\documentclass{amsart}
\usepackage{pdfsync}
\usepackage{amssymb,amsmath}
\usepackage{graphicx}
\usepackage{xcolor}
\usepackage{hyperref}
\usepackage{subfigure}
\usepackage{ upgreek }
\usepackage{tikz}
\usepackage{comment}
\usepackage[all]{xy}
\usetikzlibrary{matrix}
\usepackage{appendix}
\graphicspath{{../}}

\newcommand{\bC}{{\mathbb C}}

\newcommand{\bR}{{\mathbb R}}
\newcommand{\bZ}{{\mathbb Z}}

\newcommand{\bT}{{\mathbb T}}

\newcommand{\cA}{\mathcal A}

\newcommand{\cC}{\mathcal C}

\newcommand{\cN}{\mathcal N}
\newcommand{\cM}{\mathcal M}
\newcommand{\cO}{\mathcal O}

\newcommand{\cL}{\mathcal L}
\newcommand{\cR}{\mathcal R}

\newcommand{\cH}{\mathcal H}

\newcommand{\cK}{\mathcal K}

\newcommand{\cJ}{\mathcal J}

\newcommand{\fh}{\mathfrak h}
\newcommand{\cSH}{\mathcal{SH}}
\newcommand{\fH}{\mathfrak{H}}

\DeclareMathOperator{\Sec}{Sec}
\DeclareMathOperator{\inj}{inj}
\DeclareMathOperator{\Area}{Area}

\DeclareMathOperator{\Hess}{Hess}
\DeclareMathOperator{\Per}{Per}

\newtheorem{lemma}{Lemma}[section]
\newtheorem{theorem}[lemma]{Theorem}
\newtheorem{proposition}[lemma]{Proposition}
\newtheorem{remark}[lemma]{Remark}
\newtheorem{example}[lemma]{Example}
\newtheorem{definition}[lemma]{Definition}
\newtheorem{ques}{Question}
\newtheorem{corollary}[lemma]{Corollary}

\newtheorem{obs}{Observation}
\newtheorem{conjecture}{Conjecture}
\begin{document}

\title{Locality of relative symplectic cohomology for complete embeddings}

\author{Yoel Groman 
 and Umut Varolgunes
}

\address{ Yoel Groman,
Hebrew University of Jerusalem,
Mathematics Department}
\address{Umut Varolgunes, Department of Mathematics, Bo\u{g}azi\c{c}i University}

\begin{abstract}
A complete embedding is a symplectic embedding $\iota:Y\to M$ of a geometrically bounded symplectic manifold $Y$  into another geometrically bounded symplectic manifold $M$ of the same dimension. When $Y$ satisfies an additional finiteness hypothesis, we prove that the truncated relative symplectic cohomology of a compact subset $K$ inside $Y$ is naturally isomorphic to that of its image $\iota(K)$ inside $M$. Under the assumption that the torsion exponents of $K$ are bounded we deduce the same result for relative symplectic cohomology. We introduce a technique for constructing complete embeddings using what we refer to as integrable anti-surgery. We apply these to study symplectic topology and mirror symmetry of symplectic cluster manifolds and other examples of symplectic manifolds with singular Lagrangian torus fibrations satisfying certain completeness conditions. 

\end{abstract}

\maketitle
\tableofcontents
\section{Introduction}
 A symplectic manifold  $(M,\omega)$ is said to be \emph{geometrically bounded} if it carries a compatible almost complex structure whose associated Riemannian metric is equivalent to a complete Riemannian metric with bounds on its sectional curvature and radius of injectivity. This includes closed symplectic manifolds but also many open ones. See the beginning of \S \ref{Sec:Overview} for a discussion. 

The geometric boundedness assumption allows one to do Hamiltonian Floer theory on $M$. In particular we can associate a $\Lambda_{\geq 0}$-module $SH_M(K),$ called relative symplectic cohomology, to each compact $K\subset M$. A key feature  is  that for nested inclusions of compact subsets $K\subset K'$, we have canonical and functorial restriction maps $SH_M(K')\to SH_M(K).$ Recent years saw a flourishing of the study of similar invariants which were initially introduced, for symplectically a-spherical manifolds, in the the series of papers \cite{FH94,CFH95,CFHW94,CFHW96}.

A natural (but rather vague) question is the following:
\begin{ques}
How much does $SH_M(K)$ depend on $M$?
 \end{ques}
This can be thought of as \emph{the question of locality}.  This question is of broad importance in reducing the study of global Floer theoretic invariants to local ones (especially when it is combined with the results of \cite{varolgunes}) and it is closely related to the notion of obstructedness in Lagrangian Floer theory.  It is expected that systematic answers exist in numerous settings, e.g. \cite{borman}, \cite{yuhan}, \cite{tonkonog}, \cite{GromanToAppear}, also compare with \cite{tonkonog2019}.

Here is a more precise sub-question that we will concern ourselves with in the present paper:

\begin{ques}
Let $Y$ be a geometrically bounded symplectic manifold of the same dimension together with a symplectic embedding $\iota:Y\to M$. 
When is there a natural isomorphism between $SH_Y(K)$ and $SH_M(\iota(K))$ for a class of compact subsets $K\subset Y$? We use the word natural in the sense that the isomorphisms commute with restriction maps.
 \end{ques}

When such isomorphisms exist we call them \emph{locality isomorphisms}. In this paper we consider the case when $Y$ 
is of \emph{geometrically finite type} and construct locality isomorphisms for compact subsets of $Y$ that have \emph{homologically finite torsion}. These notions are introduced in the next paragraph. Let us immediately comment that the homologically finite torsion assumption is there for technical reasons and we fully believe it is unnecessary. On the other hand, the geometrically finite type assumption (as opposed to mere geometric boundedness) is essential for our argument.

A symplectic manifold $Y$ is said to be of \emph{geometrically finite type} if it is geometrically bounded and carries a smooth exhaustion function $f$ which has a finite number of critical points and whose Hamiltonian vector field is $C^1$-bounded with respect to a geometrically bounded $J$. See the beginning of \S \ref{sec-loc-iso} for a discussion including examples and non-examples. Note that closed manifolds again automatically satisfy this condition. We emphasize that $J$ and $f$ are not part of the structure of $Y$. That is, any invariant we construct  will eventually be independent of $J$ and $f$. A compact set $K\subset Y$ is said to have \emph{homologically finite torsion} if the torsion exponents of $SH^*_Y(K)$ over the Novikov ring $\Lambda_{\geq 0}$ are bounded above. See \S \ref{sss-on-tor} for a discussion.

\begin{theorem}[The Main Theorem]\label{thmMainIntro}
Let $M$ and $Y$ be symplectic manifolds of the same dimension and $\iota:Y\to M$ be a symplectic embedding. Assume that $M$ is geometrically bounded and $Y$ is geometrically of finite type. 
Then, we can construct natural isomorphisms $SH^*_{Y}(K)\simeq SH^*_{M}(\iota(K))$ for each homologically finite torsion compact subset $K\subset Y$.
\end{theorem}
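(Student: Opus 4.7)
The approach is to reduce the Floer theory of $(M,\iota(K))$ to that of $(Y,K)$ by confining the relevant Floer trajectories on the $M$-side into a precompact subset of $\iota(Y)$. I would first pick an acceleration data $(H_n^Y, J^Y)$ on $Y$ adapted to $K$, using the exhaustion function $f$ to control the growth outside $K$ so that $H_n^Y$ equals a convex function $s_n(f)$ outside a fixed compact neighborhood of $K$, with slopes $s'_n \to \infty$. Transporting these data to $\iota(Y) \subset M$ via $\iota$ and then capping them off admissibly outside a large sublevel set $\iota(\{f \leq C_0\})$ yields parallel acceleration data $(H_n^M, J^M)$ on $M$ which agrees with the $Y$-side on a neighborhood of the region that will eventually matter. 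The point of working with $s_n(f)$ rather than an arbitrary profile is that $f$ itself will serve as the barrier function in the next step.

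The technical core, and the step I expect to be the main obstacle, is a confinement estimate: for every truncation level $r>0$ there exists $c<C_0$ such that any Floer cylinder for $(H_n^M, J^M)$ of $\omega$-energy at most $r$ whose asymptotes lie in $\iota(\{f \leq c\})$ is contained in $\iota(\{f \leq c\})$, and the analogous statement holds on the $Y$-side. The crucial input is precisely the geometrically finite type hypothesis: the $C^1$-boundedness of $X_f$ with respect to a geometrically bounded $J$ permits an integrated maximum principle, in the dissipative spirit, showing that a Floer cylinder crossing a regular level set of $f$ must expend a definite amount of geometric energy proportional to the $f$-distance it traverses. Bounding $\omega$-energy by $r$ then rules out escape from the chosen sublevel set. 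Without this hypothesis there would be no barrier to prevent trajectories in $M$ from drifting out of $\iota(Y)$, and the locality statement could fail.

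Once confinement is in place, the Floer complexes computing the $r$-truncated versions of $SH^*_Y(K)$ and $SH^*_M(\iota(K))$ are supported entirely on generators and differentials in the region where the data are identified via $\iota$, so the two chain complexes are literally isomorphic as $r$-truncated objects. An analogous confinement for Floer homotopies (with $s$-dependent data, handled by the same barrier argument applied to the homotopy) shows that the identification is compatible with continuation maps between successive $n$'s, and is independent of admissible choices. Passing to the colimit in $n$ followed by the $\pi$-adic completion defining relative symplectic cohomology at each fixed $r$ therefore yields natural truncated isomorphisms $SH^{*,<r}_Y(K) \simeq SH^{*,<r}_M(\iota(K))$, intertwining the restriction maps associated to inclusions of compact sets.

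Finally, to upgrade to the untruncated statement I would invoke the homologically finite torsion hypothesis: a uniform bound on the $\Lambda_{\geq 0}$-torsion exponents of $SH^*_Y(K)$ implies that $SH^*_Y(K)$ is recovered as the honest inverse limit of its truncations with vanishing $\lim^1$, and the same then follows on the $M$-side from the truncated isomorphisms just constructed. Taking $r \to \infty$ assembles these into the desired natural isomorphism $SH^*_Y(K) \simeq SH^*_M(\iota(K))$, with naturality in $K$ following from the fact that all acceleration data and confinement regions can be chosen compatibly along nested inclusions $K \subset K'$.
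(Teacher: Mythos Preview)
Your overall architecture—first establishing locality at each truncation level, then invoking homologically finite torsion to pass to the inverse limit with vanishing $\varprojlim^1$—is exactly the paper's strategy, and your final paragraph correctly identifies how finite torsion on the $Y$ side transfers to the $M$ side via the truncated isomorphisms.

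There is, however, a genuine gap in your third paragraph. You assert that once confinement holds, ``the two chain complexes are literally isomorphic as $r$-truncated objects.'' This is not correct: when you cap off the $Y$-side data to obtain $(H_n^M, J^M)$ on $M$, the resulting Hamiltonians necessarily acquire $1$-periodic orbits outside $\iota(\{f \leq C_0\})$, since as acceleration data they must diverge on $M\setminus\iota(K)$ and you have no control over the dynamics in $M\setminus\iota(Y)$. Your confinement estimate, as stated, only governs cylinders with \emph{both} asymptotes in the inner region; it says nothing about these outer orbits or about mixed trajectories. What the barrier argument actually delivers (Lemma~\ref{lmSplit} and step~(3) of the proof of Theorem~\ref{thmLocalityTrunc}) is that the $r$-truncated complex on $M$ \emph{splits} as inner~$\oplus$~outer, with the inner summand matching the $Y$-side. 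The outer summand is nonzero at each finite stage $n$; it vanishes only in the \emph{colimit} over $n$, and proving this requires a separate argument—arranging $H^M_{n+1}-H^M_n\geq 1$ on the outer region so that the outer continuation maps carry topological energy $\geq 1$ and hence become zero after enough compositions in the $r$-truncation.

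A secondary issue: the barrier estimate you invoke requires the Lipschitz constant of the \emph{Hamiltonian} (not merely of $f$) on the barrier region to be uniformly small—this is the hypothesis $\epsilon<\epsilon(c)$ in Proposition~\ref{prpLipBoundDiff}. If $s_n'\to\infty$ on that region, the estimate degrades with $n$. The paper's remedy is to take the acceleration data equal to $H_{loc}+c_\tau$ (a constant shift of a fixed small-Lipschitz locality function) on a wide region $H_{loc}^{-1}([h,h+\Delta])$, achieving separation by \emph{width} $\Delta$ rather than by slope; see Definition~\ref{defDeltaComp}.
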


Before proceeding we note that if $Y$ is compact then it has to be a connected component of $M$ and the result is trivial. Therefore, let's assume that $Y$ is non-compact. This implies (see Remark \ref{remVolInf}) that $Y$, and therefore $M$, has infinite volume, and in particular that $M$ is also non-compact. An important special case of the setup in Theorem \ref{thmMainIntro} is when $K$ is a symplectic manifold with convex contact boundary, $Y$ is its completion and $M$ is an \emph{arbitrary} geometrically bounded symplectic manifold \footnote{For the present version of the theorem to apply we need $K\subset Y$ to satisfy the torsion hypothesis but the geometric setup should be considered without that assumption.}. This case is what gave rise to the phrase \emph{complete embedding}: a codimension zero symplectic embedding of a geometrically bounded symplectic manifold into another.

We briefly enumerate some settings in which Theorem \ref{thmMainIntro} can be applied.  The last two items will be expanded upon later in the introduction.

\begin{itemize}
\item
One class of examples comes from not necessarily exact strong symplectic fillings of Liouville cobordisms, see Example \ref{Ex:Liouville-Cobordisms}.  We do not focus on this setup in this paper.

\item In Sections \S \ref{sec:EignenRayCompleteEmb} and \S \ref{Sec:GrossFibrations} we introduce a technique for constructing interesting complete embeddings in the context of singular Lagrangian torus fibrations. This technique applies among others to symplectic cluster varieties and the Gross fibrations on the complement of an anti-canonical divisor in a toric Calabi-Yau manifold. We expect this technique to apply also to torus fibrations with singularities of Gross-Siebert type \cite{Gammage2021,AbouzaidSylvan2021} equipped with global torus symmetries, or, more generally, satisfying a certain slidability hypothesis. We discuss some applications of Theorem \ref{thmMainIntro} to mirror symmetry below. This will be developed further in forthcoming work.


\item We can also use Theorem \ref{thmMainIntro} to show non-existence of complete embeddings.  This is analogous to the way Viterbo functoriality is used  to rule out exact Lagrangian embeddings in $\bC^n$ and other flexible Weinstein manifolds.  For a general geometrically bounded symplectic manifold the notion of complete embedding replaces that of Liouville embedding. A simple example of this phenomenon is that \emph{there is no open subset of a toric Calabi-Yau $n$-fold which is symplectomorphic to $T^*\bT^n$}. We will discuss this in more detail later in the introduction.
\end{itemize}

We assume throughout the paper that $c_1(M)=0$, and moreover, when we do Hamiltonian Floer theory on $M$, we fix a homotopy class of a trivialization of $\Lambda^n_\bC(TM^{2n})$. This allows us to work in the $\mathbb{Z}$-graded set-up (which sometimes leads to sharper statements, e.g. Proposition \ref{prop-intro-rel-red}) and get by without using virtual techniques. We hope that it will be clear to the reader that none of these are crucial for our techniques.

\subsection{More details on Theorem \ref{thmMainIntro}}
\subsubsection{On torsion}\label{sss-on-tor}
We now discuss the assumption concerning torsion in Theorem \ref{thmMainIntro}. We repeat that we fully expect this assumption can be lifted from the statement. Proving this would have required working a lot more at the chain level,  making the paper more technical than it already is, but we do not expect any serious difficulties.

In the text we prove a statement that is more refined than Theorem \ref{thmMainIntro}, which applies without  assumptions on the torsion. Namely, we prove that the truncated relative\footnote{we will omit relative from this phrase from now on and say truncated symplectic cohomology} symplectic cohomologies $SH^*_{Y,\lambda}(K)$ and $SH^*_{M,\lambda}(\iota(K))$ (see Section \ref{ss-trunc}) satisfy locality in the sense of Theorem \ref{thmMainIntro} (with the same assumptions on $M,Y$ and $\iota$) for all $\lambda\geq 0$ and \emph{all} compact $K\subset Y$. See Theorem \ref{thmLocalityTrunc} for the precise statement. 

The torsion assumption comes in when we try to recover relative symplectic cohomology from truncated symplectic cohomologies. We need some definitions to explain this. The maximal torsion of a module $V$ over the Novikov ring $\Lambda_{\geq 0}$ is the supremum of the torsions of all of its torsion elements (Definition \ref{def-tor}), which is possibly equal to $\pm\infty$. If it is not $+\infty$, then we say that $V$ has \emph{finite torsion}.  
%

Let us state the specific result that we use to relate the relative symplectic cohomology to truncated symplectic cohomologies as we think it is instructive in its own right. We define

$$SH_{M,red}^*(K):=\varprojlim_{\lambda}SH_{M,\lambda}^*(K).$$

\begin{proposition}\label{prop-intro-rel-red}Assume that $SH^{i}_{M}(K)$ has finite torsion for some $i\in\mathbb{Z}$, then the canonical map $$SH_M^i(K)\to SH_{M,red}^i(K)$$ is an isomorphism.\end{proposition}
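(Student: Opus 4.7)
My plan is to combine two pieces of standard homological algebra: a short exact sequence expressing $SH^j_{M,\lambda}(K)$ in terms of $SH^j_M(K)$ and $SH^{j+1}_M(K)$, together with the Milnor $\varprojlim^1$ sequence comparing $SH^i_M(K)$ with $SH^i_{M,red}(K)$.

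Let $\hat{C}$ denote the $T$-adically complete chain complex whose cohomology computes $SH^*_M(K)$, so that $SH^*_{M,\lambda}(K) = H^*(\hat{C}/T^\lambda\hat{C})$. The short exact sequence of complexes $0\to\hat{C}\xrightarrow{T^\lambda}\hat{C}\to\hat{C}/T^\lambda\hat{C}\to 0$ produces, in each degree $j$, the short exact sequence
\[
0 \to SH^j_M(K)/T^\lambda SH^j_M(K) \to SH^j_{M,\lambda}(K) \to SH^{j+1}_M(K)[T^\lambda] \to 0, \qquad (\ast)
\]
where $V[T^\lambda]$ denotes the $T^\lambda$-torsion submodule of $V$. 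A direct computation of the connecting homomorphism shows that for $\lambda'\geq\lambda$ the restriction map $SH^j_{M,\lambda'}(K)\to SH^j_{M,\lambda}(K)$ acts as the evident quotient on the left factor and as multiplication by $T^{\lambda'-\lambda}$ on the right. Since the tower $(\hat{C}/T^\lambda\hat{C})_\lambda$ has surjective transition maps and inverse limit $\hat{C}$, the Milnor short exact sequence yields
\[
0 \to \varprojlim{}^1 SH^{i-1}_{M,\lambda}(K) \to SH^i_M(K) \to SH^i_{M,red}(K) \to 0,
\]
so the proposition reduces to proving $\varprojlim{}^1 SH^{i-1}_{M,\lambda}(K)=0$.

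Applying $\varprojlim_\lambda$ to $(\ast)$ in degree $j=i-1$ produces a six-term exact sequence whose relevant portion reads
\[
\varprojlim{}^1 \bigl(SH^{i-1}_M(K)/T^\lambda\bigr) \to \varprojlim{}^1 SH^{i-1}_{M,\lambda}(K) \to \varprojlim{}^1 \bigl(SH^i_M(K)[T^\lambda]\bigr) \to 0.
\]
The first term vanishes by Mittag--Leffler since the tower $SH^{i-1}_M(K)/T^\lambda$ has surjective transitions. For the last term, the hypothesis finally enters: if $\tau$ is an upper bound on the torsion exponents of $SH^i_M(K)$, then $SH^i_M(K)[T^\lambda] = SH^i_M(K)[T^\tau]$ for all $\lambda\geq\tau$, while the transition from $\lambda'$ to $\lambda\geq\tau$ is multiplication by $T^{\lambda'-\lambda}$, which annihilates this torsion submodule as soon as $\lambda'-\lambda\geq\tau$. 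Hence the tower has eventually zero transitions and its $\varprojlim^1$ vanishes. Combining the two vanishings gives $\varprojlim{}^1 SH^{i-1}_{M,\lambda}(K)=0$, and the claimed isomorphism follows.

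The only real obstacle I foresee is the setup-level verification that, in the paper's specific chain-level model for relative symplectic cohomology, the identifications $SH^*_M(K)=H^*(\hat{C})$ and $SH^*_{M,\lambda}(K)=H^*(\hat{C}/T^\lambda\hat{C})$ hold, and that the tower $(\hat{C}/T^\lambda\hat{C})_\lambda$ genuinely has $\hat{C}$ as its inverse limit with surjective transitions so that both $(\ast)$ and the Milnor sequence apply. Once that bookkeeping is in place, the rest of the argument is a formal exercise in homological algebra.
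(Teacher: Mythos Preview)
Your proof is correct and uses the same two ingredients as the paper: the universal-coefficients short exact sequence (the paper's Lemma~\ref{lemUCT}) and the Milnor $\varprojlim^1$ sequence (the paper's Proposition~\ref{propMitt}). The organization differs slightly: the paper first argues that the tower $SH^{i-1}_{M,\lambda}(K)$ is Mittag--Leffler (Lemma~\ref{lemTorsMitt}) and then invokes $\varprojlim^1=0$, whereas you apply the six-term $\varprojlim$ sequence directly to $(\ast)$ and kill the outer $\varprojlim^1$ terms separately. Your identification of the transition map on the $\mathrm{Tor}$ tower as multiplication by $T^{\lambda'-\lambda}$ is correct and is in fact sharper than the paper's treatment: the paper's Lemma~\ref{lemTorCons} asserts this map is an \emph{isomorphism} for $\lambda',\lambda\geq\lambda_0$, but under the standard identification $\mathrm{Tor}^1(V,\Lambda_{\geq 0}/T^{\lambda})\cong V[T^{\lambda}]$ the functorial map is $T^{\lambda'-\lambda}$, which is generally not surjective on the torsion submodule. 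Your observation that the tower is pro-zero (transitions vanish once $\lambda'-\lambda\geq\tau$) bypasses this issue and still yields the Mittag--Leffler conclusion the paper needs. The setup concerns you raise are handled in the paper: the identification $SH^*_{M,\lambda}(K)\cong H^*(\hat{C}/T^{\lambda}\hat{C})$ is the chain of isomorphisms preceding Proposition~\ref{propRelSHWell}, and $\hat{C}$ is degreewise the $T$-adic completion of a torsion-free $\Lambda_{\geq 0}$-module, hence complete and torsion-free.
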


The proof is an exercise in homological algebra and it is provided in Section \ref{ss-tor-fin}. Let us stress that in Theorem \ref{thmMainIntro} (or even better in its slightly sharper version Theorem \ref{thmLocalityRelFinal}) the assumption on finiteness of torsion is made only with respect to $Y$ (and not $M$), making the result much more useful. 

\begin{remark}
The maximal torsion of the homology of a chain complex over the Novikov ring is closely related to the \emph{boundary depth} of the corresponding filtered chain complex over the Novikov field. We do not discuss this in detail as it would require us to introduce some notation that is not used elsewhere in the paper.
\end{remark}

We now comment on when the assumption on homological finiteness of torsion is known or expected to hold and when it does not hold. Let us define the \emph{homological torsion in degree $i$} of $K\subset M$ as the maximal torsion of $SH_M^i(K)$.

\begin{itemize}
\item By the computations in  \cite{CFHW94} ellipsoids and polydisks in $\bC^n$ have homologically finite torsion in each degree.  
\item Let $Y=T^*\bT^n$ with its projection $\pi:T^*\bT^n\to\bR^n$. Let $K=\pi^{-1}(P)\subset Y$ for $P\subset \bR^n$ a compact convex domain. Then $SH_Y^*(K)$ has no torsion, so finiteness of torsion in any degree holds trivially. This result can be derived by a direct computation using Viterbo type acceleration data along with careful perturbations to deal with the Morse-Bott $T^n$-families of orbits that appear (similar to Section \ref{ss-non-fin-tor}). 
\item For $Y$ as in the previous item and $K=\pi^{-1}(P)\subset  Y$ such that $P\subset\bR^n $ is \emph{star-shaped} but not convex, it can be shown that homological finiteness of torsion does \emph{not} hold in all degrees. We give the argument for a particular such domain in Section \ref{ss-non-fin-tor} below. 
\item Let $Y$ be a symplectic cluster manifold with nodal almost toric fibration $\pi:Y\to B$ of cluster type (see Section \ref{sss-intro-cluster} for quick definitions).  Let $P\subset B$ be a compact domain (possibly with corners) whose boundary does not contain any nodes. Let $K=\pi^{-1}(P)$. Then $K$ has homologically finite torsion in degree $0$. If we assume further that $P$ is a convex\footnote{this means that it is locally convex near its boundary} polygon with rational slope sides and the restriction of the symplectic form to $K$ is exact then $K$ has homologically finite torsion in degree $1$ as well. This can be shown by combining Theorems \ref{tmCompEmbLagTails} and \ref{thmMainIntro} and the argument given in \cite[Theorem 6.19, Remark 6.20]{Pascaleff}. 
\item For $Y$ as in the previous item, it can again be shown that if $P$ is star shaped but not convex, homological finiteness of torsion does not hold in degree 1. Cf \cite[Remark 6.17]{Pascaleff}. 
\item If $K$ is an index bounded Liouville domain (as in e.g. Definition 2.2 of \cite{yuhan}) inside a symplectic manifold $Y$ with some (homotopy class) of a trivialization of its canonical bundle, then $SH^{i}_{Y}(K)$ has finite torsion for all $i$. Compare with Proposition 4.6 of \cite{yuhan}.
\end{itemize}
\begin{remark}
We have not rigorously studied the question of homologically torsion finiteness in degrees $2$ and $3$ inside symplectic cluster manifolds, but we expect that it would not be too hard to analyze using similar methods. 
\end{remark}

\subsubsection{Idea of the proof}
We conclude this part of the introduction with a brief discussion of the method of proof. Consider the setup of Theorem \ref{thmMainIntro}. We start with a comment on why one should generally expect any type of locality result. 
The first condition for such result is that the contribution of the $1$-periodic orbits which are, roughly speaking, present in $M$ but not in $Y$ be negligible. This is expected to always hold as a result of the truncation and completion operations involved in the definition of our invariants, but it is not always easy to prove. If this holds, we can consider the underlying $\Lambda_{\geq 0}$-modules for $SH^*_Y(K)$ and $SH^*_M(\iota(K))$ at the chain level as being identical. The locality question then involves understanding the contribution, or lack thereof, of Floer trajectories that connect orbits in $\iota(Y)$ but are not contained in $\iota(Y)$.

Let us assume that $\iota$ is the inclusion of an open subset $Y\subset M$ equipped with the symplectic form $\omega|_Y$.\footnote{For the construction of locality isomorphisms, the setup we presented thus far with a complete embedding $\iota: Y\to M$ is more general only in a psychological sense. One can always identify $Y$ with its image under $\iota$ and reduce to the case of an open subset $Y\subset M$ equipped with the symplectic form $\omega|_Y$, which is by assumption geometrically of finite type.} The geometrically finite type hypothesis made on $Y$ implies that we can find a geometrically bounded almost complex structure $J_Y$, an admissible function $f$ as defined in the beginning of Section \ref{sec-loc-iso}, and a constant $r$ such that all the $1$-periodic orbits of $f$ are contained in the sub-level $V_r:=f^{-1}(-\infty,r)$. For any $R>0$  the annulus $A_R:=f^{-1}([r,R+r])$ forms a \emph{separating region} in the following sense.
\begin{proposition}
Fix $J_Y$ as above.  There are constants $\epsilon>0, C>0,D,$ depending on the bounds on the geometry of $J_Y$ such that the following holds. Let $f$ be a function on $Y$ as above and assume the Lipschitz constant of $f$ is $\leq\epsilon$. Let $R>0$ and let $H:M\to\bR$ be a Hamiltonian such that $H|_{A_R}= f|_{A_R}$. Let $J$ be a compatible almost complex structure on $M$ such that $J|_{A_R}=J_Y|_{A_R}$. Then any Floer trajectory $u$ for the datum $(H,J)$ which meets both $V_{r}$ and $M\setminus V_{R+r}$ satisfies
\begin{equation}
E(u)\geq CR-D.
\end{equation}
\end{proposition}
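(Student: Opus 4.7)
The plan is to use a monotonicity argument for Floer trajectories in the annular region $A_R$, where the Hamiltonian perturbation is small thanks to the Lipschitz bound on $f$.

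First, I would set things up as follows. Since $u$ meets both $V_r$ and $M \setminus V_{R+r}$, by continuity the image $u(\bR\times S^1)$ intersects every level set $\{f=c\}$ for $c\in[r,R+r]$. On the annulus $A_R$ the Floer equation reads
\begin{equation*}
\partial_s u + J_Y\bigl(\partial_t u - X_f(u)\bigr)=0,
\end{equation*}
and the perturbing vector field $X_f$ satisfies $|X_f|_{J_Y}\lesssim \epsilon$ by the assumed Lipschitz bound together with the $C^1$-boundedness of $X_f$ built into the definition of geometric finiteness. Since $J_Y$ extends to a geometrically bounded almost complex structure on $M$ and the Floer datum on $A_R$ depends only on $J_Y$ and $f$, everything that happens inside $A_R$ is controlled by geometric constants attached to $J_Y$.

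Second, I would use the Lipschitz bound to turn the level-set traversal into a metric length. In the $J_Y$-metric, any continuous path joining $\{f=r\}$ to $\{f=R+r\}$ has length at least $R/\epsilon$. Thus I can select a sequence of points $p_1,\ldots,p_N$ on the image of $u$ inside $A_R$, all at pairwise $J_Y$-distance at least $\rho_0$ from each other (for some fixed $\rho_0>0$ chosen below the injectivity radius bound of $J_Y$), with
\begin{equation*}
N \geq \frac{R}{\epsilon\,\rho_0} - \text{const},
\end{equation*}
where the additive constant absorbs endpoint effects near $\partial A_R$. The geodesic balls $B(p_i,\rho_0/2)$ are then pairwise disjoint and each is entered and exited by $u$.

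Third, I would invoke a monotonicity lemma for perturbed pseudo-holomorphic curves. The classical Sikorav-type monotonicity (as adapted to Floer trajectories in geometrically bounded manifolds, e.g.\ along the lines of earlier work of the first author) asserts that there exist constants $\rho_0>0$, $\epsilon_0>0$ and $E_0>0$, depending only on the geometric bounds of $J_Y$ and on a $C^1$-bound for the Hamiltonian vector field, such that whenever the Hamiltonian perturbation has $C^1$-norm at most $\epsilon_0$ and a Floer trajectory enters and exits a ball $B(p,\rho_0/2)$ contained in an appropriate neighborhood, the energy of the trajectory inside that ball is at least $E_0$. Choosing $\epsilon\leq\epsilon_0$ and summing over the disjoint balls $B(p_i,\rho_0/2)$, one obtains
\begin{equation*}
E(u) \geq N\cdot E_0 \geq \frac{E_0}{\epsilon\,\rho_0}\,R - D,
\end{equation*}
which is the desired inequality with $C=E_0/(\epsilon\rho_0)$.

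The main obstacle is making the monotonicity lemma work uniformly for Floer trajectories rather than for honest $J$-holomorphic curves. Concretely, one must verify that for $\epsilon$ small enough the Hamiltonian term $X_f$ can be absorbed into the standard isoperimetric/mean-value argument (the usual trick is to note that the geometric energy density of a Floer trajectory differs from the $J_Y$-holomorphic energy density by a term quadratic in $|X_f|$, which is controlled by $\epsilon$). Once $\epsilon_0$, $\rho_0$, and $E_0$ are fixed purely in terms of the geometry of $J_Y$, the remainder of the argument is a packing estimate as above, and the additive constant $D$ simply accounts for the boundary portions of the trajectory where disjoint balls of full radius do not fit inside $A_R$.
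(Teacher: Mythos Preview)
Your packing-plus-monotonicity intuition is exactly the starting point of the paper's argument, but the step you flag as ``the main obstacle'' is a genuine gap, and the suggested fix does not close it. The only monotonicity available here is the one obtained via Gromov's trick: the graph $\tilde u=(\mathrm{id},u)$ is $J_{\fH}$-holomorphic in $\Sigma\times M$, and the Monotonicity Lemma applied to a ball $\tilde B_i$ of radius $\sim\rho_0$ centred at $(z_i,p_i)$ yields a lower bound on $\int_{S_i}\tilde u^*\omega_{\fH}=E_{top}(u;S_i)+\operatorname{Area}(S_i)$, not on $E_{top}(u;S_i)$ alone. Since $S_i$ sits inside a $\Sigma$-ball of the same radius, $\operatorname{Area}(S_i)$ is of the same order $\rho_0^2$ as the monotonicity bound, so after summing over $N$ balls you only get $E_{top}(u)+\sum_i\operatorname{Area}(S_i)\gtrsim N$, which says nothing about $E_{top}(u)$. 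Your proposed trick of comparing the Floer energy density to the $J_Y$-holomorphic one does not help: the relevant quantity for monotonicity is the symplectic area $\int u^*\omega$, and on a subdomain this differs from $E_{geo}$ by boundary terms involving $H$ itself, which is not small (only $|X_H|\le\epsilon$ is). Concretely, on $T^*S^1$ with $H=\epsilon p$ the trajectory $u(s,t)=(q_0,p_0+\epsilon s)$ crosses a ball of radius $\rho_0$ while spending only energy $\rho_0\epsilon$; so there is no uniform $E_0>0$ depending only on the geometry.

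The paper deals with exactly this area-loss by a two-step argument that you are missing. First, the packing estimate (Lemma~\ref{lmDiamEstFreeBo}) is recorded with the area term present. Second, an independent \emph{loopwise} lower bound $\Gamma_{H,J}(h,h+R_0)>\delta$ (imported from \cite[Lemma~6.9]{groman}, itself nontrivial) gives $E(u)\ge N\delta$ where $N$ counts level-set crossings. These two inequalities are then played off against each other in Lemma~\ref{lmLoopDisLin} to eliminate both $N$ and the accumulated area, producing the linear-in-$R$ bound. In short, the result does not follow from a single monotonicity-per-ball estimate; one needs the additional dissipativity input to cancel the domain-area contribution.
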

The proofs of more detailed forms of this statement are given in Sections \S\ref{ss-diss-c0} and \S\ref{SecLipBoundDiff}.

\begin{remark}\label{remHeinUsher}
A closely related proposition is proven in \cite[Lemma 3.2]{Hein} following \cite[Lemma 2.3]{Usher} and is referred to as Usher's Lemma. What the present Proposition adds to Usher's Lemma is that the right hand side goes to infinity as $R$ goes to infinity provided the Hamiltonian has small Lipschitz constant. The proof is non-trivial.
\end{remark}

Given a compact set $K$ we can choose $r$ so that, in addition, $K\subset V_r$. The proposition implies that at each energy truncation level we can Floer theoretically separate $K$ from $M\setminus Y$ by making $R$ large enough. The upshot is that at each truncation level we can make the underlying complexes for $K\subset Y$ and ${\iota(K)}\subset M$ coincide after justifying that the orbits that are not common to both do not contribute.

\subsection{Complete embeddings and singular Lagrangian torus fibrations}
\subsubsection{Symplectic cluster manifolds}\label{sss-intro-cluster}

Let $X$ be a symplectic manifold and $\pi: X\to B$ be a \emph{nodal Lagrangian torus fibration}. That is, an almost toric fibration as in \cite{symington} with a finite number of singularities all of which are of focus-focus type. The torus fibration $\pi$  induces on $B$ the structure of an integral affine manifold with singularities. That is, denoting by $N\subset B$ the critical values  of $\pi$, the open and dense subset $B^{reg}:=B\setminus N$ carries an integral affine structure. Around each $n\in N$ the integral affine structure has monodromy of shear type. A detailed description is in Section \S \ref{sec:EignenRayCompleteEmb} below.  


A \emph{Lagrangian tail $L$ emanating from a focus-focus point $p$} is a properly embedded Lagrangian plane in $X$ which surjects under $\pi$ onto a {smooth ray} $l$ emanating from $m=\pi(p)$ with fiber over $m$ being $p$ and with all the other fibers diffeomorphic to circles disjoint from the critical points of $\pi$. Here, by a smooth ray we mean the image of a smooth embedding $[0,\infty)\to B$.   We show in Lemma \ref{lem-tail-eigenray} that $l$ is in fact an {eigenray} of $p$ in the sense of Definition \ref{def-eigenray}. See Figure \ref{fig-tail}. Note that $X$ might admit different nodal Lagrangian torus fibrations and each of these give rise to Lagrangian tails in their own right.

\begin{figure}
\includegraphics[width=0.6\textwidth]{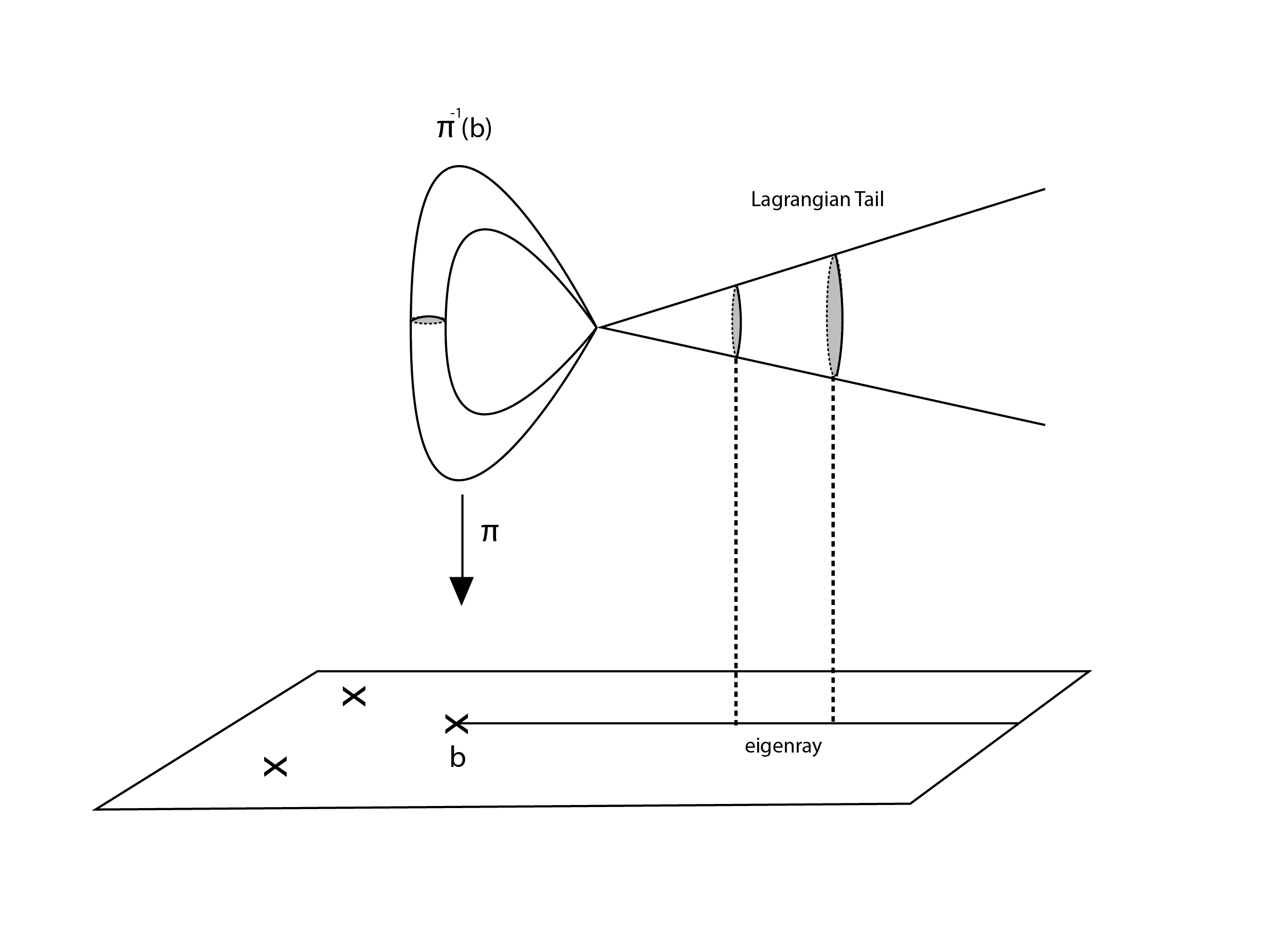}
\caption{A depiction of a Lagrangian tail}
\label{fig-tail}
\end{figure}

\begin{definition}\label{def-symp-clus}A \emph{4-dimensional symplectic cluster manifold} is a symplectic 4-manifold $X$ for which there exists a nodal Lagrangian torus fibration $\pi:X\to B$ with $B$ simply connected, and a choice of pairwise disjoint Lagrangian tails $L_p$ for each critical point $p$ of $\pi$ such that the following condition holds: \begin{itemize}
\item The eigenrays $l_p:=\pi(L_p)$ are proper in the sense of Definition \ref{def-eigenray}. Moreover, an affine geodesic starting at any point $b\in B^{reg}$ in any direction can be extended indefinitely unless it converges in finite time to a point on  $\bigcup_{p\in Crit (\pi)}l_p$.
\end{itemize}Let us refer to this condition as \emph{weak geodesic completeness}.\end{definition}

{For the remainder of the paper we will refer to 4-dimensional symplectic cluster manifold simply as symplectic cluster manifolds.} In Lemma \ref{prpClstGeoBd} below we show that \emph{symplectic cluster manifolds are of geometrically finite type}.

We refer to the data $P=\{\pi,\{L_p\}_{p\in Crit \pi}\}$ as a \emph{cluster presentation} of $X$.   We say {that} a nodal Lagrangian torus fibration $\pi$ on $X$ is \emph{of cluster type} if it is part of a cluster presentation on $X$. The multiset $\{\pi(L_p)\}_{p\in Crit\pi}$  of rays in $B$ associated to a given cluster presentation, together with the integral affine structure induced by $\pi$ give rise to a combinatorial structure called an \emph{eigenray diagram} in the beginning of Section \S\ref{SecEigenrayDiagrams} {\footnote{{See Remark \ref{rem-eigenray-def}.}} }. We refer to it as an \emph{eigenray presentation} of $X$ and denote it {by} $\cR_P$. It is shown in Proposition \ref{prpClustEquiv} that an eigenray diagram $\cR$ gives rise to a symplectic cluster manifold $M_{\cR}$ which is unique up to symplectomorphism (see Section \ref{ss-nodal-int} for the uniqueness part). In particular, for the case where $\pi$ in a cluster representation has no singularities we have $M_0=T^*T^2$. 

\begin{remark} We could define a symplectic cluster manifold simply as a symplectic manifold that is symplectomorphic to $M_\cR$ for some eigenray diagram $\cR$, but we tried to give a more intrinsic definition in the introduction. The equivalence is shown in Proposition \ref{prpClustEquiv}. 
\end{remark}

A given cluster manifold may possess many different eigenray presentations. These may be related by \emph{nodal slides} which change the fibration $\pi$ and by $\emph{branch moves}$ which roughly amount to replacing an eigenray by its opposite (see Section \ref{SecEigenrayDiagrams} for precise definition).  Eigenray diagrams are the symplectic counterpart of the \emph{seeds} familiar from the cluster variety literature, and one could think of a cluster representation as being analogous to that of \emph{toric models} from \cite{ghk}.

The defining property of cluster varieties in algebraic geometry is  the existence of a family $T_{\alpha}$ of embeddings of the algebraic torus parametrized by the set of seeds $\alpha$. The following theorem gives the symplectic counterpart of this story.

\begin{theorem}\label{tmCompEmbLagTails}
Let $X$ be a symplectic cluster manifold, and let $$P=\{\pi,\{L_p\}_{p\in Crit \pi}\}$$ be a cluster presentation of $X$.  For any subset $S\subset Crit \pi$ let $\cR_{P\setminus S}$ be the eigenray diagram obtained from $\cR_P$ by deleting (in the multiset description used above) the eigenrays $\pi(L_p)$ for $p\in S$.
Then $X\setminus\cup_{p\in S} L_p$ is symplectomorphic to $M_{\cR_{P\setminus S}}$. In particular,
\begin{equation}\label{eqCompEmbLagTails}
X\setminus \cup_{p\in Crit\pi}L_p\simeq T^*T^2.
\end{equation}
\end{theorem}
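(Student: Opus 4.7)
The plan is to reduce to the case $|S|=1$ and then iterate. Concretely, once it is shown that for a single $p\in Crit(\pi)$ the open manifold $X\setminus L_p$ admits a cluster presentation $(\pi',\{L_q\}_{q\ne p})$ whose associated eigenray diagram is $\cR_{P\setminus\{p\}}$, Proposition \ref{prpClustEquiv} will provide a symplectomorphism $X\setminus L_p\simeq M_{\cR_{P\setminus\{p\}}}$. Iterating this over the elements of $S$, using that the tails $\{L_p\}_{p\in Crit(\pi)}$ are pairwise disjoint by hypothesis, yields the desired symplectomorphism with $M_{\cR_{P\setminus S}}$. In the extremal case $S=Crit(\pi)$ the eigenray diagram is empty, so $M_{\cR_{P\setminus Crit(\pi)}}=T^*T^2$, which is equation \eqref{eqCompEmbLagTails}.

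For the single-tail step, I would build $\pi'$ by a local-to-global patching. First I would write down the standard focus--focus model on a neighborhood $U$ of $p$ together with an explicit standard Lagrangian tail $L_p^{\mathrm{std}}\subset U$ projecting to an eigenray in $\pi(U)$, using explicit action--angle coordinates away from the critical fiber. The key local claim is that $U\setminus L_p^{\mathrm{std}}$ carries a \emph{non-singular} Lagrangian torus fibration $\pi_U'$ over an integral affine open set $U'\subset\bR^2$ obtained by cutting $\pi(U)$ along the eigenray and developing to flatten the shear monodromy around $p$. A symplectic neighborhood theorem for Lagrangian tails, established by combining the Weinstein Lagrangian neighborhood theorem with the explicit fibration structure near infinity of $L_p$ (a trivial $S^1$-bundle over a ray), identifies a neighborhood of $L_p$ in $X$ with $U$ in a way matching the ambient fibration $\pi$ with the focus--focus model outside a small neighborhood of $L_p$. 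Gluing $\pi_U'$ with $\pi|_{X\setminus U}$ then produces $\pi':X\setminus L_p\to B'$, where $B'$ is $B$ with $l_p$ cut open and monodromy flattened across the cut. The remaining tails $L_q$ lie in the unchanged region and survive as Lagrangian tails of $\pi'$ realizing the combinatorial diagram $\cR_{P\setminus\{p\}}$. Weak geodesic completeness of $B'$ inherits from that of $B$, since the only affine geodesics affected by the surgery are those whose extension in $B$ would terminate on $l_p$, and these now extend freely in $B'$.

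The main obstacle will be constructing $\pi_U'$ on $U\setminus L_p^{\mathrm{std}}$ and verifying that it glues smoothly with $\pi$ across the collar of $U$ to a bona fide almost toric fibration rather than a merely topological one. The crucial integral-affine input is that $l_p$ is an \emph{eigenray} (Definition \ref{def-eigenray}, Lemma \ref{lem-tail-eigenray}): the monodromy of $\pi$ around $p$ is conjugate to a shear with eigenvalue $1$ in the direction of $l_p$, so cutting along $l_p$ is precisely what trivializes this monodromy and makes the developing map onto $U'$ single-valued. The corresponding statement at the level of total spaces requires a careful choice of primitive of the action one-form on each side of the cut and a verification that the angle coordinates close up smoothly into $T^2$ fibers over the reglued base. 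Once this normal form is in place, the combinatorial identification of the eigenray diagram of $\pi'$ with $\cR_{P\setminus\{p\}}$ is essentially tautological, and the induction closes.
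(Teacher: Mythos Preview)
Your high-level strategy---remove one tail at a time and iterate---matches the paper's exactly (the paper packages the single-tail step as Theorem \ref{prop-node-removal} and then deduces Theorem \ref{tmCompEmbLagTails} immediately from it together with Proposition \ref{prpClustEquiv}). The gap is in your execution of the single-tail step, specifically in the gluing. You propose to use the ordinary Weinstein neighborhood theorem plus ``the explicit fibration structure near infinity of $L_p$'' to identify a neighborhood of $L_p$ with the model in a way that matches $\pi$ with the model focus--focus fibration near the boundary of the neighborhood. But Weinstein gives only a symplectomorphism of neighborhoods; it has no reason to respect any ambient Lagrangian fibration, and arranging this by hand along the non-compact collar is precisely the hard part you flag but do not resolve. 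The paper's solution is to use the Hamiltonian $S^1$-action that is already present: the integral affine coordinate in the eigenray direction generates, near $L_p$, a circle action which preserves $\pi$ and rotates $L_p$. One then invokes the \emph{equivariant} Weinstein theorem (Proposition \ref{prop-eq-wein}) and performs the anti-surgery on the three-dimensional quotient via a diffeomorphism $\Phi$ that is the identity near the boundary (Lemmas \ref{lem-ex-slide}, \ref{lem-slide-ab}). Because the new fibration $\pi^{new}$ is literally defined as $\pi$ precomposed with this quotient diffeomorphism (Proposition \ref{prop-3d-smooth-st}(3)), it agrees with $\pi$ outside a compact piece automatically; there is no model-matching step at all. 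Properness of $\pi^{new}$ also needs checking (Lemma \ref{lmSurgPresPrp}).

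A second point you call ``essentially tautological'' is not: identifying the induced integral affine structure on $B'$ with $B_{\cR_{P\setminus\{p\}}}$. The paper computes this via symplectic areas in the $S^1$-reduced spaces (Propositions \ref{prop-eig-proc} and \ref{prop-lag-to-eig}); the crucial fact is that removing the one-dimensional image of $L_p$ from the reduced surface does not change areas, so the horizontal action coordinate still runs over all of $\bR$. Even then one only lands on the correct eigenray diagram up to nodal slides, and Proposition \ref{prop-nodal-slide} is needed to finish. Your ``cut along $l_p$ and develop'' description is the right picture, but the verification that the developing map is onto---equivalently, that horizontal completeness survives---requires exactly this area argument.
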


{The proof of Theorem \ref{tmCompEmbLagTails} is given in Section \S\ref{SecProofOfThm2}. There, we also show that we have good control over how the nodal Lagrangian torus fibrations on the two symplectic cluster manifolds relate to each other.} The basic idea is explained in Section \ref{sss-idea}. 
\begin{remark}\label{remExacEigenray}
Let us call an eigenray diagram \emph{exact} if the lines containing all of the eigenrays pass through the same point $b_0$. It is well known that if $\mathcal{R}$ is exact then $M_{\mathcal{R}}$ admits a Liouville structure which makes it a complete finite type Weinstein manifold\footnote{This is a folk result. Let us give a quick proof. Possibly using nodal slides, we can assume that eigenrays do not contain $b_0$ in their interior. Let  $K\subset B_\cR$ be  a star shaped compact domain which contains the union of the straight line segments from $b_0$ to all the other nodes. The assumption implies that outside of $K$ the Euler vector field with respect to $b_0$ in connected integral affine charts is preserved under the transition maps of the integral affine structure. Using action angle coordinates we lift it to a Liouville field whose $\omega$-dual is a primitive $\theta$ of the symplectic form $\omega$ in the complement of $\pi_\cR^{-1}(K)$. In particular, we obtain a class $a=[(\omega,\theta)]$ in relative deRham cohomology $H^2_{dR}(M_\cR, M\setminus \pi_\cR^{-1}(K))$.  
To extend $\theta$ to all of $M$ it suffices to show this class vanishes. 
This follows from the relative deRham isomorphism because the class of a Lagrangian section and the classes of Lagrangian tails 
over the given eigenrays generate the relative homology group $H_2(M_\cR, M\setminus \pi_\cR^{-1}(K))$ and $a$ vanishes on all of them.
}.  Such exact symplectic cluster manifolds and their mirror symmetry have been studied by various authors in the literature \cite{shende}, \cite{keating}, \cite{hacking2020homological}. Conversely, it is easy to show that if $\cR$ is non-exact then $M_{\cR}$ is non-exact by constructing a second homology class that pairs non-trivially with the symplectic class.

We are not aware of any systematic study of the cluster symplectic manifolds that are associated to non-exact eigenray diagrams. It appears that for a non-exact eigenray diagram $\mathcal{R}$, the symplectic manifold $M_{\mathcal{R}}$ is not symplectomorphic to the positive half of the symplectization of a contact manifold outside of a compact subset, but there might be a weaker structure at infinity involving stable Hamiltonian structures.

\end{remark}

Let us illustrate how Theorem \ref{tmCompEmbLagTails} is used in an example. 
\begin{example}\label{ExFiveCharts}
Consider the eigenray diagram $\cR$ with two nodes  of multiplicity one at the points $(1,0)$ and $(0,1)$ in $\bR^2$ with rays going along the positive $x$ and $y$-axes, respectively. See the left side of Figure \ref{fig-five}. Let $\pi : M\to B$ be a compatible nodal Lagrangian torus fibration, i.e. in the notation of Section \ref{SecEigenrayDiagrams}, one of the form $\pi_\cR: M_\cR\to B_\cR$. We  describe $5$ symplectic embeddings of $M_0=T^*T^2$ into $M$ using the statement of Theorem \ref{tmCompEmbLagTails}.
\begin{figure}
\includegraphics[width=0.6\textwidth]{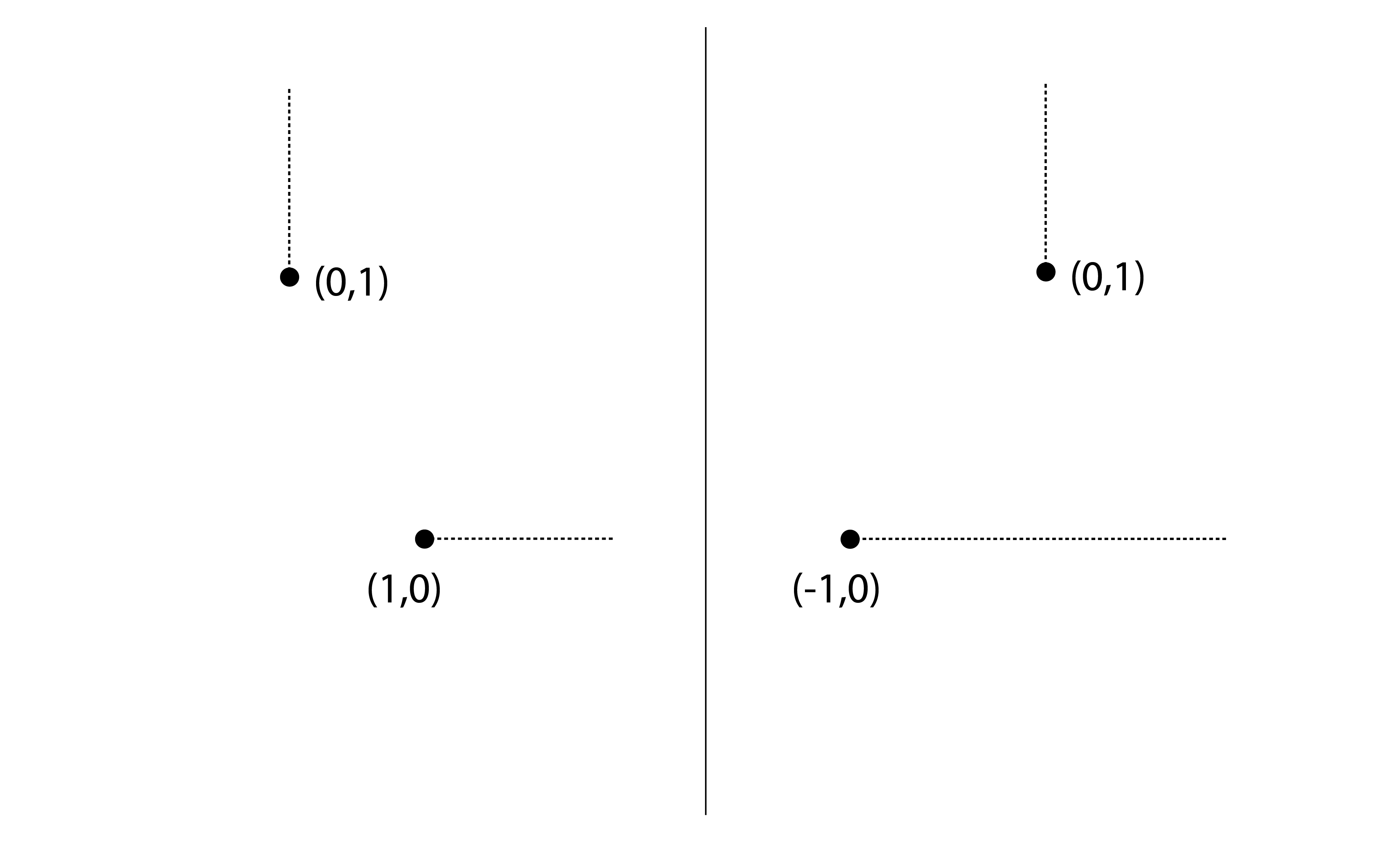}
\caption{Two eigenray diagrams related by a nodal slide.}
\label{fig-five}
\end{figure}
Let us denote the critical values in $B$ corresponding to $(1,0)$ and $(0,1)$ by $n_x$ and $n_y$. We then denote the eigenrays of $n_x$ and $n_y$ corresponding to the defining rays by $l_x$ and $l_y$, and the other eigenrays by $l_x^-$ and $l_y^-$. There are $4$ different ways of choosing an eigenray for each node. Three out of these $4$ choices, namely $(l_x,l_y),(l_x^-,l_y),$ and $(l_x,l_y^-)$ lead to non-intersecting eigenrays. The corresponding Lagrangian tails are clearly disjoint, and hence for each of these three cases we obtain a cluster representation of $M$ (along with $\pi$). By Theorem \ref{tmCompEmbLagTails}, we therefore obtain $3$ embeddings $M_0\to M$.  For the choice $(l_x^-,l_y^-)$, however, one immediately sees that, no matter how the Lagrangian tails are chosen, they cannot be disjoint as they  intersect the torus over the intersection point along cycles representing mutually independent homology classes (well defined up to sign). Indeed, under the identification of the tangent space to the base with the cohomology of the fiber, these non-zero homology classes annihilate the tangent line to the respective tail. 

Using nodal slides we can produce two additional embeddings. As mentioned above, here we are thinking of a nodal slide operation as changing the initial nodal Lagrangian torus fibration on the underlying symplectic manifold $M$ to a new fibration $\pi^{new}:M\to B^{new}$. Here $B^{new}=B$ as smooth manifolds, but with a different induced nodal integral affine structure. Moreover, $B^{new}$ as a nodal integral affine manifold can be described by an eigenray diagram $\cR^{new}$ obtained by applying a nodal slide to $\cR$, which we will use in what follows. The reader might want to consult the end of Section \ref{ss-3d} for how this works in an example.

The first option is to slide $n_x$ in the direction of $l_x^{-}$ until in the eigenray diagram the node $(1,0)$ arrives at $(-1,0)$. See the right side of Figure \ref{fig-five}. This results in a new nodal Lagrangian torus fibration $\pi':M\to B'$ as in the previous paragraph. Let us denote the nodes in $B'$ by $n_x'$ and $n_y'$ with eigenrays $l'_x$ and $l'_y$ corresponding to the rays of the eigenray diagram, and $(l'_x)^{-}$ and $(l'_y)^{-}$ for the others. Lagrangian tails corresponding to $l'_x, l'_y, (l'_x)^{-}$ are Hamiltonian isotopic to the ones of $l_x, l_y, l_x^{-}$ respectively, but crucially,  the one for $(l'_y)^-$ has changed drastically and it is not Hamiltonian isotopic to the one for $l_y^-$\footnote{The justification of this point is given below, right after Corollary \ref{corNonHamIsotopicTails}. This relies on Conjecture \ref{ConjSupport}, for which we give a proof sketch in the required generality.}. More important for our point here, $(l'_y)^-$ does not intersect $(l'_x)^-$ and hence their Lagrangian tails give rise to a cluster presentation (along with $\pi'$) of $M$. This gives our fourth embedding via Theorem \ref{tmCompEmbLagTails}. We later refer to the Lagrangian tail of $(l'_y)^-$  as a \emph{scattered Lagrangian for the diagram $\cR$}.

Finally, we can apply the same procedure but this time sliding $n_y$. This produces the fifth embedding. The fact that these five embeddings are not Hamiltonian isotopic is not trivial but we give a convincing sketch of an argument below.

Interestingly, the two scattered Lagrangians are Hamiltonian isotopic to each other. In fact, it can be shown that they are both Hamiltonian isotopic to the tropical Lagrangian from Figure \ref{fig-scattered}. Working slightly harder one can also show that iterations of the nodal slide and mutate procedure produces Lagrangian tails, which are either Hamiltonian isotopic to the initial Lagrangian tails (the ones of $l_x,l_y,l_x^-$ and $l_y^-$) or to the scattered Lagrangian.
\end{example}

\begin{figure}
\includegraphics[width=0.6\textwidth]{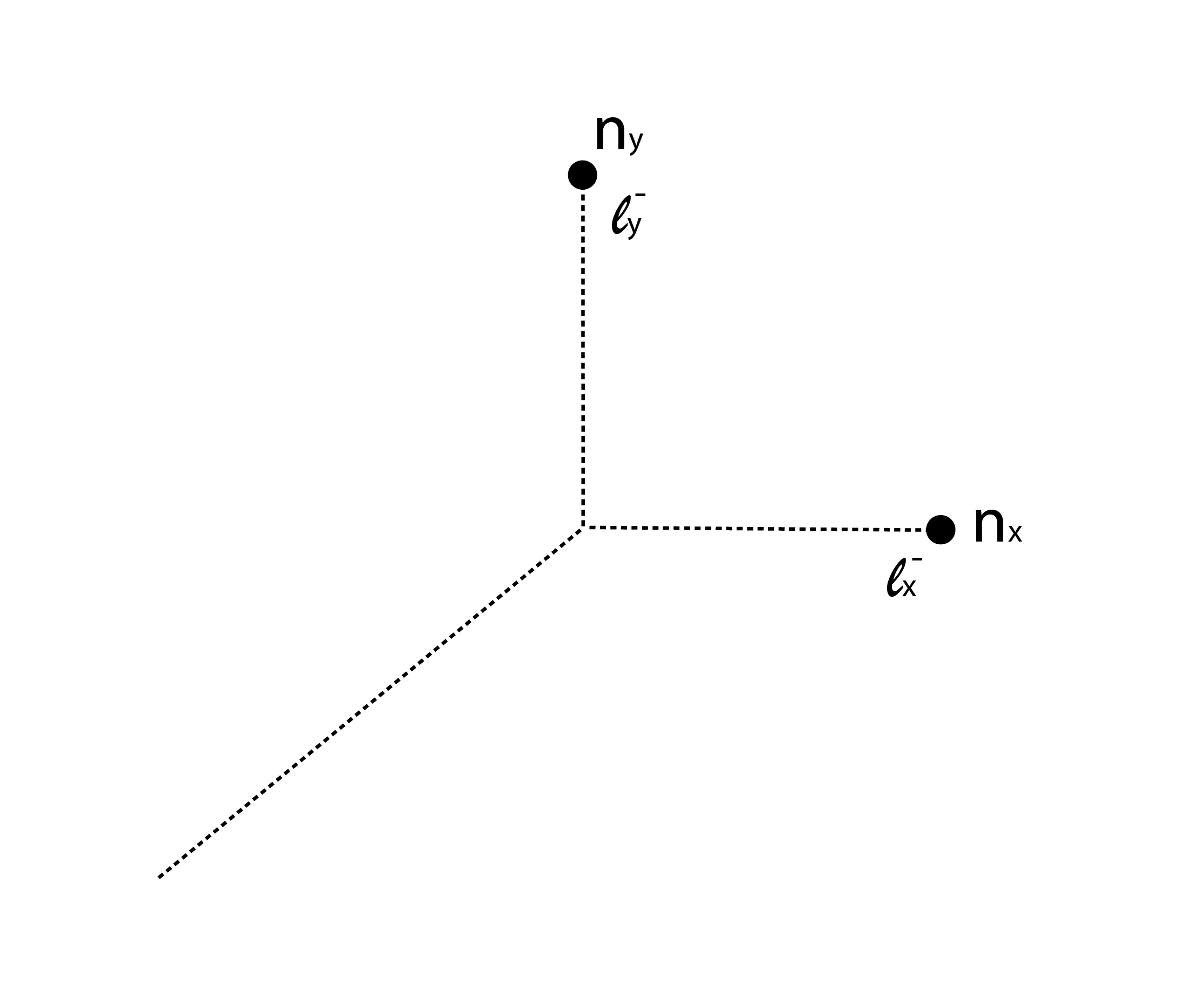}
\caption{The tropical curve drawn above in the base of the original fibration $\pi:M\to B$ gives rise to a tropical Lagrangian inside $M$. The scattered Lagrangians in Example \ref{ExFiveCharts} are both Hamiltonian isotopic to this tropical Lagrangian.}
\label{fig-scattered}
\end{figure}
We now formulate the following conjecture inspired by the combination of mirror symmetry, Theorems \ref{tmCompEmbLagTails}  and \ref{thmMainIntro}, and the theory of the local Fukaya category being developed by M. Abouzaid jointly with the authors. It is also a special case of the more general expectation (see \cite{hicks} for first steps) that the support a tropical Lagrangian should be equal to the defining tropical variety.

\begin{conjecture}\label{ConjSupport}
Let $X$ be a symplectic cluster $4$-manifold equipped with a  cluster presentation $\{\pi,L_1,\dots, L_n\}$.  Then for any $1\leq i\leq n$ no fiber of $\pi$ over a point in $\pi(L_i)$ can be displaced from $L_i$ by a Hamiltonian isotopy.  
 \end{conjecture}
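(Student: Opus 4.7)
The plan is to prove the conjecture by contradiction: assume $T$ is Hamiltonianly displaceable from $L_i$ and derive the vanishing of a Floer cohomology group that, computed in a reduced model, is in fact non-zero. The reduction to the model uses Theorem \ref{tmCompEmbLagTails}, while the comparison of Floer-theoretic invariants between $X$ and the model uses the locality Theorem \ref{thmMainIntro} packaged in the local Fukaya category framework alluded to in the introduction.

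Apply Theorem \ref{tmCompEmbLagTails} to the subset of tails $\{L_j\}_{j \neq i}$: the open subset $X' := X \setminus \bigcup_{j \neq i} L_j$ is symplectomorphic to the cluster manifold $M_{\cR \setminus \{l_j : j \neq i\}}$, whose eigenray diagram contains only $l_i$ and whose unique Lagrangian tail is $L_i$. Both $T$ and all of $L_i$ lie in $X'$. Using Theorem \ref{thmMainIntro} together with a local-Fukaya-category formulation of Lagrangian Floer cohomology for the pair (compact fiber $T$, complete Lagrangian tail $L_i$) yields a natural identification
\[
HF^*_X(T, L_i) \;\simeq\; HF^*_{X'}(T, L_i).
\]

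Compute $HF^*_{X'}(T, L_i)$ directly in the single-eigenray model $X'$. The generators are the intersection points of $T$ with $L_i$ (a circle's worth if $b$ is a regular point of $l_i$; a single point if $b$ is the focus-focus critical value), and the Floer differentials count holomorphic strips in a neighborhood of the focus-focus fiber. A Pascaleff-style local analysis \cite{Pascaleff}, adapted to this mixed torus-fiber-plus-Lagrangian-thimble situation, shows that these differentials do not annihilate the generators: a non-zero class survives, reflecting the classical Maslov-zero holomorphic disks through a focus-focus singularity.

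On the other hand, if $\phi \in \mathrm{Ham}(X)$ satisfied $\phi(T) \cap L_i = \emptyset$, then Hamiltonian-isotopy invariance of Lagrangian Floer cohomology (in the local Fukaya category framework) would give $HF^*_X(T, L_i) \simeq HF^*_X(\phi(T), L_i) = 0$, contradicting the previous step. The main obstacle is the local computation of $HF^*_{X'}(T, L_i)$, together with the rigorous setup of $HF^*$ for a compact Lagrangian paired with a non-compact Lagrangian tail, including the precise invariance and locality statements used above; this is essentially the content of the local Fukaya category project. Working throughout at the truncated relative symplectic cohomology level as in Theorem \ref{thmLocalityTrunc} should circumvent torsion issues.
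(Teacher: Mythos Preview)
Your approach is a reasonable sketch at the same level of rigor as the paper's own, but it takes a genuinely different route. The paper does \emph{not} reduce to the single-node model by removing the tails $L_j$ for $j\neq i$. Instead, after possibly sliding nodes, it replaces $L_1$ by an \emph{opposite} tail $L'_1$ and removes $L'_1\cup L_2\cup\dots\cup L_n$, so that $Y\simeq T^*\bT^2$ and $L_1\cap Y$ becomes the conormal $N$ of a rational affine line. The relevant invariant is then the \emph{relative self-Floer cohomology} $HF^*_X(L_1,L_1;\pi^{-1}(P))$ for small polygons $P$ meeting $\pi(L_1)$, computed via locality as a completion of the wrapped Floer cohomology of $N$ in $T^*\bT^2$; non-displaceability of the fiber from $L_1$ follows from the non-vanishing of this relative invariant for arbitrarily small $P$. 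By contrast you pair the compact fiber $T$ with the non-compact thimble $L_i$ and work in the single-node model. The paper's choice buys two things: the final computation lives in $T^*\bT^2$ where it is standard, and the invariant $HF^*(L,L;K)$ is the direct Lagrangian analogue of the relative $SH^*_M(K)$ whose locality is actually proven here, so the required extension of Theorem~\ref{thmMainIntro} is closer to what is already established. Your $HF^*(T,L_i)$ requires a somewhat different locality statement (one Lagrangian compact, one a proper plane) and a less standard local computation near a focus-focus fiber. Both sketches share the same genuine gaps: the Lagrangian-Floer version of locality is not proven in the paper, and potential obstructions of the torus fibers (from Maslov-zero disks through the node) must be addressed --- the paper explicitly flags this last point.
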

 
Without loss of generality fix $i=1$.  We will assume for simplicity that $\pi(L_1)$ does not contain any $\pi(L_i)$ in its interior and that the fiber in question is a smooth one. By nodal slides supported away from $\pi(L_1)$ the ray $\pi(L_1)$ can be taken to be in mutable position. Taking $L'_1$ to be an opposite tail (i.e. one that lies over the opposite ray and emanates from the same critical point), we get a new cluster presentation $\{\pi',L'_1,L_2,\dots, L_n\}$. Taking $Y=X\setminus L_1'\cup_{i\geq 2}L_i$ we have that $L_1\cap Y$ is a properly embedded Lagrangian cylinder. Under the symplectomorphism \eqref{eqCompEmbLagTails} $Y\cap L_1$ maps to the conormal $N$ of a rational affine line $l$ in the base of the standard fibration $\pi_0:T^*\bT^2\to\bR^2$. It is straightforward to show that 
for any rational convex polygon $P\subset\bR^2$ which meets $l=\pi_0(N)$ we have 
\begin{equation}\label{eqHFneq0}
HF^*_{T^*\bT^2}(N,N; \pi^{-1}(P))\neq 0.
\end{equation}
The left hand side is the relative Lagrangian Floer cohomology, which was introduced in Section 2.3 of  \cite{tonkonog}. In this case $HF^*_{T^*\bT^2}(N,N; \pi^{-1}(P))$ can be shown to be a certain completion of the wrapped Floer cohomology of $N$ which is straightforward to compute and so prove \eqref{eqHFneq0}. It should not be hard to generalize the locality Theorem \ref{thmMainIntro} to give an isomorphism
\begin{equation}
HF^*_Y(N_1\cap Y,N_2\cap Y;K)\simeq HF^*_M(N_1,N_2;K)
\end{equation}
whenever $Y\subset M$ is a complete embedding, $N_1,N_2\subset M$ are properly embedded, and for a compact subset $K\subset Y$ the Lagrangians $N_1\cap Y\setminus K, N_2\cap Y\setminus K$ can be displaced a distance $\epsilon$ from each other by a Lipschitz Hamiltonian with respect to a metric determined by an almost complex structure for which $Y$ is geometrically bounded. We deduce from this that 
\begin{equation}\label{eqHFneq02}
HF^*_{X}(L_1,L_1; \pi^{-1}(P))\neq 0 
\end{equation}
for any polygon $P\subset B$ meeting $\pi(L_1)$. Strictly speaking, this requires discussing potential obstructions, but we ignore this at the level of rigor of the current discussion. Since \eqref{eqHFneq02} holds for arbitrary  $P\subset B$ meeting $\pi(L_1),$ Conjecture \eqref{ConjSupport} follows. 

\begin{remark}
Interpreting this in terms of mirror symmetry, one expects that for any torus fiber $\pi_x$, with $x\in\pi(L),$ there exists a bounding cochain $b$ such that   $HF^*(L,(\pi_x,b))\neq 0$. Conceptually, $L$ is mirror to a certain holomorphic plane $C$, and the set of objects of the Fukaya category supported on the torus fiber $\pi_x$ contains objects that are mirror to points of $C$. This is well known to experts in the case where $X$ contains a single node. The more general case can be reduced to the case of a single node by Theorem \ref{tmCompEmbLagTails} together with an appropriate variant of the locality theorem \ref{thmMainIntro}. 

\end{remark}
We can use Conjecture \ref{ConjSupport} to produce the complete embeddings version of \cite{shende} on distinguishing exact Lagrangian tori in exact symplectic cluster manifolds.  Given two complete embeddings $i_1,i_2$ of, say, $T^*T^2$ inside $X$ each  obtained as in Theorem \ref{tmCompEmbLagTails} by  choosing a cluster presentation and removing Lagrangian tails we may ask whether the two complete embeddings are related by Hamiltonian isotopy. Equivalently we may ask the same question for their complements. This leads us to the question of distinguishing Lagrangian tails associated with possibly different fibrations $\pi$ up to Hamiltonian isotopy.


Conjecture \ref{ConjSupport} has the following immediate corollary.

\begin{corollary}\label{corNonHamIsotopicTails}
Let $X$ be a symplectic cluster manifold, let $\pi_1,\pi_2:X\to B$ be {nodal Lagrangian torus fibrations} of cluster type, and let $L_1,L_2$ be Lagrangian tails associated with  $\pi_1,\pi_2$ respectively. Suppose  $\pi_1,\pi_2$ coincide on some open set $K=\pi_1^{-1}(V)=\pi_2^{-1}(V)$.  Then if $\pi_1(L_1)\cap V\neq \pi_2(L_2)\cap V$, $L_1$ is not Hamiltonian isotopic to $L_2$. 

\end{corollary}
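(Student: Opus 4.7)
The corollary should follow quickly from Conjecture \ref{ConjSupport}. Suppose for contradiction that $\phi:X\to X$ is a Hamiltonian diffeomorphism with $\phi(L_2)=L_1$. Because each $\pi_i(L_i)$ is a smooth arc in $B$, the critical values of $\pi_i$ lying on $\pi_i(L_i)$ form a finite set, and the hypothesis $\pi_1(L_1)\cap V\neq \pi_2(L_2)\cap V$ lets me pick, after possibly interchanging $(L_1,\pi_1)$ and $(L_2,\pi_2)$, a point $b\in V$ which is a regular value of both $\pi_1$ and $\pi_2$ and which satisfies $b\in\pi_1(L_1)$ and $b\notin\pi_2(L_2)$. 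The hypothesis $\pi_1|_K=\pi_2|_K$ makes the Lagrangian torus fiber
\[
F:=\pi_1^{-1}(b)=\pi_2^{-1}(b)\subset K
\]
unambiguous.

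By the definition of a Lagrangian tail, $L_i$ surjects under $\pi_i$ onto its eigenray, so $F\cap L_1\neq\emptyset$; on the other hand $\pi_2(F)=\{b\}$ is not contained in $\pi_2(L_2)$, which forces $F\cap L_2=\emptyset$. Conjecture \ref{ConjSupport} applied to the cluster presentation containing $L_1$ asserts that no Hamiltonian diffeomorphism $\psi$ of $X$ can satisfy $\psi(F)\cap L_1=\emptyset$. Rewriting this using $L_1=\phi(L_2)$, the condition $\psi(F)\cap L_1=\emptyset$ is equivalent to $(\phi^{-1}\psi)(F)\cap L_2=\emptyset$, and as $\psi$ ranges over all Hamiltonian diffeomorphisms, so does $\tau:=\phi^{-1}\psi$. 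Taking $\psi=\phi$, i.e.\ $\tau=\mathrm{id}$, would then yield $F\cap L_2\neq\emptyset$, contradicting the construction of $b$.

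The only substantive input is Conjecture \ref{ConjSupport} itself, whose (sketched) proof passes through the locality theorem \ref{thmMainIntro} together with a non-vanishing computation of a relative Lagrangian Floer cohomology in $T^*\mathbb{T}^2$; the present corollary is then a purely formal consequence of Hamiltonian invariance of the non-displaceability relation. I do not expect any genuine obstacle in this deduction beyond mild bookkeeping in the selection of $b$ — ensuring simultaneously that $b\in V$, that $b$ is regular for both fibrations, and that $b$ lies in the symmetric difference of the two eigenrays.
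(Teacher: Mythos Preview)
Your argument is correct and is exactly the deduction the paper has in mind: it states the corollary as an ``immediate'' consequence of Conjecture~\ref{ConjSupport} without spelling out the details. Your reduction via $\tau=\phi^{-1}\psi$ is a slightly roundabout way of saying that $F\cap L_2=\emptyset$ implies $\phi(F)\cap L_1=\emptyset$, which directly contradicts the non-displaceability asserted by the conjecture, but the logic and the bookkeeping in selecting $b$ are sound.
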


This corollary can be applied to distinguish complete embeddings. We expand on how this works in Example \ref{ExFiveCharts}. In this case the corollary shows that the 4 initial Lagrangian tails corresponding to $l_x^\pm,l_y^\pm$  together with the scattered Lagrangian are pairwise Hamiltonian non-isotopic. Indeed, for the initial rays this is an immediate consequence of Conjecture \ref{ConjSupport}. We now consider a pair consisting of an initial tail and the scattered one. According to Proposition \ref{prop-nodal-slide} we can take the nodal slide involved in producing the scattered ray to be supported on arbitrarily small neighborhood of the segment containing the intersection point and one of the nodes. Thus $V$ in the corollary can be taken to be the complement of the closure of this neighborhood and the result follows. 

\subsubsection{Some three dimensional examples}

{The ideas presented so far are not limited to dimension $4$. There are open symplectic manifolds in any even dimension for which analogues to Theorem \ref{tmCompEmbLagTails} hold. Here we content ourselves by analyzing one of the two classes of $6$ dimensional analogues to a symplectic cluster manifold with global $S^1$ symmetry (e.g., one with a single node). Specifically, we consider symplectic $3$-manifolds carrying Lagrangian torus fibrations $\cL:M\to \bR^3$ which
\begin{itemize}
 \item have global $\bT^2$ symmetry generated by the last two coordinate functions $x_1$ and $x_2$,
 \item satisfy an appropriate completeness condition, and,
 \item has singular values along a $1$-dimensional graph $\Delta\subset \bR^3$ which lies in the plane $x_0=0$.
 \end{itemize} For a complete definition of the class we consider see Definitions \ref{def-3d-semitoric} and \ref{dfHorizontalCompleteness}. Important examples of this geometric setup is given by Gross fibrations from \cite[Theorem 2.4]{Gross00}. We describe in detail two special cases of Gross' construction in Examples \ref{ex-gross1} and \ref{ex-gross2}.
 
 Let $P^{\pm}= \bR_{\pm}\times \Delta\subset\bR^3$. Let $C^{\pm}\subset M $ be a $\bT^2$ invariant lift of  $P^{\pm}$. Note that $C^{\pm}$ is a stratified coisotropic subspace. The pair of spaces $C^{\pm}$ are the analogues of the pair of Lagrangian tails emanating from the singular point in the symplectic cluster manifold with a single node. We prove the following theorem in Section \ref{Sec:GrossFibrations} }

\begin{theorem}\label{tmSemiToricCo}
There are symplectomorphisms $\iota^{\pm}:M\setminus C^{\pm}=T^*\bT^3$. Moreover, $\iota^{\pm}$ can be chosen to intertwine $\cL$ with the standard projection $T^*\bT^3\to\bR^3$ on the complement of any open neighborhood of $P^{\pm}$. 
\end{theorem}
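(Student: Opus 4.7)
The plan is to construct $\iota^+$ by producing global action-angle coordinates on $M\setminus C^+$ adapted to the $\bT^2$-symmetry; the construction of $\iota^-$ is identical after reversing the sign of $x_0$. This is the natural $3$-dimensional analogue of the proof of Theorem \ref{tmCompEmbLagTails}, with the coisotropic $C^+$ playing the role of a union of Lagrangian tails.

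First I would verify that the restricted map $\cL\colon M\setminus C^+\to \bR^3\setminus P^+$ is a smooth regular Lagrangian torus fibration: all singular values of $\cL$ lie on $\Delta\subset P^+$, so removing $C^+=\cL^{-1}(P^+)$ leaves only regular fibers. The base $\bR^3\setminus P^+$ deformation retracts onto the open half-space $\{x_0<0\}$ and is hence simply connected, which is what will allow us to unroll the integral-affine monodromy that would otherwise be carried by loops encircling components of $\Delta$. I would then build global action coordinates: the moment maps $x_1$ and $x_2$ are already action coordinates of period $1$ on all of $M$, and the third action coordinate $h$ can be constructed locally by integrating the canonical one-form of the fibration along a family of cycles in the fibers transverse to the $\bT^2$-orbits, then globalized using simple connectivity of $\bR^3\setminus P^+$. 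The horizontal completeness hypothesis of Definition \ref{dfHorizontalCompleteness} is what ensures the resulting affine map $(x_1,x_2,h)\colon \bR^3\setminus P^+\to \bR^3$ is surjective rather than landing in a proper subset. Combined with the angle coordinates conjugate to $(x_1,x_2,h)$, this yields the symplectomorphism $\iota^+$ with $T^*\bT^3$.

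To arrange that $\iota^+$ intertwines $\cL$ with the standard projection outside an arbitrary open neighborhood $U$ of $P^+$, I would choose the integral-affine identification $\bR^3\setminus P^+\to\bR^3$ to be the identity on the complement of $U$, absorbing the nontrivial shear transition into a collar of $P^+$ lying inside $U$; the $\bT^2$-equivariance makes the corresponding modification of the angle coordinates routine. The main technical obstacle I foresee is verifying the surjectivity of the third action coordinate $h$: this is precisely where horizontal completeness enters essentially, and it is the $3$-dimensional counterpart of the weak geodesic completeness condition used in Definition \ref{def-symp-clus}. Once surjectivity is in hand, the remaining properties are immediate from the construction.
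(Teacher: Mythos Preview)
Your proposal has a genuine gap stemming from a misidentification of $C^+$. You write $C^+=\cL^{-1}(P^+)$, but in the paper $C^+$ is only a $\bT^2$-invariant \emph{lift} of $P^+$: over each point of $P^+$ it contains a single $\bT^2$-orbit, not the full torus fiber. Thus $C^+$ is a $4$-dimensional stratified coisotropic, strictly contained in the $5$-dimensional set $\cL^{-1}(P^+)$ (see the statement of Proposition \ref{proComlpeteT3embedding}).

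This distinction is exactly what breaks your surjectivity claim for the third action coordinate $h$. If you remove all of $\cL^{-1}(P^+)$, then for $c\in f_\mu(\Delta)$ the remaining portion of the line $f_\mu^{-1}(c)$ is a half-line. Horizontal completeness says $f_\mu^{-1}(c)\setminus\Delta$ is integral-affine isomorphic to $\bR\setminus\{0\}$, so after deleting the positive half you are left with a copy of $(-\infty,0)$, not of $\bR$. Hence $(x_1,x_2,h)$ cannot be surjective onto $\bR^3$, and $M\setminus\cL^{-1}(P^+)$ is \emph{not} symplectomorphic to $T^*\bT^3$.

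The paper instead removes only the $4$-dimensional $C^\pm$ and then performs an integrable anti-surgery on the orbit space $Y=M/H$: a fiber-preserving diffeomorphism $\chi^\pm:Y\setminus\tilde C^\pm\to Y$ is used to build a \emph{new} regular Lagrangian torus fibration $\cL^\pm=\tilde\cL\circ\chi^\pm\circ q$ on $M^\pm$, now defined over all of $B$. The key lemma is that each symplectic reduction $(f_\mu\circ\tilde\cL)^{-1}(c)\setminus\tilde C^\pm$ is a complete cylinder; for $c\in f_\mu(\Delta)$ this is a cylinder with a ray removed, and one checks both complementary regions have infinite area. Only after this re-fibering does Arnold--Liouville together with horizontal completeness give global action-angle coordinates identifying $M^\pm$ with $T^*\bT^3$. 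Your outline omits the anti-surgery step entirely, and without it the argument cannot close.
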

{
We briefly comment on the significance of this result {in relation to the locality theorem}. The Gross fibrations have been studied from the point of view of SYZ mirror symmetry in \cite{Gross00, aak,CSN}. It has been shown that, up to real codimension 4, the SYZ mirror is a conic bundle which is glued together from a pair of algebraic tori $(\Lambda^*)^3$ over the Novikov field according to some wall crossing formula. Theorem \ref{tmSemiToricCo} together with our locality theorem gives a reinterpretation of this SYZ construction using relative symplectic cohomology. Explaining this further is outside the scope of the present paper.}

\subsubsection{Basic idea}\label{sss-idea}
The method  we use to prove the embedding theorems above is an anti-surgery operation on symplectic manifolds equipped with {nodal} Lagrangian submersions. We can also define a surgery operation, which we develop in Section \ref{ss-surgery} and give applications, but our focus is on the anti-surgery operation in the introduction. 

As a toy version of the anti-surgery operation we describe below, the reader may  consider the case of removing the ray $\gamma=\bR_{\geq 0}\times \{1\}$ from the cylinder $\bR\times S^1.$ The operation would start with the standard Lagrangian {circle} fibration  $\bR\times S^1\to \bR$, produce a Lagrangian {circle} fibration $\bR\times S^1\setminus \gamma\to \bR$ {so that the integral affine structure induced on $\bR$ is the standard one}, and {hence, in particular,} prove that $\bR\times S^1\setminus \gamma$ is symplectomorphic to $\bR\times S^1$.\\

To explain the basic idea in the context of symplectic cluster manifolds, we focus on the embedding described in Equation \eqref{eqCompEmbLagTails} of Theorem \ref{tmCompEmbLagTails} in the $S=Crit\pi$ case. The procedure starts with a nodal Lagrangian torus fibration $\pi:X\to B$ that is part of a cluster presentation and modifies it in a neighborhood $U_p$ of each Lagrangian tail $L_p$. The end result is a new Lagrangian torus fibration $$\pi^{new}:X\setminus \cup_{p\in Crit\pi}L_p\to B$$ which is 
\begin{itemize}
\item proper, 
\item has no singularities, and, crucially, 
\item induces on $B$ an integral affine structure which is isomorphic to the standard one on $\bR^2$.
\end{itemize}

We describe this now in more detail. {We show in Proposition \ref{prpClustEquiv}  that we may identify the base $B$ with $\bR^2$ so that with respect to the induced integral affine structure, the identity map is a PL homeomorphism which is an affine isomorphism on the complement of the projections $l_p=\pi(L_p)$ of the tails. For simplicity, we assume that the rays of the form $l_p$ for $p\in Crit \pi$ are pairwise disjoint.} We focus on one Lagrangian tail at a time. A local model for the fibration in a neighborhood of such a tail is as follows. Consider $\mathbb{C}^2$ with its standard Kahler structure and denote the complex coordinates by $z_1$ and $z_2$. The Hamiltonian function  $\mu(z_1,z_2):=\pi(|z_1|^2-|z_2|^2)$ generates a $S:=\mathbb{R}/\mathbb{Z}$ action on $\mathbb{C}^2$ by $\theta\cdot (z_1,z_2)=(e^{2\pi i\theta}z_1, e^{-2\pi i\theta}z_2)$. We have a smooth map $Hopf: \mathbb{C}^2\to \mathbb{C} \times\mathbb{R}$ defined by 
\begin{equation}
(z_1,z_2)\mapsto \left(2\pi z_1z_2, \mu(z_1,z_2)\right).
\end{equation}
 The fibers of $Hopf$ are precisely the orbits of the $S$-action. Let  
 \begin{equation}
 \pi^{st}=(Re(z_1z_2),\mu(z_1,z_2)), 
 \end{equation}
  and let 
  \begin{equation}
  L^{st}:=\{Re(z_1z_2)\geq 0\}\cap \{Im(z_1z_2)= 0\}\cap \{|z_1|^2-|z_2|^2=0\}.
  \end{equation}
Then there is an open neighborhood $W_p$ of $L_p$, an $S$-equivariant neighborhood $W^{st}$ of $L^{st}$ and a symplectomorphism $\psi:W_p\to W^{st}$ mapping $L_p$ to $L^{st}$ and intertwining $\pi$ with $\pi^{st}$. 

We now make the following crucial observation: \emph{if $f:\bC\times\bR\setminus Q\to\bR$ is a smooth submersion with $Q$ a closed subset containing $(0,0)$, then the fibers of the map $(f\circ Hopf,\mu):\bC^2\setminus Hopf^{-1}(Q)\to \bR^2$ are Lagrangian submanifolds}. 

We apply this as follows. Let $l:=Hopf(L^{st})$ and let $U=Hopf(W^{st})$ .  By being a little careful in the choice of $W^{st}$ we can ensure that there is a diffeomorphism  $
\Phi: U-l\to U$ which preserves the projection to $\mathbb{R}$ and is the identity near the boundary of $U$. Let $h:\bC\times\bR\to \bR$ be the map $(z,x)\mapsto Re (z)$. Then the map $$\pi':=(h\circ \Phi\circ Hopf,\mu):W^{st}\setminus L^{st}\to U$$ is a Lagrangian submersion with no singularities. Moreover $\pi'=\pi^{st}$ near the boundary of $W_p$.  

We may thus define $\pi^{new}|_{W_p}=\pi'\circ\psi$ for each of the critical points of $\pi$, and $\pi^{new}=\pi$ on  $X\setminus\cup_{p\in Crit\pi}W_p$. The resulting Lagrangian submersion is readily seen to be proper. A graphic illustration of the modification of the foliation near one of the Lagrangian tails is given in Figure \ref{fig-node-removal}. The result is a proper Lagrangian submersion $\pi^{new}:X^{new}:=X\setminus \cup_{p\in Crit\pi} L_p\to B$. 

\begin{figure}
\includegraphics[width=0.6\textwidth]{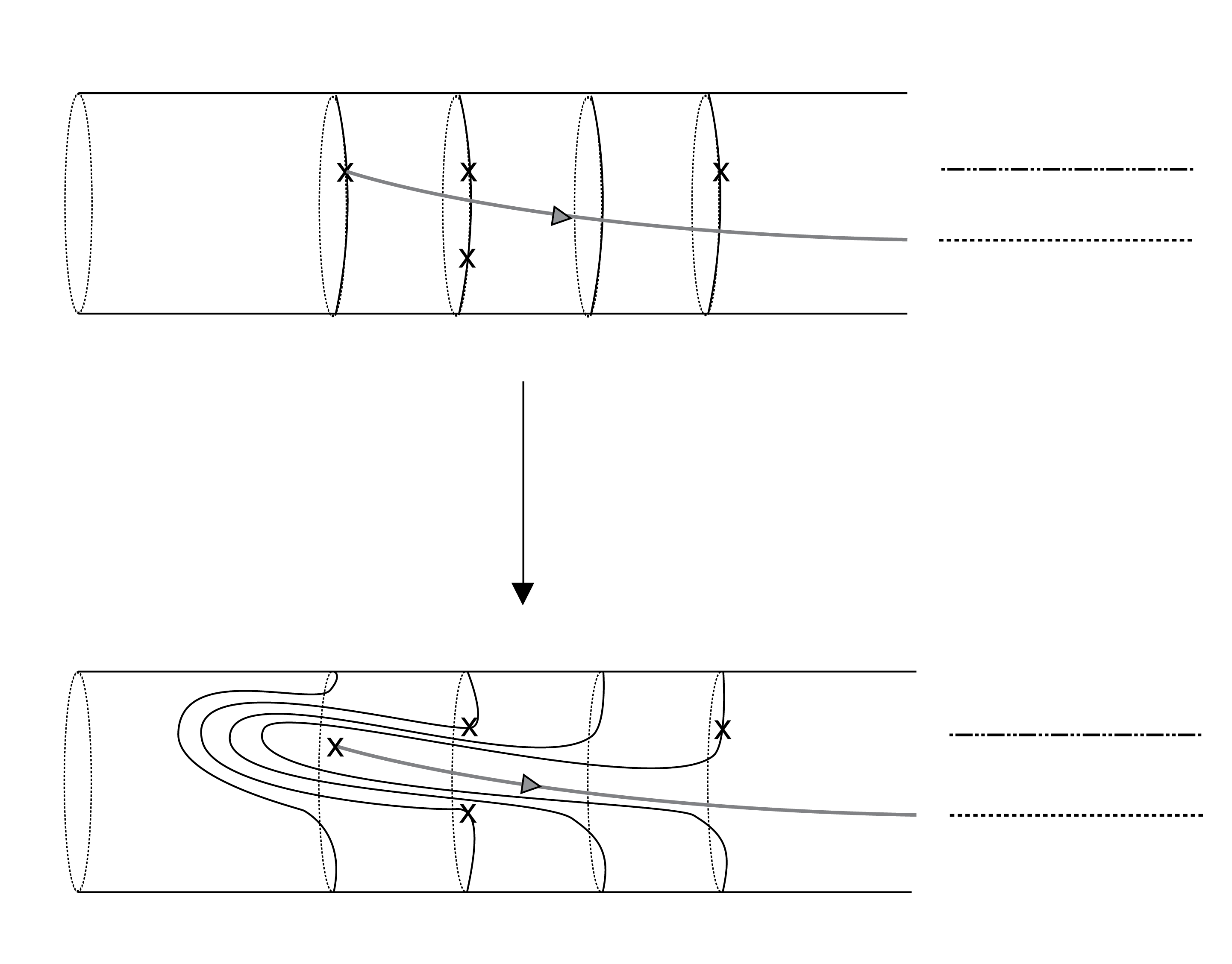}
\caption{The foliation before (top) and after (bottom) an anti-surgery (along a Lagrangian tail) on the reduced space containing the $S$ quotient of the tail (depicted as the smooth ray going to infinity with an arrow on it). Note that at the bottom image, this ray has been removed. }
\label{fig-node-removal}
\end{figure}

To conclude, we need to show that the integral affine structure induced on $B$ by $\pi^{new}$ via the Arnold-Liouville Theorem makes it isomorphic to $\bR^2$ with its standard integral affine structure.  
To proceed,  recall that $L_p$ projects under $\pi$ {to an eigenray $l_p$}, i.e. an affine ray whose direction is fixed under the integral affine monodromy around $\pi(p)$. In fact, there is an integral affine coordinate $i:\pi(W_p)\to\bR$,  such that $i$ generates the local $S$-action and such that $\pi(L_p)$ is contained in a level set of $i$. The function $i$ is determined by the equation $i\circ\pi=\mu\circ\psi_p$.  Without loss of generality we choose $W_p\subset B$ { so that} the non-empty level sets of $i$ are affine isomorphic to half infinite intervals. 

{The modifications do not affect the integral affine structure outside of the the neighborhoods $\pi(W_p)$, so we focus on analyzing what happens in the  neighborhoods $\pi(W_p)$}. Note that $i\circ\pi^{new}|_{W_p\setminus L_p}=i\circ\pi_{W_p\setminus L_p}$ generates a $1$-periodic flow and so is still an integral affine coordinate with respect to $\pi^{new}$.  Let us now show that, with respect to $\pi^{new}$, the non-empty level sets of $i$ are integral affine isomorphic to half infinite intervals. Equivalently, we need to show that if $\gamma$ is any simple loop in a fiber of $\pi^{new}$ which is transverse to the $S$-orbits then the cylinder $C$ traced out by transporting $\gamma$ along any non-empty level set $i^{-1}(c)$ has infinite area. This is readily seen by comparing the area of such a cylinder $C$ with that of a cylinder  $C'$ traced by transporting $\gamma$ along fibers of the original fibration $\pi$ over the same level set $i^{-1}(c)$. Figure \ref{fig-node-removal} extended to a larger portion of the reduced space so as to include some of the unchanged fibers could help visualizing this. {The effect of the modification in the neighborhood $W_p$ is thus to make it affine isomorphic to the corresponding neighborhood in $\bR^2$. Doing this for all the tails, the integral affine structure induced by $\pi^{new}$ on $B$ becomes isomorphic to $\bR^2$.} 



Let us also discuss the application of the surgery operation that we have in mind. Recall that a Looijenga interior $U$ is a log Calabi-Yau surface with maximal boundary. By \cite{ghk}, $U$ admits a toric model. Fixing a toric model, we obtain a decomposition of $U$ into an open dense $(\mathbb{C}^*)^2$ and some disjoint union of exceptional curves coming from the Kaliman modifications.  Assume that the holomorphic volume form $\Omega$ of $U$ is normalized so that it is equal to $dlog(x)\wedge dlog(y)$ on $(\mathbb{C}^*)^2.$ Using integrable surgery one can prove the following result, which could be of independent interest. 

\begin{theorem}\label{thm-Loo}
$(U,Im(\Omega))$ is a symplectic cluster manifold. {In particular, it is geometrically of finite type}.
\end{theorem}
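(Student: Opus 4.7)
The plan is to exhibit an explicit cluster presentation $(\pi,\{L_p\}_{p\in S})$ of $U$ and then invoke Lemma \ref{prpClstGeoBd}. By \cite{ghk}, I first fix a toric model for $U$: a birational morphism $Y\to\bar Y$ onto a smooth projective toric surface obtained as a sequence of interior blow-ups at points $\{q_p\}_{p\in S}$ of the toric boundary $\bar D$, so that $U=Y\setminus D$ where $D$ is the strict transform of $\bar D$. On the dense open orbit $(\bC^*)^2\subset U$, writing $x=e^{a+i\alpha}$ and $y=e^{b+i\beta}$, the form $\operatorname{Im}(\Omega)$ becomes $da\wedge d\beta+d\alpha\wedge db$, and $\pi_0:=(a,b):(\bC^*)^2\to\bR^2$ is the standard Lagrangian torus fibration with base the standard integral affine plane $\bR^2_{st}$. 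Each exceptional curve $E_p$ meets $D$ transversally in a single point, so $E_p\cap U\cong\mathbb A^1$ is a properly embedded complex plane escaping to infinity in $U$ along the primitive inward normal of $\bar D$ at $q_p$.

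Next I would apply the integrable surgery construction of Section \ref{ss-surgery} in a small neighborhood of each $E_p\cap U$, deforming $\pi_0$ into a nodal Lagrangian torus fibration $\pi:U\to B$ with $|S|$ focus-focus singular fibers. In a toric chart of $\bar Y$ around $q_p$, the Kaliman modification is symplectically equivalent to an integrable surgery on the Hopf local model $(\pi^{st},L^{st})$ from Section \ref{sss-idea}: the surgery identifies $E_p\cap U$ with the standard tail $L^{st}$ up to a compactly supported Hamiltonian isotopy, introduces one focus-focus critical point $p$, and produces a Lagrangian tail $L_p$ whose image $l_p:=\pi(L_p)$ is an eigenray of $p$ pointing to infinity in the toric direction dual to $\bar D$ at $q_p$. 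Choosing the surgery neighborhoods to be pairwise disjoint, which is possible since $S$ is finite, the resulting tails $L_p$ are pairwise disjoint.

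To conclude, I would verify the axioms of Definition \ref{def-symp-clus}. The base $B\cong\bR^2$ is PL-equivalent to $\bR^2_{st}$, and its induced integral affine structure agrees with the standard one outside a bounded neighborhood of $\bigcup_p l_p$. Any affine geodesic in $B$ therefore either eventually enters this standard region and extends indefinitely, or else stays within a surgery neighborhood and (inside the Hopf model) either exits again or converges in finite time onto some $l_p$, which establishes weak geodesic completeness. Properness of each $l_p$ in the sense of Definition \ref{def-eigenray} is built into the local Hopf model, so $(\pi,\{L_p\})$ is a cluster presentation of $U$, making it a symplectic cluster manifold; geometric finiteness then follows from Lemma \ref{prpClstGeoBd}.

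The hard part will be the symplectic matching underlying the second paragraph: showing that a neighborhood of $E_p\cap U$ in $(U,\operatorname{Im}\Omega)$ is symplectomorphic to the Hopf model on a neighborhood of $L^{st}$ in a manner that is compatible, at the boundary of the surgery region, with the ambient fibration $\pi_0$ on $(\bC^*)^2$. This is essentially a Weinstein-type neighborhood theorem for a properly embedded Lagrangian plane of infinite area with prescribed asymptotic behavior, and the control at infinity is exactly what the normalization $\Omega=d\log x\wedge d\log y$ near the toric divisor is designed to provide.
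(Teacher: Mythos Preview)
Your approach is essentially the same as the paper's: use a toric model to write $U$ as $(\bC^*)^2$ with exceptional curves removed, then perform integrable surgery along those curves to produce a nodal Lagrangian torus fibration of cluster type. The paper's sketch (Remark \ref{rem-Loo}) carries this out explicitly for $\bC^2\setminus\{xy=1\}$ and then says ``This method immediately generalizes.'' One cosmetic difference: rather than verifying the axioms of Definition \ref{def-symp-clus} directly as you do, the paper would instead read off from Propositions \ref{prop-eig-proc} and \ref{prop-lag-to-eig} that the new fibration is compatible with an eigenray diagram $\cR$, so $U\simeq M_{\cR}$ and Proposition \ref{prpClustEquiv} finishes.

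Two points deserve comment. First, you assume each $E_p\cap U\cong\mathbb{A}^1$, i.e.\ that the toric model involves only blow-ups at distinct smooth points of $\bar D$. In general the exceptional locus can be a chain of $\bC P^1$'s followed by a plane (iterated blow-ups), and the paper explicitly flags this: one must either replace the chain by pairwise disjoint Lagrangian tails emanating from the double points, or use a mild generalization of the surgery. Your write-up should at least acknowledge this case. Second, the ``hard part'' you isolate is exactly Proposition \ref{prop-eq-wein}, the equivariant Weinstein neighborhood theorem; the crucial input you do not make explicit is the Hamiltonian $S^1$-action near $E_p\cap U$ that rotates the plane (in the model of Remark \ref{rem-Loo} this is $\theta\cdot(x,y)=(e^{2\pi i\theta}x,e^{-2\pi i\theta}y)$). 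Once you name that action, Proposition \ref{prop-eq-wein} supplies the symplectic matching with the Hopf model and the boundary compatibility comes for free from the surgery being supported in a sliding neighborhood.
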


The sketch of the proof is given in Remark \ref{rem-Loo} in the first non-trivial example. Note that with this symplectic form the exceptional curves become Lagrangians. In the case exceptional curves are all complex planes the theorem follows immediately from doing integrable surgery to the standard Lagrangian torus fibration on $$((\mathbb{C}^*)^2, Im(\Omega))=(T^*T^2, d\lambda_{taut})$$ along the exceptional curves. When some of the exceptional curves are a chain of $\bC P^1$'s followed by a complex plane, we {can} modify them to pairwise disjoint Lagrangian {tails} emanating from each of the double points in the exceptional curves {or use a straightforward generalization of the integrable surgery}.

\begin{remark}
In fact, all symplectic cluster manifolds can be presented as $(U,Im(\Omega))$ from Theorem \ref{thm-Loo}. Still, we find the presentation that relies on eigenray diagrams and nodal Lagrangian torus fibrations more useful for symplectic geometry.

Incidentally, a similar definition of a symplectic cluster manifold in terms of combinatorial data on $\mathbb{R}^{2n}$ can be given in all $n\geq 1$. This already appears in an unpublished manuscript of Kontsevich-Soibelman. Since we restrict to 4 dimensional symplectic cluster manifolds in this paper, we omit the definition.
\end{remark}

In \cite[Lemma 4.3]{bardwell}, which came out only several days after the first version of our preprint appeared, the authors construct special Lagrangian fibrations $\pi'_\epsilon:U\to B$ on $(U,\Omega)$ for a family of Kahler forms $\omega_\epsilon$. In particular, the $\pi'_\epsilon$'s are all nodal Lagrangian torus fibrations with respect to $Im(\Omega)$. This appears to be an alternative construction to the one from Theorem \ref{thm-Loo}. It will be interesting to compare the two approaches more thoroughly in the future. 

\subsection{Relation to wall crossing and  mirror symmetry}\label{ss-WC}

Let us first illustrate in the simplest example how the wall-crossing phenomenon makes an appearance in our framework of locality isomorphisms for complete embeddings.

Consider the eigenray diagram with a single multiplicity one node at $(0,0)$ with its ray being the non-negative real axis, and take compatible a nodal Lagrangian torus fibration $\pi_1: M_1\to B_1$. Note that $\pi_1$ has a single focus-focus singularity, which we assume is in the fiber $\pi_1^{-1}(b)$. There are two monodromy invariant rays of $b\in B_1$ and choosing arbitrary Lagrangian tails above each of them we obtain two non-Hamiltonian isotopic symplectic embeddings $$\Phi_i: T^*T^2\to M_1, \text{ for } i=1,2$$ using Theorem \ref{tmCompEmbLagTails}. In fact using the slightly stronger statement in Theorem \ref{prop-node-removal}, for any connected compact convex polygon $P\subset B_1$ that is disjoint from both eigenrays of $b$, we can choose the embeddings $\Phi_i$ such that there is a convex polygon $Q\subset B_0$ where $\Phi_i$ are both fiber preserving over $Q$ and induce bijections $Q\simeq P$.

Assume that we choose the homotopy class of trivializations of the canonical bundles such that the fibers of $\pi_0$ and $\pi_1$ are Maslov zero Lagrangians, see the beginning of Section \ref{ss-rel-koso} for a discussion.

Our results show that there are two locality isomorphisms $$(\Phi_i)_*: SH^*_{T^*T^2}(\pi_0^{-1}(Q))\to SH^*_{M_1}(\pi_1^{-1}(P)),  \text{ for } i=1,2.$$

Let us also note that an extension of the computation of Seidel from \cite{seidel}  Equation (3.4) shows that $SH^0_{T^*T^2}(\pi_0^{-1}(Q))$ is isomorphic to the algebra $KS(Q)$ of non-archimedean analytic functions convergent on $Q$ (a la \cite{koso}, see Section \ref{ss-rel-koso}). The main point which we return to in a future paper is that the automorphism $((\Phi_2)_*)^{-1}(\Phi_1)_*$ of $KS(Q)$ is not the identity, not even monomial. It is given by the well-known wall crossing transformation as in e.g. pg. 6 of \cite{hacking}.


We end this section by noting that in the brief and sketchy Section \ref{ss-ms}, we discuss mirror symmetry for symplectic cluster varieties. Our goal is to illustrate the kind of results we will be aiming for in future work and is limited in scope and detail.

\subsection{Further applications to symplectic topology}

We say that a geometrically bounded symplectic manifold $M$ is \emph{$SH$-invisible} if for each compact $K\subset M$ we have that $SH^*_M(K)\otimes_{\Lambda_{\geq 0}}\Lambda=0$. By unitality of restriction maps in relative symplectic cohomology \cite{tonkonog}, this condition is equivalent to $SH^*_M(K_i)\otimes_{\Lambda_{\geq 0}}\Lambda=0$, $i=1,2\dots$ for an exhaustion $K_1\subset K_2\subset \ldots $ of $M$ by compact subsets. {We now list some examples and non-examples without detailed proofs.}
\begin{itemize}
\item  A closed $M$ cannot be $SH$-invisible, since $SH^*_M(M)\otimes_{\Lambda_{\geq 0}}\Lambda\simeq H^*(M;\Lambda)$.
\item { If $M$ contains a Floer theoretically essential Lagrangian submanifold it cannot be $SH$-invisible. We expect this to be an immediate consequence of the unitality of closed-open maps as in \cite{tonkonog}, but omit an actual proof.}
\item If $M$ is a finite type Weinstein domain whose skeleton is stably displaceable,  then a neighbourhood of the skeleton is stably displaceable. Using the Liouville flow, it follows that in fact any precompact neighborhood of the skeleton is stably displaceable. Finally,  by \cite{varolgunesthesis} (compare with \cite{kang}) $M$ is $SH$-invisible. In particular,  this holds for flexible Weinstein domains. Using the technique used to prove Corollary 3.9 in \cite{Kyler}, one can also show that every subflexible Weinstein domain is $SH$-invisible.
\item By the Kunneth formula of \cite{groman} {and a straightforward analysis of the units}, the product of a geometrically bounded symplectic manifold with an $SH$-invisible one is $SH$-invisible.
\item If $c_1(M)=0$ and $M$ carries a Hamiltonian $S^1$ action whose moment map is bounded from below, then $M$ is $SH$-invisible (\cite[Lemma 10.10]{groman}).  In particular, by \cite[Example 10.12]{groman}  this holds for any toric Calabi-Yau manifold.
\item If $M$ is a non-aspherical smooth manifold and $\sigma$ is a non-aspherical two form on $M$ then the twisted cotangent bundle $(T^*M,d\lambda+\pi^*\sigma)$ is $SH$-invisible \cite{GromanMerry}.
\end{itemize}

We say that $M$ has \emph{homologically finite torsion} if it has an exhaustion by compact subsets with homologically finite torsion. By the discussion in subsection \ref{sss-on-tor} this holds for $T^*T^n$ and $ \bC^n$.

An immediate corollary of the main theorem is
\begin{corollary}
Let $X\hookrightarrow Y$ be a complete embedding and  suppose $X$  as homologically finite torsion. Then if $Y$ is $SH$-invisible so is $X$.
\end{corollary}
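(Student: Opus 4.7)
The plan is to apply Theorem \ref{thmMainIntro} directly to an exhaustion of $X$. First, I will invoke the equivalent characterization of $SH$-invisibility noted immediately after its definition: it suffices to verify that $SH^*_X(K_i)\otimes_{\Lambda_{\geq 0}}\Lambda=0$ for a single exhaustion $K_1\subset K_2\subset\cdots$ of $X$ by compact subsets. Indeed, given any compact $K\subset X$, we have $K\subset K_i$ for some $i$, and unitality of the restriction map $SH^*_X(K_i)\to SH^*_X(K)$ combined with the fact that a unital ring with vanishing unit is zero propagates the vanishing from $K_i$ to $K$. By hypothesis $X$ has homologically finite torsion, so I may choose the exhaustion $\{K_i\}$ with the additional property that each $K_i$ has homologically finite torsion.

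For each $i$, the inclusion $\iota:X\hookrightarrow Y$ is a complete embedding, $X$ is geometrically of finite type, and $K_i$ satisfies the torsion hypothesis. Theorem \ref{thmMainIntro} therefore supplies a natural isomorphism
$$SH^*_X(K_i)\simeq SH^*_Y(\iota(K_i)).$$
Tensoring with $\Lambda$ over $\Lambda_{\geq 0}$, the right-hand side vanishes because $\iota(K_i)$ is a compact subset of $Y$ and $Y$ is $SH$-invisible by assumption. Hence the left-hand side vanishes for every $i$, and by the reduction in the previous paragraph this is enough to conclude that $X$ is $SH$-invisible.

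There is no real obstacle in this argument; essentially all of the content is packaged inside Theorem \ref{thmMainIntro}. The only mildly subtle point, which I have highlighted above, is the equivalence of the two formulations of $SH$-invisibility (on \emph{all} compact subsets versus on a \emph{single} exhaustion). This equivalence is precisely what permits the reduction to the specific exhaustion guaranteed by the homological finite torsion hypothesis on $X$, and in particular it is what lets us circumvent the fact that the torsion hypothesis is imposed only on the members of one particular exhaustion of $X$ rather than on every compact subset.
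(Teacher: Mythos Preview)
Your proof is correct and matches the paper's approach: the paper gives no proof at all, simply stating that this is ``an immediate corollary of the main theorem,'' and your argument is precisely the unpacking of that immediacy. The reduction to a single exhaustion via unitality of restriction maps, followed by applying Theorem \ref{thmMainIntro} to each member of a homologically-finite-torsion exhaustion, is exactly the intended reasoning.
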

\begin{corollary}
An $SH$-invisible manifold $X$ does not admit a complete embedding of $T^*T^n$.
\end{corollary}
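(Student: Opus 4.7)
The plan is a short contrapositive argument, combining the previous corollary with the non-$SH$-invisibility of $T^*\bT^n$. I would begin by supposing, for contradiction, that there exists a complete embedding $T^*\bT^n \hookrightarrow X$. By the second bullet in Section~\ref{sss-on-tor}, for any compact convex domain $P \subset \bR^n$ the preimage $K = \pi_0^{-1}(P)$ has $SH^*_{T^*\bT^n}(K)$ torsion-free; since such $P$ exhaust $\bR^n$, the corresponding $K$'s form an exhaustion of $T^*\bT^n$ by compact subsets of homologically finite torsion. Hence $T^*\bT^n$ itself has homologically finite torsion, and the previous corollary applies: were $X$ $SH$-invisible, so would be $T^*\bT^n$.

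It therefore suffices to show $T^*\bT^n$ is \emph{not} $SH$-invisible. The cleanest route is to invoke the Seidel-type computation recalled in Section~\ref{ss-WC}, which for $K = \pi_0^{-1}(P)$ with $P$ convex yields an isomorphism of $\Lambda_{\geq 0}$-algebras
$$SH^0_{T^*\bT^n}(K) \simeq KS(P),$$
where $KS(P)$ is the $\Lambda$-algebra of non-archimedean analytic functions convergent on $P$. Since $KS(P)$ is a non-zero unital $\Lambda$-algebra it already lives over $\Lambda$, so $SH^0_{T^*\bT^n}(K) \otimes_{\Lambda_{\geq 0}} \Lambda \neq 0$, contradicting $SH$-invisibility of $T^*\bT^n$. (One should also remark in passing that the Main Theorem applies in this setting because $T^*\bT^n$ is geometrically of finite type: take $f$ a small Morse perturbation of $|p|^2/2$, with a flat $J$ at infinity.)

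The logical structure is therefore essentially immediate from the preceding corollary. The only substantive input is the non-vanishing of $SH^0_{T^*\bT^n}(K) \otimes \Lambda$ for $K$ a preimage of a convex polytope, and this is the step I expect to be the main obstacle if one wishes to avoid citing the Seidel extension as a black box. An alternative I would fall back on is to compute $SH^*_{T^*\bT^n}(K)$ directly via a Viterbo-type acceleration, carefully perturbing the Morse-Bott $\bT^n$-families of orbits (as alluded to in Section~\ref{sss-on-tor}); any such computation exhibits a non-torsion unit in degree zero, which survives $\otimes_{\Lambda_{\geq 0}} \Lambda$ and gives the desired contradiction.
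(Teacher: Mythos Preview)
Your argument is correct and matches the paper's implicit reasoning: the paper states this as an immediate corollary without proof, relying on the preceding corollary together with the already-noted facts that $T^*\bT^n$ has homologically finite torsion and is not $SH$-invisible. Your expansion of the non-$SH$-invisibility step via the $KS(P)$ computation is a legitimate way to fill the gap; one small imprecision is that the isomorphism $SH^0_{T^*\bT^n}(K)\simeq KS(P)$ in the paper is stated \emph{after} tensoring with $\Lambda$ (not as $\Lambda_{\geq 0}$-algebras), but since your conclusion only concerns the tensor product this does not affect the argument.
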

\begin{remark}
We expect the hypothesis on $X$ to be fully liftable. Thus an $SH$-invisible manifold $M$ should not admit any equidimensional symplectic embedding $T^*Q\to M$, where $Q$ is a closed manifold. In fact, here we believe one can use the complete embedding to show that $Q\subset X$ is tautologically unobstructed, and hence Floer theoretically essential, leading to an alternative proof.
\end{remark}

\subsection{Structure of the paper}
In Section \S\ref{Sec:Overview} we give an overview of the Hamiltonian Floer theory package for truncated Floer cohomology on geometrically bounded manifolds. In Section \S\ref{sec-loc-iso}
we introduce the notion of geometrically finite symplectic manifold and formulate a more refined version of Theorem \ref{thmMainIntro} involving truncated relative symplectic cohomology. In Section \S\ref{SecSeparatingFlDt} we introduce the notion of separating Floer data and develop the $C^0$ estimates needed for proving our locality results. The proof of locality for truncated relative $SH$ is carried out in Section \S\ref{secMainProof}. In Section \S\ref{Sec:Torsion} we discuss lifting the locality result from truncated relative $SH$ to relative $SH$ under the homological finiteness assumption and prove Theorem \ref{thmMainIntro}. In Section \S\ref{sec:EignenRayCompleteEmb} we develop the theory of symplectic cluster manifolds and prove Theorem \ref{tmCompEmbLagTails}. In section \S\ref{Sec:GrossFibrations} we prove Theorem \ref{tmSemiToricCo}. Appendix \S\ref{AppDissRev} summarizes the results of \cite{groman} that are used.
\subsection{Acknowledgements}
Y.G. was supported by the ISF (grant no. 2445/20). U.V. was supported by the ERC
Starting Grant 850713, and by the T\"{U}B\.{I}TAK 2236 (CoCirc2) programme with a grant numbered 121C034. We thank Mohammed Abouzaid for helpful discussions.

\section{Overview of truncated symplectic cohomology}\label{Sec:Overview}
A symplectic manifold $(M,\omega)$ is said to be \emph{geometrically bounded} if there exists an almost complex structure $J$, a complete Riemannian metric $g$ and a constant $c>1$ so that 
\begin{itemize}
\item $g_J$ is $c$-equivalent to $g$. That is
\begin{equation}
\frac1{c}|v|_{g_J}<|v|_g<c|v|_{g_J}
\end{equation}
for any tangent vector $v$.
\item
\begin{equation}
\max\left\{\left|\Sec_{g}\right|,\frac1{\inj_{g}}\right\}<c.
\end{equation}
\end{itemize}
For a such a $J$ and constant $c$ we say that \emph{$J$ is geometrically  bounded by $c$} or {$c$-bounded}.

\begin{remark}\label{remVolInf}
The bounds on sectional curvature and radius of injectivity imply a uniform bound from below on the volume of balls of radius $\epsilon$ with respect to $g$ for $\epsilon>0$ small enough by standard comparison estimates. This implies a related bound for the metric $g_J$. Thus by the completeness  assumption we obtain that \emph{a geometrically bounded symplectic manifold is either closed or has infinite volume}.
\end{remark}

We now give a quick review of Floer theory on geometrically bounded symplectic manifolds. This is to be expanded upon in the main body of this section and Appendix \S\ref{AppDissRev}. On a geometrically bounded symplectic manifold there exists a set of pairs $(H,J)\in C^{\infty}(S^1\times M,\bR)\times\cJ(M,\omega)$ of \emph{dissipative Floer data}. These satisfy two assumptions that are independent of each other : \begin{enumerate} \item geometric boundedness of the almost complex structure $J_H$ on $\bR\times S^1\times M$ obtained by the Gromov trick, \item loopwise dissipativity.\end{enumerate}
We recall the definition in detail in Appendix \S\ref{AppDissRev}. Such pairs $(H,J)$ satisfy the required $C^0$-estimates for the definition of the Floer differential. We stress that dissipativity is a property of the pair  $(H,J)$.

In  Appendix \S\ref{AppDissRev} we also recall notions of dissipative Floer continuation data, dissipative families of dissipative Floer continuation data, and also families of dissipative Floer data on punctured Riemann surfaces  used in the constructions of the algebraic structures. These notions  allow one to generalize the \emph{Hamiltonian Floer theory package} that is well-established for closed symplectic manifolds to symplectic manifolds which are merely geometrically bounded.

Our end goal in this section is to use the methods of \cite{groman} to define what we will call the truncated symplectic cohomology of a compact set in a geometrically bounded symplectic manifold.

We make the assumption $c_1(M)=0$ and fix the homotopy class of a trivialization of $\Lambda^{n}T_{\mathbb{C}}M$, where $n=\frac{\text{dim}(M)}{2}$. This assumption holds in our intended applications and is only made for convenience  to not distract the reader with discussions of various possibilities of different types of gradings.  In particular, we expect no difficulty in generalizing our methods to general symplectic manifolds, where Hamiltonian Floer theory requires virtual techniques.

\subsection{Floer cohomology for dissipative data}\label{FloerPackage}
Denote by $\Lambda_{\geq 0}$ the Novikov ring
\[
\Lambda_{\geq 0}:=\left\{\sum a_iT^{\lambda_i}|\lambda_i\in\mathbb{R}_{\geq 0},a_i\in\bZ,\lim_{i\to\infty}\lambda_i=\infty\right\}.
\]

Let $H:\bR/\bZ\times M\to\bR$ be a smooth function and let $J$ be an $\omega$ compatible periodically time dependent almost complex structure. We assume that $(H,J)$ is a dissipative Floer datum which is regular for the definition of Floer cohomology. That is, all the $1$-periodic orbits are non-degenerate, all the Floer moduli spaces are cut out transversally and all sphere bubbling is of codimension $\geq 2$. The existence and abundance of such data without the dissipativity condition is well established \cite{HoferSalamon}. As discussed in Appendix \S\ref{AppDissRev} the dissipativity condition is open in
some natural topology and non-empty so does not impose a severe  restriction in this regard. 

Given a pair of elements $\gamma_1,\gamma_2\in\Per(H)$ a \emph{Floer trajectory} from $\gamma_1$ to $\gamma_2$ is a solution $u$ to Floer's equation
\begin{equation}
\partial_su+J(\partial_t-X_H)=0,
\end{equation}
such that $\lim_{s\to-\infty}u(s,t)=\gamma_1(t)$ and $\lim_{s\to\infty}u(s,t)=\gamma_2(t)$. We define the \emph{topological energy} of $u$ by the formula
\begin{equation}
E_{top}(u):=\int u^*\omega+\int_{\bR/\bZ}\left(H_t(\gamma_2(t))-H_t(\gamma_1(t))\right)dt.
\end{equation}
It is a fact that
\begin{equation}\label{eqGeoTopEn1}
E_{top}(u)=\int\|\partial_su\|^2ds\geq 0.
\end{equation}
The quantity on the right hand side is referred to as the \emph{geometric energy} $E_{geo}(u)$. According to proposition \ref{tmSummary00} the dissipativity assumption implies the set $\overline{\cM}(\gamma_1,\gamma_2,E)$ of Floer trajectories from $\gamma_1$ to $\gamma_2$ of energy at most $E$ is contained in an a priori compact set $K=K(E,\gamma_1\gamma_2)$.

Our choice of trivialization  for $\Lambda^{n}T_{\mathbb{C}}M$ gives rise to a grading $i_{CZ}:\Per(H)\to\bZ$ using the induced trivialization of $\gamma^*TM$.  Assuming the index difference is  $i_{CZ}(\gamma_2)-i_{CZ}(\gamma_1)=1$, the regularity assumption implies the quotient
\[
\cM(\gamma_1,\gamma_2,E)=\overline{\cM}(\gamma_1,\gamma_2,E)/\bR
\]
 is a compact oriented $0$-dimensional manifold.

Because $H$ might have infinitely many $1$-periodic orbits (even in a fixed degree), we will be need the following definition. Let $\{\Lambda_{\geq 0}\langle\gamma_i\rangle\}_{i\in I}$ be a collection of free rank one $\Lambda_{\geq 0}$-modules with given generators $\gamma_i$. We define their completed direct sum as follows:
\begin{equation*}
\widehat{\oplus}_{i\in I}\Lambda_{\geq 0}\langle\gamma_i\rangle:=\left\{\sum_{i\in I}\alpha_i\gamma_i\mid\alpha_i\in \Lambda_{\geq 0},\quad \# \{i\in I \mid val(\alpha_i)<R\}<\infty\quad \forall R>0\right\}
\end{equation*}

We may thus proceed to define the Floer complex  $CF^*(H,J)$. As  a $\bZ$-graded $\Lambda_{\geq 0}$-module $CF^*(H,J)=\bigoplus_{k\in \bZ} CF^k(H,J)$, 
\begin{equation}\label{eq-ham-completed}
CF^k(H,J):=\widehat{\oplus}_{\gamma\in Per^k(H)}\Lambda_{\geq 0}\langle\gamma\rangle
\end{equation}
where $Per^k(H)$ are the $1$-periodic orbits of $H$ with degree\footnote{This means Conley-Zehnder index computed using the fixed grading datum on $M$ plus $n=\frac{\text{dim}(M)}{2}$ for us.}  $k$. For each periodic orbit $\gamma$ we fix an isomorphism
\begin{equation}
\Lambda_{\geq 0}\langle\gamma\rangle\simeq o_{\gamma}\otimes \Lambda_{\geq 0},
\end{equation}
where $o_{\gamma}$ is the orientation line associated with $\gamma$ as in Definition 4.19 of \cite{Abouzaid14}. Each $u\in \cM(\gamma_1,\gamma_2)$ for $\gamma_1,\gamma_2$ with index difference $1$ induces an isomorphism
\[
d_u:o_{\gamma_1}\to o_{\gamma_2}.
\]
The differential is defined by
\begin{equation}
d|_{\Lambda_{\geq 0}\langle\gamma_1\rangle}=\sum_{\gamma_2:i_{CZ}(\gamma_2)-i_{CZ}(\gamma_1)=1}\sum_{u\in \cM(\gamma_1,\gamma_2)}T^{E_{top}(u)}d_u.
\end{equation}
It follows from \eqref{eqGeoTopEn1} that the Floer complex is indeed defined over the Novikov ring $\Lambda_{\geq 0}$. Since all the moduli spaces with bounded energy are contained in an a priori compact set, it follows from standard Floer theory that the differential indeed defines a map of completed direct sums and that it squares to $0$.

For a pair $F_1=(H_1,J_1), F_2=(H_2,J_2)$, a \emph{monotone homotopy from $F_1$ to $F_2$} is a family $(H^s,J^s)$ which coincides with $(H_1,J_1)$ for $s\ll0$, with $(H_2,J_2)$ for $s\gg0$, and satisfies
\begin{equation}
\partial_sH^s\geq 0.
\end{equation}
Evidently, a monotone homotopy exists if only if $H_{1,t}(x)\leq H_{2,t}(x)$ for all $t\in S^1$ and $x\in M$. Solutions to the Floer equation corresponding to a monotone datum satisfy the variant of estimate \eqref{eqGeoTopEn1}
\begin{equation}\label{eqGeoTopEn2}
E_{top}(u)\geq\int\|\partial_su\|^2ds\geq 0.
\end{equation}
Moreover, as we recall in Proposition \ref{tmSummary}, when $F_1,F_2$ are dissipative, we can take the regular monotone homotopy to be  dissipative. Note that dissipativity in this case only involves the intermittent boundedness condition which is an open condition. The loopwise dissipativity is inherited from the ends.  Thus, relying on the $C^0$ estimate of Proposition \ref{tmSummary00}  and standard Floer theory, given a generic dissipative monotone homotopy $(H^s,J^s)$ there is an induced chain map
\begin{equation}\label{eqContinuationMap}
f_{H^s,J^s}:CF^*(H_1,J_1)\to CF^*(H_2,J_2).
\end{equation}
These are again defined by counting appropriate Floer solutions weighted by their topological energy.
By the standard Hamiltonian Floer theory package and the estimates of Proposition \ref{tmSummary00} the induced map on homology is independent of the choice of homotopy $(H^s,J^s)$. Moreover, the map is functorial in the sense that if $H_1\leq H_2\leq H_3$, the continuation map associated with the relation $H_1\leq H_3$ is the composition of those associated with $H_1\leq H_2$ and $H_2\leq H_3$.

A particular consequence of the discussion above is that given a Hamiltonian $H$ and two different choices $J_1,J_2$ so that $(H,J_i)$ is dissipative for $i=1,2$ there is a canonical isomorphism  $HF^*(H,J_1)=HF^*(H,J_2)$. For this reason we sometimes allow ourselves to drop $J$ from the notation. Similarly, given a pair of Hamiltonians $H_1\leq H_2$ we refer to the canonical continuation map $HF^*(H_1,J_1)\to HF^*(H_2,J_2)$ without specifying any choice of homotopy. We will sometimes omit from the notation the dependence of Hamiltonian Floer groups on the almost complex structure to not clutter up the already cluttered notation. They are there, and we spell out what they are in the surrounding discussion. We hope this will not cause confusion.


\subsection{Truncated symplectic cohomology}\label{ss-trunc}
For a dissipative pair $(H,J)\in C^{\infty}(S^1\times M,\bR)\times\cJ(M,\omega)$ and a non-negative real number $\lambda$ we denote by $$HF^*_{\lambda}(H,J):=H^*\left(CF^*(H,J)\otimes_{\Lambda_{\geq 0}}\Lambda_{\geq0}/T^{\lambda}\Lambda_{\geq 0}\right)$$ the $\lambda$-truncated Floer homology. This is a module over $\Lambda_{\geq0}/T^{\lambda}\Lambda_{\geq 0}$.

\begin{remark}
Note that the underlying $\Lambda_{\geq0}/T^{\lambda}\Lambda_{\geq 0}$-module of $\lambda$-truncated Floer homology is free: $$CF^*(H,J)\otimes_{\Lambda_{\geq 0}}\Lambda_{\geq0}/T^{\lambda}\Lambda_{\geq 0}=\oplus_{\gamma\in Per(H)}\Lambda_{\geq 0}/T^{\lambda}\Lambda_{\geq 0}\langle\gamma\rangle.$$
\end{remark}
It follows from \eqref{eqGeoTopEn2} and the definition that the continuation maps \eqref{eqContinuationMap} induce natural maps of $\Lambda_{\geq0}/T^{\lambda_1}\Lambda_{\geq 0}$-modules
\[
HF^*_{\lambda_1}(H_1)\to HF^*_{\lambda_2}(H_2)
\]
whenever $\lambda_1\geq\lambda_2$ and $H_2\geq H_1$.

Denote by $\cH_K$ the  set of dissipative regular Floer data $(H,J)$ such that $H<0$ on $K$. It is shown in \cite[Theorem 6.10]{groman} and recalled in the appendix that the set  $\cH_K$ is a non-empty directed set. Namely, for any pair $(H_1,J_1), (H_2,J_2)\in\cH_K$ there exists a third datum $(H_3,J_3)\in \cH_K$ such that $\max \{H_1(x),H_2(x)\}\leq H_3(x)$. Moreover, we have
\begin{equation}
\sup_{(H,J)\in\cH_K}H=\chi_{M\setminus K},
\end{equation}
where the right hand side is the characteristic function $\chi_{M\setminus K}$ which is $0$ on $K$ and $\infty$ everywhere else. The \emph{truncated relative symplectic cohomology} is defined by
\begin{equation}
SH^*_{M,\lambda}(K) :=\varinjlim_{H\in\cH_K} HF^*_{\lambda}(H).
\end{equation}
Given an inclusion $K_2\subset K_1$ and a pair of real numbers $\lambda_1\leq\lambda_2$ we obtain an induced $\Lambda_{\geq0}/T^{\lambda_1}\Lambda_{\geq 0}$-module map
\begin{equation}\label{eqrestriction}
SH^*_{M,\lambda_1}(K_1)\to SH^*_{M,\lambda_2}(K_2).
\end{equation}
We refer to the map associated to the inclusion $K_2\subset K_1$ with $\lambda$ fixed as \emph{restriction}. To the morphism associated to $\lambda_1\geq\lambda_2$ we refer to as \emph{truncation}. Note that each map \eqref{eqrestriction} can be canonical factored into a restriction map followed by a truncation map.\\

Let $Y\subset M$ be an open set. Denote by $\cK(Y)$ the set of compact sets $K\subset Y$. Consider the category whose objects are the set $\cK(Y)\times\bR_+$ and the morphism sets are
\[
Hom_{\cK(Y)\times\bR_+}\left((K_1,\lambda_1),(K_2,\lambda_2)\right)=\begin{cases} *,&\quad K_2\subset K_1,\lambda_2\leq \lambda_1\\
\emptyset&\quad\mbox{otherwise}.
\end{cases}
\]
We summarize the above discussion with the following proposition.
\begin{proposition}
The assignment
\[
\cSH^*_{Y\subset M}:(K,\lambda)\mapsto SH^*_{M,\lambda}(K),
\]
which acts on morphisms by the maps in \eqref{eqrestriction} is a functor
\[
\cSH^*_{Y\subset M}: \cK(Y)\times\bR_+\to \Lambda_{\geq 0}-Mod.
\]
\end{proposition}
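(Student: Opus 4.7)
The plan is straightforward: verify the two functor axioms, using the properties of continuation maps and truncation already recorded in the preceding subsections. Since every morphism set in $\cK(Y)\times\bR_+$ is either empty or a singleton, I only need to check that (i) the map induced at an identity morphism is the identity, and (ii) given a composable chain $(K_3,\lambda_3)\to(K_2,\lambda_2)\to(K_1,\lambda_1)$, the two maps $SH^*_{M,\lambda_1}(K_1)\to SH^*_{M,\lambda_3}(K_3)$ that one can write down agree.

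For (i), the identity morphism on $(K,\lambda)$ corresponds at the level of defining data to leaving the direct system $\cH_K$ untouched and to using the identity on $\Lambda_{\geq 0}/T^\lambda$. The constant monotone homotopy induces the identity on $HF^*_\lambda(H)$ by the standard Floer continuation package recalled around \eqref{eqContinuationMap}, so the induced map on the direct limit is the identity.

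For (ii), I would first observe that every morphism \eqref{eqrestriction} factors canonically as a pure restriction (at fixed $\lambda$) followed by a pure truncation (at fixed $K$). It therefore suffices to check three compatibility statements: (a) compositions of restrictions at fixed $\lambda$ agree with the restriction associated with the composed inclusion; (b) compositions of truncations at fixed $K$ agree with the truncation between the outer truncation levels; and (c) restriction and truncation commute. Statement (a) follows from the chain of inclusions $\cH_{K_1}\subset\cH_{K_2}\subset\cH_{K_3}$ of cofinal subsets (any $H$ negative on $K_1$ is automatically negative on $K_2\subset K_1$), together with the strict functoriality of the continuation maps stated explicitly in the text after \eqref{eqContinuationMap}. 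Statement (b) is the algebraic fact that the canonical quotient $\Lambda_{\geq 0}/T^{\lambda_1}\to\Lambda_{\geq 0}/T^{\lambda_3}$ factors through $\Lambda_{\geq 0}/T^{\lambda_2}$ and that the truncated complexes are obtained by tensoring. Statement (c) holds because the continuation maps $CF^*(H_1,J_1)\to CF^*(H_2,J_2)$ are $\Lambda_{\geq 0}$-linear chain maps and hence descend compatibly to every truncation of coefficients.

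The only subtlety worth flagging, which I would regard as the main (and still mild) obstacle, is the passage from chain-level identities to equalities of maps of direct limits. This is handled by using that $\cH_K$ is a \emph{directed} set, as recalled from \cite[Theorem 6.10]{groman}, so that any two continuation maps between two elements of $\cH_K$ can be compared through a third element in the system; combined with the functoriality property quoted above, this makes the induced map on the colimit well-defined and strictly functorial. Tensor product being a left adjoint commutes with the colimit, so truncation is compatible with the direct-limit construction as well. Once these formal ingredients are in place, (a), (b), (c) assemble by a routine diagram chase into the desired functor $\cSH^*_{Y\subset M}$.
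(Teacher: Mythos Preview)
Your argument is correct and is precisely the kind of verification the paper intends: the proposition is stated explicitly as a \emph{summary} of the preceding discussion and carries no proof in the paper, the functoriality being taken as immediate from the functoriality of continuation maps and the compatibility of truncation recorded just before the statement. Your decomposition into pure restrictions, pure truncations, and their commutation is the natural way to spell this out.
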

\subsection{Truncated symplectic cohomology from acceleration data}
In the construction above we have considered the set $\cH_K$ of all  dissipative Floer data. This is a set of Floer data which is invariant under symplectomorphisms. However, to get a more concrete handle on relative $SH$ we make use of the following framework. 

\begin{definition} Let $K\subset M$ be a compact subset. We call the following datum the \emph{Hamiltonian part of an acceleration datum for $K$}:
\begin{itemize}
\item $H_1\leq H_2\leq\ldots$ a monotone sequence of non-degenerate  one-periodic Hamiltonians $H_i: M\times S^1_t\to \bR$ satisfying $H\mid_{S^1\times K}<0$ and for every $(x,t)\in M\times S^1,$  $$
H_i(x,t)\xrightarrow[i\to+\infty]{}\begin{cases}
0,& x\in K,\\
+\infty,& x\notin K.
\end{cases}
$$
\item A monotone homotopy of Hamiltonians $H_{i,i+1}:[i,i+1]\times M\times S^1\to\mathbb{R}$, for all $i$, which is equal to $H_i$ and $H_{i+1}$ in a fixed neighborhood of the corresponding end points.
\end{itemize}
One can combine the Hamiltonian part  an acceleration datum into a single family of time-dependent Hamiltonians $H_{\tau}: M\times S^1\to \bR$, $\tau\in \bR_{\geq 1}$. Note that we are choosing to omit the $t$-parameter.
\end{definition}

\begin{remark}
Note that on a non-compact $M$ the pointwise convergence condition is not equivalent to cofinality.
\end{remark}

\begin{definition}We call a $[1,\infty)_\tau\times S^1_t$ family of geometrically bounded compatible almost complex structures $J_{\tau}$ on $M$ the \emph{almost complex structure part of an acceleration datum}. We similarly omitted the $t$-parameter from the notation. Note that this notion is independent of $K$. \end{definition}

We also fix a non-decreasing surjective map $$(-\infty, \infty)\to [0,1],$$ which is used to turn an $[i,i+1]$-family of Hamiltonians and almost complex structures to a $(-\infty,\infty)$-family, which is then used to write down the Floer equations. This is how the $\tau$ parameter is related to the $s$-parameter as it is commonly used. When we say the Floer data associated to $(H_\tau, J_\tau)_{\tau\in [i,i+1]}$, we mean the data on the infinite cylinder $\mathbb{R}_s\times S^1_t$ after this operation. We assume that the choices are made so that such Floer data is locally $s$-independent outside of $[-1,1]\times S^1$.

\begin{definition}An acceleration datum for $K$ is data of the Hamiltonian part $H_{\tau}$ and the almost complex structure part $J_\tau$ which satisfy the following properties:

\begin{enumerate}
\item For each $i\in\mathbb{N}$, $(H_i,J_i)$ is dissipative and regular.
\item For each $i\in\mathbb{N}$, the Floer data associated to  $(H_\tau, J_\tau)_{\tau\in [i,i+1]}$ is dissipative and regular.
\end{enumerate}

\end{definition}

\begin{proposition}\label{propTruncAccDef}
\begin{itemize}
\item For two different choices of acceleration data for $K$, $(H_\tau,J_\tau)$ and $(H_\tau',J_\tau')$ such that $H_{\tau}\leq H_{\tau'}$  and for any $\lambda\geq 0$
there is a canonical $\Lambda_{\geq 0}$-module isomorphism $$\varinjlim_{i}HF^*_{\lambda}(H_i,J_i)\to \varinjlim_{i}HF^*_{\lambda}(H_i',J_i')$$ defined using the Hamiltonian Floer theory package on geometrically bounded manifolds. Let us call these maps \emph{comparison maps}. They automatically commute with truncation maps.
 \item The comparison map from an acceleration datum to itself is the identity map. Moreover, the comparison maps are functorial for composite inequalities $H_{\tau}\leq H_{\tau'}\leq H_{\tau''}.$
 \end{itemize}
\end{proposition}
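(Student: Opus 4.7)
The plan is to build the comparison map by combining monotone continuation maps between the two acceleration data with the acceleration maps along each data, and then to identify both telescopes with the colimit $SH^*_{M,\lambda}(K)$ over all of $\cH_K$.

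First I would construct the comparison on chain level. For each $i$, since $H_i\le H'_i$ pointwise, I may apply Proposition \ref{tmSummary} to choose a regular dissipative monotone homotopy $(H^s_i, J^s_i)$ from $(H_i,J_i)$ to $(H'_i,J'_i)$. This produces a continuation chain map $\phi_i: CF^*(H_i,J_i)\to CF^*(H'_i,J'_i)$, and after reducing modulo $T^\lambda$ it induces $\bar\phi_i:HF^*_\lambda(H_i,J_i)\to HF^*_\lambda(H'_i,J'_i)$. The next step is commutativity with the acceleration maps: the two compositions
\[
HF^*_\lambda(H_i,J_i)\to HF^*_\lambda(H'_i,J'_i)\to HF^*_\lambda(H'_{i+1},J'_{i+1})
\]
and
\[
HF^*_\lambda(H_i,J_i)\to HF^*_\lambda(H_{i+1},J_{i+1})\to HF^*_\lambda(H'_{i+1},J'_{i+1})
\]
must agree. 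By the standard argument, this follows by choosing a regular dissipative 2-parameter family of monotone Floer data over the square with corners $(H_i,J_i),(H'_i,J'_i),(H_{i+1},J_{i+1}),(H'_{i+1},J'_{i+1})$; the parametrized moduli spaces, whose compactness is guaranteed by the $C^0$ estimate of Proposition \ref{tmSummary00}, provide the chain homotopy between the two compositions. Taking the colimit in $i$ then yields the desired map of $\Lambda_{\geq 0}$-modules.

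The next thing to verify is that the map so obtained does not depend on any of the auxiliary choices. Two choices of monotone homotopies from $(H_i,J_i)$ to $(H'_i,J'_i)$ are connected by a one-parameter family of monotone homotopies (the space of monotone homotopies with fixed endpoints is convex, hence contractible in the relevant space of dissipative data once intermittent boundedness and regularity are arranged as in Proposition \ref{tmSummary}), and the usual parametrized argument produces the chain homotopy showing that the induced maps $\bar\phi_i$ agree on homology; the same mechanism shows independence of the choice of 2-parameter interpolations used in the commutativity step. That the comparison maps commute with truncation is immediate from the fact that continuation maps are defined over $\Lambda_{\geq 0}$ and descend canonically modulo $T^\lambda$.

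To prove that the resulting map is an isomorphism, I would factor it through $SH^*_{M,\lambda}(K)$. Each $(H_i,J_i)$ and $(H'_i,J'_i)$ lies in $\cH_K$, so the telescopes come equipped with canonical maps $\varinjlim_i HF^*_\lambda(H_i,J_i)\to SH^*_{M,\lambda}(K)$ and likewise for the primed data, and by construction the triangle commutes. The claim that both of these canonical maps are isomorphisms is the cofinality statement for truncated relative $SH$: given any $(H,J)\in\cH_K$, for every fixed $\lambda$ the action window relevant to $HF^*_\lambda(H,J)$ is bounded, and this, combined with the pointwise monotone convergence $H_i\nearrow \chi_{M\setminus K}$ and a standard comparison of continuation maps via an auxiliary max-type Hamiltonian $\max\{H,H_i\}$ (which lies in $\cH_K$ by the directedness recalled in \S\ref{FloerPackage}), shows that the telescope of $HF^*_\lambda(H_i,J_i)$ is naturally isomorphic to $SH^*_{M,\lambda}(K)$. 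The hard part will be precisely this cofinality argument at the level of $\lambda$-truncated groups, because pointwise convergence of $H_i$ alone does not yield cofinality in $\cH_K$; one needs to exploit the fact that truncation by $T^\lambda$ kills the contribution of orbits whose action lies beyond a certain window, and to verify carefully that the necessary intermittent boundedness and dissipativity persist through the max construction. Finally, the identity statement follows by taking the constant homotopy in each step (which is tautologically a monotone dissipative homotopy), and functoriality for $H_\tau\le H'_\tau\le H''_\tau$ follows from the standard gluing argument for continuation maps, applied colimit-wise.
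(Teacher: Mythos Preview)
Your construction of the comparison map via levelwise continuation maps, the square-homotopy argument for compatibility with the acceleration maps, and the verification of identity and functoriality are all correct and are exactly what underlies the paper's one-line citation to \cite[Lemma~8.12]{groman}. Compatibility with truncation is likewise immediate as you say.

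Where your proposal is more ambitious than the paper --- and where it has a genuine gap --- is the isomorphism step. You propose to prove it by showing that \emph{any} acceleration datum computes $SH^*_{M,\lambda}(K)$; that statement is Proposition~\ref{lmSHCofinal}, which in the paper is proven \emph{after} and \emph{using} Proposition~\ref{propTruncAccDef}. So you are in effect reordering the logic and must establish the cofinality independently. That is a legitimate strategy, but your sketch of it does not go through. First, the assertion that ``the action window relevant to $HF^*_\lambda(H,J)$ is bounded'' is not correct as stated: truncation by $T^\lambda$ bounds the topological energy of contributing trajectories, not the actions of the generating orbits, which can be arbitrary. Second, the $\max\{H,H_i\}$ construction is problematic --- it is not smooth, and even after smoothing there is no reason the result is dissipative, as you yourself flag. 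The paper (in the proof of Proposition~\ref{lmSHCofinal}) avoids max-type constructions entirely: it fixes a single proper dissipative $H$ below the whole sequence, works with Hamiltonians equal to $H+c$ outside a compact set, and invokes the continuity result \cite[Theorem~8.9]{groman} for monotone sequences converging on compact subsets. This is the mechanism that substitutes for genuine cofinality in $\cH_K$ (which, as the paper's remark after the definition of acceleration data notes, pointwise convergence does not give on a non-compact $M$), and it is what you would need in place of the max argument.
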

\begin{proof}
This is \cite[Lemma 8.12]{groman}.
\end{proof}
In a similar way we have the following proposition. 

\begin{proposition}\label{lmSHCofinal}
Given any acceleration datum $(H_\tau,J_\tau)$ we have an isomorphism
\begin{equation}\label{eqSHcofinal}
\varinjlim_{i} HF^*_{\lambda}(H_i,J_i)\to SH^*_{M,\lambda}(K).
\end{equation}
This isomorphism is natural with respect to restriction and truncation maps. Moreover, if we are given a second acceleration datum $(H_\tau',J_\tau')$, we have a commutative diagram\begin{align*}
\xymatrix{
 \varinjlim_{i} HF^*_{\lambda}(H_i,J_i)\ar[d]\ar[dr]&\\
 \varinjlim_{i} HF^*_{\lambda}(H_i',J_i')\ar[r]&SH^*_{M,\lambda}(K),}
\end{align*} where the vertical map is the comparison map.
\end{proposition}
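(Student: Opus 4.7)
The plan is to show that the natural map from the colimit over the acceleration datum to $SH^*_{M,\lambda}(K)$ is an isomorphism by explicitly constructing an inverse. The forward map $\Phi:\varinjlim_i HF^*_\lambda(H_i,J_i)\to SH^*_{M,\lambda}(K)$ is built from the universal property of the colimit: since each $(H_i,J_i)$ lies in $\cH_K$, and the continuation maps within the acceleration datum agree with the canonical continuation maps in $\cH_K$, the collection $\{HF^*_\lambda(H_i,J_i)\to SH^*_{M,\lambda}(K)\}$ factors through $\varinjlim_i HF^*_\lambda(H_i,J_i)$.

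The subtlety in constructing the inverse is that strict cofinality of $\{(H_i,J_i)\}$ in $\cH_K$ need not hold: a general $(H,J)\in\cH_K$ may grow faster outside $K$ than any fixed $H_i$. To overcome this, for each $(H,J)\in\cH_K$ I would construct an auxiliary dissipative regular acceleration datum $(H'_\tau,J'_\tau)$ for $K$ with $H'_1\geq H$ and $H'_\tau\geq H_\tau$ for all $\tau\geq 1$. Concretely, one first produces an element of $\cH_K$ dominating both $(H,J)$ and $(H_1,J_1)$ via the directedness of $\cH_K$ established in \cite[Theorem 6.10]{groman}, takes this as $(H'_1,J'_1)$, and then extends inductively using the same directedness together with the abundance of dissipative regular data to build a monotone sequence pointwise converging to $\chi_{M\setminus K}$. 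The required regular monotone homotopies $H'_{i,i+1}$ are obtained by standard genericity arguments once the sequence is chosen. Proposition \ref{propTruncAccDef} then yields a canonical isomorphism $\varinjlim_i HF^*_\lambda(H'_i,J'_i)\simeq\varinjlim_i HF^*_\lambda(H_i,J_i)$, and composing with the continuation map $HF^*_\lambda(H,J)\to HF^*_\lambda(H'_1,J'_1)\to\varinjlim_i HF^*_\lambda(H'_i,J'_i)$ assigns to $(H,J)$ a map into $\varinjlim_i HF^*_\lambda(H_i,J_i)$.

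Next I would verify that this assignment is independent of the choice of dominating acceleration datum and is functorial in $(H,J)$. Independence follows because any two such choices $(H'_\tau,J'_\tau),(H''_\tau,J''_\tau)$ admit a common dominating acceleration datum, and the comparison maps of Proposition \ref{propTruncAccDef} are functorial and send the identity pair to the identity. Functoriality in $(H,J)$, namely compatibility with continuation maps $HF^*_\lambda(H_a,J_a)\to HF^*_\lambda(H_b,J_b)$ for $H_a\leq H_b$ in $\cH_K$, is checked by choosing a single dominating acceleration datum for the pair $(H_b,J_b)$ and invoking the functoriality of continuation maps together with Proposition \ref{propTruncAccDef}. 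The universal property of $SH^*_{M,\lambda}(K)$ then packages this into the inverse $\Psi:SH^*_{M,\lambda}(K)\to\varinjlim_i HF^*_\lambda(H_i,J_i)$. The identities $\Phi\circ\Psi=\mathrm{id}$ and $\Psi\circ\Phi=\mathrm{id}$ reduce, after tracing through the construction, to the fact that a canonical comparison map from an acceleration datum to itself is the identity.

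For the remaining assertions, naturality with respect to restriction maps $(K_2\subset K_1)$ and truncation $(\lambda_1\leq\lambda_2)$ follows because all the constructions used commute with these operations: a dominating acceleration datum for a Hamiltonian in $\cH_{K_1}$ also lies in $\cH_{K_2}$, and truncation is a base change applied after taking homology. The commutative triangle involving a second acceleration datum $(H'_\tau,J'_\tau)$ is obtained by choosing a common dominating acceleration datum and applying the functoriality clause of Proposition \ref{propTruncAccDef}. The main obstacle in the whole argument is the construction of the auxiliary acceleration datum in the second paragraph; everything else is a formal direct-limit manipulation once this construction is in place.
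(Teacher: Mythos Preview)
Your approach is correct but genuinely different from the paper's. You build an explicit inverse $\Psi$ by, for each $(H,J)\in\cH_K$, producing a dominating acceleration datum $(H'_\tau,J'_\tau)$ with $H'_\tau\geq H_\tau$ and $H'_1\geq H$, and then invoking only Proposition~\ref{propTruncAccDef} (the comparison isomorphism between ordered acceleration data) together with formal colimit bookkeeping. The paper instead argues via a commutative square: it introduces an auxiliary family $\cH''_K\subset\cH_K$ of Hamiltonians that equal a fixed proper dissipative $H$ plus a constant outside a compact set, picks a cofinal sequence $(H''_i)$ in $\cH''_K$ dominated by $(H_i)$, and then uses the \emph{continuity theorem} \cite[Theorem~8.9]{groman} (that $\varinjlim_i HF^*_\lambda(H''_i)\to HF^*_\lambda(H')$ is an isomorphism when $H''_i\to H'$ on compacta) to show $\varinjlim_{\cH''_K}\simeq\varinjlim_{\cH'_K}$ for a cofinal $\cH'_K\subset\cH_K$.

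Your route is more economical in its external inputs: it relies solely on Proposition~\ref{propTruncAccDef} as a black box, whereas the paper also invokes the analytically nontrivial continuity result \cite[Theorem~8.9]{groman}. On the other hand, the paper's argument isolates the conceptual content in a single four-arrow diagram and makes the role of the underlying convergence theorem transparent, while your argument trades that for a somewhat longer verification that the cocone $\{\psi_{(H,J)}\}$ is well defined and compatible. Both arguments ultimately hinge on Proposition~\ref{propTruncAccDef}; the paper's extra ingredient is what lets it avoid constructing the inverse by hand.
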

\begin{proof}
The sequence $\{(H_i,J_i)\}$ embeds as a directed set into $\cH_K$, so a map as in \eqref{eqSHcofinal} is induced by the universal mapping property of direct limits.  It remains to {show that it is an isomorphism}. For this fix a proper dissipative datum $(H,J)\in\cH_K$ such that $H\leq H_i$ for all $i$. Such a datum exists according to \cite[Theorem 6.10]{groman}.  The subset $\cH'_K\subset\cH_K$ consisting of $(H',J')$ such that $H'>H$ is cofinal. Define the set $\cH''_K\subset\cH_K$ to consist of all elements $H''$ for which there is a compact set $K'$ and a constant $c$ so that $H''|_{M\setminus K'}=H|_{M\setminus K'}+c$. 
is an isomorphism. The set $\cH''_K$ admits a cofinal sequence $(H''_i,J''_i)$. Moreover, such a cofinal sequence can be constructed with $H''_i\leq H_i$. { We obtain a commutative diagram
\begin{equation}\label{eqSHcofinal2}
\xymatrix{\varinjlim_{i} HF^*_{\lambda}(H''_i,J''_i)\ar[d]\ar[r]&\varinjlim_{i} HF^*_{\lambda}(H_i,J_i)\ar[d]\\
\varinjlim_{(H'',J'')\in\cH''_K}HF^*_{\lambda}(H'')\ar[r]& SH^*_{M,\lambda}(K)}
\end{equation}
where the upper horizontal map is the comparison map. The upper horizontal map is an isomorphism by the previous lemma. The left vertical map is an isomorphism by cofinality. We show that the lower horizontal map is an isomorphism.} For any $H'\in\cH'_K$ we can pick a monotone sequence $H''_i\in\cH_K''$  converging to $H'$ on compact sets. According to \cite[Theorem 8.9]{groman} we have that the natural map $\varinjlim_iHF^*_{\lambda}(H''_i)\to HF^*_{\lambda}(H')$ is an isomorphism. 
{Consider the maps
\begin{equation}\label{eqSHcofinal1}
\varinjlim_{(H'',J'')\in\cH''_K}HF^*_{\lambda}(H'')\to \varinjlim_{(H',J')\in\cH'_K}HF^*_{\lambda}(H')\to SH^*_{M,\lambda}(K).
\end{equation}
By the above discussion and Fubini for colimits, the arrow on the left is an isomorphism. The arrow on the right is an isomorphism by cofinality of  $\cH'_K\subset\cH_K$. So, the lower horizontal map in \eqref{eqSHcofinal2} is an isomorphism. We conclude that the right vertical map is also an isomorphism as was to be proven. }
 
\end{proof}

\subsection{Operations in truncated symplectic cohomology.}\label{ssoperations}

To construct the pair of pants product in local $SH$ we first discuss it as an operation
\begin{equation}
HF^*_{\lambda} (H_1)\otimes HF^*_{\lambda}(H_2)\to HF^*_{\lambda}(H_3),
\end{equation}
for a triple $(H_1,H_2,H_3)$ satisfying the inequality
\begin{equation}\label{eqProdTrip}
\min_tH^3_t(x)>H>2\max_{t,i=1,2}\{H^1_t(x),H^2_t(x)\}.
\end{equation}
We first explain the source of this condition. On a compact manifold and working over the Novikov field instead of over the Novikov ring, the pair of pants product is a standard construction in Floer theory with no need to impose additional conditions. However to guarantee positivity of topological energy we need to require that our Hamiltonian one form satisfy the inequality
\begin{equation}\label{EqMonFlD}
d\fH +\{\fH,\fH\}\geq 0.
\end{equation}
Being non-linear this condition is difficult to work with. For this reason we restrict the discussion to \emph{monotone split} one forms. These are of the form $\fH=H\otimes\alpha$ where $\alpha$ is a closed $1$-form (which for the purpose of the present discussion can be fixed once and for all)  on the pair of pants equaling $dt$ near the input and $2dt$ near the output, and $H:\Sigma\times M\to\bR$ is a function satisfying
\begin{equation}
dH(x)\wedge\alpha\geq 0,\forall x\in M.
\end{equation}
Here we consider $H(x)$ for fixed $x$ as a function on $\Sigma$ and take the exterior derivative of it. The last equation implies \eqref{EqMonFlD} for a split datum. On the other hand the condition \eqref{eqProdTrip} implies the existence of a split Floer datum $H$, equaling $H_i$ near the $i$th end.

It is different question whether there exists a \emph{dissipative} monotone split Floer datum (and whether two such can be connected by  dissipative family etc.). The answer should be an unqualified yes.  However this general claim has not been established in \cite{groman}. Rather a more limited claim is established which suffices for our purposes.

\begin{lemma}\label{lmDissProd}
For $i=1,2,3$ let $(H_i,J_i)$ be dissipative data satisfying \eqref{eqProdTrip}. Suppose
 that away from a compact set we have $(H_1,J_1)=(H_2,J_2)$.  Then
we can find a dissipative datum $(H,\alpha,J)$ on the pair of pants so that on the $i$th end $H\alpha=H^idt$ and such that $dH\wedge\alpha\geq0$.   \end{lemma}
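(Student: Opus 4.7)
The plan is to construct $(H,\alpha,J)$ explicitly across the pair of pants $\Sigma$ and to deduce dissipativity from the ends. Fix the standard closed one-form $\alpha$ on $\Sigma$ equal to $dt$ on the two input ends $E_1,E_2$ and to $2dt$ on the output end $E_3$, and pick a proper smooth function $R\colon\Sigma\to\bR$ agreeing with the cylindrical axial coordinate on each end, with $R\to -\infty$ on $E_1,E_2$ and $R\to +\infty$ on $E_3$. Let $K_0\subset M$ be a compact set outside of which $(H_1,J_1)=(H_2,J_2)=:(H_{12},J_{12})$; such a $K_0$ exists by hypothesis.

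The key idea is that on $\Sigma\times (M\setminus K_0)$ the two inputs are indistinguishable, so the construction reduces to a cylindrical continuation problem between $H_{12}$ and $H_3/2$. Pick a non-decreasing cutoff $\chi\colon\bR\to[0,1]$ which is $0$ for $R\ll 0$ and $1$ for $R\gg 0$, and on $\Sigma\times(M\setminus K_0)$ set
\[
H(z,t,x)=(1-\chi(R(z)))H_{12}(t,x)+\chi(R(z))H_3(t,x)/2.
\]
The hypothesis \eqref{eqProdTrip} gives $H_3/2>H_{12}$, and on the ends $\alpha$ is a positive multiple of $dt$, so $dH\wedge\alpha\geq 0$ outside $\Sigma\times K_0$, with the correct boundary behavior on all three ends (using that $\alpha=2dt$ on $E_3$ exactly absorbs the factor of $1/2$). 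On $\Sigma\times K_0$ we extend $H$ by a partition-of-unity construction on a compact core $\Sigma_0\subset\Sigma$: on collars of the input ends $H=H_i$, on a collar of the output end $H=H_3/2$, and the interior interpolation routes through a smoothed maximum of $H_1,H_2$ at the central junction so that $H$ grows monotonically along $\alpha$-flow lines from the inputs through the center to the output. Compactness of $K_0$ and the uniform gap $H_3/2-\max\{H_1,H_2\}>0$ provide the slack needed for this to be possible. For $J$, we analogously interpolate between $J_{12}$ and $J_3$ by a cylindrical homotopy outside $K_0$ and extend arbitrarily, compatibly with $\omega$, on the compact region $\Sigma\times K_0$.

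Dissipativity of $(H,\alpha,J)$ is verified in two parts. Loopwise dissipativity depends only on the asymptotic behavior at the ends, where the datum coincides with $(H_i,J_i)$; since these are dissipative by hypothesis, loopwise dissipativity is inherited. Intermittent boundedness of the Gromov-trick almost complex structure $J_{H}$ on $\Sigma\times M$ is an open condition, and outside a compact subset of $\Sigma\times M$ our datum is a controlled cylindrical homotopy between dissipative data, so the argument of Proposition \ref{tmSummary} (as recalled in Appendix \ref{AppDissRev}) applies essentially verbatim after a small generic perturbation to achieve regularity. The main obstacle is verifying the monotonicity $dH\wedge\alpha\geq 0$ globally across the junction of $\Sigma$, where $H$ must simultaneously dominate both $H_1$ and $H_2$ while remaining bounded above by $H_3/2$ at the output; the strict inequality in \eqref{eqProdTrip} is exactly what makes the smoothed-max interpolation work, and the assumption $(H_1,J_1)=(H_2,J_2)$ outside $K_0$ is what confines this subtlety to the compact region $\Sigma\times K_0$.
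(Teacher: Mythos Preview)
Your direct-interpolation approach differs from the paper's, which uses a slick reduction: take a $2{:}1$ holomorphic branched cover $\pi\colon\Sigma\to\bR\times S^1$ that is the identity near each input end and $(s,t)\mapsto(s,2t)$ near the output, choose a monotone dissipative homotopy $(F_s,J_s)$ from $(H_1,J_1)=(H_2,J_2)$ to $(\tfrac12 H_3,J_3)$ via Lemma~\ref{prpMonHo}, and set $H_z=F_{\pi(z)}$, $\alpha=\pi^*dt$, $J_z=J_{\pi(z)}$. Monotonicity is then automatic, since $dH\wedge\alpha=\pi^*(\partial_sF_s\,ds\wedge dt)\geq 0$ because $\pi$ is holomorphic and $F_s$ is monotone, and dissipativity is inherited directly from the cylinder datum. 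The pair-of-pants problem is thus reduced in one line to the already-established cylinder case.

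Your construction has a genuine gap at exactly the point the branched cover handles. For $x\notin K_0$ you have $d_zH=\chi'(R)(\tfrac12 H_3-H_{12})\,dR$, and you then assert $dH\wedge\alpha\geq 0$ ``because on the ends $\alpha$ is a positive multiple of $dt$''. But the inequality must hold for every $z\in\Sigma$, in particular on the non-cylindrical core, where $\alpha$ is not of the form $c\,dt$ and where your function $R$ necessarily has a critical point (two level circles at $R\ll 0$ must merge into one at $R\gg 0$). What you actually need is $dR\wedge\alpha\geq 0$ on the support of $\chi'\circ R$, and this is a nontrivial compatibility between $R$ and $\alpha$ that you never arrange. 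One fix is to force the entire transition of $\chi$ to occur on the output cylindrical end, so $\chi'(R)\equiv 0$ on the core; another is precisely the branched-cover choice $R=s\circ\pi$, $\alpha=\pi^*dt$. Similarly, your appeal to Proposition~\ref{tmSummary} for intermittent boundedness is not directly applicable, since that statement concerns cylinders; the branched cover is again what makes the reduction to the cylinder case legitimate.
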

\begin{proof}
Denote by $\Sigma$ the pair of pants and let $\pi:\Sigma\to\bR\times S^1$ be a $2:1$ branched cover which in cylindrical coordinates is the identity near each negative end and $(s,t)\mapsto (s,2t)$ near the positive end. Let $(F_s,J_s)$ be a monotone dissipative homotopy from $(H_1,J_1)=(H_2,J_2)$ to $(\frac1{2}H_3,J_3)$. Such a homotopy exists by \eqref{eqProdTrip} and Lemma \ref{prpMonHo}. Let  $H_z=F_{\pi(z)}$, $\alpha=\pi^*dt$, $J_z=\pi^*J_{\pi(z)}$ and $\alpha=\pi^*dt$. Then $(H,\alpha,J)$ is a monotone dissipative datum as required. Moreover, a $C^0$ bounded perturbation of  $J$ maintains dissipativity. So this Floer datum can be made regular.
\end{proof}
 Any such datum gives rise to a map
\begin{equation}
*:HF^*_{\lambda}(H^1)\otimes HF^*_{\lambda}(H^2)\to HF^*_{\lambda}(H^3)
\end{equation}
by counting solutions to Floer's equation
\begin{equation}\label{eqFlProd}
(du+\fH)^{0,1}=0
\end{equation}
and weighting them by topological energies.

Indeed, since $\fH$ is split, the monotonicity of $\fH$ guarantees that positivity of the topological energy of $u$. On the other hand we have that the space of dissipative monotone data as above is connected. Thus the operation $*$ is independent of the choice. Moreover, it commutes with continuation maps which preserve the inequality \eqref{eqProdTrip}. Therefore, applying the direct limit procedure, we obtain a pair of pants product on truncated local $SH$. Namely we pick a pair of of acceleration $H_{\tau}, G_{\tau}$ such that
\begin{equation}
\min_tG_{\tau,t}(x)>2\max_t H_{\tau,t}(x).
\end{equation}

We get an induced product
\begin{equation}
SH^*_{K,\lambda}\otimes SH^*_{K,\lambda}=\varinjlim_i\left(HF^*_{\lambda}(H_i) \otimes HF^*_{\lambda}(H_i)\right)\to \varinjlim_i HF^*_{\lambda}(G_i)= SH^*_{\lambda,K}.
\end{equation}
The induced product is associative and super-commutative by a standard argument.


In a similar way $SH^*_{M,\lambda}(K)$ carries a $BV$ operator. For details on its construction and relations with the product see \cite{Abouzaid14}.

It should be emphasized that all the structure introduced here are natural with respect to both restriction maps and truncation maps.

Let us summarize our final discussion as a theorem.

\begin{theorem}
The assignment
\[
\cSH^*_{Y\subset M}:(K,\lambda)\mapsto SH^*_{M,\lambda}(K),
\]
which acts on morphisms by the maps in \eqref{eqrestriction} is a functor
\[
\cSH^*_{Y\subset M}: \cK(Y)\times\bR_+\to \Lambda_{\geq 0}-BValg.
\]
\end{theorem}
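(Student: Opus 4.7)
The plan is to upgrade the discussion of \S\ref{ssoperations}, which constructs the product and the BV operator on a single $SH^*_{M,\lambda}(K)$, to a statement about naturality with respect to the two types of morphisms in $\cK(Y)\times\bR_+$. First I would fix, for each $K\in\cK(Y)$, a pair of acceleration data $(H_\tau,J_\tau)$ and $(G_\tau,J'_\tau)$ with $\min_t G_{\tau,t}>2\max_t H_{\tau,t}$, so that Lemma \ref{lmDissProd} produces dissipative monotone split Floer data on the pair of pants interpolating between them. Taking the direct limit of the resulting chain-level products, and invoking Proposition \ref{lmSHCofinal} to identify the colimit with $SH^*_{M,\lambda}(K)$, yields a well-defined and (by the standard connectedness and Fubini arguments) canonical product. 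The BV operator is constructed analogously from a single-cylinder Floer datum with an $S^1$-rotation, see \cite{Abouzaid14}. Associativity, super-commutativity and the BV axioms follow from the standard TCFT arguments carried out at the chain level with dissipative representatives, using the abundance statements in Appendix \S\ref{AppDissRev} to guarantee that all parametrized families needed can be chosen dissipative and generic.

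Next I would verify naturality on the two generating classes of morphisms. For truncation $\lambda_1\geq\lambda_2$, the induced map is simply tensoring with the reduction $\Lambda_{\geq 0}/T^{\lambda_1}\to\Lambda_{\geq 0}/T^{\lambda_2}$ applied to the same underlying Floer complex; since the product and BV operator are already defined over $\Lambda_{\geq 0}$ before truncation, they descend compatibly. For restriction along $K_2\subset K_1$, the key observation is that any pair of acceleration data $(H_\tau,J_\tau)\in\cH_{K_1}$, $(H'_\tau,J'_\tau)\in\cH_{K_2}$ can be connected by a monotone comparison homotopy (after possibly passing to an enlargement through $\cH_{K_2}$), and the resulting continuation maps are chain maps with respect to all of the TCFT operations: this is a standard ``interpolating Riemann surfaces with cylindrical ends'' gluing argument, again needing only that the family of pair-of-pants data deformed along the interpolation can be kept dissipative, which holds since dissipativity is inherited from the ends and is open along compact families. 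Passing to the colimit gives a $\Lambda_{\geq 0}$-BV-algebra map $SH^*_{M,\lambda}(K_1)\to SH^*_{M,\lambda}(K_2)$.

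Functoriality for composites $(K_3,\lambda_3)\subset(K_2,\lambda_2)\subset(K_1,\lambda_1)$ reduces, by decomposing each morphism into a pure restriction followed by a pure truncation and using that truncation visibly commutes with restriction at the chain level, to the corresponding statement for restriction alone; this in turn follows from the functoriality of continuation maps stated in Proposition \ref{propTruncAccDef} together with the compatibility of these continuation maps with the pair-of-pants and rotation-cylinder data, established in the previous paragraph. The main technical obstacle is coherent choice of dissipative Floer data on \emph{parametrized families} of Riemann surfaces realizing the associativity, BV, and naturality homotopies simultaneously; this is handled by first fixing cofinal dissipative acceleration data, then appealing to the fact that the space of dissipative data completing a prescribed boundary behavior is non-empty and connected by the results of \cite{groman} summarized in the appendix, so that all relevant moduli problems give rise to well-defined counts at each truncation level and assemble into a functor to $\Lambda_{\geq 0}\text{-}BValg$.
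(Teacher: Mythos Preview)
Your proposal is correct and follows essentially the same approach as the paper. In fact the paper does not give a separate proof of this theorem: it is stated as a summary of the preceding discussion in \S\ref{ssoperations}, which constructs the product and BV operator via dissipative split Floer data and then simply asserts (in the sentence immediately before the theorem) that ``all the structure introduced here are natural with respect to both restriction maps and truncation maps.'' Your sketch spells out in more detail the naturality checks (truncation via reduction over $\Lambda_{\geq 0}$, restriction via monotone continuation compatible with the pair-of-pants data) that the paper leaves implicit.
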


From now on we consider  the functor $\cSH^*_{Y\subset M}$ is it appears in this theorem, i.e. with $BV$-algebras over the Novikov ring as its target.

\section{Symplectic manifolds of geometrically finite type}\label{sec-loc-iso}



A function $f$ on a geometrically bounded symplectic manifold is called \emph{admissible} if it is proper, bounded below, and there is a constant $C$ such that with respect to a geometrically bounded almost complex structure $J$ we have $\|X_f\|_{g_J}<C$ and $\|\nabla X_f\|_{g_J}<C.$ According to \cite{Tam10}  on any complete geometrically bounded Riemannian manifold  there exists a function $f$ satisfying $\|\nabla f\|<C$, $\|\Hess f\|<C$ and $d(x,x_0)<f(x)<d(x,x_0)+C$. To obtain by this an admissible function, i.e one with estimates on $X_f$ rather than $\nabla f$, we need to assume in addition that $\|\nabla J\|_{g_J}$ is uniformly bounded, or, equivalently, that $\|\nabla\omega\|_{g_J}$ is uniformly bounded.

\begin{lemma}\label{lemNoNonTOrbits}
Let $f$ be an admissible function on a geometrically bounded symplectic manifold.  Then there is an $\epsilon_0>0$ such that any Hamiltonian $H$ which outside of a compact set $K$ satisfies $H|_K=\epsilon_0 f$ has no non-constant $1$-periodic orbits outside of $K$.
\end{lemma}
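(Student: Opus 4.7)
The plan is to show that for $\epsilon_0$ chosen small enough, any would-be non-constant $1$-periodic orbit of $X_H=\epsilon_0 X_f$ on $M\setminus K$ is forced by the $C^1$-bounds on $X_f$ together with the geometric boundedness of $J$ to have the self-improving property that its velocity at every point must be arbitrarily small, and hence zero.

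I fix a geometrically bounded $J$ that witnesses admissibility and let $C$ denote the common upper bound for $\|X_f\|_{g_J}$ and $\|\nabla X_f\|_{g_J}$, and $c>1$ the bound on $|\Sec_{g_J}|$ and $1/\inj_{g_J}$. On $M\setminus K$ we have $X_H=\epsilon_0 X_f$, so $\|X_H\|_{g_J}\leq \epsilon_0 C$ and the pointwise Lipschitz constant of $X_H$ with respect to $g_J$ is likewise $\leq\epsilon_0 C$. In particular, any $1$-periodic orbit $\gamma$ of $X_H$ contained in $M\setminus K$ has $g_J$-length at most $\epsilon_0 C$. The first constraint on $\epsilon_0$ is that $\epsilon_0 C<\tfrac{1}{2c}\leq \tfrac{1}{2}\inj_{g_J}$; then $\gamma$ lies entirely in a $g_J$-normal coordinate ball centered at $p:=\gamma(0)$. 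Pulling back via $\exp_p$ I obtain a loop $\tilde\gamma\colon [0,1]\to T_pM$ with $\tilde\gamma(0)=\tilde\gamma(1)=0$ satisfying $\dot{\tilde\gamma}=V(\tilde\gamma)$, where $V$ is the pullback of $X_H$. The curvature bound $|\Sec_{g_J}|\leq c$ allows Rauch comparison to produce a uniform bound on $D\exp_p$ on this ball, so the Euclidean Lipschitz constant of $V$ is bounded by $C'\epsilon_0$ for some $C'=C'(C,c)$.

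The loop condition $\int_0^1 V(\tilde\gamma(t))\,dt=\tilde\gamma(1)-\tilde\gamma(0)=0$ yields
\[
|V(0)|\;\leq\;\int_0^1 |V(\tilde\gamma(t))-V(0)|\,dt\;\leq\;C'\epsilon_0\int_0^1 |\tilde\gamma(t)|\,dt.
\]
On the other hand, the estimate $|\dot{\tilde\gamma}|\leq |V(0)|+C'\epsilon_0 |\tilde\gamma|$ together with Gronwall gives $|\tilde\gamma(t)|\leq |V(0)|\,t\,e^{C'\epsilon_0 t}$, whence $\int_0^1|\tilde\gamma|\,dt\leq \tfrac{1}{2}|V(0)|e^{C'\epsilon_0}$. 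Combining,
\[
|V(0)|\bigl(1-\tfrac{1}{2}C'\epsilon_0 e^{C'\epsilon_0}\bigr)\leq 0.
\]
Choosing $\epsilon_0$ small enough that also $\tfrac{1}{2}C'\epsilon_0 e^{C'\epsilon_0}<1$ forces $V(0)=0$, i.e.\ $X_H(p)=0$. By uniqueness of solutions to the ODE $\dot\gamma=X_H(\gamma)$, $\gamma$ is constant at $p$, contradicting the assumption that $\gamma$ is non-constant. Taking $\epsilon_0>0$ to be the minimum of the two thresholds above proves the lemma.

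The main technical obstacle is the passage from the intrinsic bound $\|\nabla X_f\|_{g_J}\leq C$ to an effective Euclidean Lipschitz bound on the coordinate vector field $V$ in the normal chart. This is routine from Jacobi field comparison using the two-sided sectional curvature bound, but it is the place where the specific role of the almost complex structure $J$ enters the proof; once the bound $C'$ depending only on $C$ and $c$ is produced, the remainder is a standard Gronwall-type bootstrap.
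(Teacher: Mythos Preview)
Your argument is correct and follows the same overall architecture as the paper: use the $C^0$-bound on $X_f$ to trap any $1$-periodic orbit of $\epsilon_0 f$ inside a single geodesic ball, then use the $C^1$-bound on $X_f$ to force such an orbit to be constant. The difference is only in the local step. The paper quotes \cite[Ch.~5, Prop.~17]{HoferZhender} and transfers the Euclidean Poincar\'e/Wirtinger inequality
\[
\int_0^1\|\gamma'\|^2\,dt \;\leq\; C'\int_0^1\|\nabla_{\gamma'}\gamma'\|^2\,dt
\]
to the geodesic ball, then combines it with $\|\nabla_{\gamma'}\gamma'\|\leq \epsilon C\|\gamma'\|$ to get $\int\|\gamma'\|^2\leq C'\epsilon^2C^2\int\|\gamma'\|^2$. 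You instead pull back to $T_pM$ via $\exp_p$, use Rauch comparison to convert $\|\nabla X_f\|\leq C$ into a Euclidean Lipschitz bound on the coordinate vector field, and close up with a Gronwall bootstrap on $|V(0)|$. Your route is slightly more hands-on (you reprove the Hofer--Zehnder input rather than cite it) and trades the clean $L^2$ inequality for an ODE estimate; the paper's route is shorter once one is willing to invoke the Fourier/Poincar\'e inequality and the stability of covariant derivatives in small normal charts. Both rely on exactly the same geometric inputs, namely the two-sided curvature bound and the uniform bound on $\nabla X_f$.
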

\begin{proof}

It suffices to show that the function $\epsilon f$ has no non-constant $1$-periodic orbits for $\epsilon$ small enough.  A flow line $\gamma$ of $\epsilon f$ satisfies $|\gamma'|<\epsilon C$. So a $1$-periodic orbit has diameter at most $\epsilon C/2$. Thus, since the geometry is bounded, by making $\epsilon$ small enough, any $1$ periodic orbit is contained in a geodesic ball. Now, in the particular case of $\bR^{2n}$ the claim of the lemma is \cite[Ch. 5, Prp. 17]{HoferZhender}. The proof there adapts immediately to the case of a geodesic ball. For completeness we spell this out.

Let $\gamma:\bR/\bZ \to M$ be a $1$-periodic orbit of $\epsilon f$ mapping into a geodesic ball of radius $\epsilon C/2$.  Then for $\epsilon$ small enough we have
\begin{equation}\label{eqF1st2ndCovDer}
\int\|\gamma'\|^2dt\leq C'\int\|\nabla_{\gamma'}\gamma'\|^2dt
\end{equation}
 for some $C'$ depending on the bounds on the geometry. Indeed, as stated in  \cite{HoferZhender}, for the Euclidean metric this is true with the constant $C'=\frac1{2\pi}$ by Fourier analysis. The covariant derivative with respect to $g_J$ is arbitrarily close to the covariant derivative in the flat metric as $\epsilon$ is made small enough \cite{Eichhorn91}. So the inequality \eqref{eqF1st2ndCovDer} follows at least for sufficiently small $\epsilon$.

We have
\begin{equation}
\int\|\nabla_{\gamma'}\gamma'\|^2=\int \|\nabla_{\epsilon X_f}\epsilon X_f\|^2\leq \epsilon^2 C^2\int\|\gamma'\|^2.
\end{equation}
Combining this with \eqref{eqF1st2ndCovDer} we get that for $\epsilon<\frac1{C\sqrt{C'}}$ any periodic orbit is constant.

\end{proof}

To rule out all orbits outside of some compact subset we need further assumptions which are incorporated into the following definition.

\begin{definition} We say that a symplectic manifold $Y$ is \emph{geometrically of finite type} if it admits a compatible geometrically bounded almost complex structure $J$ and an exhaustion function $f$ which is admissible with respect to $g_J$ and all its critical points are  contained in a compact subset.
\end{definition}


Note that the completion of a symplectic manifold with contact boundary is geometrically of finite type. An example of a geometrically bounded symplectic manifold which is not geometrically  of finite type is a Riemann surface of infinite genus with the area form induced by a complete geometrically bounded Riemannian metric.





In the following we will consider a geometrically bounded symplectic manifold $M$ and an open subset $U\subset M$ which is geometrically of finite type with respect to the restriction symplectic form. Such an open subset is the  image of a symplectic embedding $Y\to M$ where $Y$ is a geometrically finite type symplectic manifold with the same dimension as $M$. We do not make any assumptions on the relation between an almost complex structures on $U$ witnessing its being geometrically of finite type and one on $M$ witnessing its geometric boundedness.\\

\begin{example}\label{Ex:Liouville-Cobordisms}
Let $(X,\omega_1)$ be a symplectic manifold with boundary and let $V$ be a Liouville vector field which is negatively transverse to $\partial X$. This makes $\partial X$ into a contact manifold which we denote by $(Y^-,\lambda^-)$. We moreover assume that the Liouville flow is complete in the forward direction and that $X$ admits an exhaustion by (compact) Liouville cobordisms (as in \cite[Section 11.1]{Eliashberg}) with $\partial X$ as their negative boundary. Let $(F,\omega_2)$  be a  strong (but not necessarily exact) symplectic filling of $(Y^-,\lambda^-)$. Consider the symplectic manifold $W$ obtained as the union $F\cup_{Y^-} X$. We note that $W$ is geometrically bounded but not necessarily of geometrically finite type. 

\begin{lemma}\label{lem-com-lio-emb}
There exists a unique symplectic embedding of the completion $\widehat{F}$ of $F$ into $W$, which restricts to identity on $F$ and sends the Liouville vector field on the semi-infinite symplectization of $(Y^-,\lambda^-)$ to $V$.
\end{lemma}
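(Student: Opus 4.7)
The plan is to construct the embedding by flowing along $V$, using the identity on the $F$-piece and the time-$r$ Liouville flow on the symplectization piece. Concretely, since the Liouville flow of $V$ is complete in the forward direction, define
\[
\phi : [0,\infty)\times Y^- \to X, \qquad \phi(r,y) := \Phi^r_V(y),
\]
where $\Phi^r_V$ is the time-$r$ flow of $V$. I then glue $\phi$ to $\mathrm{id}_F$ along $\{0\}\times Y^- = \partial F$, noting that near $\partial X$ the field $V$ is outward-pointing-relative-to-$F$ (equivalently, inward into $X$), so a collar neighborhood identifies with $[0,\epsilon)\times Y^-$ with $V = \partial_r$, and the gluing is smooth.

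To verify the symplectic condition, set $\lambda_V := \iota_V \omega_1$, which is a primitive of $\omega_1$ defined wherever $V$ is, with $\lambda_V|_{Y^-}=\lambda^-$ by the strong-filling/cobordism hypothesis. The Liouville identity $\mathcal{L}_V \lambda_V = \lambda_V$ yields $(\Phi^r_V)^* \lambda_V = e^r \lambda_V$; plugging this into the definition of $\phi$ and using $\lambda_V(V) = 0$, one computes $\phi^*\lambda_V = e^r\lambda^-$, so $\phi^*\omega_1 = d(e^r\lambda^-)$, which is exactly the symplectic form on the positive semi-infinite symplectization being attached to $F$. Thus $\phi$ is a symplectic immersion, automatically intertwining $\partial_r$ with $V$ by construction.

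The main obstacle is to upgrade $\phi$ to a global embedding. Injectivity: suppose $\Phi^{r_1}_V(y_1) = \Phi^{r_2}_V(y_2)$ with $r_1\leq r_2$ and set $s := r_2 - r_1 \geq 0$, so $y_1 = \Phi^{s}_V(y_2)$. Since $V$ points into $X$ at $\partial X$, the collar picture forces any forward flow line starting at $y_2 \in \partial X$ to immediately enter the interior of $X$ and stay there (any first return time to $\partial X$ would contradict inward transversality at the return point). Hence $s=0$, and then $y_1=y_2$ by uniqueness of flows. The Liouville cobordism exhaustion of $X$ with $Y^-$ as negative boundary is what makes this global: on each $X_k$ the flow lines from $\partial_- X_k = Y^-$ exit through $\partial_+ X_k$, ruling out periodic orbits or recurrence. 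Combined with compactness of $Y^-$ and forward completeness, the injective immersion $\phi$ restricted to each slab $[0,R]\times Y^-$ is a proper embedding, and the union over $R$ gives a global embedding.

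Uniqueness is then immediate. Any symplectic embedding $\psi:\widehat{F}\to W$ restricting to $\mathrm{id}_F$ and sending $\partial_r$ to $V$ satisfies $\psi\circ\Phi^r_{\partial_r} = \Phi^r_V \circ \psi$, and since $\psi|_{\{0\}\times Y^-}=\mathrm{id}_{Y^-}$ by the identity-on-$F$ condition, we get $\psi(r,y) = \Phi^r_V(y) = \phi(r,y)$. I expect the only delicate point is the embedding property, where the forward-completeness of $V$ and the Liouville cobordism exhaustion are crucial to prevent recurrence or escape behavior of flow lines that would destroy injectivity.
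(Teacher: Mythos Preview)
Your proof is correct and follows essentially the same approach as the paper's sketch: construct the map by flowing along $V$ (using forward completeness), then verify it is symplectic via the Liouville expansion identity $(\Phi^r_V)^*\lambda_V = e^r\lambda_V$. Your version simply supplies the details the paper omits; note that the injectivity argument already follows from inward transversality of $V$ along $Y^-$ alone (via a collar function $f$ with $f|_{Y^-}=0$, $df(V)>0$), so the cobordism exhaustion is not strictly needed there.
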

\begin{proof}[Sketch proof] Using the completeness of the Liouville flow in the positive direction we immediately obtain a smooth embedding $\widehat{F}\to W$ with the desired properties. The symplectically expanding property of Liouville vector fields shows that this is a symplectic embedding. Compare with Section 1.8.4 of \cite{EG}.

\end{proof}

The image of the embedding $\widehat{F}\to W$ is an example of the geometric setup we consider in this section, i.e. a \emph{complete embedding}.
\end{example}
The main technical result of this section is the following theorem.

\begin{theorem}\label{thmLocalityTrunc}
Let $Y$ be a geometrically finite type symplectic manifold and let $M$ be one that is geometrically bounded. Let $\text{dim}(M)=\text{dim}(Y)$ and $\iota:Y\to M$ be a symplectic embedding. Also denote by $\iota$ the induced functor $\cK(Y)\times\bR_+\to\cK(M)\times\bR_+$. Then there exists a distinguished isomorphism of functors
\begin{equation}
T_{\iota}: \cSH^*_{Y\subset Y}\simeq \cSH^*_{\iota(Y)\subset M}\circ\iota.
\end{equation}
\end{theorem}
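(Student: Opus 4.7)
Identify $Y$ with its image $\iota(Y)\subset M$ equipped with $\omega|_{\iota(Y)}$, so the task reduces to constructing, for each compact $K\subset Y$ and each $\lambda\geq 0$, a natural $\Lambda_{\geq 0}/T^\lambda$-BV-algebra isomorphism $\cSH^*_{Y\subset Y}(K,\lambda)\to\cSH^*_{\iota(Y)\subset M}(K,\lambda)$. Fix a geometrically bounded $J_Y$ on $Y$ and an admissible exhaustion $f$ whose critical points, together with $K$, lie in $V_{r_0}:=\{f<r_0\}$; Lemma \ref{lemNoNonTOrbits} supplies $\epsilon_0>0$ such that any Hamiltonian of the form $\epsilon_0 f+\mathrm{const}$ has no nonconstant $1$-periodic orbits outside $V_{r_0}$. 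Using the separating-region Proposition advertised in \S\ref{SecSeparatingFlDt}, choose $R=R(\lambda)$ so large that any Floer, continuation, or pair-of-pants trajectory meeting both $V_{r_0}$ and the complement of $V_{r_0+R}$ has topological energy strictly greater than $\lambda$, provided its Hamiltonian equals $\epsilon_0 f$ and its almost complex structure equals $J_Y$ on the annulus $A_R=f^{-1}([r_0,r_0+R])$.

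The plan is then to build matched cofinal acceleration data. On $Y$, set $H_i=\phi_i(f)$ plus a compactly supported Morse perturbation in $V_{r_0}$, where $\phi_i$ is convex, affine with slope $\epsilon_0$ on $[r_0,\infty)$, tends pointwise to $+\infty$ above $\max f|_K$, and is strictly negative on $f(K)$. Then $H_i\to\chi_{Y\setminus K}$ pointwise, and by the Lipschitz estimate of Lemma \ref{lemNoNonTOrbits} every $1$-periodic orbit of $H_i$ lies in $V_{r_0}$. Pick a Tam-type admissible extension $\tilde f$ of $f$ to $M$ (using geometric boundedness of $M$) and define $\tilde H_i$ to agree with $H_i$ on $V_{r_0+R_i}$ for a sequence $R_i\to\infty$ with $R_i\geq R(\lambda)$, and to equal $\epsilon_0\tilde f+c_i-\epsilon_0 r_0$ on $M\setminus V_{r_0+R_i}$, with constants $c_i$ chosen so that $c_{i+1}-c_i>\lambda$ for all $i$. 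Extend $J_i$ arbitrarily to a geometrically bounded $\tilde J_i$ on $M$.

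At each stage, the $1$-periodic orbits of $\tilde H_i$ split as inner orbits in $V_{r_0}$ (canonically identified with those of $H_i$) and outer orbits in $M\setminus V_{r_0+R_i}$ (which cannot be eliminated a priori, since $\tilde f$ may have critical points there). The separating-region estimate shows that, modulo $T^\lambda$, no Floer differential mixes inner and outer orbits, so $CF^*(\tilde H_i)/T^\lambda$ splits as a direct sum of subcomplexes whose inner summand is canonically identified with $CF^*(H_i)/T^\lambda$. The same estimate applied to monotone continuation data implies that the continuation $\phi_{i,i+1}\colon CF^*(\tilde H_i)/T^\lambda\to CF^*(\tilde H_{i+1})/T^\lambda$ respects the splitting, is standard on inner parts, and is zero on outer parts modulo $T^\lambda$: any continuation solution with outer input has topological energy bounded below by $\min(c_{i+1}-c_i,\ CR_i-D)>\lambda$, since $\int\partial_s H^s\geq c_{i+1}-c_i$ for a solution staying outside $V_{r_0}$ and any solution entering $V_{r_0}$ must cross $A_{R_i}$. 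Consequently the outer direct summand dies under a single continuation step, and in the direct limit both $\cSH^*_{Y\subset Y}(K,\lambda)$ and $\cSH^*_{\iota(Y)\subset M}(K,\lambda)$ are computed by the common inner system, producing the isomorphism $T_\iota(K,\lambda)$. Analogous separating-region estimates for pair-of-pants and BV operadic data give compatibility with the BV-algebra structure.

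Naturality in $K$ and $\lambda$ is obtained by performing the construction compatibly: for $K_1\subset K_2$ use a common $r_0$ with $K_2\subset V_{r_0}$, and for $\lambda_1\leq\lambda_2$ note that the $\lambda_2$-separating estimate implies the $\lambda_1$-one. Independence of the auxiliary choices $(J_Y,f,\phi_i,\tilde f,c_i,R_i,\tilde J_i)$ follows by interpolating two sets of choices inside a larger matched acceleration system and invoking the comparison machinery of Proposition \ref{propTruncAccDef}, yielding a \emph{distinguished} isomorphism of functors. The main technical obstacle is not the algebraic sketch above but rather the uniform establishment of the separating-region $C^0$ estimate simultaneously for Floer trajectories, monotone continuation data, and the pair-of-pants and BV operadic data; this is precisely what is carried out in \S\S\ref{SecSeparatingFlDt}--\ref{secMainProof}.
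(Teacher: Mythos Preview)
Your overall strategy—fixing locality data on $Y$, building matched acceleration data on $Y$ and $M$, splitting into inner/outer summands via $\lambda$-separation, showing the outer summand dies in the direct limit, and checking compatibility with restriction, truncation, and the BV-algebra operations—is exactly the paper's five-step plan in \S\ref{secMainProof}. However, two concrete points in your construction of the acceleration data do not work as written.

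First, no convex $\phi_i$ with the listed properties exists. Convexity together with $\phi_i'\equiv\epsilon_0$ on $[r_0,\infty)$ forces $\phi_i'\leq\epsilon_0$ on all of $(-\infty,r_0]$, so for any $s\in f(K)$ and $t\in(\max f|_K,r_0)$ one has $\phi_i(t)\leq\phi_i(s)+\epsilon_0(t-s)<\epsilon_0(t-s)$, which is bounded independently of $i$; thus $\phi_i(t)\not\to+\infty$ and $H_i=\phi_i(f)+\text{(small perturbation)}$ cannot converge to $\chi_{Y\setminus K}$ on $V_{r_0}\setminus K$. The paper avoids this by only requiring the acceleration Hamiltonian to equal $H_{loc}+\text{const}$ on the fixed annulus $H_{loc}^{-1}([h,h+\Delta])$ (Definition \ref{defDeltaComp} in the proof), leaving the precompact inner region containing $K$ completely unconstrained so that the Hamiltonian may diverge there as needed.

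Second, an ``admissible extension $\tilde f$ of $f$ to $M$'' does not exist: $f$ is proper on the open set $Y$, hence unbounded near $\partial Y\subset M$, and cannot extend even continuously across the boundary. Consequently your piecewise definition of $\tilde H_i$ has no chance of being smooth, and the inequality $\tilde H_{i+1}-\tilde H_i\geq c_{i+1}-c_i$ on the full outer region (needed for your one-step vanishing argument) is not available. The paper instead asks nothing of $H_\tau$ on $M\setminus H_{loc}^{-1}((-\infty,h+\Delta])$ beyond dissipativity and the increment condition $H_{i+1}\geq H_i+1$ there, and kills the outer summand by iterating the continuation map $\lceil\lambda\rceil+1$ times rather than in a single step. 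With these two repairs your sketch becomes the paper's proof.
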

We refer to this natural transformation as the \emph{locality isomorphism.}

The following theorem states that the locality isomorphisms are functorial with respect to  nested inclusions.

\begin{theorem}\label{thmLocalityTruncFunc}
Let $X$ and $Y$ be symplectic manifolds of geometrically finite type and let $M$ be one that is geometrically bounded. Let $\text{dim}(M)=\text{dim}(Y)=\text{dim}(X)$ and $\iota_X:X\to Y$, $\iota_Y:Y\to M$ be symplectic embeddings. Then there is an equality of natural transformations $$T_{\iota_Y}\circ T_{\iota_X}=T_{\iota_Y\circ\iota_X}.$$
\end{theorem}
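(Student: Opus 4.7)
The plan is to show that, for a fixed compact $K \subset X$ and truncation level $\lambda \geq 0$, all three locality isomorphisms appearing in the statement can be realized as coming from a single chain-level identification built from a carefully nested acceleration datum on $M$. Once this is in place, the equality $T_{\iota_Y} \circ T_{\iota_X} = T_{\iota_Y \circ \iota_X}$ becomes a tautology of composed identifications of subcomplexes, together with a naturality argument to pass to the direct limits involved in the definition of $\cSH$.

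First I would fix a geometrically bounded $J_X$, an admissible function $f_X$ on $X$, and a radius $R_X$ large enough that the energy estimate in the separating proposition recalled after Remark \ref{remHeinUsher} gives $C R_X - D > \lambda$, where $V^X_{r_X}$ contains $K$ and all one-periodic orbits of $\epsilon f_X$ (cf. Lemma \ref{lemNoNonTOrbits}). Similarly I would fix $(J_Y, f_Y, R_Y)$ on $Y$, arranging that the critical set of $f_Y$ and all small multiples of its periodic orbits lie inside $V^X_{R_X + r_X} \subset Y$. Next I would construct an acceleration datum $(H_\tau, \cJ_\tau)$ for $\iota_Y \iota_X(K) \subset M$ whose Hamiltonians are nested in the following sense: each $H_\tau$ is negative on $\iota_Y \iota_X(K)$, coincides with $\epsilon f_X$ transported via $\iota_Y \circ \iota_X$ on a shell containing $A^X_{R_X}$, rises monotonically to a large plateau beyond $V^X_{R_X + r_X}$, coincides with $\epsilon' f_Y$ on a shell containing $A^Y_{R_Y}$, and rises further outside $V^Y_{R_Y + r_Y}$; the family $\cJ_\tau$ agrees with $J_X$ on $A^X_{R_X}$ and with $J_Y$ on $A^Y_{R_Y}$. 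Monotone homotopies between consecutive $H_\tau$ would be chosen to preserve this nesting. Dissipativity and regularity can be arranged by perturbation as in \cite{groman} and Appendix \ref{AppDissRev}.

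Applying the separating estimates of Sections \ref{SecSeparatingFlDt} and \ref{secMainProof} twice, Floer trajectories for $(H_\tau, \cJ_\tau)$ (and for the accompanying monotone homotopies) of topological energy at most $\lambda$ whose asymptotes lie in $V^X_{r_X}$ cannot cross $A^X_{R_X}$ and hence stay inside $\iota_Y \iota_X(X)$; trajectories of energy at most $\lambda$ whose asymptotes lie in $V^Y_{r_Y}$ stay inside $\iota_Y(Y)$. Consequently the truncated complex $CF^*(H_\tau, \cJ_\tau) \otimes_{\Lambda_{\geq 0}} \Lambda_{\geq 0}/T^\lambda$ contains, as direct summands with the differential preserved, the corresponding truncated complexes computed from the restrictions of $(H_\tau, \cJ_\tau)$ to $Y$ and to $X$, after discarding the generators outside the relevant sublevel sets (whose contribution to the differentials between our generators vanishes at truncation $\lambda$ by the same estimate). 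Each of $T_{\iota_X}$, $T_{\iota_Y}$, and $T_{\iota_Y \circ \iota_X}$ is, by the proof of Theorem \ref{thmLocalityTrunc}, induced by precisely these chain-level identifications; after passing to the direct limit in $\tau$ and invoking Propositions \ref{propTruncAccDef} and \ref{lmSHCofinal} to compare with generic acceleration data, functoriality of the composition is immediate. Naturality in $K$ and $\lambda$ is inherited from the naturality already established for $T_{\iota_Y \circ \iota_X}$.

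The main obstacle will be arranging the nested structure so that dissipativity is maintained through the double interpolation between the three shell regions and so that the separating estimates apply to the continuation maps inside the acceleration datum, not just to its constituents $(H_i, \cJ_i)$. This amounts to running the monotone-homotopy analogue of the separating proposition simultaneously through $A^X_{R_X}$ and $A^Y_{R_Y}$, which should go through verbatim since on each shell the Floer datum has Lipschitz constant $\leq \epsilon$ by construction, but it does require care in the bookkeeping. A secondary but lighter point is the cofinality argument that compares the specific nested acceleration data produced here with arbitrary ones representing $\cSH^*_{M,\lambda}(\iota_Y \iota_X K)$, which is handled uniformly by the comparison and cofinality statements in Propositions \ref{propTruncAccDef} and \ref{lmSHCofinal}.
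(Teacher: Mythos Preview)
Your approach is essentially the paper's: build a single acceleration datum on $M$ that is simultaneously $\lambda$-separating for $X\subset M$ and for $Y\subset M$ (the paper packages this as being ``$\Delta$-compatible with both'' locality data $(H_{loc}^X,J_{loc}^X)$ and $(H_{loc}^Y,J_{loc}^Y)$), then restrict it to $Y$ to obtain a datum computing the $X\subset Y$ locality map, so that the factorization $T_{\iota_Y}\circ T_{\iota_X}=T_{\iota_Y\circ\iota_X}$ becomes a tautology of nested inner/outer splittings at the chain level. One phrasing issue to fix: you cannot arrange that the critical set of $f_Y$ lies inside $V^X_{R_X+r_X}$, since critical points of $f_Y$ live in $Y$ and need not lie in the image of $X$; the correct nesting condition (which you in fact use in your description of $H_\tau$) is that the compact set $\iota_X(V^X_{R_X+r_X})\subset Y$ is contained in $V^Y_{r_Y}$, achieved simply by taking $r_Y$ large since $f_Y$ is proper.
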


After proving these two theorems, we will discuss how to extend these two results about truncated symplectic cohomology to relative symplectic cohomology as in Theorem \ref{thmMainIntro} from the introduction.

\section{Separating Floer data}\label{SecSeparatingFlDt}

We start with a basic but very crucial definition.

\begin{definition}\label{dfLambdaSeparation} Let $Y,M$ and $\iota:Y\to M$ be as in Theorem \ref{thmLocalityTrunc} and $K\subset Y$ be a compact subset. For a real number $\lambda>0$, a Floer datum $(H,J)$  inside $M$ is called \emph{$\lambda$-separating for $K$} if there exist open neighborhoods $V_1$ of $K$ and $V_2$ of $M\setminus\iota(Y)$ with the following property. \textit{All $1$-periodic orbits are contained in $V_1\cup V_2$ and any solution $u$ to Floer's equation which meets both $V_1$ and $V_2$ has topological energy at least $\lambda$.}

A homotopy $(H^s,J^s)$ between $\lambda$-separating Floer data is called \emph{$\lambda$-separating} if the same property holds for solutions to the corresponding Floer equation. We define this property for higher homotopies and for Floer solutions associated with other moduli problems as well. \end{definition}


\begin{lemma}\label{lmSplit}
Let $(H,J)$ be a $\lambda$-separating datum for $K$ with witnesses open sets $V_1$ and $V_2$ as in Definition \ref{dfLambdaSeparation}. Then the $\lambda$-truncated Floer cohomology has a direct sum decomposition
\[
HF^*_\lambda(H)=HF^*_{\lambda,inner}(H)\oplus HF^*_{\lambda,outer}(H),
\]
corresponding to classes represented by periodic orbits contained in $V_1$ and $V_2$ respectively. If $(H^i,J^i)$ are $\lambda$-separating for $i=1,2$, $H^1\leq H^2$ and $(H_s,J_s)$ is a $\lambda$-separating monotone homotopy then the induced continuation map is split. Similar claims hold for the pair of pants product and the $BV$ operator.
\end{lemma}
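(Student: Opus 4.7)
The plan is to exploit the $\lambda$-separation hypothesis as an energy gap which, after reduction modulo $T^\lambda$, makes every Floer-theoretic operation block-diagonal with respect to an inner/outer decomposition of the generators. First I would shrink $V_1$ if necessary to arrange $V_1\cap V_2=\emptyset$; since $\Per(H)$ is discrete and covered by $V_1\cup V_2$, a small shrinking that preserves the cover exists, and the separation property is inherited because any trajectory meeting the shrunk $V_1$ a fortiori meets the original one. This partitions $\Per(H)$ into \emph{inner} orbits (those in $V_1$) and \emph{outer} orbits (those in $V_2$), giving a $\Lambda_{\geq 0}$-module splitting $CF^*(H,J)=CF^*_{inner}(H,J)\oplus CF^*_{outer}(H,J)$.

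The key observation is that any Floer trajectory $u$ from $\gamma_-$ to $\gamma_+$ with $\gamma_-$ inner and $\gamma_+$ outer (or vice versa) must meet both $V_1$ and $V_2$ by virtue of its asymptotic behaviour, so by the separating hypothesis $E_{top}(u)\geq\lambda$, and consequently $T^{E_{top}(u)}\in T^\lambda\Lambda_{\geq 0}$. Reducing the differential modulo $T^\lambda$ therefore kills all mixed matrix entries, which yields the asserted direct-sum decomposition on $HF^*_\lambda(H,J)$.

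Exactly the same bookkeeping gives the remaining claims. For a $\lambda$-separating monotone homotopy, any continuation trajectory with one end inner and the other outer meets both $V_1$ and $V_2$, so its topological energy is at least $\lambda$, making the reduced continuation map block-diagonal. For the pair-of-pants product one chooses a $\lambda$-separating split datum, following the recipe of Lemma \ref{lmDissProd}, with common witnesses at all three ends; any pair-of-pants solution whose three asymptotic orbits are not all inner or all outer crosses between $V_1$ and $V_2$, hence contributes a factor in $T^\lambda\Lambda_{\geq 0}$ and vanishes in the truncation. The identical analysis on the BV cylinder handles the BV operator.

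The only genuinely technical point is to ensure that $\lambda$-separating data with common witnesses $V_1,V_2$ exist across the various ends of whichever moduli problem is under consideration. I expect this to be a straightforward adaptation of the arguments already used to construct dissipative monotone data in Section \ref{FloerPackage} and Lemma \ref{lmDissProd}, since shrinking a common pair of witnesses is stable under small perturbations of the moduli datum; this is the main step that needs careful handling. Beyond it, the lemma is a direct accounting of topological energy against the Novikov truncation level.
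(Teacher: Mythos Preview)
Your proposal is correct and follows essentially the same approach as the paper, which dispatches the lemma in one line: ``This is an immediate consequence of the definitions.'' You have simply spelled out what that means, namely that any trajectory mixing inner and outer asymptotics has $E_{top}\geq\lambda$ and so contributes $0$ modulo $T^\lambda$.

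Two minor remarks. First, the shrinking step is unnecessary and its justification is slightly off: $\Per(H)$ consists of circles, not points, and on an open manifold there may be infinitely many, so ``discreteness'' does not straightforwardly license a shrinking. But you do not need $V_1\cap V_2=\emptyset$ as sets; you only need that no periodic orbit lies in the overlap, and this is automatic from the definition: the trivial cylinder over such an orbit would meet both $V_1$ and $V_2$ with topological energy $0<\lambda$, contradicting $\lambda$-separation. Second, your closing paragraph about the existence of $\lambda$-separating data with common witnesses is not part of this lemma at all---the lemma takes such data as hypothesis, and the existence question is handled separately in \S\ref{secMainProof}.
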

\begin{proof}
This is an immediate consequence of the definitions.
\end{proof}

Our task now is to produce for any $K$ and $\lambda$ acceleration data for $K$ which are $\lambda$-separating in a a way that is natural with respect to restriction and truncation. The key to this are the robust  $C^0$ estimates we introduce in the next subsection. These state that by considering Floer data which are of sufficiently small Lipschitz constant on a sufficiently large region separating $K$ and $M\setminus\iota(Y)$ we achieve $\lambda$-separation. 

\subsection{Dissipativity and robust $C^0$-estimates}\label{ss-diss-c0}
\subsubsection{Gromov's trick for Floer equation}\label{AppDissRev1}


A \emph{Riemann surface with $n$ inputs and $1$ output} is a Riemann surface $\Sigma$ obtained from a compact Riemann surface $\overline{\Sigma}$ by forming $n+1$ punctures. We distinguish one puncture as an \emph{output} and refer to the other punctures as \emph{input}. In the the neighborhood of the input $p_i$ we make a choice of \emph{cylindrical coordinates} $\epsilon^-_i:(-\infty,0)\times S^1\to \Sigma$. Near the output $p_0$ we fix cylindrical coordinates $\epsilon^+_0:(0,\infty)\times S^1$. We use the letter $s,t$ to refer to the standard coordinates on $\bR\times S^1$.

Let $\Sigma$ be a Riemann surface with cylindrical ends. A \emph{Floer datum} on $\Sigma$ is a triple $(\alpha,H,J)$ where
\begin{itemize}
\item $H:\Sigma\times M\to\bR$ is a smooth function which is $s$-independent on the ends,
\item $\alpha$ is a closed $1$-form on $\Sigma$ coinciding with $dt$ on the ends, and,
\item $J$ is a  $\Sigma$ dependent $\omega$-compatible almost complex structure on $M$ which is $s$ independent at the ends.
\end{itemize}
The pair $(\alpha,H)$ defines a $1$-form $\fH=H\pi_1^*\alpha$ on $\Sigma\times M$. It can also be considered as a $1$-form on $\Sigma$ with values in functions on $M$. 
We denote by $X_{\fH}$ the $1$-form  $\alpha\otimes_{C^{\infty}(\Sigma)} X_H$ on $\Sigma$ with values in  Hamiltonian vector fields  on $M$. For the case $\Sigma=\bR\times S^1$, $\alpha=dt$ we write the Floer datum as $(H,J)$. 


To the Floer datum $(H,\alpha,J)$ we associate an almost complex structure $J_{\fH}$ on $\Sigma\times M$ as follows
\begin{equation}\label{eqGrTrick}
J_{\fH}:=\left(\begin{matrix} j_{\Sigma}(z) & 0  \\ X_{\fH}(z,x)\circ j_\Sigma(z)-J(z,x)\circ X_{\fH}(z,x) & J(x) \end{matrix}\right).
\end{equation}
When $H,J$ are $s$ independent data on the cylinder $\bR\times S^1$ we use the notation $J_H$ for the corresponding almost complex structure.

We say that  $(H,\alpha,J)$ is \emph{monotone} if for each $p\in M$ we have
\begin{equation}\label{eqMonCond}
d\fH|_{\Sigma\times\{p\}}\geq 0.
\end{equation}
In this case the closed form
\begin{equation}\label{eqGrTrick2}
\omega_{\fH}:=\pi_1^*\omega_{\Sigma}+\pi_2^*\omega+ d\fH
\end{equation}
on $\Sigma\times M$ can be shown to be symplectic and  $J_{\fH}$ is compatible with it. Here $\omega_\Sigma$ is any symplectic form compatible with  $j_\Sigma$.  We shall take $\omega_\Sigma$ to coincide with the form $ds\wedge dt$ on the ends.
We denote the induced metric on $\Sigma\times M$ by $g_{J_{\fH}}$. We refer to the metric $g_{J_\fH}$ as the \emph{Gromov metric}. We stress that \emph{a Gromov metric depends on the choice of area form on $\Sigma$}. The projection $\pi_1:\Sigma\times M\to \Sigma$ is $(J_\fH,j_{\Sigma})$-holomorphic. $\pi_1$ is typically far from being a Riemannian submersion. However we have the following lemma.
\begin{lemma}\label{lmGrNonDec}
Under the monotonicity condition \eqref{eqMonCond} the projection $\pi_1$ is  \emph{length non-increasing}.
\end{lemma}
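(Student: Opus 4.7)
The plan is to prove the inequality pointwise: for every $v \in T_{(z,p)}(\Sigma \times M)$ we aim to show $|d\pi_1(v)|_{\omega_\Sigma}^2 \leq |v|_{g_{J_\fH}}^2$. Decompose $v = (a,b) \in T_z\Sigma \oplus T_pM$, so that $d\pi_1(v) = a$. The block form of $J_\fH$ in \eqref{eqGrTrick} shows immediately that $d\pi_1 \circ J_\fH = j_\Sigma \circ d\pi_1$, so $\pi_1$ is holomorphic; this gives
\begin{equation*}
|a|_{\omega_\Sigma}^2 = \omega_\Sigma(a, j_\Sigma a) = \pi_1^*\omega_\Sigma(v, J_\fH v).
\end{equation*}
Combining this with the definition $|v|_{g_{J_\fH}}^2 = \omega_\fH(v, J_\fH v)$ and the expression \eqref{eqGrTrick2} for $\omega_\fH$, the problem reduces to proving
\begin{equation*}
\pi_2^*\omega(v, J_\fH v) + d\fH(v, J_\fH v) \geq 0.
\end{equation*}

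The core calculation will be to unpack $J_\fH v$ as
\begin{equation*}
J_\fH v = \bigl(j_\Sigma a,\; \alpha(j_\Sigma a) X_H - \alpha(a) JX_H + Jb\bigr),
\end{equation*}
using that $X_\fH = \alpha \otimes X_H$, and then expand everything using the standard identities $\omega(X_H,\cdot) = dH$ and $\omega(\cdot, J\cdot) = g_J(\cdot,\cdot)$. The $H$-dependent cross terms coming from $\pi_2^*\omega$ will cancel against the analogous cross terms in $(dH \wedge \alpha)(v, J_\fH v)$, and after the dust settles I expect the clean identity
\begin{equation*}
|v|_{g_{J_\fH}}^2 \;=\; |a|_{\omega_\Sigma}^2 \;+\; \bigl|\,b - \alpha(a) X_H\,\bigr|_{J}^2 \;+\; d\fH|_{\Sigma \times \{p\}}(a, j_\Sigma a).
\end{equation*}

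Granted this identity, the conclusion is immediate: the middle term is a norm square and hence non-negative, while the last term is non-negative by the monotonicity hypothesis \eqref{eqMonCond}, since $(a, j_\Sigma a)$ is (weakly) positively oriented in $T_z\Sigma$ and any $j_\Sigma$-positive $2$-form on a Riemann surface pairs non-negatively with such a pair. The only real obstacle is the bookkeeping in the middle step: correctly splitting $dH$ into its $\Sigma$- and $M$-components, being careful with sign conventions ($\omega(Y,X_H) = -dH(Y)$), and verifying the cancellations between $\pi_2^*\omega(v,J_\fH v)$ and $(dH\wedge\alpha)(v,J_\fH v)$. Once this arithmetic is done, no further input is needed.
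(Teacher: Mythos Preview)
Your proof is correct. The paper's argument is structurally identical---it also reduces to showing $\beta(v,J_\fH v)\geq 0$ where $\beta=\pi_2^*\omega+d\fH$---but instead of computing this directly it cites \cite{groman}[Lemma 5.1], which says $\omega_\fH$ is $J_\fH$-compatible for \emph{any} choice of area form $\omega_\Sigma$; letting $\omega_\Sigma\to 0$ then yields $\beta(v,J_\fH v)\geq 0$. Your explicit identity
\[
|v|_{g_{J_\fH}}^2 \;=\; |a|_{\omega_\Sigma}^2 \;+\; |b-\alpha(a)X_H|_J^2 \;+\; d\fH|_{\Sigma\times\{p\}}(a,j_\Sigma a)
\]
is precisely the computation underlying that cited lemma, so your proof is a self-contained unpacking of the same idea rather than a genuinely different route. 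The advantage of your version is that it is self-contained and yields the exact form of the Gromov metric; the paper's version is shorter but defers the computation to the reference.
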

\begin{proof}
It is shown in \cite{groman}[Lemma 5.1] that the 2-form \eqref{eqGrTrick2} is symplectic  and compatible with $J_{\fH}$ for \emph{any} choice of area form $\omega_{\Sigma}$. That is, writing $\beta=\pi_2^*\omega+d\fH$ we have $\beta(v,J_{\fH}v)\geq 0$. It follows that
\[
|v|^2\geq\pi_1^*\omega_{\Sigma}(v,J_{\fH}v)=|(\pi_{1})_*v|^2
\]
where we use that  $\pi_1$ is $(J_\fH,j_{\Sigma})$-holomorphic
\end{proof}

\subsubsection{Gromov's trick and energy}

An observation known as \emph{Gromov's trick} is that $u$ is a solution to Floer's equation
\begin{equation}\label{eqFloer}
(du-X_{J})^{0,1}=0,
\end{equation}
if and only if its graph $\tilde{u}$ satisfies the Cauchy Riemann equation
\begin{equation}
\overline{\partial}_{J_{\fH}}\tilde{u}=0.
\end{equation}
Thus Floer trajectories can be considered as $J_{\fH}$-holomorphic sections of $\Sigma\times M\to\Sigma$. 


To a Floer solutions $u:\Sigma\to M$ and a subset $S\subset\Sigma$ we can now associate three different non-negative real numbers:
\begin{itemize}
\item The \emph{geometric energy} $E_{geo}(u;S):=\frac12\int_S\|(du-X_{\fH})\|^2$ of $u$.
\item The \emph{topological energy} $E_{top}(u;S):=\int_Su^*\omega+\tilde{u}^*d\fH.$
\item The \emph{symplectic energy} $E(\tilde{u};S):=\int_S\tilde{u}^*\omega_{\fH}$. 
\end{itemize}
We have the relation $E(\tilde{u};S)=E_{top}(u;S)+Area(S)$. For a monotone Floer datum we have, in addition, the relation $E_{geo}(u;S)\leq E_{top}(u;S)$.



\subsubsection{$C^0$ estimates}

Gromov's trick allows transferring \emph{local} results concerning $J_{\fH}$-holomorphic curves to Floer trajectories. This is true in particular for the monotonicity Lemma \ref{lmMonEst++}. However, note that because of the  contribution from the added term $\pi_1^*\omega_{\Sigma}$ the symplectic energy $\int_{\Sigma}\tilde{u}^*\omega$ is always infinite. For this reason, applying Gromov's trick and the monotonicity Lemma for obtaining $C^0$ estimates  is not straightforward. Techniques for doing this were developed by the first named author in \cite{groman}. These techniques are reviewed in Appendix \S\ref{AppDissRev}. 

The remainder of this section is a quantitative version of the results of  \cite[\S5,\S6]{groman}. The end results are summarized in Lemmas \ref{lemmaLipBounEnd} and \ref{lemmaLipBounEnd2}. The proof follows the pattern of the dissipativity estimates of \cite{groman}. These involve  proving a domain-area dependent  estimate on the \emph{diameter} of a Floer solution as in Lemma \ref{lmDiamEstFreeBo} and a domain-area independent estimate on the \emph{distance} between the two boundary components of a Floer cylinder as in Lemma \ref{lmLoopDisLin}. The combination of the two gives the desired $C^0$ estimate.

The setup we will be considering in the entire subsection, as well as throughout the proof of the locality theorem, is the case of a Floer solution passing through a region $U$ so that the Floer datum satisfies some bounds on the region $U$, but is otherwise arbitrary. In order to avoid unnecessary distractions \emph{we shall assume throughout that the underlying almost complex structure is domain independent in the region $U$ and that $H$ is time independent on the ends.} These assumptions are not strictly necessary, but make the presentation clearer. Moreover, note that all our estimates are robust under $C^0$ small perturbations of $J$ and the Hamiltonian vector field $X_H$.

The following Lemma is a special case of the subsequent Lemma \ref{lmDiamEstFreeBo} (the case $\partial^-S=\emptyset$). We prove it separately for expositional reasons.
\begin{lemma}
Let $\Sigma$ be a Riemann surface with cylindrical ends and let $(\alpha,H,J)$ be a monotone Floer datum on $\Sigma$. Let $U\subset M$ be a compact domain whose boundary is partitioned as $\partial U=\partial^+U\coprod\partial^-U$.  Let $S\subset\Sigma$ be a compact connected subset and $u:S\to M$ be a Floer solution for the datum  $(\alpha,H,J)$ such that $u(\partial S)$ is disjoint from the interior of $U$. Suppose that $u(S)$ meets both $\partial^+U$ and $\partial^-U$. Then there are constants $C,D$ depending only on the bounds on $J$ and $\fH$ along $U$ such that
\begin{equation}
E_{top}(u;S)+\Area(S)\geq Cd_{M}(\partial^+U,\partial^-U)-D,
\end{equation}
where $d_M$ is the distance with respect to $g_J$.
\end{lemma}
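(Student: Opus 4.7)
My strategy is to invoke Gromov's trick, reinterpret the desired estimate as a lower bound on the $g_{J_\fH}$-area of the graph $\tilde u$, and produce many disjoint monotonicity balls along $\tilde u(S)$. By Gromov's trick, the graph $\tilde u(z) = (z, u(z))$ is a $J_\fH$-holomorphic section, and a short computation using $\omega_\fH = \pi_1^* \omega_\Sigma + \pi_2^* \omega + d\fH$ gives
\begin{equation*}
E(\tilde u; S) := \int_S \tilde u^* \omega_\fH = E_{top}(u; S) + \Area(S).
\end{equation*}
Since $\fH$ is monotone, $\omega_\fH$ is symplectic and compatible with $J_\fH$, so $E(\tilde u; S)$ coincides with the $g_{J_\fH}$-area of $\tilde u(S)$ counted with multiplicity; hence it suffices to lower bound the latter.

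The hypothesized bounds on $J$ and $\fH$ along $U$ translate into uniform bounds on the geometry of $(\Sigma \times U, \omega_\fH, g_{J_\fH})$, so the monotonicity lemma for $J_\fH$-holomorphic curves (Lemma \ref{lmMonEst++}) applies with uniform constants $r_0, c > 0$: any $J_\fH$-holomorphic curve $v$ meeting $\tilde p \in \Sigma \times U$ with $B_{g_{J_\fH}}(\tilde p, r_0) \cap v(\partial) = \emptyset$ has symplectic area at least $c r_0^2$ in that ball. Since $S$ is compact and connected and $u(S)$ meets both $\partial^{\pm} U$, there exists a continuous path $\beta \subset S$ from $z^+$ to $z^-$ with $u(z^{\pm}) \in \partial^{\pm} U$, and the $M$-length of $u(\beta)$ is at least $d_M(\partial^+ U, \partial^- U)$. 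I then select points $\tilde p_1, \ldots, \tilde p_N$ along $\tilde u(\beta)$ with pairwise $g_{J_\fH}$-distances at least $2 r_0$, with $\pi_2$-projections in the interior of $U$, and satisfying $N \geq d_M(\partial^+ U, \partial^- U)/(2 C' r_0) - O(1)$ for a constant $C'$ explained below. For $r_0$ small enough the balls $B(\tilde p_i, r_0)$ are pairwise disjoint and disjoint from $\tilde u(\partial S)$, the latter using $u(\partial S) \cap \mathrm{int}(U) = \emptyset$. Summing monotonicity over these balls yields
\begin{equation*}
E_{top}(u; S) + \Area(S) = E(\tilde u; S) \geq N c r_0^2 \geq C d_M(\partial^+ U, \partial^- U) - D.
\end{equation*}

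The main obstacle I anticipate is the comparison that validates the constant $C'$ above: I need an inequality of the form $|(\pi_2)_* v|_{g_J} \leq C' |v|_{g_{J_\fH}}$ for $v$ tangent to the Floer graph over $\pi_2^{-1}(U)$, so that traversing Gromov-distance $2 r_0$ between consecutive $\tilde p_i$ corresponds to moving at most $2 C' r_0$ in $M$, and hence forces $N$ to be large. To establish this I plan to use Floer's equation to decompose $du$ into a $J$-complex-linear part plus $X_\fH$, and then bound the resulting pieces using the assumed uniform bounds on $\fH$ along $U$.
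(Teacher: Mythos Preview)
Your proposal is correct and follows essentially the same route as the paper: Gromov's trick, the energy identity $E(\tilde u;S)=E_{top}(u;S)+\Area(S)$, a path in $S$ whose $u$-image joins $\partial^{\pm}U$, and packing $\sim d_M(\partial^+U,\partial^-U)$ disjoint monotonicity balls along the graph.

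The one place you make life harder for yourself is the ``main obstacle''. The paper resolves it without ever touching Floer's equation: by Lemma~\ref{lmCritABound}, the bounds on $J$ and $\fH$ along $U$ make $g_{J_\fH}$ uniformly \emph{equivalent to the product metric} $g_1=g_\Sigma\times g_J$ on a $g_1$-neighborhood of $\Sigma\times U$. For the product metric, $\pi_2$ is trivially $1$-Lipschitz, so your desired estimate $|(\pi_2)_*v|_{g_J}\le C'|v|_{g_{J_\fH}}$ follows immediately with $C'=a$. In fact the paper simply packs $g_1$-balls of radius $1/a$ with centers at mutual $g_J$-distance $\ge 2/a$ in the $M$-factor (so disjointness is automatic from the product structure), and then invokes monotonicity via the $a$-equivalence. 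Your plan to decompose $du$ using Floer's equation would also work, but it is unnecessary once you notice that the ``uniform bounds on the geometry'' you already invoked are precisely this product-metric equivalence.
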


\begin{remark}
As in Remark \ref{remHeinUsher}, what this Lemma adds to the closely related Usher's lemma \cite{Hein} is that the right hand side goes to infinity with the distance between the boundary components. Lemma \ref{lmDiamEstFreeBo} adds a further relaxation of the requirement that $U$ not meet the boundary of $\partial S$.
\end{remark}

\begin{proof}
Denote by $J_{\fH}$ the almost complex structure produced by the Gromov trick \eqref{eqGrTrick} on $\Sigma\times M$. As discussed above,  the graph $\tilde{u}$ of $u$  is $J_{\fH}$-holomorphic, and, moreover,
we have
\begin{equation}
E(\tilde{u}) =E_{top}(u)+Area(S)
\end{equation}

Let $g_{\Sigma}$ be a metric on $\Sigma$ as defined in the paragraph right after \eqref{eqGrTrick2}.  Since $U$ is compact, there is a constant $a$ such  that the Riemannian metric $g_J$ is strictly bounded by $a$ on the region $U$.  See Definition \ref{dfStrBound}. 
 Let $g_1=g_\Sigma\times g_J$ be the product metric. By Lemma \ref{lmCritABound}, the Gromov metric $g_{J_{\fH}}$ is equivalent on the ball \emph{with respect to $g_1$} of radius $1/a$ around $U$ to the product metric $g_1$ with equivalence constant {$b$} depending on the datum $(\fH,J)$ on $U$. By possibly enlarging $a$ or $b$, we may assume $a=b$. 
In particular, every point $p$ of $\Sigma\times U$ is $a$-bounded with witness supported on the ball with respect to $g_1$ of radius $1/a$ around $p$.

 Picking a path $\gamma$ in $S$ whose image under $u$ connects $\partial^+U$ and $\partial^-U$ we can find $N=\left\lfloor \frac{a}{2}d_{M}(\partial^+U,\partial^- U)\right\rfloor$ disjoint $g_{1}$-balls in $\Sigma\times M$ of radius $1/a$ whose center is in the image of $\tilde{u}$ and which are contained in $S\times U$. The Monotonicity Lemma (Lemma \ref{lmMonEst++}) applied to the Gromov metric now gives
 \begin{equation}
 E_{top}(u;S)+\Area(S)\geq N\frac{c}{a^4} \sim  \frac{c}{2a^3}d_{M}(\partial^+U,\partial^- U).
 \end{equation}
 The constant $D$ in the statement takes care of the case $d_{M}(\partial^+U,\partial^- U)\leq 2a$.
\end{proof}
The Lemma just proven concerned the case where $\partial S$ is mapped by $u$ outside of $U$. The next lemma relaxes this and allows $S$ to have boundary component $\partial^-S$ on which no condition is imposed. In this case the energy $E_{top}(u;S)$ only controls the diameter under $u$ of the smaller set $S'\subset S$ which consists of points whose distance from $\partial^-S$ is bounded away from $0$.
\begin{lemma}\label{lmDiamEstFreeBo}
Let $\Sigma, H,\alpha,J,U$ be as in the previous lemma. Let $S\subset\Sigma$ be a  compact domain whose boundary is partitioned into disjoint closed components  $\partial S=\partial^+S\cup\partial^-S$ snd let $u:S\to M$ be a Floer solution so that $u(\partial^+S)$ does not meet the interior of $U$. Let $S'\subset S$ be a connected subdomain consisting of  points whose distance from $\partial^-S$ is at least $1/a$ where $a>1$ strictly bounds the geometry of $J$ on $U$. Suppose $u(S')$ meets both  $\partial^+U$ and $\partial^-U$.  Then
\begin{equation}
E_{top}(u;S)+\Area(S)\geq Cd_{M}(\partial^+U,\partial^-U)-D
\end{equation}
for  $C,D$ depending only on the bounds on $J$ and $\fH$ along $U$ .
\end{lemma}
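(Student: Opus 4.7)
The plan is to adapt the proof of the preceding lemma, the novelty being that the free boundary component $\partial^-S$ must be avoided by a \emph{geometric} device rather than by the hypothesis on $u(\partial S)$. Apply Gromov's trick to view the Floer solution as a $J_\fH$-holomorphic section $\tilde u:S\to\Sigma\times M$, and equip $\Sigma\times M$ with the product metric $g_1=g_\Sigma\times g_J$. Fix $a>1$ strictly bounding $J$ on $U$; Lemma \ref{lmCritABound} then gives an equivalence constant $b$ between $g_{J_\fH}$ and $g_1$ on every $g_1$-ball of radius $1/a$ centered at a point of $\Sigma\times U$, and after enlarging $a$ we may assume $a=b$.

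Choose a continuous path $\gamma\subset S'$ whose image under $u$ connects $\partial^+U$ to $\partial^-U$; such $\gamma$ exists because $S'$ is connected and $u(S')$ meets both components. Along $u(\gamma)$ pick points $p_i=u(z_i)$, $i=1,\dots,N$, with $z_i\in\gamma\subset S'$, satisfying (i) $d_M(p_i,\partial U)\geq 1/a$ and (ii) $d_M(p_i,p_{i+1})\geq 2/a$. After excluding initial and final subsegments of length $1/a$, one can arrange $N\geq \tfrac{a}{2}d_M(\partial^+U,\partial^-U)-D_0$ with $D_0$ depending only on $a$.

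Let $B_i\subset\Sigma\times M$ denote the open $g_1$-ball of radius $1/a$ about $(z_i,p_i)$. These are pairwise disjoint because their $M$-projections are disjoint by (ii). By (i) each $B_i$ lies in $\Sigma\times\mathrm{int}(U)$, so $\tilde u(\partial^+S)\cap B_i=\emptyset$ since $u(\partial^+S)$ avoids $\mathrm{int}(U)$. Moreover the $\Sigma$-projection of $B_i$ is an open $g_\Sigma$-ball of radius $1/a$ around $z_i\in S'$, which by definition of $S'$ is disjoint from $\partial^-S$, so $\tilde u(\partial^-S)\cap B_i=\emptyset$ as well. Thus $\tilde u$ restricted to $\tilde u^{-1}(B_i)$ is a $J_\fH$-holomorphic curve passing through the center of $B_i$ with boundary outside $B_i$.

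Apply the monotonicity Lemma \ref{lmMonEst++} in the Gromov metric inside each $B_i$; the $b$-equivalence of the two metrics converts the standard lower bound into $E(\tilde u;B_i)\geq c/a^4$ for a universal constant $c$. Summing over $i$ and using $E(\tilde u;S)=E_{top}(u;S)+\Area(S)$ gives
\begin{equation*}
E_{top}(u;S)+\Area(S)\;\geq\;\tfrac{c}{a^4}N\;\geq\;\tfrac{c}{2a^3}\,d_M(\partial^+U,\partial^-U)-D,
\end{equation*}
which is the desired estimate. The main point distinguishing this from the preceding lemma is the absence of any control on $u(\partial^-S)$; the passage from $S$ to $S'$ in the statement is designed precisely so that the $\Sigma$-projections of monotonicity balls centered on $\tilde u(S')$ stay clear of $\partial^-S$ by pure domain geometry, while the $\partial^+S$ side is handled exactly as before.
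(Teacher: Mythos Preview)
Your argument is essentially the paper's own proof: same Gromov-trick setup, same choice of points along a path in $S'$, same two mechanisms for clearing $\partial S$ (the $M$-projection for $\partial^+S$, the $\Sigma$-projection for $\partial^-S$), same monotonicity-and-sum conclusion.

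One genuine slip: your condition (ii) only bounds the distance between \emph{consecutive} points $p_i,p_{i+1}$, yet you invoke it to conclude that \emph{all} the balls $B_i$ are pairwise disjoint via their $M$-projections. That does not follow, since $u(\gamma)$ may loop back on itself. The fix is standard (and is what the paper does implicitly): require the pairwise condition $d_M(p_i,p_j)\geq 2/a$ for all $i\neq j$. One way to produce such points while keeping the count $N\gtrsim \tfrac{a}{2}d_M(\partial^+U,\partial^-U)$ is to let $f(t)=d_M(u(z_-),u(\gamma(t)))$ and pick $z_k$ with $f(z_k)=2k/a$; then $d_M(p_i,p_j)\geq |f(z_i)-f(z_j)|\geq 2/a$ by the triangle inequality. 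With that correction your proof is complete and matches the paper's.
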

 \begin{remark} We suggest the following example for the reader to appreciate this lemma. By the Riemann mapping theorem, there is a bi-holomorphism from the unit disk to an arbitrarily long and thin simply connected domain of say area $1$. The Lemma with $S$ being the unit disk, $\partial^-S=\partial S$ and  $\partial^+S=\emptyset,$ says that under any such a biholomorphism the image of the disc $S'$of radius $0.99$  around the origin  has diameter at most $c_0$, which is a real number independent of the target domain. Indeed  for any $p_1,p_2$ in the image of $S'$ we can take $U$ to be an an annulus with $p_1$ on its inside and $p_2$ on its outside.
 \end{remark}
\begin{proof}[Proof of Lemma \ref{lmDiamEstFreeBo}]
As in the proof of the previous Lemma, without loss of generality the Gromov metric associated with the Floer datum $(\fH,J)$ is $a$-bounded  and $a$-equivalent to the product metric $g_1$ on $\Sigma\times U$. Let $z_-,z_+\in S'$ be a pair of points meeting each boundary respectively. Let $\gamma$ be a path in $S'$ connecting $z_-$ and $z_+$. Let $N$ be a number and $z_i$ for $i=1,\dots N$  be a sequence of points along $\gamma$ with the property that $d_M(u(z_i),u(z_j))\geq2/a$ and $d_M(u(z_i),\partial^{\pm}U)\geq1/a$. We can  take
\begin{equation}
N+1\geq \frac{a}{2} d_M(u(z_-),u(z_+))\geq \frac{a}{2} d_M(\partial^+U,\partial^-U).
\end{equation}

Consider the map $\tilde{u}=id\times u: S\to \Sigma\times M$.  
 Let $S_i\subset S$ be the connected component of $\tilde{u}^{-1}(\tilde{B}_{1/a}(\tilde{u}(z_i))$ containing the point $z_i$.
Here $\tilde{B}_{1/a}$ denotes the ball in $\Sigma\times M$ with respect to the product metric $g_0$.
We claim the interior of $S_i$ does not meet the boundary $\partial S$. Indeed the interior of $S_i$ does not meet $\partial^+S$ since
\begin{equation}
d_{g_1}(\tilde{u}(\partial^+S),\tilde{u}(z_i))\geq d_{M}(u(\partial^+S),u(z_i))\geq d_M(\partial^{\pm} U,u(z_i))\geq1/a.
\end{equation}
Similarly,  the interior of $S_i$   does not meet $\partial^-S$ since $z_i\in S'$, so
\begin{equation}
d_{g_1}(\tilde{u}(\partial^-S),\tilde{u}(z_i))\geq d_{\Sigma}(\partial^-S,z_i)\geq1/a.
\end{equation}

From this it follows that $u(\partial S_i)\cap \tilde{B}_{1/a}(\tilde{u}(z_i))=\emptyset.$
The Monotonicity Lemma (Lemma \ref{lmMonEst++}) thus  guarantees
\begin{equation}
E(\tilde{u};S_i)=Area(S_i)+E(u;S_i)\geq \frac{c}{a^4}.
\end{equation}


On the other hand, if $i\neq j$ then $S_i\cap S_j=\emptyset$. Thus
\begin{align}
E(u;S)+Area(S)&\geq\sum_i Area(S_i)+E(u;S_i)\\
&\geq N\frac{c}{a^4}\notag\\
& \geq \frac{c}{2a^3} d_M(\partial^+U,\partial^-U)-\frac{c}{a^4}.\notag
\end{align}

\end{proof}

For the sake of simplicity,  in the following Lemma we restrict attention  to $H,J$ which are domain independent Floer data on the cylinder. In the statement we refer to the function $\Gamma_{H,J}$ which is introduced in Definiton \ref{dfGamma}.
\begin{lemma}\label{lmLoopDisLin}
For any constant $c>0$ there are constants $\epsilon,R,C>0$ such that the following hold. Let $J$ be a compatible almost complex structure inducing a complete metric. Let $H:M\to\bR$ be a Hamiltonian and let $h_1<h_2\in H( M)$. Suppose on the region $H^{-1}([h_1,h_2])$ we have the geometry of $g_J$ is strictly $c$-bounded (see Definition \ref{dfStrBound}) and  that $H$ has Lipschitz constant $\leq \epsilon$. Then
\begin{equation}
\Gamma_{H,J}(h_1,h_2)\geq C(h_2-h_1-R).
\end{equation}
\end{lemma}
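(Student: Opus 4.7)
The plan is to derive the estimate from Lemma \ref{lmDiamEstFreeBo} applied to the annular region $U:=H^{-1}([h_1,h_2])$ as the separating obstacle, with the Lipschitz bound on $H$ used only to conclude that $U$ is metrically ``wide.''

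First, I would establish the distance bound
$$d_M\bigl(H^{-1}(h_1),H^{-1}(h_2)\bigr)\ge (h_2-h_1)/\epsilon.$$
Indeed, $\|dH\|_{g_J}\le\epsilon$ forces any $C^1$ path $\gamma$ connecting the two level sets to satisfy $h_2-h_1\le\int|dH(\dot\gamma)|\,dt\le \epsilon\cdot\mathrm{length}(\gamma)$. This is the \emph{only} place where the Lipschitz hypothesis enters.

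Next, I would recall from Definition \ref{dfGamma} (following the loopwise dissipativity framework of \cite{groman}) that $\Gamma_{H,J}(h_1,h_2)$ is extracted from Floer solutions $u$ on a cylindrical domain whose circles lie on one side of $H\le h_1$ and on the other side of $H\ge h_2$, measuring the minimal topological energy needed for such a crossing. For any such competitor $u$, the image must traverse the annulus $U$ from $H^{-1}(h_1)$ to $H^{-1}(h_2)$. Since the datum is $s$-independent on the ends, one can select a compact subdomain $S$ of the cylinder which contains the crossing of $U$ and whose area is uniformly bounded by a constant depending only on the geometric data on $U$.

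Applying Lemma \ref{lmDiamEstFreeBo} with $\partial^+U:=H^{-1}(h_2)$, $\partial^-U:=H^{-1}(h_1)$, and the indicated decomposition of $\partial S$, one obtains
$$E_{top}(u;S)+\mathrm{Area}(S)\;\ge\;C_1\,d_M\bigl(H^{-1}(h_1),H^{-1}(h_2)\bigr)-D_1,$$
with $C_1,D_1$ depending only on the strict $c$-boundedness of $g_J$ on $U$. Combining with the distance bound gives $E_{top}(u;S)+\mathrm{Area}(S)\ge (C_1/\epsilon)(h_2-h_1)-D_1$. Taking the infimum over competitors, fixing $\epsilon$ small enough (depending on $c$) so that the hypotheses of Lemma \ref{lmDiamEstFreeBo} are activated, setting $C:=C_1/\epsilon$, and absorbing $D_1$ together with the uniform area bound into a single additive constant $R$ yields the stated inequality.

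The main obstacle is the extraction from an arbitrary competitor $u$ of a subdomain $S$ whose area is universally bounded \emph{and} which already realizes the crossing of $U$, so that the additive area contribution in Lemma \ref{lmDiamEstFreeBo} can be absorbed into $R$. Once one exploits the $s$-independence of the datum together with the fact that a loop on a fixed level of $H$ lies in a compact region of controlled size (by the strict $c$-boundedness), this technical point is standard; the remainder of the argument is then a direct combination of the Lipschitz hypothesis with the monotonicity-based diameter estimate already in hand.
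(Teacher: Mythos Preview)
Your identification of the obstacle is exactly right, but your dismissal of it as ``standard'' is where the argument fails. The extraction of a subdomain $S$ of uniformly bounded area that realises the full crossing of $U=H^{-1}([h_1,h_2])$ is \emph{not} possible in general, and this is precisely the content of the lemma.

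Consider a competitor $u:[a,b]\times S^1\to M$ for which every loop $u(s,\cdot)$ has small $H$-oscillation, say $g(s)-f(s)<(h_2-h_1)/3$ for all $s$ (with $f(s)=\min_t H\circ u(s,t)$, $g(s)=\max_t H\circ u(s,t)$). Such competitors exist: think of a trajectory that is everywhere close to the flow of $X_H$ and creeps slowly across $U$ over a very long $s$-interval. Now any candidate subdomain $S$ faces a dichotomy. If you take $\partial^+S$ to consist of whole circles $\{s\}\times S^1$ mapped outside $\mathrm{int}\,U$, these must be the original ends $s=a,b$, so $\mathrm{Area}(S)=b-a$, which is unbounded. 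If instead you invoke the free-boundary mechanism of Lemma~\ref{lmDiamEstFreeBo} and take $S=[s_0-1,s_0+1]\times S^1$ with $\partial^-S=\partial S$, then the connected $S'$ lies in a single loop of small $H$-oscillation, so $d_M(\partial^+U,\partial^-U)$ in the conclusion must be replaced by a distance of order $(g(s_0)-f(s_0))/\epsilon$, which gives nothing close to $(h_2-h_1)/\epsilon$. Neither the $s$-independence of the datum nor the $c$-boundedness of $g_J$ on $U$ rescues this: they control the \emph{target} geometry, not the $s$-length of the domain needed to traverse it.

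The paper's proof handles exactly this difficulty by an inductive subdivision: one chooses $s_0<s_1<\dots<s_N$ so that the windows $[f(s_i),g(s_i)]$ together with $N$ gaps of a fixed width $R$ cover $[h_1,h_2]$, and to each $s_i$ one associates a set $S_i$ of area $\le 2$ on which Lemma~\ref{lmDiamEstFreeBo} yields $E(u;S_i)\ge\frac{C}{\epsilon}(g(s_i)-f(s_i))-D'$. Summing gives $2E(u)\ge\frac{C}{\epsilon}(h_2-h_1)-(\mathrm{const})\cdot N$. The dependence on $N$ is then eliminated using a separate non-trivial input, \cite[Lemma~6.9]{groman}, which guarantees a fixed energy cost $\delta>0$ for crossing any window of width $R$, hence $E(u)\ge N\delta$. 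Combining the two inequalities yields the linear bound. Your sketch omits both the subdivision and this external lemma; without them the gap you flagged cannot be closed.
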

\begin{proof}
We rely on  \cite[Lemma 6.9]{groman} according to which if $\epsilon$ is taken small enough there are constants $R,\delta>0$ so that for any $h\in H(M)$ we have $\Gamma_{H,J}(h,h+R) >\delta$. We now argue as follows. Let $u:[a,b]\times S^1\to M$ be a Floer solution with one end in $\{H<h_1\}$ and the other end in $\{H>h_2\}$. If no such $u$ exists, the claim holds vacuously. We may further assume that $h_2-h_1>R$ or else the statement holds automatically by positivity of geometric energy.  Define functions $f,g:[a,b]\to\bR$ by $f(s)=\min_t H(u(s,t))$ and $g(s):=\max_t H(u(s,t))$. Without loss of generality $g(a)=h_1$ and $f(b)=h_2$ and $g(s)>h_1,f(s)<h_2$ for $s\in (a,b)$. We inductively construct a sequence
\[
s_0=a<s_1<\dots<s_N<b
\]
with the property that $f(s_i)=g(s_{i-1})+R$ and which is a maximal such sequence. Namely, let $s_1$ the smallest $s$ in the interval $[s_0,b]$ for which $f(s)=g(s_0)+R$. Such an $s$ exists since we assume $f(b)-g(a)=h_2-h_1>R$. Inductively, let $s_i$ be the smallest $s$ on the interval $[s_{i-1},b] $ for which $f(s)=g(s_{i-1})+R$.   Take $N=i-1$ for the first $i$ for which no such $s$ exists.

We associate with each $0<i<N$ a subset $S_i\subset [a,b]\times S^1$ as follows. The reader might find it helpful to consult Figure \ref{diss} to follow along. Let
\begin{equation}
\tilde{S}_i=(H\circ u)^{-1}([f(s_i),g(s_i)]).
\end{equation}
let $\tilde{S}_i'$ be the connected component of $\tilde{S}_i$ containing the circle $s=s_i$. Let $a_i=\min\left\{s|(s,t)\in \tilde{S}_i'\right\}$ and $b_i= \max\left\{s|(s,t)\in \tilde{S}_i'\right\}.$ Let
\begin{equation}
S_i^+=\begin{cases} [s_i,s_i+1]\times S^1,&\quad b_i-s_i\geq 2,\\
				\tilde{S}_i'\cap[s_i,b],&\quad b_i-s_i<2.
				\end{cases}
\end{equation}

\begin{figure}
\includegraphics[scale =0.5, trim=1500 250 1500 200,clip]{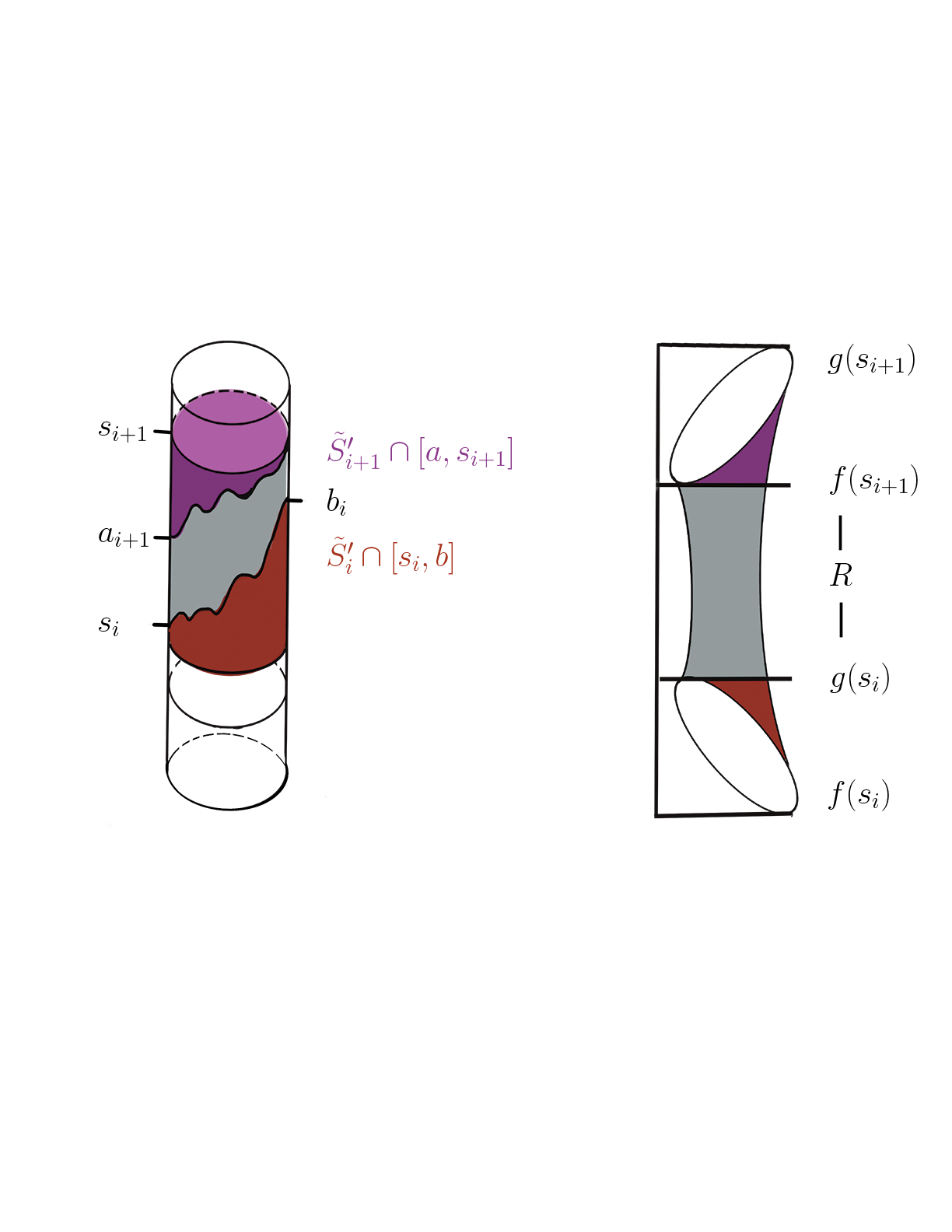}
\caption{}
\label{diss}
\end{figure}
Observe that in any case, we have $S_i^+\subset [a,b]\times S^1$.
Similarly, let $S_i^-$ be the set $[s_i-1,s_i]\times S^1$ if $a_i\leq s_i-2$ and $[a,s_i]\cap\tilde{S}'_i$ otherwise. Finally, let $S_i=S_i^+\cup S_i^-$. Observe we have the following
\begin{enumerate}
\item If $|i- j|>1$ then $S_i\cap S_j=\emptyset$. To see this note for any $s\in [s_i,b_i]$ we have $f(s)\leq g(s_i)<f(s_{i+1})$. This implies $s_{i+1}>b_i$. From this it follows that $S_i^+\cap S_{i+1}^+=\emptyset$. Similarly we have $S_{i-1}^-\cap S_{i}^-=\emptyset$. Thus if $S_i\cap S_j\neq 0$ we have either $i=j$ or $|i-j|=1$. 
\item  $E(u;S_i)+Area(S_i)\geq \frac{C}{\epsilon}(g(s_i)-f(s_i))-D$ for $C,D$ as in Lemma \ref{lmDiamEstFreeBo}. Indeed each component of the boundary of $S_i$ is either a distance of $1$ away from the line $s=s_i$ or its image under $u$ does not meet the interior of the the region $K=H^{-1}[f(s_i),g(s_i)]$. Denote by $d$ the distance between the boundary components of $K$. Then by the Lipschitz property of $H$ we have that $g(s_i)-f(s_i)\leq\epsilon d$. The claim thus follows  Lemma \ref{lmDiamEstFreeBo}. For simplicity assume from now on $\epsilon\leq 1$. 
\item We have $Area(S_i)\leq 2$ and therefore, by the first item, $ \sum_iArea(S_i)\leq 4N$. 
\end{enumerate}
Thus summing the inequalities of the second item  and taking $S=\cup_i S_i$ we obtain
\begin{equation}
2E(u;S)\geq\sum_{i=1}^{N}\frac{C}{\epsilon}(g(s_i)-f(s_i))-(4+D)N.
\end{equation}
{The factor of $2$ accounts for the potential of double counting since item 1 allows $S_i\cap S_{i+1}\neq 0$}. 
We have $h_2-h_1\leq \sum_i(g(s_i)-f(s_i))+(N+1)R$. So, writing $\tilde{R}:=\frac{C}{\epsilon}R$ we can convert the last inequality into
\begin{equation}
2E(u)\geq \frac{C}{\epsilon}(h_2-h_1)-(\tilde{R}+D+4)N - \tilde{R}.
\end{equation}
On the other hand by \cite[Lemma 6.9]{groman} we have
\begin{equation}
E(u)\geq N\delta.
\end{equation}
Combining the last two inequalities to eliminate $N$ we obtain
\begin{equation}
E(u)\geq \frac{C\delta}{\epsilon(\tilde{R}+D+4+2\delta)}(h_2-h_1-\tilde{R}).
\end{equation}
{The claim now follows by renaming the constants}.
\end{proof}
\begin{lemma}\label{lemmaLipBounEnd}
Let $J$ be a compatible almost complex structure inducing a complete metric on $M$.  Let $H:M\to\bR$ be a proper Hamiltonian. Let $h_{\pm}\in H(M)$ and let $c$ be a constant  strictly bounding  the geometry of $g_J$ on the region $H^{-1}(h_+,h_-)$. Suppose $H$ is $\epsilon$-Lipschitz where $\epsilon=\epsilon(c)$ is given by Lemma \ref{lmLoopDisLin}. Suppose $u$ is a Floer solution defined along $[a-1,b+1]\times S^1$ and $h_{\pm}$ are the maximum and minimum respectively of $H\circ u$ on $[a,b]\times S^1$. Then
\begin{equation}
E_{top}(u;[a-1,b+1]\times S^1)\geq C(h_+-h_--R)
\end{equation}
for $C=C(c), R=R(c)$.
\end{lemma}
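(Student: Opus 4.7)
The plan is to run the same covering argument as in the proof of Lemma \ref{lmLoopDisLin}, now adapted to a finite cylinder whose end circles carry no a priori level-set condition. Set $f(s):=\min_t H(u(s,t))$ and $g(s):=\max_t H(u(s,t))$ for $s\in[a-1,b+1]$. Since $h_+,h_-$ are attained on $[a,b]\times S^1$, there exist $s^*,s^{**}\in[a,b]$ with $f(s^*)\le h_-$ and $g(s^{**})\ge h_+$. If $h_+-h_-\le R$ the conclusion is trivial by positivity of the geometric energy, so assume $h_+-h_->R$. Without loss of generality $s^*\le s^{**}$ (the reverse case is symmetric). Let $R,\delta>0$ be the constants produced by \cite[Lemma 6.9]{groman} for the given bound $c$, on making $\epsilon$ smaller if necessary so that the Lipschitz hypothesis of that lemma also holds.

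Inductively define $s_0:=s^*<s_1<\dots<s_N$ inside $[s^*,s^{**}]$ by letting $s_i$ be the smallest $s\in[s_{i-1},s^{**}]$ with $f(s)=g(s_{i-1})+R$, terminating at $N$ when no such $s$ exists. Exactly as in Lemma \ref{lmLoopDisLin}, associate to each $s_i$ the subset $S_i\subset[a-1,b+1]\times S^1$ built from a component of $(H\circ u)^{-1}([f(s_i),g(s_i)])$ truncated to width at most $2$ in the $s$-direction; the extra buffers $[a-1,a]\times S^1$ and $[b,b+1]\times S^1$ guarantee that each $S_i$ lies in the domain and that $\text{Area}(S_i)\le 2$. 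Also form one additional set $S_\star$ of the same type around $s^{**}$. Lemma \ref{lmDiamEstFreeBo} applied to each $S_i$ and to $S_\star$ (with $U=H^{-1}([f(s_i),g(s_i)])$, resp. $U=H^{-1}([f(s^{**}),g(s^{**})])$) together with the Lipschitz hypothesis yields
\begin{equation*}
2E_{top}(u)+\sum_i\!\text{Area}(S_i)+\text{Area}(S_\star)\ge \tfrac{C}{\epsilon}\Bigl(\sum_{i=0}^N(g(s_i)-f(s_i))+(g(s^{**})-f(s^{**}))\Bigr)-D(N+2),
\end{equation*}
where $C,D$ depend only on $c$.

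On each strip $[s_{i-1},s_i]\times S^1$ the Floer solution lies entirely below $g(s_{i-1})$ on the left circle and entirely above $g(s_{i-1})+R$ on the right circle; hence \cite[Lemma 6.9]{groman}, applied with $h=g(s_{i-1})$, gives $E_{top}(u;[s_{i-1},s_i]\times S^1)\ge\delta$, so summing on disjoint strips yields $E_{top}(u)\ge N\delta$. To close the argument, one must bound $h_+-h_-$ by the quantities appearing above. Because $f(s_i)=g(s_{i-1})+R$, telescoping gives $g(s_N)-g(s_0)=NR+\sum_{i=1}^N(g(s_i)-f(s_i))$; the termination condition at $s^{**}$ forces $f(s^{**})<g(s_N)+R$, so $g(s^{**})-f(s^{**})>h_+-g(s_N)-R$; and $g(s_0)\le h_-+(g(s_0)-f(s_0))$. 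Combining these three bounds gives
\begin{equation*}
\sum_{i=0}^N(g(s_i)-f(s_i))+(g(s^{**})-f(s^{**}))\;\ge\;h_+-h_--(N+2)R.
\end{equation*}
Substituting into the energy inequality and eliminating $N$ using $E_{top}(u)\ge N\delta$ yields a lower bound of the claimed form $E_{top}(u)\ge C'(h_+-h_--R')$ with constants depending only on $c$.

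The main technical obstacle I anticipate is the geometric/combinatorial bookkeeping at the end of the iteration: in Lemma \ref{lmLoopDisLin} the far endpoints of the cylinder lie in sublevel sets $\{H<h_1\}$ and $\{H>h_2\}$, which cleanly drives the iteration to reach the target level. Here, neither end circle is constrained, so one has to exploit the attainment of $h_+,h_-$ on the interior circles and extract the missing ``end correction'' from the single strip $S_\star$ at $s^{**}$, then verify that the resulting covering argument still balances against the dissipativity bound $E_{top}(u)\ge N\delta$. The buffer width $1$ is used precisely so that the width-$\le 2$ sets $S_i,S_\star$ always remain inside $[a-1,b+1]\times S^1$, avoiding any issue with the boundary of the domain.
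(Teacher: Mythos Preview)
Your argument appears to work, but it is far more laborious than the paper's proof, which avoids rerunning the inductive covering construction altogether. The paper argues by a simple dichotomy: if some single circle $\{s\}\times S^1$ with $s\in[a,b]$ has oscillation $g(s)-f(s)>\tfrac13(h_+-h_-)$, then applying Lemma \ref{lmDiamEstFreeBo} to $S=[s-1,s+1]\times S^1$ with $\partial^+S=\emptyset$ already gives the estimate; otherwise, every circle has oscillation $\leq\tfrac13(h_+-h_-)$, and since $h_\pm$ are attained on $[a,b]\times S^1$ one finds $s_1,s_2\in[a,b]$ with $g(s_1)<h_-+\tfrac13(h_+-h_-)$ and $f(s_2)>h_+-\tfrac13(h_+-h_-)$, whence the energy is bounded below by $\Gamma_{H,J}$ of the middle third of $[h_-,h_+]$, and Lemma \ref{lmLoopDisLin} applies directly as a black box.

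The advantage of the paper's route is that all the delicate bookkeeping---the inductive construction of the $s_i$, the $S_i^\pm$ sets, the telescoping, and the elimination of $N$ against $N\delta$---has already been absorbed into Lemma \ref{lmLoopDisLin} and need not be repeated. Your approach instead reproves that lemma inside the present one, with the added complication of the extra endpoint correction $S_\star$ at $s^{**}$; this is the ``technical obstacle'' you yourself flag, and it is exactly what the dichotomy sidesteps. What your approach buys is perhaps a slightly more self-contained argument, but at the cost of duplicating machinery already in place.
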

\begin{proof}
Define the functions $f,g$ as in the proof of Lemma \ref{lmLoopDisLin}. If there is an $s\in[a,b]$ such that $g(s)-f(s)>\frac1{3}(h_+-h_-)$  the estimate follows by considering the case $S=[s-1,s+1]\times S^1$ and $\partial^+S=\emptyset$ in Lemma \ref{lmDiamEstFreeBo}. Otherwise, subdivide the interval $[h_-,h_+]$ into equal interval with endpoint labeled $h_0<h_1<h_2<h_3$. Then by assumption there is a pair $s_1,s_2\in[a,b]$ such that $g(s_1)<h_1$ and $f(s_2)>h_2$.  Then
\begin{equation}
E(u;[a,b]\times S^1)>\Gamma_{H,J}(h_1,h_2)
\end{equation}
by definition of $\Gamma_{H,J}$. 
Since  $h_2-h_1= \frac1{3}(h_+-h_-)$ we are done by Lemma \ref{lmLoopDisLin}.
\end{proof}

\begin{lemma}\label{lemmaLipBounEnd2}
Let $\Sigma$ be a Riemann surface with $n$ inputs and $1$ output. Let $\alpha$ be a closed $1$-form on $\Sigma$ equalling $dt$ on the negative ends and $ndt$ on the positive end. Let $H:\Sigma\times M\to\bR$ be smooth such that denoting by $\pi_1:\Sigma\times M\to\Sigma$ the projection we have that $\pi_1\times H$ is proper. Let $\Sigma_0\subset\Sigma$ such that $\Sigma\setminus\Sigma_0$ is a union of cylindrical ends and $H$ is locally domain independent on the complement of $\Sigma_0$. Let $J$ be a compatible almost complex structure inducing a complete metric.  Let $c>0$ be a constant such that on  $H^{-1}(h_1,h_2)$ we have the geometry of $g_J$ is strictly $c$-bounded. Assume that on $\Sigma\setminus\Sigma_0$, $H$ has Lipschitz constant $\leq \frac{\epsilon}{n}$ for $\epsilon=\epsilon(c)$ as in Lemma \ref{lmLoopDisLin}. Let
\[
u:\Sigma\to M
\]
be a map satisfying Floer's equation with datum $(H,\alpha,J)$. Then if $u$ meets both sides of $ M\setminus H^{-1}(h_1,h_2)$ we have
\begin{equation}
E_{geo}(u)+Area(\Sigma_0)+2\geq C(h_2-h_1-R),
\end{equation}
with the constants depending only on $\Sigma_0$ and the bounds on $(\alpha,H,J)$ in the region $H^{-1}(h_1,h_2)$.
\end{lemma}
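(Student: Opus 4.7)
The plan is to combine Lemma \ref{lemmaLipBounEnd} applied to each cylindrical end, where $H$ is Lipschitz, with the diameter estimate Lemma \ref{lmDiamEstFreeBo} applied to a small enlargement of the core $\Sigma_0$. A pigeonhole argument identifies which piece carries most of the height spread.

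First I would normalize the output end. On it $\alpha = n\,dt$, so Floer's equation with datum $(\alpha, H, J)$ is equivalent to the standard cylindrical Floer equation with $\tilde\alpha = dt$ and rescaled Hamiltonian $\tilde H := n H$. The bound $\mathrm{Lip}(H|_{\Sigma\setminus\Sigma_0}) \leq \epsilon/n$ gives $\mathrm{Lip}(\tilde H) \leq \epsilon$, exactly matching the hypothesis of Lemma \ref{lemmaLipBounEnd}. After this reduction, let $P_0 = \Sigma_0, P_1, \ldots, P_{n+1}$ denote the $n+2$ pieces of $\Sigma$ (core and cylindrical ends), and for each set $I_{P_k} := (H\circ u)(P_k)$ (using $\tilde H$ on the positive end, which only stretches ranges, so is harmless for a lower bound). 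Since $H\circ u$ is continuous on the connected surface $\Sigma$ and attains values both below $h_1$ and above $h_2$, we have $\bigcup_k I_{P_k} \supseteq [h_1, h_2]$, hence by pigeonhole some $|I_{P_k}| \geq (h_2 - h_1)/(n+2)$.

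In the first case, where the piece $P_k$ achieving this lower bound is a cylindrical end, Lemma \ref{lemmaLipBounEnd} applied directly to $u|_{P_k}$ yields
\[
E_{geo}(u) \,\geq\, E_{geo}(u|_{P_k}) \,\geq\, C_1\bigl((h_2-h_1)/(n+2) - R_1\bigr),
\]
which after absorbing constants gives the required inequality. In the second case, where $P_k = \Sigma_0$, we have that $H\circ u$ has range at least $(h_2-h_1)/(n+2)$ on $\Sigma_0$. Here the observation to exploit is that although the Lipschitz hypothesis on $H$ fails on $\Sigma_0$, the compactness of $\Sigma_0$ together with the properness of $\pi_1\times H$ implies that $M_0 := \pi_M\bigl(\Sigma_0 \times M \,\cap\, H^{-1}[h_1,h_2]\bigr)$ is a compact subset of $M$; by smoothness of $H$ we obtain a Lipschitz constant $L = L(\Sigma_0, H)$ for $H(z,\cdot)$ on $M_0$, uniform in $z \in \Sigma_0$. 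Choose $h_- < h_+$ near the extremes of $I_{\Sigma_0}$ with $h_+ - h_-$ close to $(h_2-h_1)/(n+2)$, set $U := H^{-1}[h_-,h_+] \subset M$, and apply Lemma \ref{lmDiamEstFreeBo} with $S$ the enlargement of $\Sigma_0$ by a buffer of length $1/a$ along each end, $\partial^+ S = \emptyset$, $\partial^- S = \partial S$, and $S' = \Sigma_0$. Since $\Sigma_0$ is path-connected, $u(\Sigma_0)$ traverses $U$, so $u(S')$ meets both $\partial^\pm U$; the Lipschitz bound gives $d_M(\partial^+ U, \partial^- U) \geq (h_+ - h_-)/L$, and Lemma \ref{lmDiamEstFreeBo} yields
\[
E_{geo}(u) + \mathrm{Area}(S) \,\geq\, C_2 (h_+ - h_-)/L - D_2.
\]
Noting $\mathrm{Area}(S) \leq \mathrm{Area}(\Sigma_0) + (n+1)/a$ and absorbing the additive $O(1)$ terms into $R$ gives the desired bound, with all constants depending only on $\Sigma_0$ and the bounds on $(\alpha, H, J)$ in $H^{-1}(h_1,h_2)$.

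The main obstacle is Case B: one must extract a usable Lipschitz control on $H$ over $\Sigma_0 \times M_0$ without an a priori hypothesis, which is achieved via properness of $\pi_1 \times H$ and smoothness, and then arrange the choice of $S$, $S'$, and the heights $h_\pm$ so that the enlargement sits inside the region where $J$ has $c$-bounded geometry and $u(S')$ genuinely crosses $U$. Once this matching is done carefully, both cases combine into the stated inequality.
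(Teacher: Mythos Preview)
Your overall strategy is close to the paper's, but Case A has a genuine gap. To apply Lemma \ref{lemmaLipBounEnd} to an end $P_k$ you must choose $[a,b]\subset P_k$ and let $h_\pm$ be the actual max and min of $H\circ u$ on $[a,b]\times S^1$; the lemma then requires $g_J$ to be $c$-bounded on all of $H^{-1}(h_-,h_+)$. Nothing in your pigeonhole forces $[h_-,h_+]\subset [h_1,h_2]$: even when $I_{P_k}\cap[h_1,h_2]$ is long, a single circle $\{s\}\times S^1$ may already have $H\circ u$ overshooting both $h_1$ and $h_2$, and then no sub-cylinder has range contained in the region where you control the geometry. So Lemma \ref{lemmaLipBounEnd} is not directly applicable as you claim.

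The paper resolves exactly this by a different dichotomy, placed \emph{before} any pigeonhole on pieces. It first checks whether some pair of ``nearby'' points $p_1,p_2$ --- either both in $\Sigma_0$, or on the same circle $\{s\}\times S^1$ of an end --- has $H$-values differing by more than $\tfrac{1}{7}(h_2-h_1)$. If so, Lemma \ref{lmDiamEstFreeBo} is applied to a domain $S$ around a connecting path of area $\leq\mathrm{Area}(\Sigma_0)+2$ (this subsumes your Case B). If not, then in particular every circle $\{s\}\times S^1$ has oscillation $\leq\tfrac{1}{7}(h_2-h_1)$, and \emph{this} bound is precisely what allows one to locate a sub-cylinder $[a',b']\times S^1$ in some end with $u([a',b']\times S^1)\subset H^{-1}(h_1,h_2)$ while still traversing an $H$-interval of length $\geq\tfrac{1}{7}(h_2-h_1)$; only then is Lemma \ref{lemmaLipBounEnd} invoked. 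Your argument already contains the $\Sigma_0$ half of this oscillation check but omits the slice half, without which Case A does not close. (A secondary imprecision: in Case B you write $U=H^{-1}[h_-,h_+]\subset M$, but $H$ is domain-dependent on $\Sigma_0$, so this set lives in $\Sigma\times M$; converting the $H$-gap to a $d_M$-gap also needs control of the $\Sigma$-variation of $H$ on $\Sigma_0$, which you should make explicit.)
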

\begin{proof}
Call a pair of points $p_1,p_2$ \emph{nearby}  if there is a path $\gamma$ connecting $p_1,p_2$ such that $Area(B_1(\gamma))<Area(\Sigma_0)+2$. For appropriately chosen area form on $\Sigma$\footnote{Note the choice of area form affects the constants appearing in the various estimates in this section, via the geometry of the Gromov metric, but plays no role in Floer's equation.}, any two points on $\Sigma_0$ are nearby and any two points in an end of the form $(s,t_0),(s,t_1)$ are nearby. If there are two nearby points such that $u(p_1),u(p_2)\in H^{-1}(h_1,h_2)$ and $H(u(p_1))-H(u(p_2))>\frac1{7}(h_2-h_1)$ we are done by Lemma  \ref{lmDiamEstFreeBo}. Otherwise, {we deduce that the oscillation of $H$  on $\Sigma_0\cap u^{-1}([h_1,h_2])$  is $\leq \frac17(h_2-h_1)$. Since the oscillation of $H$ on $u^{-1}([h_1,h_2])$ is $h_2-h_1$ we deduce there is a subset $[a,b]\times S^1$ in one of the ends so that the oscillation of $H$ on $[a,b]\times S^1\cap u^{-1}([h_1,h_2])$ is at least $\frac{3}{7}(h_2-h_1)$. For each $s\in [a,b]$ the oscillation on $\{s\}\times S^1\cap u^{-1}([h_1,h_2]$ is also $\leq\frac1{7}(h_2-h_1)$. It follows that there is an $I\subset [h_1,h_2]$ of length $\geq\frac17(h_2-h_1)$ and an interval $[a',b']\subset [a,b]$ so that $u( [a',b']\times S^1)$} maps into $H^{-1}((h_1,h_2))$ and so that each boundary is contained in another component of $u^{-1}(I)$. The claim then follows by Lemma \ref{lemmaLipBounEnd}. Either way we are done.

\end{proof}

\subsection{$\lambda$-separation for Hamiltonians with small Lipschitz constant on a region}\label{SecLipBoundDiff}
We now recast the results of the previous section in the form which is most useful for us in the succeeding discussion.

\begin{proposition}[$C^0$ estimate for the Floer differential]\label{prpLipBoundDiff}
Let $J$ be an $\omega$-compatible geometrically bounded almost complex structure. Let $H$ be a smooth proper Hamiltonian, and let $[a,b]$ be an interval so that on the region $H^{-1}([a,b])$ we have that  $H$ is time independent. Let $c$ be a bound on the geometry of $J$ on $H^{-1}([a,b])$ and let $\epsilon$ be a bound on the  the Lipschitz constant of $H$.

Then there are constants $\epsilon(c), C(c)$ and $R(c)$  with the following significance. If $\epsilon<\epsilon(c)$, then any solution $u:\bR\times S^1\to M$ to Floer's equation for which
\begin{equation}
\min (H\circ u)<a<b<\max (H\circ u)
\end{equation}
satisfies
\begin{equation}
E(u)>C(c)(b-a-R(c)).
\end{equation}
\end{proposition}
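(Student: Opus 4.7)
The plan is to deduce this proposition directly from Lemma \ref{lemmaLipBounEnd} (or its more flexible variant Lemma \ref{lemmaLipBounEnd2}) by restricting the Floer solution $u$ to a finite sub-cylinder of $\bR\times S^1$ on which the hypotheses of those lemmas can be verified.

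First I would locate points $s_1 < s_2$ in $\bR$ so that $\min_t H(u(s_1,t)) < a$ and $\max_t H(u(s_2,t)) > b$; these exist by the hypothesis $\min(H\circ u) < a < b < \max(H\circ u)$ together with continuity, after a translation in $\bR$ and possibly swapping the two sides (which is symmetric). The oscillation of $H\circ u$ on the central strip $[s_1,s_2]\times S^1$ is then at least $b-a$.

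Next I would apply Lemma \ref{lemmaLipBounEnd} to the restriction of $u$ to $[s_1-1, s_2+1]\times S^1$, with its role of $[a,b]$ played by $[s_1,s_2]$ and with $h_-\leq a$, $h_+\geq b$ being the extrema of $H\circ u$ on $[s_1,s_2]\times S^1$. One subtle point is that Lemma \ref{lemmaLipBounEnd} nominally wants the geometry and Lipschitz bounds on all of $H^{-1}((h_-,h_+))$, whereas our hypothesis provides them only on the possibly smaller region $H^{-1}([a,b])$. To address this I would take $s_1, s_2$ to be the \emph{innermost} such crossing points, so that $u([s_1,s_2]\times S^1)$ has its relevant bulk inside $H^{-1}([a,b])$; more cleanly, I would invoke Lemma \ref{lemmaLipBounEnd2} with $h_1=a$, $h_2=b$, and $\Sigma_0$ a compact piece of the cylinder containing the activity that crosses the levels $a$ and $b$. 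The time-independence of $H$ on $H^{-1}([a,b])$, combined with the fact that $H$ on $M$ is a priori independent of the domain variable $s$, guarantees the ``locally domain-independent'' hypothesis of Lemma \ref{lemmaLipBounEnd2} on the relevant ends.

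Either version produces the estimate $E_{top}(u;[s_1-1,s_2+1]\times S^1) \geq C(b-a-R)$, and since for an $s$-independent Floer datum the topological energy on any sub-cylinder is bounded above by the total energy $E(u)$ of the solution on $\bR\times S^1$ (by the pointwise positivity of $\|\partial_s u\|^2$ on the complement), the stated inequality follows. The main obstacle I anticipate is the alignment of regions, namely ensuring that the piece of $u$ to which one applies the lemma lies within the region where the geometry and Lipschitz bounds are known to hold; but the serious analytic input, namely the iterative chopping that converts the small Lipschitz bound on $H$ into a linear-in-height lower bound on energy, was already carried out in Lemma \ref{lmLoopDisLin}, and the present proposition is essentially a packaging of that estimate for use in the next section.
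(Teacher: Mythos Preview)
Your approach is correct and matches the paper's: the proof there is literally the single sentence ``This follows immediately from Lemma \ref{lemmaLipBounEnd}.'' You have unpacked this in more detail than the authors do, including the alignment-of-regions issue (which they leave implicit), and your observation that the proposition is just a repackaging of the estimates already established in Lemmas \ref{lmLoopDisLin}--\ref{lemmaLipBounEnd2} is exactly right.
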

\begin{proof}
This follows immediately from Lemma \ref{lemmaLipBounEnd}.
\end{proof}
\begin{proposition}[$C^0$ estimates for continuation maps with changing constant]\label{prpLipBoundCont}
Let $J, H$ and $[a,b]$ be as in proposition \ref{prpLipBoundDiff}  with some $c>0$ and $\epsilon<\epsilon(c)$. Let $H^s$ be a smooth family of Hamiltonians such that $\partial_sH^s\geq 0$ and such that $\partial_sH^s\equiv 0$ for $|s|\geq 1$. Suppose that on the region $H^{-1}([a,b])$ we have $H^s-H=f(s)$ for some  smooth function $f:\bR\to\bR$. Then any solution $u$ to Floer's equation for which
\begin{equation}
\min (H\circ u)<a<b<\max (H\circ u)
\end{equation}
satisfies
\begin{equation}
E(u)>C(c)(b-a-R(c)).
\end{equation}
\end{proposition}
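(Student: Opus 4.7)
The key observation is that on the region $H^{-1}([a,b])$ the continuation datum is \emph{autonomous up to a pure domain shift}. Concretely, the hypothesis $H^s - H = f(s)$ implies $dH^s|_x = dH|_x$ for every $x \in H^{-1}([a,b])$, hence $X_{H^s}(x) = X_H(x)$ throughout this region. In particular the spatial Lipschitz constant of $H^s$ on $H^{-1}([a,b])$ equals that of $H$, namely $\leq \epsilon$, and Gromov's trick produces an almost complex structure $J_{\fH}$ on $(\bR\times S^1)\times H^{-1}([a,b])$ that coincides exactly with the one built from the autonomous datum $(H,J)$.

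With this in hand, the plan is to run the arguments of Lemmas \ref{lmDiamEstFreeBo}, \ref{lmLoopDisLin}, and \ref{lemmaLipBounEnd} essentially verbatim on the Floer solution $u$. Indeed, each of those arguments proceeds by selecting a path or a sequence of points on $\Sigma = \bR \times S^1$ whose image under $u$ lies in $H^{-1}([a,b])$, applying the monotonicity lemma (Lemma \ref{lmMonEst++}) to disjoint $g_1$-balls contained inside $(\bR\times S^1)\times H^{-1}([a,b])$, and using the Lipschitz control on $H$ to convert oscillation of $H\circ u$ into distances in $M$. Each of these ingredients depends only on the local data $(J, X_H, H)$ on $H^{-1}([a,b])$, all of which agrees with the autonomous case. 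This produces a geometric energy bound
$$
E_{geo}(u) \geq C(c)\bigl(b - a - R(c)\bigr)
$$
with constants of the same form as in Proposition \ref{prpLipBoundDiff}.

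Finally, because the homotopy is monotone ($\partial_s H^s \geq 0$), inequality \eqref{eqGeoTopEn2} gives $E_{top}(u) \geq E_{geo}(u)$, which yields the claimed bound on $E(u)$ after renaming constants.

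I expect the only real obstacle to writing a fully rigorous proof to be careful bookkeeping, verifying that every step of the previous lemmas uses only the portion of the datum that is invariant under the shift $H \mapsto H^s = H + f(s)$ on the separating region, rather than any new analytical input. The substantive work (the monotonicity estimates via the Gromov trick, and the linear growth of $\Gamma_{H,J}$ for small Lipschitz constant) was already carried out in Section \ref{ss-diss-c0} and transfers over unchanged.
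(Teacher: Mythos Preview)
Your proposal is correct and follows essentially the same approach as the paper. The paper's proof is slightly more streamlined: rather than re-running Lemmas~\ref{lmDiamEstFreeBo}, \ref{lmLoopDisLin}, and \ref{lemmaLipBounEnd} by hand, it invokes the packaged Lemma~\ref{lemmaLipBounEnd2} with $\Sigma_0=[-1,1]\times S^1$, which already handles the domain-dependent part at the cost of the harmless additive term $\Area(\Sigma_0)+2$; but the underlying observation---that on $H^{-1}([a,b])$ the Gromov almost complex structure $J_{\fH}$ depends only on $X_{H^s}=X_H$ and hence coincides with the autonomous one, so all the monotonicity and Lipschitz estimates carry over with constants depending only on $c$---is exactly the one you identified.
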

\begin{proof}
The assumption concerning $H^s-H$ on the region $K=H^{-1}([a,b])$ implies that on $K$ the Gromov metric, which depends only on the derivatives in the direction of $M$, satisfies the same bounds as those of $H$.  The claim now follows  from Lemma \ref{lemmaLipBounEnd2} after taking $\Sigma_0=[-1,1]\times S^1$.
\end{proof}




\begin{proposition}[$C^0$ estimates for pairs of pants]\label{prpLipBoundProd}
Let $J, H$ and $[a,b]$ be as in proposition \ref{prpLipBoundDiff}  with some $c>0$ and $\epsilon<\epsilon(c)$.  Denote by $\Sigma$ the pair of pants with two inputs and one output. Let $\alpha$ be a closed $1$-form on $\Sigma$ equaling $dt$ on the inputs and $2dt$ on the output.  Fix once and for all a compact domain $\Sigma_0\subset\Sigma$ whose complement consists of cylindrical ends.  Let $H_{\Sigma}:\Sigma\times M\to\bR$ be a smooth function such that $dH_{\Sigma}\wedge\alpha\geq 0$ and such that $dH_{\Sigma}$ is supported on $\Sigma_0$.  Suppose that on the region $H^{-1}([a,b])$ we have $H_{\Sigma}-H=f_\Sigma$ where $f_\Sigma:\Sigma\to \bR$ is a smooth function. Then any solution $u$ to Floer's equation for the datum $(H_{\Sigma},\alpha,J)$ for which
\begin{equation}
\min (H\circ u)<a<b<\max (H\circ u)
\end{equation}
satisfies
\begin{equation}
E(u)>C(c)(b-a-R(c)).
\end{equation}
\end{proposition}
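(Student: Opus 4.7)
The plan is to reduce the statement to Lemma \ref{lemmaLipBounEnd2} applied with $n=2$, the number of input punctures of the pair of pants. The form $\alpha$ has the required shape, the monotonicity condition $dH_\Sigma \wedge \alpha \geq 0$ is assumed, and the condition that $dH_\Sigma$ is supported on $\Sigma_0$ means exactly that $H_\Sigma$ is locally $\Sigma$-independent on the ends of $\Sigma$, which is the ``locally domain independent'' hypothesis of that lemma.

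The central step will be to verify the Lipschitz hypothesis on the ends. On any end of $\Sigma$, the $\Sigma$-independence of $H_\Sigma$ allows us to write $H_\Sigma(z, x) = h_e(x)$ for some function $h_e$ on $M$. The relation $H_\Sigma - H = f_\Sigma$ on $H^{-1}([a,b])$ then forces $h_e(x) = H(x) + c_e$ on $H^{-1}([a,b])$ for a constant $c_e \in \bR$. Hence $h_e$ is $\epsilon$-Lipschitz on the region relevant to the estimate. Choosing the $\epsilon(c)$ in the present proposition to be at most half of the $\epsilon(c)$ provided by Lemma \ref{lemmaLipBounEnd2} (equivalently Lemma \ref{lmLoopDisLin}) will guarantee the required bound $\epsilon < \epsilon(c)/n = \epsilon(c)/2$ on the Lipschitz constant on each end. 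Further, the geometric boundedness of $g_J$ on the relevant preimage $H_\Sigma^{-1}((h_1, h_2))$ follows because $J$ is $\Sigma$-independent and, modulo the additive shifts $c_e$, this preimage sits inside $H^{-1}([a,b])$, on which $g_J$ is $c$-bounded by hypothesis.

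With all hypotheses verified, Lemma \ref{lemmaLipBounEnd2} will deliver
\[
E_{geo}(u) + \mathrm{Area}(\Sigma_0) + 2 \geq C(c)\bigl(b - a - R(c)\bigr).
\]
Since $\Sigma_0$ is fixed once and for all and $E(u) = E_{top}(u) \geq E_{geo}(u)$ by the monotonicity of $(H_\Sigma, \alpha, J)$, the area contribution and the additive constant $2$ can be absorbed into an updated $R(c)$, yielding the claimed bound $E(u) > C(c)(b - a - R(c))$. The only real obstacle is the bookkeeping around the additive constants $c_e$ and the scaling factor $n$ in the Lipschitz hypothesis; neither affects the intrinsic Lipschitz constant of $H_\Sigma$ in the $M$-direction nor the geometric boundedness of $g_J$, so the reduction to Lemma \ref{lemmaLipBounEnd2} is essentially tautological once these identifications are made.
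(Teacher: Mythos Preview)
Your proposal is correct and takes essentially the same approach as the paper, which states simply that the result ``again follows immediately from Lemma \ref{lemmaLipBounEnd2}.'' You have supplied the verification of hypotheses that the paper leaves implicit, including the observation that on each end $H_\Sigma$ differs from $H$ by a constant on $H^{-1}([a,b])$, and that the $\mathrm{Area}(\Sigma_0)+2$ term can be absorbed into $R(c)$.
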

\begin{proof}
This again follows immediately from Lemma \ref{lemmaLipBounEnd2}.
\end{proof}

\section{Proofs of Theorems \ref{thmLocalityTrunc} and \ref{thmLocalityTruncFunc}}\label{secMainProof}

Let $M$ be a geometrically bounded symplectic manifold and $Y\subset M$ be an open subset. We assume that $Y$ is geometrically of finite type.

\begin{definition}We call an acceleration datum $(H_\tau,J_\tau)$ for $K\subset Y$, considered as a compact subset of $M$, \emph{$\lambda$-separating} if

\begin{enumerate}
\item For each $i\in\mathbb{N}$, $(H_i,J_i)$ is $\lambda$-separating for $K$.
\item For each $i\in\mathbb{N}$, the Floer datum associated to  $(H_\tau, J_\tau)_{\tau\in [i,i+1]}$ is $\lambda$-separating for $K$.
\end{enumerate}
\end{definition}

\begin{definition}
A \emph{Locality datum}  is a triple $(H_{loc}, J_{loc},h_0)$ consisting of a Floer datum $(H_{loc},J_{loc})$ on $Y$ and a real number $h_0$
such  that denoting  $D=H_{loc}^{-1}((-\infty,h_0])$
\begin{itemize}
	\item $J_{loc}$ is  geometrically bounded.
	\item $H_{loc}$ is proper and bounded below.
	\item All $1$-periodic orbits of $H_{loc}$ are contained inside $D$.
	\item The Lipschitz constant of $H_{loc}|_{Y\setminus D}$ is less than $\epsilon(c)$ from Proposition \ref{prpLipBoundDiff} where $c$ is a bound on the geometry of $g_J$ on $Y\setminus D$.
\end{itemize}

In what follows we will generally drop $h_0$ from the notation.
\end{definition}

Note that if $(H_{loc}, J_{loc},h_0)$ is a locality datum so is $(H_{loc}+C, J_{loc},h_0+C)$ for any $C\in\bR$.

\begin{proposition}\label{propLocDatExist}
There exist locality data $(H_{loc}, J_{loc},h_0)$ on $Y$.
\end{proposition}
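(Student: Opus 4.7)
The plan is to take $J_{loc}$ to be the geometrically bounded almost complex structure $J$ supplied by the geometrically finite type hypothesis on $Y$, and to take $H_{loc} = \delta f$, where $f$ is the associated admissible exhaustion function and $\delta > 0$ is a small constant to be determined. Then $H_{loc}$ is automatically proper and bounded below, and its critical points coincide with those of $f$, which by hypothesis lie in some compact subset $K_0 \subset Y$. Since $J$ is globally geometrically bounded, we may fix a constant $c$ bounding the geometry of $g_J$ on all of $Y$, so in particular on any region $Y \setminus D$ that we will construct. Let $\epsilon(c)$ denote the constant produced by Proposition \ref{prpLipBoundDiff}.

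Next I would verify the quantitative Lipschitz bound on $H_{loc}$. Since $f$ is admissible, there is a constant $C_0$ with $\|X_f\|_{g_J} \leq C_0$. The $\omega$-compatibility of $J$ gives the identity $X_f = -J \nabla f$, and in particular $\|\nabla f\|_{g_J} = \|X_f\|_{g_J} \leq C_0$. Hence $f$ is $C_0$-Lipschitz with respect to $g_J$, and $H_{loc} = \delta f$ is $\delta C_0$-Lipschitz.

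To control $1$-periodic orbits, I apply Lemma \ref{lemNoNonTOrbits} with the compact set $K_0$: there is some $\epsilon_0 > 0$ such that, whenever $0 < \delta \leq \epsilon_0$, the Hamiltonian $\delta f$ has no non-constant $1$-periodic orbits outside $K_0$. Its constant $1$-periodic orbits are the critical points of $f$, which lie in $K_0$ as well. Thus all $1$-periodic orbits of $\delta f$ are contained in $K_0$.

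Finally, choose $\delta > 0$ with $\delta \leq \min\bigl(\epsilon_0,\, \tfrac{1}{2} \epsilon(c) / C_0\bigr)$, and pick $h_0$ large enough that $K_0 \subset D := H_{loc}^{-1}((-\infty, h_0])$, which is possible by properness of $f$. All four conditions in the definition of a locality datum then hold: $J_{loc} = J$ is geometrically bounded; $H_{loc}$ is proper and bounded below; every $1$-periodic orbit lies in $K_0 \subset D$; and $H_{loc}|_{Y \setminus D}$ is $(\delta C_0)$-Lipschitz with $\delta C_0 < \epsilon(c)$. There is no serious obstacle here; the only subtlety worth flagging is that the Lipschitz constant of $H_{loc}$ is controlled by $\|X_{H_{loc}}\|$ thanks to $\omega$-compatibility, so the admissibility of $f$ (phrased in terms of $X_f$ rather than $\nabla f$) suffices.
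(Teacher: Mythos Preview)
Your proof is correct and follows essentially the same approach as the paper's: take $J_{loc}=J$, set $H_{loc}=\delta f$ for $\delta$ small enough to satisfy both the Lipschitz bound from Proposition~\ref{prpLipBoundDiff} and the no-orbits condition from Lemma~\ref{lemNoNonTOrbits}, and choose $h_0$ so that the compact set containing the critical points sits inside $D$. Your explicit remark that $\|\nabla f\|_{g_J}=\|X_f\|_{g_J}$ via $X_f=-J\nabla f$ is a helpful clarification that the paper leaves implicit.
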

\begin{proof}
Since $Y$ is geometrically of finite type, there exists a compatible almost complex structure $J$ such that $g_J$ is geometrically bounded and an admissible function $f: Y\to \mathbb{R}$ with no critical points outside of a compact set $K$. Let us choose $f_0\in\mathbb{R}$ so that $f|_K<f_0$ and $f_0$ is a regular value of $f$. 
By Lemma \ref{lemNoNonTOrbits} we can find an $\epsilon_0>0$ such that the Hamiltonian $\epsilon_0 f$ has no non-constant $1$-periodic orbits outside of $K$. Let $\epsilon>0$ be as given by Proposition \ref{prpLipBoundDiff}, let $d$ be the Lipschitz constant of $f$, and define $\epsilon':=min(\epsilon/d,\epsilon_0)$.

Define $J_{loc}=J$, $H_{loc}:=\epsilon' f$ and  $h_0:=\epsilon' f_0$. By construction, all the requirements are satisfied.
\end{proof}

\begin{remark}
The numbered definitions and observations inside the proofs of Theorems \ref{thmLocalityTrunc} and \ref{thmLocalityTruncFunc} will not be used elsewhere in this paper. We put them as such to organize the proof in a better way.
\end{remark}

\begin{proof}[Proof of Theorem \ref{thmLocalityTrunc}]
We identify $Y$ with its image under $\iota$ and use the definitions and notation set in the beginning of the present section. We will follow the plan below:
\begin{enumerate}
\item For fixed locality data $(H_{loc}, J_{loc})$, construct canonical $\Lambda_{\geq 0}$-module maps $$f_{K,\lambda,(H_{loc}, J_{loc})}: SH_{Y,\lambda}^*(K)\to SH_{M,\lambda}^*(K),$$ for all compact subsets $K\subset Y$ and $\lambda\geq 0$.
\item Show that the maps $f_{K,\lambda,(H_{loc}, J_{loc})}$ give a natural transformation of functors. This means that they are compatible with restriction and truncation maps.
\item Show that the maps $f_{K,\lambda,(H_{loc}, J_{loc})}$ are all isomorphisms.
\item Prove that $f_{K,\lambda,(H_{loc}, J_{loc})}$ is independent of locality data $(H_{loc}, J_{loc})$ and hence it can be dropped from notation.
\item Show that the maps $f_{K,\lambda}$ are maps of BV-algebras.
\end{enumerate}


We start with (1) and fix locality data $(H_{loc},J_{loc})$ for $Y$.

\begin{definition}\label{defDeltaComp}
Let $\Delta>0$ be a positive real number. An acceleration datum $(H_\tau ,J_\tau)$ for $K\subset Y$ inside $M$ is called \emph{$\Delta$-compatible with $(H_{loc}, J_{loc})$} if there exists an $h>h_0$ such that  $K$ is contained inside $H_{loc}^{-1}((-\infty, h))$, and for every $\tau\in [1,\infty)$,  on $H_{loc}^{-1}([h,h+\Delta])$: \begin{itemize}
\item $H_\tau$ is time independent and $H_\tau-H_{loc}$ is a constant function.
\item $J_\tau$ is time independent and equal to $J_{loc}$.
\end{itemize}
Let us denote the smallest $h$ with this property by $h(H_\tau,J_\tau)$.
\end{definition}

Note that by construction the Hamiltonian flow of such an $H_\tau$ preserves the region $D_{\Delta,h}:=
H_{loc}^{-1}([h,h+\Delta])
$
as mentioned in this definition, and it has no $1$-periodic orbit inside it. We refer to $D_{\Delta,h}$ as the \emph{$\Delta$-compatibility region.} In what follows we will repeatedly talk about orbits inside and outside relative to such a region. \emph{Inside} refers to the region $H_{loc}^{-1}((-\infty,h))\subset Y$ considered as a subset of $M$ and \emph{outside} refers to the region $M\setminus H_{loc}^{-1}((-\infty,h+\Delta))$.

\begin{obs}\label{propDeltaCompExist}
For any $\Delta>0$ there exists an acceleration datum $(H_\tau,J_\tau)$ for $K\subset Y$ inside $M$ that is $\Delta$-compatible with $(H_{loc}, J_{loc})$. In fact, we can arrange it so that $h(H_\tau,J_\tau)$ is any number larger than $\max\{\max_KH_{loc},h_0\}.$\qed

\end{obs}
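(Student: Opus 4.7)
The plan is to build $(H_\tau, J_\tau)$ piecewise: prescribe it explicitly on the compact region $W := H_{loc}^{-1}((-\infty, h+\Delta])$ (which is compact in $Y \subset M$ since $H_{loc}$ is proper), and then extend to $M \setminus W$ by invoking the existence results for dissipative acceleration data from \cite[Theorem 6.10]{groman}, recalled in Appendix \ref{AppDissRev}.

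Fix any $h$ with $h > \max\{\max_K H_{loc}, h_0\}$; then $K$ lies in the interior of $H_{loc}^{-1}((-\infty, h))$ and $H_{loc}$ has no $1$-periodic orbits in $D_{\Delta,h}$. Pick a smooth nondecreasing function $c_\tau$ on $[1,\infty)$ with $c_\tau \to +\infty$, chosen to be locally constant near each integer $\tau = i$ (so that $H_\tau$ will automatically be locally $\tau$-independent there, as required by the definition of acceleration data). On $D_{\Delta,h}$ set $H_\tau := H_{loc} + c_\tau$ and $J_\tau := J_{loc}$, which builds in clause~(2) of Definition~\ref{defDeltaComp}. On the inner piece $H_{loc}^{-1}((-\infty,h])$ set $H_\tau := \phi_\tau \circ H_{loc}$, where $\phi_\tau \colon \bR \to \bR$ is a smooth family of increasing functions with $\phi_\tau(t) = t + c_\tau$ for $t \geq h$ and $\phi_\tau(t) < 0$ for $t \leq \max_K H_{loc}$; such $\phi_\tau$ exist precisely because of the strict inequality $h > \max_K H_{loc}$. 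The two local prescriptions agree to all orders along $H_{loc}^{-1}(\{h\})$, so they glue into a smooth Hamiltonian on $W$ which is negative on $K$ and equals $H_{loc}+c_\tau$ near $\partial W$.

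To extend across $M \setminus W$, first pick a geometrically bounded compatible almost complex structure $J$ on $M$ extending $J_{loc}$ from a collar of $W$ in $Y$ (this is a standard convex-combination argument localized in the collar). Apply \cite[Theorem~6.10]{groman} to obtain a dissipative acceleration datum $(\widetilde H_\tau, J_\tau)$ for $K$ inside $M$, with $J_\tau$ agreeing with $J$ near $W$ and with $\widetilde H_\tau$ increasing pointwise to infinity outside $K$. Possibly after replacing $\widetilde H_\tau$ by $\widetilde H_\tau + $ large enough constants, we may assume $\widetilde H_\tau > h + \Delta + c_\tau$ on $\partial W$. Then redefine $H_\tau$ on $M \setminus W$ by smoothly and monotonically interpolating, inside a thin collar outside $\partial W$, between the boundary value $h + \Delta + c_\tau$ and $\widetilde H_\tau$. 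Dissipativity is preserved because the modification is supported on a compact set and loopwise dissipativity is an asymptotic condition (cf.\ Appendix~\ref{AppDissRev}). Finally, by small generic $C^\infty$ perturbations supported in $M \setminus D_{\Delta,h}$ we achieve non-degeneracy of all $1$-periodic orbits of each $H_i$ and transversality of all relevant Floer moduli spaces, as in the proof of Proposition~\ref{propTruncAccDef}; dissipativity is preserved by its openness.

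The only nontrivial point in the argument is the compatibility between the prescribed data on $D_{\Delta,h}$ and the dissipativity requirements at infinity in $M$: this is exactly where the geometric boundedness of $J_{loc}$ (built into the definition of a locality datum) and the existence of the dissipative datum $(\widetilde H_\tau, J_\tau)$ on $M$ come together. Once these are in place the verification of the two clauses of Definition~\ref{defDeltaComp} is immediate, and $h(H_\tau, J_\tau) = h$ by construction, giving the final claim.
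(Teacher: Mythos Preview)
Your proposal is correct and follows essentially the same approach as the paper's own argument. The paper's justification (the paragraph immediately following the observation) is a terse three sentences: set $(H_\tau,J_\tau)=(H_{loc}+\tau,J_{loc})$ on $H_{loc}^{-1}([h,h+\Delta])$, note that $K$ is disjoint from this region so the data can be chosen freely near $K$, and observe that since the prescribed region is bounded in $M$, dissipativity of the extension poses no difficulty. You have simply spelled out these steps in more detail, in particular the interpolation across $\partial W$ and the appeal to \cite[Theorem~6.10]{groman} for the outer extension, which the paper leaves implicit.
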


 To see this we define $(H_\tau, J_\tau)$ to be equal to $(H_{loc}+\tau,J_{loc})$ on $H_{loc}^{-1}([h,h+\Delta])\subset Y\subset M$. Since $K\subset H_{loc}^{-1}((-\infty, h))\subset Y$ is disjoint from $H_{loc}^{-1}([h,h+\Delta])$ , we are free to define the acceleration data as we please in a precompact neighborhood $U$ of $K$. Moreover, since $H_{loc}^{-1}([h,h+\Delta])\cup U\subset M$ is bounded, that the acceleration data is pre-defined in this region does not pose any difficulty in terms of achieving dissipitavity for the extensions to $M\times S^1$.

By definition of locality data  and Propositions \ref{prpLipBoundDiff} and \ref{prpLipBoundCont} there exist constants $C,R>0$  such that a $\Delta$-compatible acceleration datum is $C(\Delta-R)$-separating. Therefore, for a given $\lambda>0$, we define $\Delta(\lambda):=\frac{\lambda}{C}+R+1>0$. An acceleration datum that is $\Delta(\lambda)$-compatible with $(H_{loc}, J_{loc})$ is $\lambda$-separating.

Let $(H_\tau,J_\tau)$ be an acceleration datum $\Delta(\lambda)$-compatible with $(H_{loc}, J_{loc})$. Then Lemma \ref{lmSplit} produces a splitting
\[
HF^*_\lambda(H_i)=HF^*_{\lambda,inner}(H_i)\oplus HF^*_{\lambda,outer}(H_i),
\]
for every $i\in\mathbb{Z}_{\geq 1}$. 
These splittings are compatible with continuation maps  in the sense that $HF^*_\lambda(H_i)\to HF^*_\lambda(H_{i+1})$ is the direct sum of the induced maps $$HF^*_{\lambda,inner}(H_i)\to HF^*_{\lambda,inner}(H_{i+1}),$$ and $$HF^*_{\lambda,outer}(H_i)\to HF^*_{\lambda,outer}(H_{i+1}).$$

Let us now define $H_\tau^Y:Y\times S^1\to \mathbb{R}$ and $S^1$-dependent almost complex structure $J_\tau^Y$ on $Y$ for all $\tau\in[1,\infty)$ as follows. On $H_\tau^{-1}((-\infty,h(H_\tau,J_\tau)+\Delta(\lambda)])\times S^1$, we set $H_\tau^Y$ to $H_\tau$ and $J_\tau^Y$ to $J_\tau$. On $H_{loc}^{-1}([h(H_\tau,J_\tau),\infty))\times S^1$, we declare $H_\tau^Y$ to be time independent and equal to $H_{loc}$ up to a constant so that the resulting function is smooth and $J_\tau^Y$ also be to time independent and equal to $J_{loc}$. That this construction makes sense is a direct consequence of Definition \ref{defDeltaComp}.

Clearly, $(H_\tau^Y, J_\tau^Y)$ is an acceleration datum for $K$ inside $Y$. Moreover, using Propositions \ref{prpLipBoundDiff} and \ref{prpLipBoundCont} inside $Y$ and the fact that $H_{loc}$ has no periodic orbits in the region $\{H_{loc}>h\}$ we have commutative diagrams \begin{align*}
\xymatrix{
HF^*_\lambda(H_i^Y)\ar[d]\ar[r]&HF^*_{\lambda,inner}(H_i) \ar[d]\\ HF^*_\lambda(H_{i+1}^Y)\ar[r] &HF^*_{\lambda,inner}(H_{i+1}),}
\end{align*}where the horizontal maps are isomorphisms, left vertical map is the continuation map and the right vertical map is the one that we defined above.

Using the canonical inclusions $HF^*_{\lambda,inner}(H_i)\to HF^*_{\lambda}(H_i)$
and taking direct limits over $i$, we obtain a map $$f_{K,\lambda,(H_{loc},J_{loc})}(H_\tau,J_\tau): SH_{Y,\lambda}^*(K)\to SH_{M,\lambda}^*(K).$$

To be done with part (1) of the plan, we need to show that if $(H_\tau',J_\tau')$ is another acceleration datum $\Delta(\lambda)$-compatible with $(H_{loc}, J_{loc})$ then $$f_{K,\lambda,(H_{loc},J_{loc})}(H_\tau',J_\tau')=f_{K,\lambda,(H_{loc},J_{loc})}(H_\tau,J_\tau)$$.

\begin{obs}
Given acceleration data $(H_\tau,J_\tau)$ and $(H_\tau',J_\tau')$ for $K\subset Y$ inside $M$ that are respectively $\Delta$ and $\Delta'$-compatible with $(H_{loc}, J_{loc}),$ we can find a third acceleration datum $(H_\tau'',J_\tau'')$ that is $\Delta''$-compatible with $(H_{loc}, J_{loc})$ with the property that the interval
\[
[h(H_\tau'',J_\tau''),h(H_\tau'',J_\tau'')+\Delta'']
\]
 contains both intervals
 \[
 [h(H_\tau,J_\tau),h(H_\tau,J_\tau)+\Delta]
 \]
 and
 \[
 [h(H_\tau',J_\tau'),h(H_\tau',J_\tau')+\Delta'],
 \]
 for some $\Delta''>0$. Moreover, we can make sure that $H_i''(t,x)$ is greater than or equal to both $H_i(t,x)$ and $H_i'(t,x)$ for all $i\in \mathbb{N}$ and $(t,x)\in S^1\times M.$\qed

\end{obs}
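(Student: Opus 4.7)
The plan is to construct $(H_\tau'', J_\tau'')$ in three stages: first fix an enlarged compatibility interval $[h'', h''+\Delta'']$, then define the datum rigidly on that interval via a large constant shift of $H_{loc}$, and finally extend to all of $M\times S^1$ while pointwise dominating both given data. To begin, pick $h''$ strictly between $\max\{\max_K H_{loc}, h_0\}$ and $\min(h(H_\tau, J_\tau), h(H_\tau', J_\tau'))$ (for instance the midpoint), and set $\Delta'' := \max(h(H_\tau, J_\tau)+\Delta,\, h(H_\tau', J_\tau')+\Delta') - h''$. Then $[h'', h''+\Delta'']$ contains both original compatibility intervals and $K \subset H_{loc}^{-1}((-\infty, h''))$ is disjoint from $H_{loc}^{-1}([h'', h''+\Delta''])$, which is a compact subset of $Y \subset M$.

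Let $c_\tau, c_\tau' \in \bR$ be the constants characterized by $H_\tau = H_{loc}+c_\tau$ on the compatibility region of $(H_\tau, J_\tau)$ and analogously for $c_\tau'$, and let $a_\tau, a_\tau'$ denote the suprema of $H_\tau-H_{loc}$ and $H_\tau'-H_{loc}$ respectively over $S^1\times H_{loc}^{-1}([h'', h''+\Delta''])$; both suprema are finite by compactness. Set $c_\tau'' := \max(c_\tau,\, c_\tau',\, a_\tau,\, a_\tau') + \tau$, which is monotone in $\tau$ and diverges to $+\infty$. On $H_{loc}^{-1}([h'', h''+\Delta''])$, declare $H_\tau'' := H_{loc}+c_\tau''$ and $J_\tau'' := J_{loc}$; by construction $H_\tau''\geq H_\tau$ and $H_\tau''\geq H_\tau'$ on this region.

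Now extend to the rest of $M\times S^1$. On the precompact inside $H_{loc}^{-1}((-\infty, h''))$, smoothly interpolate so that $H_\tau''$ is negative on $K$, $H_\tau''\geq \max(H_\tau, H_\tau')$ throughout, and $H_\tau''|_K\to 0$ as $\tau\to\infty$; the last property is automatic once domination is arranged, since $H_\tau$ and $H_\tau'$ tend to $0$ from below on $K$. On the outside $M\setminus H_{loc}^{-1}((-\infty, h''+\Delta''])$, extend smoothly and monotonically so that $H_\tau''\geq \max(H_\tau, H_\tau')$ and $H_\tau''\to +\infty$ pointwise; no obstruction arises here because we may freely add a positive bump. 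Extend $J_\tau''$ to a compatible geometrically bounded almost complex structure on $M$ agreeing with $J_{loc}$ on the compatibility region. Produce the monotone homotopy $(H_\tau'', J_\tau'')_{\tau \in [i,i+1]}$ by analogous interpolation, keeping the rigid rule $H_\tau'' = H_{loc}+c_\tau''$ intact on the compatibility region throughout (with $c_\tau''$ interpolated monotonically in $\tau$).

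The main obstacle is verifying dissipativity and regularity of the resulting datum while preserving rigid compatibility, pointwise domination, and monotonicity. For dissipativity, we rely on \cite[Theorem 6.10]{groman} together with the openness of dissipativity in the appropriate topology recalled in Appendix \ref{AppDissRev}: a $C^\infty$-small perturbation of $(H_\tau'', J_\tau'')$ supported in the interior of $Y$ away from the compatibility region suffices. Regularity follows from a further generic perturbation of each $(H_i'', J_i'')$ with support disjoint from $H_{loc}^{-1}([h'', h''+\Delta''])$; no new $1$-periodic orbits can appear there since $H_{loc}$ has none, by the defining property of a locality datum. Since all inequalities above are non-strict, domination can be restored after perturbation by an arbitrarily small positive constant if needed.
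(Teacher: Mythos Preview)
Your proof is correct and follows essentially the same approach as the paper. The paper in fact gives no explicit argument for this observation beyond the $\qed$ symbol, treating it as an immediate extension of Observation~\ref{propDeltaCompExist}: one fixes the datum to equal $(H_{loc}+c_\tau'',J_{loc})$ on an enlarged compatibility region and then extends freely to the bounded inside and to the rest of $M$, exactly as you do; the domination condition $H_i''\geq\max(H_i,H_i')$ and the dissipativity/regularity of the extension pose no obstruction since the rigidly prescribed region is bounded in $M$.
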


We choose a third acceleration datum $(H_\tau'',J_\tau'')$ as in this observation (we have $\Delta=\Delta'$). Then we can choose monotone Floer data from $(H_\tau,J_\tau)$ (resp. $(H_\tau',J_\tau')$)  to  $(H_\tau'',J_\tau'')$ which agree up to an $s$-dependent constant with $(H_{loc},J_{loc})$ on the interval $[h(H_\tau,J_\tau),h(H_\tau,J_\tau)+\Delta]$ (resp. $[h(H_\tau',J_\tau'),h(H_\tau',J_\tau')+\Delta']). $ Then Proposition \ref{prpLipBoundCont} and the Hamiltonian Floer theory package imply that $f_{K,\lambda,(H_{loc}, J_{loc})}(H_\tau,J_\tau)$ and $f_{K,\lambda,(H_{loc}, J_{loc})}(H_\tau',J_\tau')$ are both equal to $f_{K,\lambda,(H_{loc}, J_{loc})}(H_\tau'',J_\tau'').$ This finishes the proof of the independence result and hence the item (1) from the master plan.\\

We will refer to the map that we just constructed (i.e. $f_{K,\lambda,(H_{loc},J_{loc})}$) the \emph{$(Y\subset M)$-locality map with data $(H_{loc},J_{loc})$} we will refer to it as just the \emph{locality map} when $M$ is clear from the context. Until we come to part (4) of our plan, we work with fixed $(H_{loc},J_{loc})$ and drop it from notation.

Now we move on to the part (2) of our master plan. We need to show that for every compact pair $K\subset K'$ of subsets of $Y$ and $\lambda'\geq \lambda$, the diagram \begin{align*}
\xymatrix{
SH_{Y,\lambda'}^*(K')\ar[d]\ar[r]&SH_{M,\lambda'}^*(K') \ar[d]\\ SH_{Y,\lambda}^*(K)\ar[r] &SH_{M,\lambda}^*(K),}
\end{align*} commutes. Here the horizontal maps are the locality maps, and the vertical ones are the combination of restriction and truncation maps.

Let's first assume $K=K'$. Then, the statement is trivial because an acceleration datum for $K\subset Y$ that is $\Delta(\lambda')$-compatible with $(H_{loc}, J_{loc})$ is automatically  $\Delta(\lambda)$-compatible with $(H_{loc}, J_{loc})$. Therefore both locality maps in the diagram can be computed using the same acceleration data.

We are now left with dealing with the case $K\subsetneq K'$. Since we have already established compatibility with truncation it suffices to consider the case $\lambda=\lambda'$. We first choose  an  arbitrary acceleration datum $(H_\tau',J_\tau')$ for $K'\subset Y$ that is $\Delta(\lambda)$-compatible with $(H_{loc}, J_{loc})$. Using a slight extension of Observation \ref{propDeltaCompExist}, we can choose an  acceleration datum $(H_\tau,J_\tau)$ for $K\subset Y$ that is $\Delta(\lambda)$-compatible with $(H_{loc}, J_{loc})$ such that \begin{itemize}
\item $h(H_\tau,J_\tau)=h(H_\tau',J_\tau')$
\item $H_i(t,x)\geq H'_i(t,x)$ for every $i\in \mathbb{N}$ and $(t,x)\in S^1\times M.$
\end{itemize} For locality maps constructed with such acceleration data, we get that all Floer trajectories associated with the restriction maps from $K$ to $K'$ of energy $\leq \lambda$ and connecting inner orbits remain inside. This finishes part (2).\\

We move on to part (3). Let $(H_\tau,J_\tau)$ be an acceleration datum for $K\subset M$ that is $\Delta(\lambda)$-compatible with $(H_{loc}, J_{loc})$. Recall that we have a splitting \[
HF^*_\lambda(H_i)=HF^*_{\lambda,inner}(H_i)\oplus HF^*_{\lambda,outer}(H_i),
\] for every $i\in\mathbb{Z}_{\geq 1}$, which is compatible with continuation maps $HF^*_\lambda(H_i)\to HF^*_\lambda(H_{i+1}).$ In particular, we have canonical maps $$HF^*_{\lambda,outer}(H_i)\to HF^*_{\lambda,outer}(H_{i+1}).$$ All we need to show is that the direct limit of the diagram formed by these maps is 0.

Let us choose $(H_\tau,J_\tau)$ with the extra property that for every $i\in \mathbb{N}$, and $x\in M$ such that $x\in H_{loc}^{-1}([h(H_\tau,J_\tau),\infty)) $  and $t\in S^1$, we have  $$H_{i+1}(t,x)\geq H_{i}(t,x)+1.$$This can be easily achieved.

Any Floer solution that contributes to $HF^*_{\lambda,outer}(H_i)\to HF^*_{\lambda,outer}(H_{i+1})$ is contained in the region $M\setminus H_{loc}^{-1}((-\infty, h(H_\tau,J_\tau)))$ because of Proposition \ref{prpLipBoundCont}. Moreover, because of the extra condition we imposed on the acceleration data, the topological energy of each such Floer solution is at least $1$. Now every element in the colimit defining $SH^*_{\lambda,outer}$ comes from an $HF^*_{\lambda,outer}(H_i)$ for some $i$. But each such element has to map to zero inside $HF^*_{\lambda,outer}(H_i+N),$ where $N>\lambda$. This finishes the proof of (3).\\

We come to part (4) and bring back the dependence on $(H_{loc},J_{loc},h_0)$. We need to show that for any $K\subset M,\lambda>0$ (which we fix now), the locality map with data $(H_{loc},J_{loc},h_0)$ is the same as the one with data $(H_{loc}',J_{loc}',h'_0).$ For this we need to introduce a notion of being compatible with both $(H_{loc},J_{loc},h_0)$ and $(H_{loc}',J_{loc}',h'_0)$ simultaneously for an acceleration datum.

\begin{definition}\label{defRoofLocData}
Let $\Delta>0$ be a positive real number. An acceleration datum $(H_\tau ,J_\tau)$ for $K\subset Y$ inside $M$ is called \emph{$\Delta$-compatible with $(H_{loc}, J_{loc},h_0)$ and $(H_{loc}', J_{loc}',h'_0)$} if there are   regions $D_{\Delta,h}:=H_{loc}^{-1}([h,h+\Delta])$ and $D'_{\Delta,h'}:=H_{loc}^{' -1}([h',h'+\Delta])$ such that
\begin{itemize}
\item $h>\max\{h_0,\max H|_K\}$ and $h'>\max\{h'_0,\max H'|_K\}$
\item  $D_{\Delta,h}\cap D'_{\Delta,h'}=\emptyset$   
\item  for every $\tau\in [1,\infty)$ we have that on $D_{\Delta,h}$: \begin{itemize}
\item $H_\tau$ is time independent and $H_\tau-H_{loc}$ is constant function.
\item $J_\tau$ is time independent and equal to $J_{loc}$.
\end{itemize} and on $D'_{\Delta,h'}$: \begin{itemize}
\item $H_\tau$ is time independent and $H_\tau-H_{loc}'$ is a constant function.
\item $J_\tau$ is time independent and equal to $J_{loc}'$.
\end{itemize}
\end{itemize}
\end{definition}

Let $\Delta$ be a real number larger than both $\Delta(\lambda)$ and $\Delta'(\lambda)$. We construct the   locality map with data $(H_{loc},J_{loc})$ using a $\Delta$-compatible acceleration datum $(H_\tau ,J_\tau)$. Since $H_{loc}$ and $H_{loc}'$ are proper, we can find an $h'>h'_0$ such that 
$$H_{loc}(D'_{\Delta,h'})\subset (h(H_\tau ,J_\tau)+\Delta,\infty).$$ 

\begin{figure}
\includegraphics[width=0.6\textwidth]{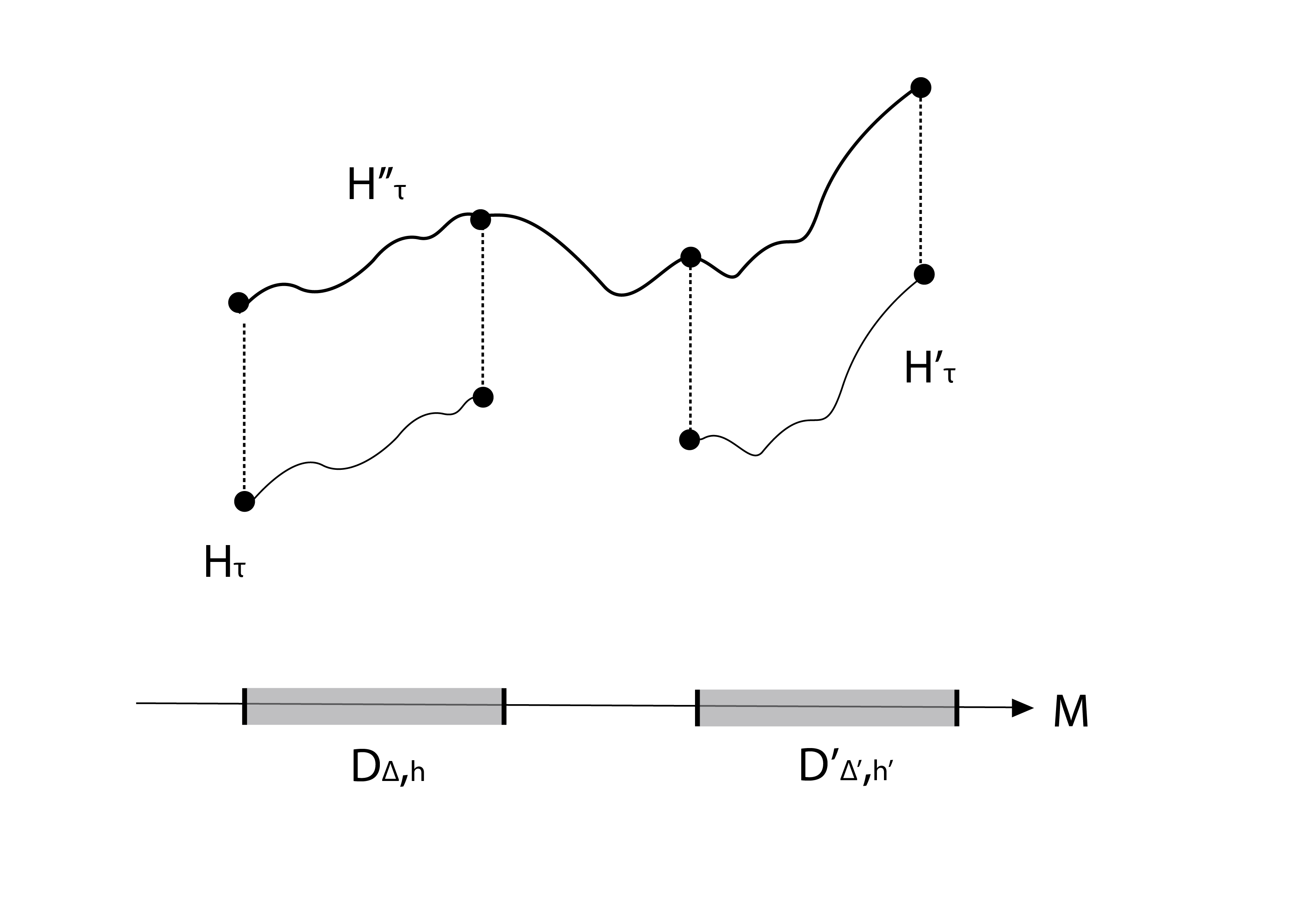}
\caption{A depiction regarding Observation \ref{obs3}}
\label{localitydata}
\end{figure}

Then, we choose acceleration data $(H_\tau' ,J_\tau')$ that is $\Delta$-compatible with $(H_{loc}',J_{loc}')$ satisfying $h(H_\tau' ,J_\tau')=h'$.

\begin{obs}\label{obs3} We can construct an acceleration datum $(H_\tau'' ,J_\tau'')$ that is $\Delta$-compatible with $(H_{loc}, J_{loc})$ and $(H_{loc}', J_{loc}')$ as in Definition \ref{defRoofLocData} with the $h$ in the statement equal to $h(H_\tau ,J_\tau)$ and $h'$ equal to $h(H_\tau' ,J_\tau').$ Moreover, we can make sure that $H_i''(t,x)$ is greater than equal to both $H_i(t,x)$ and $H_i'(t,x)$ for all $i\in \mathbb{N}$ and $(t,x)\in S^1\times M.$ See Figure \ref{localitydata}.
\end{obs}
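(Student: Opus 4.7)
The plan is to build $(H_\tau'', J_\tau'')$ by suitably combining and dominating $(H_\tau, J_\tau)$ and $(H_\tau', J_\tau')$, exploiting the key geometric fact that the two regions where we must impose locality-type constraints are automatically disjoint. Indeed, by the choice of $h'$, the region $D'_{\Delta, h'}$ lies in $H_{loc}^{-1}((h(H_\tau, J_\tau) + \Delta, \infty))$, and in particular is disjoint from $D_{\Delta, h(H_\tau, J_\tau)}$. So the constraints on each region are independent and can be prescribed separately, which is the conceptual heart of the observation.

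More concretely, I would proceed as follows. For each $\tau$, on $D_{\Delta, h(H_\tau, J_\tau)}$ the given datum $(H_\tau, J_\tau)$ already has the form $(H_{loc} + c_\tau, J_{loc})$ for a constant $c_\tau$; similarly, $(H_\tau', J_\tau')$ has the form $(H_{loc}' + c_\tau', J_{loc}')$ on $D'_{\Delta, h'}$. I would pick monotone non-decreasing sequences of constants $c_\tau'' \geq c_\tau$ and $c_\tau''' \geq c_\tau'$ large enough that the values of $H_{loc} + c_\tau''$ on $D_{\Delta, h(H_\tau, J_\tau)}$ and $H_{loc}' + c_\tau'''$ on $D'_{\Delta, h'}$ exceed the supremum of $\max(H_\tau, H_\tau')$ over a fixed compact neighborhood. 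Define $H_\tau''$ to equal these prescribed values on the two disjoint regions, and $J_\tau''$ to equal $J_{loc}$ and $J_{loc}'$ respectively. Outside $D_{\Delta, h(H_\tau, J_\tau)} \cup D'_{\Delta, h'}$, start from $\max(H_\tau, H_\tau')$ smoothed by a standard partition of unity argument and add a large time-independent bump to ensure strict domination, then extend $J_\tau''$ by any compatible geometrically bounded almost complex structure agreeing with the prescribed ones near the two regions.

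The resulting family $(H_\tau'', J_\tau'')$ will automatically satisfy the $\Delta$-compatibility conditions of Definition \ref{defRoofLocData} with the desired values of $h$ and $h'$, satisfy $H_i'' \geq \max(H_i, H_i')$ pointwise by construction, be monotone in $\tau$, and converge pointwise to $\chi_{M \setminus K}$ because both initial sequences do. The main technical point, as usual in this setting, is to arrange that $(H_\tau'', J_\tau'')$ defines a genuine acceleration datum, i.e. that the resulting Floer data are dissipative and can be made regular. Dissipativity is inherited from the two ends by \cite{groman} (see Proposition \ref{tmSummary} of the appendix), since on the prescribed regions the Hamiltonian is locally a translate of $H_{loc}$ or $H_{loc}'$, while in the intermediate region dissipativity amounts to intermittent boundedness of $J_\tau''$, an open condition easily arranged. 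Regularity is then achieved by a generic $C^0$-small perturbation supported away from the two prescribed regions, which preserves all the conditions above.
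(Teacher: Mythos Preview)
Your proposal is correct and follows exactly the natural construction the paper has in mind; indeed the paper offers no proof for this observation beyond a reference to Figure~\ref{localitydata}, treating it as evident once one notes (as you do) that the choice of $h'$ forces $D_{\Delta,h(H_\tau,J_\tau)}$ and $D'_{\Delta,h'}$ to be disjoint compact regions, so the two compatibility constraints can be imposed independently and then extended to a dominating acceleration datum on $M$. Your remarks on dissipativity and regularity are slightly imprecise in their citations (Proposition~\ref{tmSummary} concerns homotopies rather than individual data), but the substantive point---that the constraints live on compact sets and therefore do not interfere with arranging dissipativity and regularity at infinity in $M$, exactly as in Observation~\ref{propDeltaCompExist}---is correct.
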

Notice that we can use $(H_\tau'' ,J_\tau'')$ to construct locality maps in the formalism introduced in part (1) in two different ways. One by the virtue of it being $\Delta$-compatible with $(H_{loc},J_{loc})$ and two by $\Delta$-compatibility with $(H_{loc}',J_{loc}')$. Our goal is to prove that the two locality maps coincide at the level of truncated $SH$. The acceleration data that we construct for $K$ as a compact subset of $Y$,  which we had denoted with a superscript $Y$ above, are not the same in these two different ways. Let us denote these acceleration data by $((H_\tau'')^Y_1,(J_\tau'')^Y_1)$ and $((H_\tau'')^Y_2,(J_\tau'')^Y_2).$

We can divide the $1$-periodic orbits of $(H_i'')^Y_2$ also into two groups \begin{itemize}\item the ones that are common with $(H_\tau'')^Y_1,$ called common inner orbits
\item the other ones, called in-between orbits.\end{itemize}

As in the discussion of inner and outer orbits above, we get a splitting \[
HF^*_\lambda((H_i'')^Y_2)=HF^*_{\lambda,common-inner}((H_i'')^Y_2)\oplus HF^*_{\lambda,in-between}((H_i'')^Y_2),
\] similarly compatible with continuation maps. We have $$HF^*_\lambda((H_i'')^Y_1)=HF^*_{\lambda,common-inner}((H_i'')^Y_2)$$ in a way that is compatible with the maps to $HF^*_\lambda(H_i'')$

Hence, we will be done if we can show that the direct limit of $$\ldots\to HF^*_{\lambda,in-between}((H_i'')^Y_2)\to HF^*_{\lambda,in-between}((H_{i+1}'')^Y_2)\to\ldots$$ is zero. This can again be more clearly seen by being slightly more careful in the choice of $(H_\tau'' ,J_\tau'')$ similarly to the step (3) above.

Namely, we choose $(H_\tau'',J_\tau'')$ with the property that for every $i\in \mathbb{N}$, and $x\in M$ such that $H_{loc}(x)\geq h(H_\tau,J_\tau)$ and $H_{loc}'(x)\leq h(H_\tau',J_\tau')$ and $t\in S^1$, we have  $$H_{i+1}''(t,x)\geq H_{i}''(t,x)+1.$$This can be easily achieved and we finish the proof of (4) analogously to step (3).\\

We now get to (5). We will leave compatibility with the BV operator to the reader in its entirety as its proof is another similar use of Proposition \ref{prpLipBoundCont}. We want to prove that $f_{K,\lambda}$ is an algebra homomorphism.

We start by choosing a locality datum $(H_{loc},J_{loc})$ for $Y$ such that $(2H_{loc},J_{loc})$ is also a locality datum. We then choose $(H_\tau,H'_\tau,J_\tau)$ such that\begin{itemize}
\item $(H_\tau,J_\tau)$ is an acceleration datum for $K$ that is $\Delta(\lambda)$-compatible with $(H_{loc},J_{loc})$
\item $(H_\tau' ,J_\tau)$ is an acceleration datum for $K$ that is $\Delta(\lambda)$-compatible with $(2H_{loc},J_{loc}).$
\item $h(H_\tau ,J_\tau)=h(H_\tau' ,J_\tau)$
\item For every $i\in\mathbb{Z}_{>0}$ and $x\in M$, $$\min_tH'_{i}(t,x)> 2\max_t H_{i}(t,x)$$
\end{itemize}

Now consider the pair-of-pants product maps
\begin{equation*}
*:HF^*_{\lambda}(H_i)\otimes HF^*_{\lambda}(H_i)\to HF^*_{\lambda}(H_i')
\end{equation*}
defined as in Section \ref{ssoperations}. Proposition \ref{prpLipBoundProd} proves that the diagram
\begin{align*}
\xymatrix{
HF^*_\lambda(H_i^Y)\otimes HF^*_\lambda(H_i^Y)\ar[d]\ar[r]&HF^*_{\lambda,inner}(H_i)\otimes HF^*_{\lambda,inner}(H_i)\ar[r]\ar[d] &HF^*_{\lambda}(H_i)\otimes HF^*_{\lambda}(H_i)  \ar[d]\\ HF^*_\lambda((H_{i}')^Y)\ar[r] &HF^*_{\lambda,inner}(H_{i}')\ar[r]&HF^*_{\lambda}(H_{i}'),}
\end{align*} is commutative. This finishes the proof using that $f_{K,\lambda}$ is independent of the locality data and $\Delta(\lambda)$-compatible acceleration data that is used to defined it.

\end{proof}
\begin{proof}[Proof of Theorem \ref{thmLocalityTruncFunc}]
Let us identify $Y$ with its image under $\iota_Y$ and $X$ with its image under $\iota_Y\circ\iota_X$ as in the proof of Theorem \ref{thmLocalityTrunc}.

Fix $K\subset X$ compact, $\lambda>0$, and also locality data $\left(H_{loc}^X, J_{loc}^X,h_0^X\right)$ for $X$ and $\left(H_{loc}^Y, J_{loc}^Y,h_0^Y\right)$ for $Y$.
Without loss of generality we assume $h^X_0>\max H_{loc}^X|_K$ and $h^Y_0>\max H_{loc}^Y|_K$.
\begin{definition}
Let $\Delta>0$ be a positive real number. An acceleration datum $(H_\tau ,J_\tau)$ for $K\subset X$ inside $M$ is called \emph{$\Delta$-compatible with both $(H_{loc}^X, J_{loc}^X)$ and $(H_{loc}^Y, J_{loc}^Y)$} if there exist $h^X>h_0^X$ and $h^Y>h^Y_0$ such that 

$$
H_{loc}^Y(p)> h^Y\Rightarrow H_{loc}^X(p) > h^X+\Delta$$
and, moreover, for every $\tau\in [1,\infty)$, on $(H_{loc}^X)^{-1}([h^X,h^X+\Delta])$: \begin{itemize}
\item $H_\tau$ is time independent and $H_\tau-H_{loc}^X$ is a constant real number.
\item $J_\tau$ is time independent and equal to $J_{loc}^X$.
\end{itemize} and on $(H_{loc}^Y)^{-1}([h^Y,h^Y+\Delta])$: \begin{itemize}
\item $H_\tau$ is time independent and $H_\tau-H_{loc}^Y$ is a constant real number.
\item $J_\tau$ is time independent and equal to $J_{loc}^Y$.
\end{itemize}
\end{definition}

An $(H_\tau ,J_\tau)$ that is $\Delta$-compatible with both $(H_{loc}^X, J_{loc}^X)$ and $(H_{loc}^Y, J_{loc}^Y)$ can be constructed using properness similar to Step (4) of the proof of Theorem \ref{thmLocalityTrunc}.
We choose sufficiently large $\Delta>0$ so that such an acceleration datum $(H_\tau ,J_\tau)$ for $K$ inside $M$ works to define both a $X\subset M$-locality map and a $Y\subset M$-locality map for $(K,\lambda)$.

We can also define an acceleration data $(H_\tau^{X\subset Y} ,J_\tau^{X\subset Y})$  for $K$ inside $Y$ by replacing $(H_\tau|_Y ,J_\tau|_Y)$ for every $\tau\geq 1$ with $(H_{loc}^Y, J_{loc}^Y)$ on $(H_{loc}^Y)^{-1}([h^Y,\infty)$. Clearly,  $(H_\tau^{X\subset Y} ,J_\tau^{X\subset Y})$ is $\Delta(\lambda)$-compatible with $(H_{loc}^X, J_{loc}^X).$ Therefore, we can use the data  $(H_\tau^{X\subset Y} ,J_\tau^{X\subset Y})$ to define a $X\subset Y$-locality map for $(K,\lambda).$ The result follows easily.

\end{proof}
\section{Locality for reduced and relative symplectic cohomology}\label{Sec:Torsion}

Let $K$ be a compact subset inside a geometrically bounded symplectic manifold $M$. So far we have discussed results regarding the truncated symplectic cohomologies $SH_{M,\lambda}^*(K)$, where $\lambda\in\mathbb{R}_{\geq 0}$. In this section, we give the definitions of reduced symplectic cohomology $SH_{M,red}^*(K)$ and relative symplectic cohomology $SH_{M}^*(K)$. Then, we discuss locality isomorphisms for these invariants in the same geometric framework as for truncated symplectic cohomology.

We note to the reader that truncated (for a $\lambda\geq 0$), reduced and relative symplectic cohomologies are in general provably different invariants. We believe that they are all useful and we hope the difference in their constructions will be clear to the reader.

\begin{remark}
Let us also remark on why we prefer the adjectives reduced or relative over local. The main reason is that it sounds awkward to define an invariant to be local symplectic cohomology, but then prove that it is local in the sense that we explained in this paper only under some conditions. The name reduced in particular comes from the relationship with taking the reduced cohomology of a chain complex as explained in Section 8.1 of \cite{groman}. See in particular \cite[Theorem 8.4]{groman}. We do not discuss this viewpoint in the present paper.
\end{remark}
\subsection{Reduced symplectic cohomology}

Let us now define the reduced symplectic cohomology (it was called local symplectic cohomology in \cite{groman}, see Section 2.1) of $K\subset M$, $$SH_{M,red}^*(K):=\varprojlim_{\lambda}SH_{M,\lambda}^*(K).$$
Using elementary properties of the inverse limit, we can equip $SH_{M,red}^*(K)$ with a canonical $BV$-algebra structure and deduce the following theorem.

\begin{theorem}\label{thmRedSHDef}
The assignment
\[
\cSH^*_{Y\subset M,red}:(K,\lambda)\mapsto SH^*_{M,red}(K),
\]
which acts on morphisms by the inverse limit of the maps in \eqref{eqrestriction} is a functor
\[
\cSH^*_{Y\subset M,red}: \cK(Y)\to \Lambda_{\geq 0}-BValg.
\]\qed
\end{theorem}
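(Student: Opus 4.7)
The plan is to realize $\cSH^*_{Y\subset M,red}$ as an inverse limit, in the functor category, of the functor $\cSH^*_{Y\subset M}$ from the previous subsection. First, for a fixed compact $K\subset Y$, the truncation maps organize the assignment $\lambda\mapsto SH^*_{M,\lambda}(K)$ into a diagram $\bR_+^{op}\to \Lambda_{\geq 0}\text{-BValg}$. The category $\Lambda_{\geq 0}\text{-BValg}$ of BV-algebras over $\Lambda_{\geq 0}$ is complete: for any small diagram, the inverse limit at the level of underlying $\Lambda_{\geq 0}$-modules carries canonical product and BV operator defined componentwise, and the associativity, super-commutativity and BV identities pass to the limit tautologically. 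Applying this to the truncation tower yields $SH^*_{M,red}(K)$ as a $\Lambda_{\geq 0}$-BV-algebra, together with canonical structure maps $SH^*_{M,red}(K)\to SH^*_{M,\lambda}(K)$ of BV-algebras.

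Next I would handle functoriality in $K$. Given an inclusion $K'\subset K$ of compacts in $Y$, the restriction maps $SH^*_{M,\lambda}(K)\to SH^*_{M,\lambda}(K')$ constructed in the previous section are BV-algebra maps and are natural with respect to truncation maps in $\lambda$. This is exactly the data of a morphism of $\bR_+^{op}$-shaped diagrams in $\Lambda_{\geq 0}\text{-BValg}$, so by the universal property of the limit one obtains an induced BV-algebra map $SH^*_{M,red}(K)\to SH^*_{M,red}(K')$. The universal property also yields at once that the identity inclusion induces the identity map and that restriction is compatible with composition of inclusions, establishing functoriality of $\cSH^*_{Y\subset M,red}$.

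The only step requiring any care is the verification that limits of BV-algebras over $\Lambda_{\geq 0}$ exist and are computed on underlying modules, but this is a formal check since all the algebraic structure maps (product, BV operator, unit) are morphisms of $\Lambda_{\geq 0}$-modules and the defining identities are equational. There is no genuine analytic obstacle: once the truncated theory has been set up as a functor to $\Lambda_{\geq 0}\text{-BValg}$ in Section \ref{Sec:Overview}, the reduced theory is obtained by a purely categorical post-composition with the limit functor $\varprojlim_\lambda: \mathrm{Fun}(\bR_+^{op},\Lambda_{\geq 0}\text{-BValg})\to \Lambda_{\geq 0}\text{-BValg}$.
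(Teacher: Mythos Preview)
Your proposal is correct and matches the paper's approach: the paper gives no proof beyond the sentence ``Using elementary properties of the inverse limit, we can equip $SH_{M,red}^*(K)$ with a canonical $BV$-algebra structure and deduce the following theorem,'' and marks the statement with a \qed. Your sketch is precisely a spelling-out of those elementary properties---completeness of $\Lambda_{\geq 0}$-BValg, limits computed on underlying modules, and functoriality via the universal property---so there is nothing to add or correct.
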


Our locality theorem for truncated symplectic cohomology immediately implies:

\begin{theorem}\label{thmLocalityRed}
\begin{enumerate}
\item Let $Y$ be a symplectic manifold of geometrically finite type and let $M$ be one that is geometrically bounded. Let $\text{dim}(M)=\text{dim}(Y)$ and $\iota:Y\to M$ be a symplectic embedding. Also denote by $\iota$ the induced functor $\cK(Y)\to\cK(M)$. Then there exists a distinguished isomorphism of functors
\begin{equation}
T_{\iota,red}: \cSH^*_{Y\subset Y,red}\simeq \cSH^*_{\iota(Y)\subset M,red}\circ\iota.
\end{equation}
\item Let $X$ and $Y$ be symplectic manifolds of geometrically finite type and let $M$ be one that is geometrically bounded. Let $\text{dim}(M)=\text{dim}(Y)=\text{dim}(X)$ and $\iota_X:X\to Y$, $\iota_Y:Y\to M$ be symplectic embeddings. Then there is an equality of natural transformations $$T_{\iota_Y,red}\circ T_{\iota_X,red}=T_{\iota_Y\circ\iota_X,red}.$$\qed
\end{enumerate}

\end{theorem}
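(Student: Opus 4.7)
The plan is to deduce Theorem \ref{thmLocalityRed} directly from Theorem \ref{thmLocalityTrunc} and Theorem \ref{thmLocalityTruncFunc} by passing to the inverse limit in the truncation parameter $\lambda$. For each compact $K\subset Y$, the truncated locality theorem supplies a system of $\Lambda_{\geq 0}$-module isomorphisms
\[
T_{\iota}(K,\lambda): SH^*_{Y,\lambda}(K)\xrightarrow{\sim} SH^*_{M,\lambda}(\iota(K))
\]
indexed by $\lambda\in\bR_+$. Naturality of $T_\iota$ as a transformation of functors on $\cK(Y)\times\bR_+$ means in particular that these isomorphisms intertwine the truncation maps $SH^*_{M,\lambda'}(\iota(K))\to SH^*_{M,\lambda}(\iota(K))$ for $\lambda'\geq\lambda$ on the two sides. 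I would then take $\varprojlim_{\lambda}$ to define $T_{\iota,red}(K)$. Since the inverse limit of a system of isomorphisms indexed by a directed poset is itself an isomorphism, we obtain isomorphisms $SH^*_{Y,red}(K)\xrightarrow{\sim} SH^*_{M,red}(\iota(K))$.

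Next I would check naturality with respect to inclusions $K_2\subset K_1$. For each fixed $\lambda$ the square formed by restriction maps and the truncated locality isomorphisms commutes by Theorem \ref{thmLocalityTrunc}. Passing to the inverse limit over $\lambda$, using that restriction maps are by definition the inverse limits of the truncated restriction maps (see Theorem \ref{thmRedSHDef}), produces the analogous commutative square for the reduced invariants. This shows that the assembled maps $T_{\iota,red}(K)$ form a natural transformation $\cSH^*_{Y\subset Y,red}\to \cSH^*_{\iota(Y)\subset M,red}\circ\iota$ of functors $\cK(Y)\to\Lambda_{\geq 0}\text{-}BValg$. The BV-algebra structure is respected: the pair-of-pants product and BV operator on $SH^*_{M,red}(K)$ are by construction the inverse limits of those on $SH^*_{M,\lambda}(K)$, and Theorem \ref{thmLocalityTrunc} asserts that the truncated transformation is a map of BV-algebras at each truncation level.

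For the functoriality statement in part (2), I would simply invoke Theorem \ref{thmLocalityTruncFunc}, which gives the equality $T_{\iota_Y}\circ T_{\iota_X}=T_{\iota_Y\circ\iota_X}$ on $\cSH^*$. Applying $\varprojlim_\lambda$ componentwise, and using that inverse limits preserve composition of natural transformations, yields $T_{\iota_Y,red}\circ T_{\iota_X,red}=T_{\iota_Y\circ\iota_X,red}$.

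Overall, this theorem is essentially formal given the truncated case: no new Floer-theoretic input is required, and the only work is bookkeeping at the level of inverse systems. I do not anticipate a main obstacle; if anything the subtlest point is the mild verification that the inverse limit of a compatible system of isomorphisms of $\Lambda_{\geq 0}/T^\lambda$-modules assembles to an isomorphism of $\Lambda_{\geq 0}$-modules, but this is immediate from the universal property and from the fact that each structure map in the system is itself an isomorphism.
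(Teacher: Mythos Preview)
Your proposal is correct and matches the paper's approach exactly: the paper states that Theorem~\ref{thmLocalityRed} ``immediately'' follows from the truncated locality theorem and marks it with a \qed, treating the passage to the inverse limit over $\lambda$ as formal. Your write-up simply spells out this bookkeeping.
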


\subsection{Relative symplectic cohomology }

As we have seen in the previous section, the passage from truncated symplectic cohomology to reduced symplectic cohomology is rather straightforward. In our intended applications to mirror symmetry we will be interested in a slightly different invariant called \emph{relative symplectic cohomology}. This invariant was defined in \cite{varolgunes} for $M$ closed and denoted by $SH_M^*(K)$. In the same paper a Mayer-Vietoris property was proved, which is the main reason for the relative symplectic cohomology to be the more relevant version for mirror symmetry.

Let us note from the outset that unlike reduced symplectic cohomology, relative symplectic cohomology (for example when $M$ is closed, where it has already been defined) cannot in general be recovered from the inverse system $SH_{M,\lambda}^*(K).$ Our first aim in this section is to define relative symplectic cohomology $SH_M^*(K)$ for $M$ geometrically bounded.

Let $(H_\tau,J_\tau)$ be an acceleration datum for $K\subset M$. We obtain a $1$-ray of chain complexes over $\Lambda_{\geq 0}$: $$\mathcal{C}(H_\tau):= CF^*(H_1)\to CF^*(H_2)\to\ldots. $$

We define relative symplectic cochain complex by taking the degree-wise completion of the telescope of $\mathcal{C}(H_\tau)$: $$SC_M^*(K,H_\tau):=\widehat{tel}(\mathcal{C}(H_\tau)).$$ Here the telescope is defined as $$tel(\mathcal{C}))=\left(\bigoplus_{i=1}^\infty C_i[1]\oplus C_i\right)$$ with the differential depicted below \begin{align}\label{teles}
\xymatrix{
C_1\ar@{>}@(ul,ur)^{d }  &C_2\ar@{>}@(ul,ur)^{d} &C_3\ar@{>}@(ul,ur)^{d}\\
C_1[1]\ar@{>}@(dl,dr)_{-d} \ar[u]^{\text{id}}\ar[ur]^{f_1} &C_2[1]\ar@{>}@(dl,dr)_{-d} \ar[u]^{\text{id}}\ar[ur]^{f_2}&\ldots\ar[u]^{\text{id}}_{\ldots} }
\end{align}

Completion is a functor $Mod(\Lambda_{\geq 0})\to Mod(\Lambda_{\geq 0})$ defined by \begin{align}
A\mapsto \widehat{A}:\lim_{\xleftarrow[r\geq 0]{}}A\otimes_{\Lambda_{\geq 0}}\Lambda_{\geq 0}/\Lambda_{\geq r}
\end{align} on objects, and by functoriality of inverse limits on the morphisms.

The completion functor automatically extends to a functor $Ch(\Lambda_{\geq 0})\to Ch(\Lambda_{\geq 0})$. Namely, if $(C,d)$ is a chain complex over $\Lambda_{\geq 0}$, then the completion $(\widehat{C},\widehat{d})$ is obtained by applying the completion functor to each graded piece of the underlying graded module, and also to the maps $d_i:C^i\to C^{i+1}$.

\begin{remark}\label{remNotFree}
Note that the underlying $\Lambda_{\geq 0}$- module of $CF^*(H_i)$ is not free if $H_i$ has infinitely many $1$-periodic orbits, but it is degree-wise complete (recall the completed direct sum from Equation \eqref{eq-ham-completed}). Moreover, it is still true that the canonical map $$tel(\mathcal{C}(H_\tau))\otimes_{\Lambda_{\geq 0}}\Lambda_{\geq0}/T^{\lambda}\Lambda_{\geq 0}\to \widehat{tel}(\mathcal{C}(H_\tau))\otimes_{\Lambda_{\geq 0}}\Lambda_{\geq0}/T^{\lambda}\Lambda_{\geq 0}$$ is an isomorphism of chain complexes for every $\lambda\geq 0$.
\end{remark}

Note that we have the following chain of canonical isomorphisms \begin{align*}H^*(\widehat{tel}(\mathcal{C}(H_\tau))\otimes_{\Lambda_{\geq 0}}\Lambda_{\geq0}/T^{\lambda}\Lambda_{\geq 0})&\simeq  H^*(tel(\mathcal{C}(H_\tau))\otimes_{\Lambda_{\geq 0}}\Lambda_{\geq0}/T^{\lambda}\Lambda_{\geq 0})\\&\simeq H^*(tel(\mathcal{C}(H_\tau)\otimes_{\Lambda_{\geq 0}}\Lambda_{\geq0}/T^{\lambda}\Lambda_{\geq 0})) \\&\simeq H^*(\lim_{\rightarrow} (CF(H_i)\otimes_{\Lambda_{\geq 0}}\Lambda_{\geq0}/T^{\lambda}\Lambda_{\geq 0}))\\&\simeq SH^*_{M,\lambda}(K) \end{align*}
In the third equality we used the canonical quasi-isomorphism (see for example Lemma 2.2.1 of \cite{varolgunes})  \begin{align}tel(\mathcal{C})\to\lim_{\rightarrow}(C_i).
\end{align}
We will need the following proposition to prove that the homology of $SC_M^*(K,H_\tau)$ is independent of the acceleration data. This is an extension of the compact case from \cite{varolgunes} to the geometrically bounded case.

\begin{proposition}\label{propRelSHWell}
\begin{itemize}
\item For any two different choices of acceleration data for $K$, $(H_\tau,J_\tau)$ and $(H_\tau',J_\tau'),$
there is a canonical $\Lambda_{\geq 0}$-module isomorphism (comparison map) $$H^*(SC_M^*(K,H_\tau))\simeq H^*(SC_M^*(K,H_\tau'))$$ defined using the Hamiltonian Floer theory package on geometrically bounded manifolds.
 \item The comparison map from an acceleration data to itself is the identity map. Compositions of comparison maps are comparison maps.
 \end{itemize}
\end{proposition}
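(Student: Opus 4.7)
The plan is to imitate the argument for closed manifolds in \cite{varolgunes} while invoking the geometrically-bounded Hamiltonian Floer package (Appendix \ref{AppDissRev}, \cite{groman}) to handle infinite direct sums, dissipativity of $2$-- and $3$--parameter families, and compatibility with completion.

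First I would reduce to the monotone case. Given two acceleration data $(H_\tau,J_\tau)$ and $(H_\tau',J_\tau')$, the directedness result of \cite[Theorem 6.10]{groman} used in the proof of Proposition \ref{lmSHCofinal} produces a third acceleration datum $(H_\tau'',J_\tau'')$ that pointwise dominates both for $\tau$ large. Once comparison maps are constructed in the monotone case and shown to be functorial under composition of monotonicity inequalities, the comparison map between arbitrary acceleration data is defined by the roof $(H_\tau,J_\tau) \leq (H_\tau'',J_\tau'') \geq (H_\tau',J_\tau')$, and well-definedness of the result reduces to functoriality of the monotone case.

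Next I would perform the chain-level construction in the monotone case. Choose regular dissipative monotone homotopies $(F_i^s,J_i^s)$ from $(H_i,J_i)$ to $(H_i',J_i')$ for each $i$, producing continuation chain maps $f_i : CF^*(H_i) \to CF^*(H_i')$ which strictly respect the Novikov filtration by \eqref{eqGeoTopEn2}. The squares
\[
\xymatrix{
CF^*(H_i) \ar[r]^{c_i} \ar[d]_{f_i} & CF^*(H_{i+1}) \ar[d]^{f_{i+1}} \\
CF^*(H_i') \ar[r]^{c_i'} & CF^*(H_{i+1}')
}
\]
commute only up to chain homotopies $h_i : CF^*(H_i) \to CF^{*-1}(H_{i+1}')$, which arise from dissipative $2$-parameter families of monotone Floer data interpolating between $c_{i+1}' \circ f_i$ and $f_{i+1} \circ c_i$; existence of such families in the geometrically bounded setting is what the machinery of \cite{groman} provides. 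The tuples $(f_i,h_i)$ assemble, via the standard formula, into a chain map between the uncompleted telescopes $tel(\mathcal{C}(H_\tau)) \to tel(\mathcal{C}(H_\tau'))$. Since this map is $\Lambda_{\geq 0}$-linear and strictly filtered, applying the completion functor degree-wise yields a chain map $SC_M^*(K,H_\tau) \to SC_M^*(K,H_\tau')$; taking cohomology defines the proposed comparison map.

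Finally I would verify well-definedness, identity and composition. Independence from the auxiliary choices $(F_i^s,J_i^s)$ and from the $2$-parameter interpolations follows from an analogous construction one level up: any two such choices may be interpolated by dissipative $3$-parameter families, producing chain homotopies between the resulting telescope maps that respect the Novikov filtration and therefore complete to give homotopies between the induced maps on $SC_M^*$. For identity, take $F_i^s \equiv H_i$ and $J_i^s \equiv J_i$: then $f_i = \mathrm{id}$ and $h_i = 0$, so the telescope map is the identity. For composition, given three acceleration data related by monotone inequalities, one concatenates the $2$-parameter families and argues by the standard gluing of continuation homotopies that the composed telescope map is chain homotopic to the map obtained by direct interpolation; completion preserves this homotopy, so the induced map on $H^*$ is functorial. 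The main obstacle, and the only place where the argument genuinely departs from the closed case treated in \cite{varolgunes}, is the existence and regularity of the dissipative $2$-- and $3$-parameter Floer families needed for these homotopies; the homological algebra of telescopes and of the completion functor presents no new difficulties beyond those already addressed in Remark \ref{remNotFree}.
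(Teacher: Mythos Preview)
Your construction of the chain-level comparison map in the monotone case is correct and matches the paper's approach (maps of $1$-rays, telescope, completion), and your handling of identity and composition via higher-parameter dissipative families is fine. However, you never explain why the comparison map is an \emph{isomorphism} on cohomology. You construct a chain map $SC_M^*(K,H_\tau)\to SC_M^*(K,H_\tau')$ in the monotone case and then pass to the roof, but defining the comparison map for an arbitrary pair via a roof $(H_\tau)\leq (H_\tau'')\geq (H_\tau')$ requires inverting one of the two monotone maps, and you have not established invertibility. Functoriality and the identity property alone do not yield this.

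The paper closes this gap by reducing to truncations: by Remark \ref{remNotFree} the truncation of the completed telescope agrees with the truncation of the uncompleted one, whose homology is $\varinjlim_i HF^*_\lambda(H_i)$; then Proposition \ref{propTruncAccDef} shows the induced map on these direct limits is an isomorphism for every $\lambda$. Finally Lemma 2.3.5 of \cite{varolgunes} (a chain map whose cone becomes acyclic after every truncation has acyclic cone after completion) upgrades this to a quasi-isomorphism of the completed telescopes. You allude to Remark \ref{remNotFree} at the end, but not to this specific mechanism, and it is precisely the step that does the work. A secondary, cosmetic difference: the paper uses a \emph{lower} roof $H_\tau''\leq H_\tau,H_\tau'$ (via Lemma 8.13 of \cite{groman}) rather than your upper one; either direction is fine once the isomorphism of the monotone maps is in hand.
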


\begin{proof}
This is an adaptation of the proof of Proposition 3.3.3 (1) of \cite{varolgunes} to open symplectic manifolds. By Lemma 8.13 of \cite{groman}, we can find a third acceleration datum $(H_\tau'',J_\tau'')$ such that $H_\tau,H_\tau'\geq H_\tau''$ for all $\tau$.\footnote{Strictly speaking,  Lemma 8.13 of \cite{groman} is formulated specifically for $H_{\tau},H_{\tau'}$ proper. Here we do not make these properness assumptions. However the proof of  Lemma 8.13 of \cite{groman} adjusts easily.} Using contractibility of dissipative Floer data, we can obtain maps of $1$-rays from $\mathcal{C}(H_\tau'')$ to $\mathcal{C}(H_\tau)$ and $\mathcal{C}(H_\tau')$ (see Equation (3.3.2.1) of \cite{varolgunes}). These induce chain maps $$tel(\mathcal{C}(H_\tau''))\to tel(\mathcal{C}(H_\tau))$$ and $$tel(\mathcal{C}(H_\tau''))\to tel(\mathcal{C}(H_\tau')).$$ By Proposition \ref{propTruncAccDef}  and the chain of isomorphisms above, for any $\lambda >0$, tensoring these maps with $\frac{\Lambda_{\geq 0}}{T^{\lambda}\Lambda_{\geq 0}}$, we obtain quasi-isomorphisms. Hence we obtain an isomorphism as in the statement of the first bullet using Lemma 2.3.5 of \cite{varolgunes}, Remark \ref{remNotFree} and  the fact that  a chain map being a quasi-isomorphism is equivalent to its cone being acyclic.  The fact that this map does not depend on the choices and the second bullet point rely on a further application of the contractibility of dissipative Floer data (see the discussion around Equation (3.3.2.2) of \cite{varolgunes}).
\end{proof}

Therefore, we can make the following definition
\begin{definition}
The \emph{symplectic cohomology of $K$ relative to $M$} is the $\Lambda_{\geq 0}$-module
\begin{equation}
SH^*_M(K):=H^*(SC_M^*(K,H_\tau)).
\end{equation}
When $M$ is clear from the context we refer to $SH^*_M(K)$ as the relative symplectic cohomology of $K$.
\end{definition}

The proof of the following proposition (same as Proposition 3.3.3 from \cite{varolgunes}) is another application of contractibility of dissipative Floer data.

\begin{proposition}
Let $K\subset K'$ be compact subsets of $M$, then we have canonical restriction maps $$SH_M^*(K')\to SH_M^*(K).$$\qed
\end{proposition}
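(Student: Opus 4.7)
The plan is to imitate the compact case from \cite{varolgunes}, with adjustments for the dissipativity bookkeeping provided by \cite{groman}. The basic idea is that monotone continuation maps between appropriately chosen acceleration data for $K$ and $K'$ assemble into a chain map of (completed) telescopes, and the comparison machinery of Proposition \ref{propRelSHWell} makes the resulting map on homology independent of choices.

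First I would construct, given a dissipative acceleration datum $(H_\tau^{K'},J_\tau^{K'})$ for $K'$, a dissipative acceleration datum $(H_\tau^{K},J_\tau^{K})$ for $K$ satisfying $H_\tau^{K'}\leq H_\tau^{K}$ pointwise for every $\tau$. To do this, note that an acceleration datum for $K$ must dominate one for $K'$ off $K$, since on $K'\setminus K$ the sequence must diverge to $+\infty$ rather than tend to $0$. Pick any dissipative acceleration datum $(\widetilde H_\tau,\widetilde J_\tau)$ for $K$; by a variant of \cite[Lemma 8.13]{groman} (as already used in the proof of Proposition \ref{propRelSHWell}) one can then find a third dissipative acceleration datum for $K$ which pointwise dominates both $\widetilde H_\tau$ and $H_\tau^{K'}$. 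Use this as $H_\tau^{K}$. The comparison maps of Proposition \ref{propRelSHWell} then allow us to work with any such choice.

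Next, for each $i$ the inequality $H_i^{K'}\leq H_i^{K}$ and the dissipative Hamiltonian Floer package provide a monotone continuation chain map $c_i\colon CF^*(H_i^{K'})\to CF^*(H_i^{K})$, and the dissipative family between $\tau\in[i,i+1]$ for each datum, combined with a monotone interpolation, gives homotopies that turn the collection $\{c_i\}$ into a morphism of $1$-rays $\mathcal{C}(H_\tau^{K'})\to \mathcal{C}(H_\tau^{K})$ in the sense of \cite[\S3]{varolgunes}. Applying the telescope construction and then degree-wise completion (both functorial), I obtain a chain map
\[
SC_M^*(K',H_\tau^{K'})=\widehat{tel}\,\mathcal{C}(H_\tau^{K'})\longrightarrow \widehat{tel}\,\mathcal{C}(H_\tau^{K})=SC_M^*(K,H_\tau^{K}).
\]
Taking cohomology yields a candidate restriction map $SH_M^*(K')\to SH_M^*(K)$.

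Finally I would check that this map is independent of all choices. The argument mirrors the proof of Proposition \ref{propRelSHWell}: given two families of choices, use contractibility of the space of dissipative monotone homotopies (and monotone homotopies of homotopies) to produce a chain homotopy between the two induced chain maps on telescopes. After completion this remains a chain homotopy, so the induced map on $SH_M^*$ is well defined. Functoriality under compositions $K\subset K'\subset K''$ and compatibility with the comparison maps of Proposition \ref{propRelSHWell} follow from the same contractibility argument applied to a triangle of homotopies. The main technical obstacle is ensuring that every Floer datum in sight (each $H_i$, each interpolation, each family-of-families) can be chosen within the dissipative regular locus so that the $C^0$ estimates of Appendix \S\ref{AppDissRev} apply; this is handled by the same openness-plus-inheritance arguments used throughout \S\ref{Sec:Overview}, and is the only place where geometric boundedness of $M$ is used.
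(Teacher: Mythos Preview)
Your proposal is correct and follows exactly the approach the paper intends: the paper does not give a detailed proof but simply states that it is ``the same as Proposition 3.3.3 from \cite{varolgunes}'' and is ``another application of contractibility of dissipative Floer data,'' which is precisely what you have spelled out. Your write-up actually provides more detail than the paper does, and the dissipativity bookkeeping you flag (via \cite{groman}) is the only new ingredient beyond the closed case.
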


Using the formalism developed in Section 3 of \cite{tonkonog}, we can also equip $SH_M(K)$ with a BV-algebra structure. A full proof of the following statement requires considerable work. Since we do not directly need it in this paper, we omit the proof, which is within the reach of the techniques of \cite{tonkonog}. We will prove this theorem in a more conceptual manner in our future work joint with Mohammed Abouzaid.

\begin{theorem}

The assignment
\[
\cSH^*_{Y\subset M}:K\mapsto SH^*_{M}(K),
\]
which acts on morphisms by the maps in \eqref{eqrestriction} is a functor
\[
\cSH^*_{Y\subset M}: \cK(Y)\to \Lambda_{\geq 0}-BValg.
\]

Moreover, there is a canonical natural transformation of functors $$\cSH^*_{Y\subset M}\to \cSH^*_{Y\subset M,red}.$$\qed
\end{theorem}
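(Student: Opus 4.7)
The plan is to construct everything at the chain level on the completed telescope $SC_M^*(K,H_\tau) = \widehat{tel}(\mathcal{C}(H_\tau))$ and then transport structures along comparison quasi-isomorphisms. First I would fix, for each compact $K\subset Y$, a contractible choice of acceleration data and show that the resulting complexes form a homotopy-coherent diagram indexed by such choices; this is exactly the setup of \cite{tonkonog} adapted to the geometrically bounded case. The contractibility of dissipative Floer data on a geometrically bounded manifold (which underlies Proposition \ref{propRelSHWell}) is what makes these coherences available in our setting.

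For the functoriality in $K$, I would proceed exactly as in the proof of Proposition \ref{propRelSHWell}: given $K\subset K'$, any pair of acceleration data $(H_\tau,J_\tau)$ for $K$ and $(H_\tau',J_\tau')$ for $K'$ can be compared via a third acceleration datum dominated by both (using the geometrically bounded version of \cite[Lemma 8.13]{groman}). Monotone homotopies induce maps of one-rays $\mathcal{C}(H_\tau')\to\mathcal{C}(H_\tau)$ and, after applying $\widehat{tel}(-)$, chain maps between relative symplectic cochain complexes. Contractibility of the space of such dissipative homotopies guarantees that the induced map on $H^*$ is independent of choices, and functoriality under composition $K\subset K'\subset K''$ follows by the same homotopy argument.

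To produce the BV-algebra structure I would build the pair-of-pants product, BV operator, and unit exactly as in \cite{tonkonog}, working one ``level'' of the telescope at a time. Concretely, for each triple $(H_i,H_i,G_i)$ satisfying the monotonicity condition $\min_t G_{i,t} > 2\max_t H_{i,t}$ (which can be arranged by choosing the two acceleration data appropriately, as in Section \ref{ssoperations}), the standard Floer count on the pair of pants with a split monotone Hamiltonian one-form produces chain maps $CF^*(H_i)\otimes CF^*(H_i)\to CF^*(G_i)$, assembled via the telescope model into a map $\widehat{tel}(\mathcal{C}(H_\tau))\otimes\widehat{tel}(\mathcal{C}(H_\tau))\to\widehat{tel}(\mathcal{C}(G_\tau))$. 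Here a subtle point requiring care is the interaction of the completion $\widehat{(-)}$ with tensor products: one works on quotients by $T^\lambda$, where completion is trivial, verifies the structure maps there, and takes the inverse limit. Parameterized moduli spaces and the contractibility arguments from \cite{tonkonog} then give independence of acceleration data, associativity, graded commutativity, and the BV identity at the homology level. Naturality under restriction follows from arranging the pair of pants data so that it interpolates compatibly between $K$ and $K'$, which is again an application of contractibility.

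Finally, for the natural transformation $\cSH^*_{Y\subset M}\to\cSH^*_{Y\subset M,red}$, I would observe, as noted in Remark \ref{remNotFree} and the chain of isomorphisms following it, that
\[
\widehat{tel}(\mathcal{C}(H_\tau))\otimes_{\Lambda_{\geq 0}}\Lambda_{\geq 0}/T^\lambda\Lambda_{\geq 0} \;\simeq\; tel(\mathcal{C}(H_\tau))\otimes_{\Lambda_{\geq 0}}\Lambda_{\geq 0}/T^\lambda\Lambda_{\geq 0}
\]
canonically, so taking cohomology yields a map $SH_M^*(K)\to SH_{M,\lambda}^*(K)$ for each $\lambda\ge 0$. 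These maps are compatible with the truncation morphisms $SH_{M,\lambda'}^*(K)\to SH_{M,\lambda}^*(K)$ for $\lambda'\ge\lambda$ by inspection of the telescope, hence assemble into a single map $SH_M^*(K)\to\varprojlim_\lambda SH_{M,\lambda}^*(K) = SH_{M,red}^*(K)$. Naturality in $K$ is automatic from the chain-level description, and the map respects the BV-algebra structure because the defining pair-of-pants and BV operations are themselves constructed from compatible Floer data at each truncation level. The main obstacle throughout is purely bookkeeping: every operation must be defined on the completed telescope and one must systematically control commutation with completion by reducing mod $T^\lambda$ before passing to the limit; once this framework is in place, the rest is a direct transcription of the closed-case arguments of \cite{tonkonog}.
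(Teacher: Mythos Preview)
The paper does not actually prove this theorem: the text immediately preceding it states that ``A full proof of the following statement requires considerable work. Since we do not directly need it in this paper, we omit the proof, which is within the reach of the techniques of \cite{tonkonog}. We will prove this theorem in a more conceptual manner in our future work joint with Mohammed Abouzaid.'' The \qed\ at the end of the statement marks an intentionally omitted proof.

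Your proposal is precisely the route the paper gestures at: transplant the chain-level formalism of \cite{tonkonog} to the geometrically bounded setting using contractibility of dissipative Floer data, build the product and BV operator on the completed telescope level by level, and extract the comparison to $SH^*_{M,red}$ from the mod-$T^\lambda$ identifications already recorded after Remark \ref{remNotFree}. So there is no methodological divergence to speak of. One caution: your closing remark that ``the main obstacle throughout is purely bookkeeping'' undersells what the authors themselves flag as ``considerable work''; the homotopy-coherent compatibility of the operations with the telescope maps and with completion, and the verification of the BV relations at the chain level on an open manifold, are genuinely intricate even if no new idea is needed. Your outline is correct as a plan, but like the paper it stops short of being a proof.
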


We  expect that the locality theorem holds for relative symplectic cohomology in the same way it does for reduced symplectic cohomology.  To prove this we would need work at the chain level much longer than we do in this paper which we avoid in order not to make the paper more technical. On the other hand the following is immediate.

\begin{corollary}\label{corLocRelSH}
Let $Y$ be a symplectic manifold of geometrically finite type and let $M$ be one that is geometrically bounded. Let $\text{dim}(M)=\text{dim}(Y)$ and $\iota:Y\to M$ be a symplectic embedding.

We define $\cK(Y)_{i-red}$ to be the category of compact subsets $K$ of $Y$ for which the canonical maps $$SH_M^i(\iota(K))\to SH_{M,red}^i(\iota(K))$$ and $$SH_Y^i(K)\to SH_{Y,red}^i(K)$$ are both isomorphisms.

Also denote by $\iota_{i-red}$ the induced functor $\cK(Y)_{i-red}\to\cK(M)$. Then there exists a distinguished isomorphism of functors
\begin{equation}
T_{\iota}: \cSH^i_{Y\subset Y}\mid_{\cK(Y)_{i-red}}\simeq \cSH^i_{\iota(Y)\subset M}\circ\iota_{i-red}.
\end{equation}
\end{corollary}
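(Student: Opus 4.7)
The plan is to obtain the desired natural isomorphism by composing three natural isomorphisms, each of which is already established (or built in to the definition).

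For any $K\in \cK(Y)_{i-red}$, write
\[
\alpha_Y(K): SH^i_Y(K)\to SH^i_{Y,red}(K),\qquad \alpha_M(K): SH^i_M(\iota(K))\to SH^i_{M,red}(\iota(K))
\]
for the canonical comparison maps coming from the natural transformation $\cSH^*\to\cSH^*_{red}$ displayed just before the corollary. By the very definition of the subcategory $\cK(Y)_{i-red}$, both $\alpha_Y(K)$ and $\alpha_M(K)$ are isomorphisms for every object $K$. Because the canonical map $\cSH^*\to\cSH^*_{red}$ is a natural transformation, the assignments $K\mapsto\alpha_Y(K)$ and $K\mapsto\alpha_M(K)$ are natural transformations of functors on $\cK(Y)_{i-red}$ (the latter precomposed with $\iota_{i-red}$), and in particular commute with restriction maps.

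Second, Theorem \ref{thmLocalityRed} supplies a distinguished natural isomorphism $T_{\iota,red}: \cSH^*_{Y\subset Y,red}\simeq \cSH^*_{\iota(Y)\subset M,red}\circ\iota$, which, being a natural isomorphism of graded-module-valued functors, restricts in degree $i$ to a natural isomorphism $T_{\iota,red}^i(K): SH^i_{Y,red}(K)\to SH^i_{M,red}(\iota(K))$. We then define, for each $K\in\cK(Y)_{i-red}$,
\[
T_\iota(K) := \alpha_M(K)^{-1}\circ T_{\iota,red}^i(K)\circ \alpha_Y(K): SH^i_Y(K)\longrightarrow SH^i_M(\iota(K)).
\]
This is an isomorphism because each of the three factors is. Naturality with respect to a compact inclusion $K_1\supset K_2$ in $\cK(Y)_{i-red}$ follows by pasting the three commutative squares expressing naturality of $\alpha_Y$, $T_{\iota,red}^i$, and $\alpha_M$ respectively; equivalently, $T_\iota$ is a composite of natural isomorphisms of functors on $\cK(Y)_{i-red}$ and is therefore itself a natural isomorphism.

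There is no substantial obstacle here: all the analytic work has already been carried out in Theorems \ref{thmLocalityTrunc} and \ref{thmLocalityRed}, and the only content of the corollary is the bookkeeping observation that, under the degree-$i$ reduced-equals-relative hypothesis encoded in the definition of $\cK(Y)_{i-red}$, one can transport the locality isomorphism for $\cSH^*_{red}$ through the two vertical comparison maps to obtain one for $\cSH^*$. The mildest point to verify is simply that both $\alpha_Y$ and $\alpha_M$ are natural with respect to the same restriction morphisms $K_1\supset K_2$ used by $T_{\iota,red}$, which is automatic since all three natural transformations are defined on the common category $\cK(Y)_{i-red}$ (with $\iota_{i-red}$ pre-applied on the target side).
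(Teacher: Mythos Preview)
Your proposal is correct and is precisely the argument the paper has in mind: the paper introduces the corollary with ``the following is immediate'' and gives no explicit proof, and your composition $\alpha_M(K)^{-1}\circ T_{\iota,red}^i(K)\circ\alpha_Y(K)$ is exactly the bookkeeping that makes this immediacy explicit.
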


We have omitted the straightforward functoriality statement that is the corollary of Theorem \ref{thmLocalityRed} (2) for brevity.


Corollary \ref{corLocRelSH} is clearly not that useful. One of its shortcomings is that it requires knowledge of the symplectic cohomology of $K$ relative to $M$. In the next section, we will develop sufficient criteria for the reduced to relative comparison maps to be isomorphisms and prove a more useful version which only requires knowledge of symplectic cohomology of $K$ relative to $Y$.

\subsection{Homologically finite torsion chain complexes}\label{ss-tor-fin}

We start by recalling a simple version of the Mittag-Leffler condition from the theory of inverse limits. Consider an inverse system of abelian groups indexed by non-negative real numbers $$M_r\to M_s,\text{ for every }r\geq s.$$
\begin{definition}
Such a system satisfies the \emph{Mittag-Leffler condition} if there exists an $R\geq 0$ such that $M_r\to M_s$ is surjective for every $ r\geq s >R.$
\end{definition}
\begin{remark} This is a strong form of the Mittag-Leffler condition that is enough for our purposes. We refer the reader to \cite{weibel} for a discussion of the Mittag-Leffler condition (Definition 3.5.6) and in general for a discussion of inverse limits (Section 3.5). In this reference only inverse systems that are indexed by non-negative integers are considered. That our inverse system is indexed by real numbers is only a cosmetic difference as integers are final inside the real numbers.\end{remark}

As the chief difference between reduced and relative symplectic cohomologies is the order in which we apply homology and completion functors, and completion involves an inverse limit in its construction, it is not a surprise that the Mittag-Leffler condition makes an important appearance.

\begin{proposition}\label{propMitt} If the inverse system $SH^{i-1}_{M,\lambda}(K)$ satisfies the Mittag-Leffler condition, the canonical map $$SH_M^i(K)\to SH_{M,red}^i(K)$$ is an isomorphism.\end{proposition}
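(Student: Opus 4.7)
The plan is to derive the statement from the standard Milnor $\varprojlim^1$ exact sequence applied to the inverse system of cochain complexes $\{C/T^\lambda C\}_{\lambda \geq 0}$, where $C := tel(\mathcal{C}(H_\tau))$ for some acceleration datum $(H_\tau, J_\tau)$ for $K$. By the chain of identifications given above the statement of Proposition \ref{propRelSHWell} (together with Remark \ref{remNotFree}), one has $SH^j_{M,\lambda}(K) \cong H^j(C/T^\lambda C)$ for every $\lambda \geq 0$ and every $j$. Moreover, the transition maps $C/T^\lambda C \to C/T^\mu C$ for $\lambda \geq \mu$ are surjective, since they are induced by the surjective quotient $\Lambda_{\geq 0}/T^\lambda \Lambda_{\geq 0} \to \Lambda_{\geq 0}/T^\mu \Lambda_{\geq 0}$. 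Restricting to the cofinal subsystem indexed by $\lambda \in \mathbb{Z}_{\geq 0}$ places us in the standard setting for the Milnor exact sequence.

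Next, I would identify $\widehat{C} = \widehat{tel}(\mathcal{C}(H_\tau))$ with $\varprojlim_\lambda C/T^\lambda C$ as a cochain complex. This holds because the completion is defined degree-wise and inverse limits commute with inverse limits; each graded piece of $\widehat{C}$ is by definition $\varprojlim_\lambda (\text{graded piece of } C)/T^\lambda$, and the induced differential agrees with the limit of the differentials. Combined with the previous paragraph, the standard Milnor argument for inverse systems of surjective cochain complexes yields a canonical short exact sequence
\begin{equation*}
0 \to \varprojlim{}^1_\lambda SH_{M,\lambda}^{i-1}(K) \to SH_M^i(K) \to SH_{M,red}^i(K) \to 0,
\end{equation*}
in which the right map is naturally identified with the canonical comparison map by functoriality of cohomology and of truncation.

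Finally, the Mittag-Leffler condition on $\{SH_{M,\lambda}^{i-1}(K)\}$ ensures, by the standard vanishing criterion for $\varprojlim^1$, that $\varprojlim{}^1_\lambda SH_{M,\lambda}^{i-1}(K) = 0$. Exactness of the above sequence then gives the required isomorphism $SH_M^i(K) \xrightarrow{\sim} SH_{M,red}^i(K)$.

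The main technical point is justifying the Milnor exact sequence in this setup; once one confirms surjectivity of the transition maps and the identification of $\widehat{C}$ with the inverse limit of the truncated quotients (both essentially by construction), the proof is a direct application of homological algebra. A minor bookkeeping point is to pass to the cofinal subsystem indexed by integers in order to invoke the standard $\varprojlim^1$ formalism, which poses no difficulty.
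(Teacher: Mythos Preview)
Your proof is correct and follows essentially the same approach as the paper: both invoke the Milnor $\varprojlim^1$ exact sequence for the inverse system of truncated complexes and then use the Mittag-Leffler hypothesis to kill the $\varprojlim^1$ term. You are simply more explicit than the paper about why the Milnor sequence applies (surjectivity of the transition maps, identification of $\widehat{C}$ with the inverse limit, passage to the cofinal integer-indexed subsystem), whereas the paper just cites Weibel for the sequence and states the conclusion.
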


\begin{proof} Note that the Mittag-Leffler condition implies that $R^1\lim_{\leftarrow}(SH^{i-1}_{M,\lambda}(K))=0$.
The result immediately follows from the Milnor exact sequence (the cohomological variant mentioned after Theorem 3.5.8 of \cite{weibel})  $$0\to R^1\lim_{\leftarrow}(SH^{i-1}_{M,\lambda}(K))\to H^i(SC_M^*(K,H_\tau))\to\lim_{\leftarrow}(SH^i_{M,\lambda}(K)))\to 0.$$
\end{proof}

This result is still not that useful yet because it is not clear how one would check that $SH^{i-1}_{M,\lambda}(K)$ satisfies the Mittag-Leffler condition. It turns out that there is a more checkable sufficient condition on $SH^{i}_{M}(K)$ for this to happen.\\

Note that completion involves not only an inverse limit but also tensor products with the torsion (and hence non-flat) modules $\frac{\Lambda_{\geq 0}}{T^{\lambda}\Lambda_{\geq 0}}$. Such tensor products also do not commute with taking homology and this is measured by the universal coefficient formula, which we now recall.
 \begin{lemma}\label{lemUCT}For any degree-wise torsion free cochain complex $C$ over $\Lambda_{\geq 0}$, we have the following short exact sequence for every $i\in\mathbb{Z}$ and $\lambda\geq 0$:
 $$0\to H^i(C)\otimes_{\Lambda_{\geq 0}}\frac{\Lambda_{\geq 0}}{T^{\lambda}\Lambda_{\geq 0}}\to H^i\left(C\otimes_{\Lambda_{\geq 0}}\frac{\Lambda_{\geq 0}}{T^{\lambda}\Lambda_{\geq 0}}\right)\to Tor^1\left(H^{i+1}(C),\frac{\Lambda_{\geq 0}}{T^{\lambda}\Lambda_{\geq 0}}\right)\to 0.$$
 \end{lemma}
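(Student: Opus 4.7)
The plan is to mimic the classical universal coefficient theorem argument, replacing $\mathbb{Z}$ with $\Lambda_{\geq 0}$ and the prime $p$ with $T^\lambda$. The starting point is the short exact sequence of $\Lambda_{\geq 0}$-modules
\begin{equation*}
0 \to \Lambda_{\geq 0} \xrightarrow{\,T^\lambda\,} \Lambda_{\geq 0} \to \Lambda_{\geq 0}/T^\lambda \Lambda_{\geq 0} \to 0,
\end{equation*}
which is in fact a free resolution of $\Lambda_{\geq 0}/T^\lambda \Lambda_{\geq 0}$ of length one. In particular, for any $\Lambda_{\geq 0}$-module $A$, the only possibly nonzero derived functors are $\mathrm{Tor}^0$ and $\mathrm{Tor}^1$, and they are computed as the cokernel and kernel of multiplication by $T^\lambda$ on $A$, respectively.

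Now tensor the above short exact sequence with each graded piece $C^i$. Since $C$ is degree-wise torsion free, multiplication by $T^\lambda$ on $C^i$ is injective for every $i$, so we obtain a short exact sequence of cochain complexes
\begin{equation*}
0 \to C \xrightarrow{\,T^\lambda\,} C \to C \otimes_{\Lambda_{\geq 0}} \Lambda_{\geq 0}/T^\lambda \Lambda_{\geq 0} \to 0.
\end{equation*}
The associated long exact sequence in cohomology reads
\begin{equation*}
\cdots \to H^i(C) \xrightarrow{T^\lambda} H^i(C) \to H^i\!\left(C \otimes_{\Lambda_{\geq 0}} \Lambda_{\geq 0}/T^\lambda \Lambda_{\geq 0}\right) \to H^{i+1}(C) \xrightarrow{T^\lambda} H^{i+1}(C) \to \cdots.
\end{equation*}

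Splicing this into short exact sequences in the standard way, the cokernel of $T^\lambda$ acting on $H^i(C)$ is by definition $H^i(C) \otimes_{\Lambda_{\geq 0}} \Lambda_{\geq 0}/T^\lambda\Lambda_{\geq 0}$, while the kernel of $T^\lambda$ acting on $H^{i+1}(C)$ is $\mathrm{Tor}^1(H^{i+1}(C), \Lambda_{\geq 0}/T^\lambda\Lambda_{\geq 0})$ by the Tor computation in the first paragraph. This yields exactly the claimed short exact sequence. There is no real obstacle here; the only point that needs the hypothesis is the injectivity of $T^\lambda$ on each $C^i$, which is precisely the degree-wise torsion-freeness assumption and is what allows one to use the short exact sequence of complexes rather than only a distinguished triangle in the derived category. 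One should also check naturality in $C$ and in $\lambda$, which is automatic from the construction since every step above is functorial.
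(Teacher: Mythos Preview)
Your proof is correct and follows essentially the same approach as the paper: both use the short exact sequence of complexes $0\to C\xrightarrow{T^\lambda} C\to C\otimes\Lambda_{\geq 0}/T^\lambda\Lambda_{\geq 0}\to 0$ (valid by degree-wise torsion-freeness), take the associated long exact sequence in cohomology, and splice it into short exact sequences at the terms $H^i(C\otimes\Lambda_{\geq 0}/T^\lambda\Lambda_{\geq 0})$. Your version is more detailed in spelling out why the kernel and cokernel of $T^\lambda$ on cohomology are identified with $\mathrm{Tor}^1$ and the tensor product respectively, but the underlying argument is the same.
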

\begin{proof}
Consider the long exact sequence of the following SES of cochain complexes: $$0\to C\to C\to C\otimes_{\Lambda_{\geq 0}}\frac{\Lambda_{\geq 0}}{T^{\lambda}\Lambda_{\geq 0}}\to 0,$$where $C\to C$ is multiplication by $T^\lambda$. Splitting the  LES into short exact sequences at the terms  $H^i\left(C\otimes_{\Lambda_{\geq 0}}\frac{\Lambda_{\geq 0}}{T^{\lambda}\Lambda_{\geq 0}}\right)$, we prove the result.
\end{proof}

We will need the following immediate corollary.

\begin{corollary}\label{corUCT} Let $\lambda'>\lambda\geq 0$. If the canonical map $$Tor^1\left(H^{i+1}(C),\frac{\Lambda_{\geq 0}}{T^{\lambda'}\Lambda_{\geq 0}}\right)\to Tor^1\left(H^{i+1}(C),\frac{\Lambda_{\geq 0}}{T^{\lambda}\Lambda_{\geq 0}}\right)$$ is surjective, then so is $$H^i\left(C\otimes_{\Lambda_{\geq 0}}\frac{\Lambda_{\geq 0}}{T^{\lambda'}\Lambda_{\geq 0}}\right)\to H^i\left(C\otimes_{\Lambda_{\geq 0}}\frac{\Lambda_{\geq 0}}{T^{\lambda}\Lambda_{\geq 0}}\right).$$
\end{corollary}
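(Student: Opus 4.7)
The plan is to derive the corollary by applying the universal coefficient short exact sequence of Lemma \ref{lemUCT} for both $\lambda$ and $\lambda'$, organizing them into a commutative diagram, and then invoking a short diagram chase.

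More precisely, the surjection $\Lambda_{\geq 0}/T^{\lambda'}\Lambda_{\geq 0}\twoheadrightarrow \Lambda_{\geq 0}/T^{\lambda}\Lambda_{\geq 0}$ is the quotient by $T^{\lambda}\Lambda_{\geq 0}/T^{\lambda'}\Lambda_{\geq 0}$. Tensoring the universal coefficient sequence of Lemma \ref{lemUCT} with this quotient map and using naturality of the connecting homomorphism in the short exact sequence $0\to C \xrightarrow{T^{\lambda'}} C\to C\otimes \Lambda_{\geq 0}/T^{\lambda'}\Lambda_{\geq 0}\to 0$ mapping to $0\to C\xrightarrow{T^{\lambda}} C\to C\otimes \Lambda_{\geq 0}/T^{\lambda}\Lambda_{\geq 0}\to 0$, I obtain a commutative diagram with exact rows
\begin{equation*}
\xymatrix{
0\ar[r]& H^i(C)\otimes \frac{\Lambda_{\geq 0}}{T^{\lambda'}\Lambda_{\geq 0}}\ar[r]\ar[d]_{\alpha}& H^i(C\otimes \frac{\Lambda_{\geq 0}}{T^{\lambda'}\Lambda_{\geq 0}})\ar[r]\ar[d]_{\beta}& Tor^1(H^{i+1}(C),\frac{\Lambda_{\geq 0}}{T^{\lambda'}\Lambda_{\geq 0}})\ar[r]\ar[d]_{\gamma}& 0\\
0\ar[r]& H^i(C)\otimes \frac{\Lambda_{\geq 0}}{T^{\lambda}\Lambda_{\geq 0}}\ar[r]& H^i(C\otimes \frac{\Lambda_{\geq 0}}{T^{\lambda}\Lambda_{\geq 0}})\ar[r]& Tor^1(H^{i+1}(C),\frac{\Lambda_{\geq 0}}{T^{\lambda}\Lambda_{\geq 0}})\ar[r]& 0.}
\end{equation*}

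Now I observe that the left vertical map $\alpha$ is automatically surjective: it is obtained by tensoring $H^i(C)$ with the surjection $\Lambda_{\geq 0}/T^{\lambda'}\Lambda_{\geq 0}\twoheadrightarrow \Lambda_{\geq 0}/T^{\lambda}\Lambda_{\geq 0}$, and the tensor product functor preserves surjections. The right vertical map $\gamma$ is surjective by the hypothesis of the corollary. A standard diagram chase (or the appropriate form of the five lemma) then forces the middle map $\beta$ to be surjective as well. Given an element $y$ in the lower middle group, its image $\bar{y}$ in the lower right has a preimage $\tilde{y}$ under $\gamma$; lifting $\tilde{y}$ to the upper middle and comparing with $y$ produces a difference lying in the image of the lower left, which in turn lifts to the upper left by surjectivity of $\alpha$. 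Adding this lift to our upper middle element produces a preimage of $y$ under $\beta$, completing the proof.

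The argument involves no obstacles beyond the purely formal input; the only substantive point is the naturality of the universal coefficient sequence in the coefficient module, which is guaranteed because Lemma \ref{lemUCT} is proved by applying the cohomology long exact sequence to a short exact sequence of chain complexes that is itself natural in $\lambda$.
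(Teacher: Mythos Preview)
Your proof is correct and matches the paper's approach exactly: the paper's proof simply cites ``the naturality of the exact sequence of Lemma \ref{lemUCT} under truncations and the snake lemma,'' which is precisely the commutative diagram you wrote down together with the diagram chase (the snake lemma gives $\operatorname{coker}\alpha\to\operatorname{coker}\beta\to\operatorname{coker}\gamma$ exact, forcing $\operatorname{coker}\beta=0$). You have just spelled out the details that the paper leaves implicit.
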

\begin{proof}
This follows from the naturality of the exact sequence of Lemma \ref{lemUCT} under truncations and the snake lemma.
\end{proof}

\begin{definition}\label{def-tor} Let $V$ be a module  over the Novikov ring $\Lambda_{\geq 0}$. For any element $v\in V$ we define the torsion $\tau(v)$ to be the infimum over $\lambda$ so that $T^{\lambda}v=0$. We take $\tau(v)=\infty$ if $v$ is non-torsion. We define the maximal torsion
\begin{equation}
\tau(V):=\sup_{v:\tau(v)<\infty}\tau(v).
\end{equation}
If there are no torsion elements we take $\tau(V)=-\infty$.
\end{definition}

\begin{remark}
It would be more appropriate to call $\tau(v)$ the \emph{$T$-torsion} as it is possible to have integral torsion in $\Lambda_{\geq 0}$-modules. We hope this will not cause confusion.
\end{remark}

The following simple looking definition is crucial for our purposes.

\begin{definition}For any $C$ graded cochain complex over $\Lambda_{\geq 0}$, we define the \emph{$i$- homological torsion of $C$} as the maximal torsion of $H^i(C)$. If the $i$-homological torsion of $C$ is less than $\infty$, we say that it has \emph{homologically finite torsion at degree $i$}.\end{definition}

\begin{lemma}\label{lemTorCons}
Let $V$ be a module over $\Lambda_{\geq 0}$ and assume that its maximal torsion is equal to the real number $\lambda_0$. Let $\lambda'\geq\lambda\geq \lambda_0$, then the canonical map $$Tor^1\left(V,\Lambda_{\geq 0}/T^{\lambda'}\Lambda_{\geq 0}\right)\to Tor^1\left(V,\Lambda_{\geq 0}/T^{\lambda}\Lambda_{\geq 0}\right)$$ is an isomorphism.
\end{lemma}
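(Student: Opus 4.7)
The proof is a standard homological algebra computation in three steps: compute $Tor^1(V, \Lambda_{\geq 0}/T^\mu \Lambda_{\geq 0})$ via a free resolution, identify the result with the torsion submodule of $V$ under the hypothesis, and trace the naturality of the canonical map.

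First I would use the two-term free resolution
\[
0 \to \Lambda_{\geq 0} \xrightarrow{\,\cdot T^\mu\,} \Lambda_{\geq 0} \to \Lambda_{\geq 0}/T^\mu \Lambda_{\geq 0} \to 0,
\]
which is valid because $T^\mu$ is a non-zero-divisor in $\Lambda_{\geq 0}$. Tensoring with $V$ and computing the first cohomology of the resulting two-term complex yields the identification
\[
Tor^1(V,\, \Lambda_{\geq 0}/T^\mu \Lambda_{\geq 0}) \;\cong\; V[T^\mu] := \ker(T^\mu : V \to V).
\]

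Next I would identify $V[T^\mu]$ with the full torsion submodule $V_{\mathrm{tors}} = \{v \in V : \tau(v) < \infty\}$ whenever $\mu \geq \lambda_0$. The containment $V[T^\mu] \subseteq V_{\mathrm{tors}}$ is immediate. For the reverse inclusion, let $v \in V_{\mathrm{tors}}$; then $\tau(v) \leq \tau(V) = \lambda_0 \leq \mu$, and unpacking the infimum in the definition of $\tau(v)$ produces $T^\mu v = 0$, so $v \in V[T^\mu]$. In particular both $V[T^{\lambda'}]$ and $V[T^\lambda]$ are identified with $V_{\mathrm{tors}}$.

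Finally I would trace the canonical map. Lifting the surjection $\Lambda_{\geq 0}/T^{\lambda'}\Lambda_{\geq 0} \twoheadrightarrow \Lambda_{\geq 0}/T^\lambda \Lambda_{\geq 0}$ to a chain map between the two free resolutions above, tensoring with $V$, and restricting to the kernels produces the canonical map $V[T^{\lambda'}] \to V[T^\lambda]$. After the identifications from the previous step, this becomes a map between two copies of $V_{\mathrm{tors}}$, which can be read off from the lift to be the identity. The hard part will be tracking these identifications carefully, together with the boundary subtlety at $\mu = \lambda_0$ (where the infimum defining $\tau(v)$ may or may not be attained for an individual element); both are resolved by elementary module-theoretic manipulations.
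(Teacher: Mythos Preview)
Your first two steps are correct and coincide with the paper's proof: both identify $Tor^1(V,\Lambda_{\geq 0}/T^{\mu}\Lambda_{\geq 0})$ with $\ker(T^{\mu}:V\to V)$ and then observe that this kernel equals $\ker(T^{\lambda_0}:V\to V)$ for every $\mu\geq\lambda_0$. The paper's proof in fact stops there.

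The error is in your third step. When you lift the quotient $\Lambda_{\geq 0}/T^{\lambda'}\Lambda_{\geq 0}\twoheadrightarrow\Lambda_{\geq 0}/T^{\lambda}\Lambda_{\geq 0}$ to a chain map between the two-term free resolutions, the degree-$0$ component is the identity, but the degree-$1$ component is forced by the chain-map relation $T^{\lambda}\circ f_1=f_0\circ T^{\lambda'}$ to be multiplication by $T^{\lambda'-\lambda}$, not the identity. After tensoring with $V$ and passing to kernels, the canonical map is therefore
\[
V[T^{\lambda'}]\xrightarrow{\ \cdot\,T^{\lambda'-\lambda}\ }V[T^{\lambda}],
\]
and this is generally neither injective nor surjective. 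Concretely, take $V=\Lambda_{\geq 0}/T^{\lambda_0}\Lambda_{\geq 0}$ (whose maximal torsion is exactly $\lambda_0$), $\lambda=\lambda_0$ and $\lambda'=2\lambda_0$: both $Tor^1$ groups equal $V$, while the canonical map is multiplication by $T^{\lambda_0}$, hence zero. So the assertion that the map ``can be read off from the lift to be the identity'' is false, and with it the conclusion. The paper's own proof has the same lacuna, since it only checks that source and target are abstractly equal and never tracks the map. (For the intended application to the Mittag--Leffler condition one can instead argue that the images of these maps stabilize --- indeed to zero once $\lambda'-\lambda>\lambda_0$ --- which suffices for $R^1\varprojlim$ to vanish; but that is a different argument from the one stated.)
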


\begin{proof}
By definition $$Tor^1\left(V,\Lambda_{\geq 0}/T^{\lambda}\Lambda_{\geq 0}\right)=ker\left(V\xrightarrow[]{T^{\lambda}\cdot} V\right).$$ But, by the assumption: $$ker\left(V\xrightarrow[]{T^{\lambda}\cdot} V\right)=ker\left(V\xrightarrow[]{T^{\lambda'}\cdot} V\right)=ker(V\xrightarrow[]{T^{\lambda_0}\cdot} V).$$
\end{proof}

\begin{lemma}\label{lemTorsMitt}
Assume that $C$ has homologically finite torsion at degree $i$, then the inverse system $$H^{i-1}\left(C\otimes_{\Lambda_{\geq 0}}\frac{\Lambda_{\geq 0}}{T^{\lambda'}\Lambda_{\geq 0}}\right)\to H^{i-1}\left(C\otimes_{\Lambda_{\geq 0}}\frac{\Lambda_{\geq 0}}{T^{\lambda}\Lambda_{\geq 0}}\right)$$ is Mittag-Leffler.
\end{lemma}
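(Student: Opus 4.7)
The plan is to chain together the two homological algebra facts already established in this subsection, namely Corollary \ref{corUCT} and Lemma \ref{lemTorCons}, after noting that the chain complex $C$ relevant to us (the telescope computing relative $SH$) is degree-wise torsion free, so the universal coefficient sequence of Lemma \ref{lemUCT} applies.

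First I would unpack the conclusion we want. To verify the Mittag-Leffler condition in the strong form used in this section, it suffices to exhibit a threshold $R\geq 0$ so that for every $\lambda' \geq \lambda > R$ the structure map
\[
H^{i-1}\left(C\otimes_{\Lambda_{\geq 0}}\tfrac{\Lambda_{\geq 0}}{T^{\lambda'}\Lambda_{\geq 0}}\right)\to H^{i-1}\left(C\otimes_{\Lambda_{\geq 0}}\tfrac{\Lambda_{\geq 0}}{T^{\lambda}\Lambda_{\geq 0}}\right)
\]
is surjective. Corollary \ref{corUCT}, applied with $i$ replaced by $i-1$, reduces this surjectivity to the surjectivity of the Tor comparison map on $H^{i}(C)$, namely
\[
Tor^1\!\left(H^{i}(C),\tfrac{\Lambda_{\geq 0}}{T^{\lambda'}\Lambda_{\geq 0}}\right)\to Tor^1\!\left(H^{i}(C),\tfrac{\Lambda_{\geq 0}}{T^{\lambda}\Lambda_{\geq 0}}\right).
\]

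Next I would invoke the hypothesis. By assumption $H^i(C)$ has finite maximal torsion; call it $\lambda_0\in\bR\cup\{-\infty\}$ (if $\lambda_0=-\infty$ there is nothing to prove since then $Tor^1$ above vanishes identically). Lemma \ref{lemTorCons}, applied to $V=H^i(C)$, says that for all $\lambda' \geq \lambda \geq \lambda_0$ the Tor comparison map is in fact an isomorphism, a fortiori surjective. Setting $R:=\lambda_0$ and combining with the previous paragraph yields the required surjectivity for all $\lambda'\geq\lambda\geq R$, which is the Mittag-Leffler condition.

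There is no real obstacle here: once the universal coefficient sequence and the explicit description $Tor^1(V,\Lambda_{\geq 0}/T^{\lambda}\Lambda_{\geq 0})=\ker(V\xrightarrow{T^{\lambda}}V)$ are in hand, both inputs are routine and have already been proved in the excerpt. The only point worth being careful about is the index shift between the statement (which concerns degree $i-1$) and the torsion hypothesis (which is in degree $i$); this shift is exactly what Corollary \ref{corUCT} is set up to handle, because the Tor term in the universal coefficient sequence is controlled by cohomology one degree higher.
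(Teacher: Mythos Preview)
Your proof is correct and follows exactly the same route as the paper: apply Lemma \ref{lemTorCons} with $V=H^i(C)$ to get surjectivity of the Tor comparison map for $\lambda'\geq\lambda\geq\lambda_0$, then feed this into Corollary \ref{corUCT} (with the degree shifted by one) to obtain surjectivity of the truncation map on $H^{i-1}$. The paper's proof is simply a one-sentence version of this; your additional remark about degree-wise torsion freeness of $C$ (needed for Lemma \ref{lemUCT} to apply) is a helpful clarification that the paper leaves implicit.
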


\begin{proof}
By Lemma \ref{lemTorCons} applied to $V=H^i(C)$ and Corollary \ref{corUCT}, we get that $$H^{i-1}\left(C\otimes_{\Lambda_{\geq 0}}\frac{\Lambda_{\geq 0}}{T^{\lambda'}\Lambda_{\geq 0}}\right)\to H^{i-1}\left(C\otimes_{\Lambda_{\geq 0}}\frac{\Lambda_{\geq 0}}{T^{\lambda}\Lambda_{\geq 0}}\right)$$ is surjective as long as $\lambda'>\lambda$ is sufficiently large. We get the result.
\end{proof}

Let us finally give the proof of Proposition \ref{prop-intro-rel-red} from the introduction.

\begin{proof}[Proof of Proposition \ref{prop-intro-rel-red}]
The statement follows immediately by combining Lemma \ref{lemTorsMitt} and Proposition \ref{propMitt}. 
\end{proof}

We finally come to the upshot of this section. First, we make a definition.

\begin{definition}
A compact subset $K$ of a geometrically bounded symplectic manifold $M$ has \emph{homologically finite torsion in degree $i\in\mathbb{Z}$} if $SH^i_M(K)$ has finite torsion.
\end{definition}

The following is our main theorem.

\begin{theorem}\label{thmLocalityRelFinal}
Let $X$ and $Y$ be symplectic manifolds of geometrically finite type and let $M$ be one that is geometrically bounded. Let $\text{dim}(M)=\text{dim}(Y)=\text{dim}(X)$ and $\iota_X:X\to Y$, $\iota_Y:Y\to M$ be symplectic embeddings.

\begin{enumerate}
\item Let $K\subset Y$ be a compact subset with homologically finite torsion in degree $i$. Then there is a distinguished isomorphism of modules $$SH_Y^i(K)\to SH_M^i(\iota_Y(K))$$ called the locality isomorphism.

\item Let $K_1\subset K_2\subset Y$ be compact subsets with homologically finite torsion in degree $i$. Then the diagram
\begin{align*}
\xymatrix{
SH^i_Y(K_2)\ar[d]\ar[r]&SH^i_M(\iota_Y(K_2))\ar[d]\\ SH^i_Y(K_1)\ar[r] &SH^i_M(\iota_Y(K_1)),}
\end{align*} is commutative, where the horizontal arrows are locality isomorphisms and vertical arrows are restriction maps.

\item Let $K\subset X$ be a compact subset with homologically finite torsion in degree $i$. Then the composition of the locality isomorphisms $$SH_X^i(K)\to SH_Y^i(\iota_X(K))\to SH_M^i(\iota_Y(\iota_X(K)))$$ is the locality isomorphism $$SH_X^i(K)\to SH_M^i(\iota_Y\circ\iota_X(K))$$ for $\iota_Y\circ\iota_X:X\to M.$
\end{enumerate}
\end{theorem}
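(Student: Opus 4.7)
The plan is to reduce Theorem \ref{thmLocalityRelFinal} to the already-established Corollary \ref{corLocRelSH} by showing that the one-sided hypothesis of homologically finite torsion for $SH_Y^i(K)$ automatically places $K$ inside $\cK(Y)_{i-red}$. Concretely, I need to check that both reduced-comparison maps $SH_Y^i(K)\to SH_{Y,red}^i(K)$ and $SH_M^i(\iota_Y(K))\to SH_{M,red}^i(\iota_Y(K))$ are isomorphisms, and then invoke Corollary \ref{corLocRelSH} together with the functoriality clause of Theorem \ref{thmLocalityRed}(2).

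First I would handle the $Y$-side: Proposition \ref{prop-intro-rel-red}, applied with the ambient manifold equal to $Y$, gives directly that $SH_Y^i(K)\to SH_{Y,red}^i(K)$ is an isomorphism from the torsion hypothesis. The key step is then to transfer this along the embedding. By Lemma \ref{lemTorsMitt} applied to the relative cochain complex computing $SH_Y^*(K)$, the finite torsion of $SH_Y^i(K)$ implies that the inverse system $\{SH_{Y,\lambda}^{i-1}(K)\}_\lambda$ satisfies the Mittag--Leffler condition. Now Theorem \ref{thmLocalityTrunc} provides an isomorphism $\cSH^*_{Y\subset Y}\simeq \cSH^*_{\iota_Y(Y)\subset M}\circ\iota_Y$ of functors on $\cK(Y)\times\bR_+$, whose restriction to the degree-$(i-1)$, fixed-$K$ slice is precisely an isomorphism of inverse systems $\{SH_{Y,\lambda}^{i-1}(K)\}_\lambda\simeq \{SH_{M,\lambda}^{i-1}(\iota_Y(K))\}_\lambda$ compatible with the truncation morphisms. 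Hence the $M$-side inverse system is also Mittag--Leffler, and Proposition \ref{propMitt} forces $SH_M^i(\iota_Y(K))\to SH_{M,red}^i(\iota_Y(K))$ to be an isomorphism as well.

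With both comparison maps in hand, I would define the locality isomorphism in part (1) as the composition
\[
SH_Y^i(K)\xrightarrow{\sim}SH_{Y,red}^i(K)\xrightarrow{\;T_{\iota_Y,red}\;}SH_{M,red}^i(\iota_Y(K))\xleftarrow{\sim}SH_M^i(\iota_Y(K)).
\]
Part (2) is then immediate: the outer two maps are components of the natural transformation $\cSH^i\to\cSH^i_{red}$ and therefore commute with restriction, while the middle map commutes with restriction by Theorem \ref{thmLocalityRed}(1), so the whole composite is natural under $K_1\subset K_2$. Part (3) follows from the second clause of Theorem \ref{thmLocalityRed}, which asserts $T_{\iota_Y,red}\circ T_{\iota_X,red}=T_{\iota_Y\circ\iota_X,red}$; stacking two copies of the above three-term composition and cancelling the middle reduced terms yields the required equality of locality isomorphisms.

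I do not anticipate any serious technical obstacle; the real content has already been absorbed in two places. The hard homological algebra -- extracting a Mittag--Leffler condition from finite torsion and converting it into a reduced-vs-relative isomorphism -- was done in Section \ref{ss-tor-fin}, and the hard Floer-theoretic work of separating orbits and controlling energies was done in Sections \ref{SecSeparatingFlDt}--\ref{secMainProof}. The only genuinely new observation required is that an isomorphism of functors at the truncated level (Theorem \ref{thmLocalityTrunc}) transports the Mittag--Leffler property from $Y$ to $M$, which is a purely formal consequence of the naturality with respect to truncation morphisms already built into the statement.
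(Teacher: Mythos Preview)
Your proposal is correct and follows essentially the same route as the paper: use Lemma~\ref{lemTorsMitt} to get Mittag--Leffler on the $Y$-side, transport it along the truncated locality isomorphism of Theorem~\ref{thmLocalityTrunc}, apply Proposition~\ref{propMitt} on both sides, and conclude via Theorem~\ref{thmLocalityRed} (and its functoriality clause for part~(3)). The paper's proof is terser but the logical structure is identical.
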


\begin{proof} We have already done the heavy work in proving Theorem \ref{thmLocalityTrunc} and Theorem \ref{thmLocalityTruncFunc}. This version almost immediately follows  by Corollary \ref{corLocRelSH}.

If $K$ has homologically finite torsion in degree $i$, then we get that the inverse system $SH^{i-1}_{Y,\lambda}(K)$ satisfies the Mittag-Leffler condition by Lemma \ref{lemTorsMitt}. Using Theorem \ref{thmLocalityTrunc}, we deduce that the same holds for $SH^{i-1}_{M,\lambda}(\iota(K))$. Finally, using Proposition \ref{propMitt}, we get that the canonical maps $$SH_M^i(\iota(K))\to SH_{M,red}^i(\iota(K))$$ and $$SH_Y^i(K)\to SH_{Y,red}^i(K)$$ are both isomorphisms. Theorem \ref{thmLocalityRed} finishes the proof.

\end{proof}

\subsection{An example of a non-homologically finite torsion compact subset}\label{ss-non-fin-tor}
We give an example of a geometrically bounded symplectic manifold $M$ and a compact set $K$ which does not have homologically finite torsion at degree $1.$ For more details (particularly for the index computations) see Section 5 of \cite{GroVar}.

Consider the symplectic manifold  $M=\mathbb{R}^2\times T^2$ with the symplectic form $\omega=dp_1dq_1+dp_2dq_2$, where $p_1,p_2$ are coordinates on $\mathbb{R}^2$ and $q_1,q_2$ are angular coordinates on $T^2$. Denote by $$\pi: \mathbb{R}^2\times T^2\to \mathbb{R}^2$$ the standard Lagrangian fibration forgetting the angular part. We use the standard $T^2$ translation invariant trivialization of the canonical bundle.

\begin{figure}
\includegraphics[width=\textwidth]{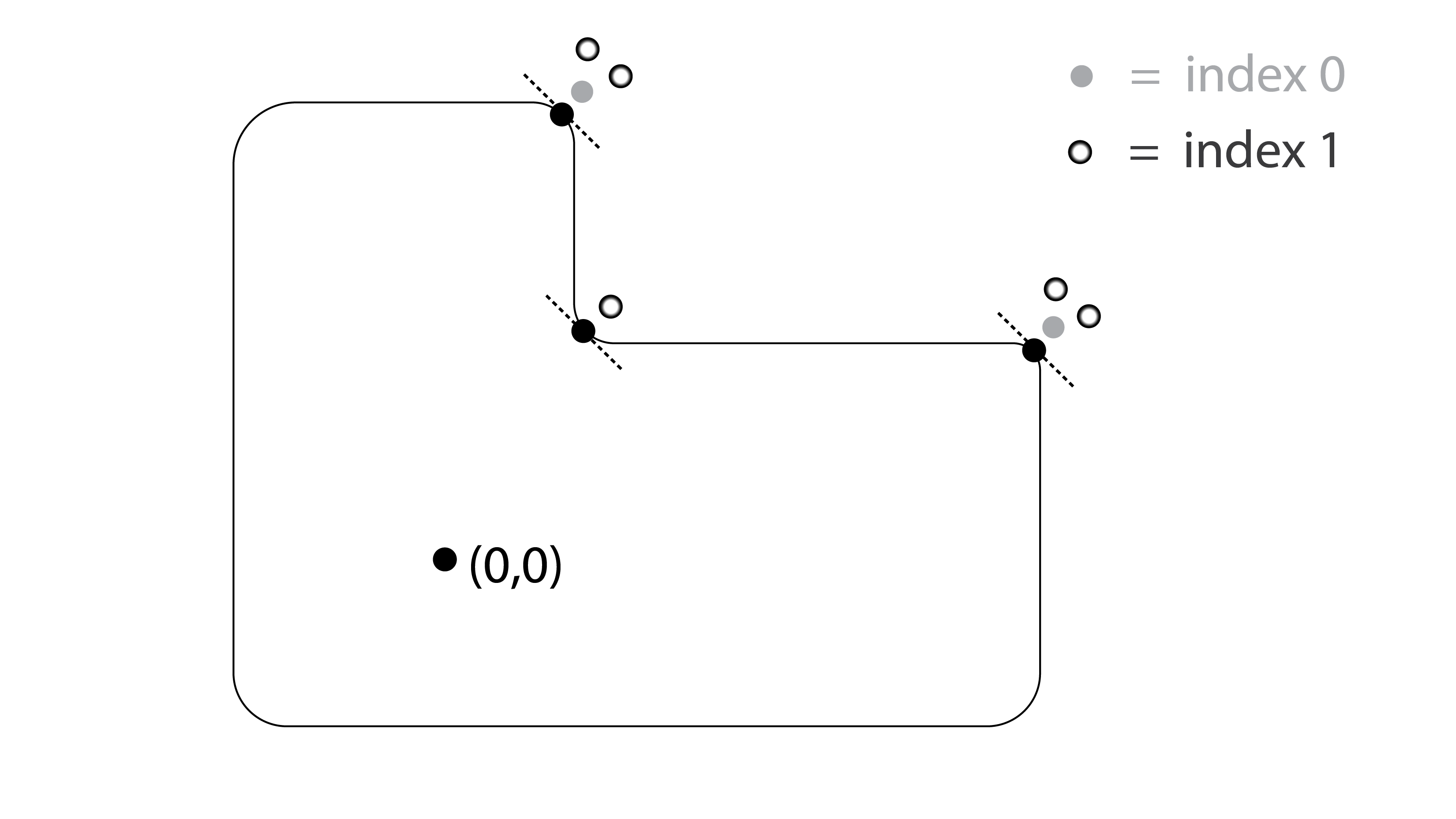}
\caption{The depiction of the set $P$ and approximate locations of the $1$-periodic orbits of index $0$ and $1$ in homology class $(a,a)$.}
\label{fig-non-convex}
\end{figure}

Consider the set $P\subset \mathbb{R}^2$ that is the union of two rectangles $$[-1,1]\times [-1,A]\cup [-1,B]\times [-1,1]$$ with its corners rounded (see Figure \ref{fig-non-convex}) with $A,B>1$. Let $K:=\pi^{-1}(P)$. We will show that $K$ does not have finite torsion at degree 1.

Note that $p_1\partial_{p_1}+p_2\partial_{p_2}$ is a Liouville vector field  that is positively transverse to $\partial K$. Let us denote by $$e^{\rho}: (\mathbb{R}^2-\{0\})\times T^2\to \mathbb{R}$$ the resulting exponentiated Liouville coordinate with $e^{\rho}(\partial K)=1$.

Consider an acceleration datum consisting of functions $H_{\tau}$ that are slight time dependent perturbations of functions that are $C^2$ small inside the region $\{e^{\rho}=0.9\}$, that that are of the form $H_{\tau}=h_{\tau}(e^{\rho})$ outside the region $\{e^{\rho}=0.9\}$ for $h_{\tau}$ a convex function which is linear at infinity.

The Floer continuation maps preserve the homology class of an orbit. So there is an extra grading on $\lim_{\rightarrow} CF(H_i)$ by $H_1(T^2,\mathbb{Z})$. We will prove that there exists $c>0$ such that for all $a\in\mathbb{N},$  $$\varnothing\neq d\left(\lim_{\rightarrow} (CF^0(H_i))_{(a,a)}\right)\subset T^{ca} \left(\lim_{\rightarrow} (CF^1(H_i))\right),$$ where the subscript $(a,a)$ means that we are considering the homogeneous piece of degree $(a,a)$. This implies the  non-torsion finiteness that we claim.

For every large enough integer subscript Hamiltonian ($H_i$) in the acceleration data, there are two index $0$ orbits $\alpha_1,\alpha_2$ in homology class $(a,a)$ with $a>0$ and five index $1$ orbits, $\alpha_1^{\pm},\alpha_2^{\pm}$ and $\beta$.  The action of an orbit $\gamma$ representing the class $(a,a)$ and occurring in the fiber over some $(x_1,x_2)\in\bR^2$ 
is (approximately due to perturbations and the constant term) given by
\begin{equation} 
\cA_{H_i}(\gamma)\sim -ax_1-ax_2.
\end{equation}
In particular, the action difference between $\alpha_i$ and $\gamma$ goes do infinity as $a\to\infty$ for $\gamma\in\{\alpha_j^{\pm},\beta\}$ and $j\neq i$.  

The orbits $\alpha_i^{\pm}$ and $\alpha_i$ for $i=1,2$ are obtained from a Morse-Bott torus orbit of $H_i$ by an arbitrarily small perturbation. One can show similarly to \cite{CFHW96} that the matrix entry of the differential between $\alpha_i$ and $\alpha_i^{\pm}$ zero if the perturbations are small enough. Thus if we show that for all natural $a$ we have 
\begin{equation}\label{eqCFnot0}
 d(\lim_{\rightarrow} (CF^0(H_i))_{(a,a)})\neq\varnothing
 \end{equation}
 it will follow that the torsion is not homologically finite.

To this end note that while the relative symplectic cohomology $SH_M(K)$ over the Novikov ring depends on $K$, when $K$ is a Liouville domain and $M$ its completion,  the $\bZ$-module obtained by substituting  $T=1$ is independent of the choice of domain $K$ which is used to express $M$ as a completion. Indeed in this case  by \cite{groman} $SH_M(K)$ is the homology of a complex computed by Hamiltonians that are linear at infinity with respect to the Liouville coordinate determined by $K$. It is proven in \cite{seidelbiased} (along with the Erratum \cite{seidelerratum} published in author's website) that the resulting homology is independent of the choice of $K$. The results of \cite{groman} give another way of proving this without relying on maximum principles. Now, over $\bZ$, it is well established that $SH^0(\mathbb{R}^2\times T^2,dp_1dq_1+dp_2dq_2))$ (a la Viterbo \cite{Vit}) in grade $(a,a)$ is $1$-dimensional.  This implies \eqref{eqCFnot0}.

\section{Complete embeddings of symplectic cluster manifolds}\label{sec:EignenRayCompleteEmb}
\subsection{Nodal integral affine manifolds}\label{ss-nodal-int}

For $k\in\mathbb{Z}$, consider the linear map $A_k:\mathbb{R}^2\to\mathbb{R}^2$ defined by \begin{equation} v\mapsto v-k\cdot det(e_1,v)\cdot e_1.\end{equation}Here $e_1$ is the first standard basis vector. We define the integral affine manifold $B_k^{reg}$ by gluing  $$U_1:= \mathbb{R}^2\setminus \{(x,y)\mid y=0, x\geq 0\}$$ and $U_2:= (0,\infty)\times (-\epsilon,\epsilon)\subset \mathbb{R}^2$ for an arbitrary $\epsilon>0$ along $U_{12}:= U_1\cap U_2\subset U_1$ and $U_{21}:= U_1\cap U_2\subset U_2$ with the transition map $\phi:U_{21}\to U_{12}$ defined by $$\phi(x,y)=\begin{cases}A_k(x,y),&  y>0 \\ (x,y),& y<0.\end{cases}$$

\begin{remark}\label{rem-PL} The integral affine structure on $B_k^{reg}$ is in particular a PL structure.  Note that with this PL-structure the continuous map $\mathbb{R}^2\setminus \{0\}\to B_k^{reg}$ extending the identity map $U_1\to U_1$ is a PL-homeomorphism.\end{remark}

\begin{definition}\label{def-nodal-int-aff}
Let $B$ be a two dimensional topological manifold with a finite number of special points $N\subset B$ where $B^{reg}:=B-N$ is equipped with an integral affine structure. If each point $n$ in $N$ admits a neighborhood $U$ such that $U\setminus\{n\}$ is integral affine isomorphic to a punctured neighborhood of the origin in $B_k^{reg}$ with $k\geq 1$ (see Remark \ref{rem-no-neg}), we call $B$ a \emph{nodal integral affine manifold}. The elements of $N$ are called \emph{nodes}, and the ones of $B^{reg}$  regular points. We call the positive integer $k$ above the \emph{multiplicity} of $n$. 
\end{definition}

We define $B_k$ as the nodal integral affine manifold obtained by adding the origin back to $B_k^{reg}$ using Remark \ref{rem-PL}.  An \emph{embedding of a nodal integral affine manifold into another} is defined to be a topological embedding which sends nodes to nodes and is integral affine on the regular locus.

We will sometimes use the phrase \emph{a node with multiplicity $0$} to mean a regular point.

\begin{remark}A nodal integral affine manifold $B$ can be equipped with the structure of a PL-manifold. We start with the induced PL structure on $B^{reg}$. Then, we use Remark \ref{rem-PL} and the standard PL structure on $\mathbb{R}^2$ to extend it to $B$. The result does not depend on any choices and from now on we consider $B$ with this PL-structure.\end{remark}

\begin{definition}\label{def-eigenray} Let $B$ be a nodal integral affine manifold and $n\in N$ a node. We call the oriented image of a continuous map $\phi: [0,E)\to B$ for some $E\in (0,\infty]$ a \emph{local eigenray of $n$} if \begin{itemize}
\item $\phi(0)=n$,
\item $\phi$ restricted to $\phi^{-1}(B^{reg})$ is an integral affine immersion,
\item For any $a\in [0,E)$ such that $\phi(a)\in N$, there exists an open neighborhood $U$ of $a$ such that for $t\in U\setminus \{a\}$ the tangent vector $\phi'(t)$ is fixed under the linear holonomy $T_{\phi(t)}B\to T_{\phi(t)}B$ around a loop which bounds a disk $D$ with $D\cap N=\phi(a)$.
\end{itemize}

A local eigenray that is not contained in any other one is called an eigenray. An eigenray for which $E=\infty$ and $\phi$ is in addition injective is called a \emph{proper eigenray}.\end{definition}

From the definitions and a quick analysis of the nodal integral affine manifolds $B_k$, it is clear that for each node there are two, not necessarily proper, eigenrays. \\

Let $B$ be a nodal integral affine manifold. There are two induced lattices: $\Lambda$ on $TB^{reg}$ and $\Lambda^*$ on $T^*B^{reg}$. We can canonically construct a symplectic manifold by $X(B^{reg}):=T^*B^{reg}/\Lambda^*$ with a Lagrangian torus fibration $X(B^{reg})\to B^{reg}$. This fibration has a preferred Lagrangian section that we call the \emph{zero section}.

One can extend the smooth structure on $B^{reg}$ to a smooth structure on $B$, the symplectic manifold $X(B^{reg})$ to a symplectic manifold $X(B),$ and  the Lagrangian torus fibration $X(B^{reg})\to B^{reg}$ to a nodal Lagrangian torus fibration
$\pi:X(B)\to B$ as defined in the introduction. This is done by gluing in a \emph{local model}. That is, a nodal Lagrangian torus fibration over some open disk $\mathbb{D}_\epsilon\subset\mathbb{C}$ with a single critical value at the origin to $X(B^{reg})\to B^{reg}$ for each node of $B$.  We now give some details on this construction which we call \emph{filling in the nodal fibers.}

In order to fill in the nodal fibers, we need to make two choices at each node of $B$: what we are gluing (the local model) and how we are gluing. We note the non-trivial fact that all local models have Lagrangian sections that do not intersect the focus-focus singularities. By Proposition 4.14 of \cite{symington}, the integral affine structure on the punctured disk induced by any local model is isomorphic to a punctured neighborhood of the origin in $B_k^{reg}$, where $k$ is the number of singularities in the local model. The choice of such a Lagrangian section of the local model (which will be matched with the zero section) along with a nodal integral affine embedding of the punctured disk into $B$ (origin mapping to the node in question) determines how the local model is glued in. We refer to the extra data that we need to fill in the nodal fibers as the \emph{fine data}.

To sum up, a nodal integral affine manifold $B$ along with a choice of fine data gives rise to a Lagrangian nodal fibration $\pi: M\to B$, which induces the given nodal integral affine structure on $B$. We refer  to such $\pi$ as a  nodal Lagrangian torus fibration \emph{compatible} with $B$.

\begin{remark}
Using the general result that two homeomorphic surfaces are diffeomorphic, it follows that any two choices of smooth structures on $B$ as above give rise to diffeomorphic smooth manifolds. On the other hand, we do not know whether the identity map is necessarily a diffeomorphism.
\end{remark}

\begin{remark}
Germs of local models for a node with multiplicity one were classified by Ngoc \cite{san} up to fiber preserving symplectomorphisms. For higher multiplicity nodes, this is a more recent result \cite{alvaro}.
\end{remark}

We note the following result that will be useful.

\begin{theorem}\label{thm-local}
For some $\epsilon>0$ let $\pi: X\to \mathbb{D}_\epsilon$ and $\pi': X'\to \mathbb{D}_{\epsilon}$ be two local models with the same number of singularities. Assume that the induced integral affine structures on $\mathbb{D}_\epsilon\setminus\{0\}$ are the same. Fix Lagrangian sections  $s: \mathbb{D}_\epsilon\to X$ and $s': \mathbb{D}_{\epsilon}\to X'$ which do not pass through any singular point. Then, for every $0<\delta<\epsilon$ we can find a symplectomorphism $$\Phi: X\to X',$$ which is fiber preserving outside of $\overline{\mathbb{D}_{\delta}}$, induces the identity map on $\mathbb{D}_\epsilon\setminus \overline{\mathbb{D}_\delta}$ and (in the same region) sends $s$ to $s'$.

\end{theorem}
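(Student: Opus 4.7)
The plan is to build $\Phi$ by combining two identifications---an Arnold--Liouville identification of the regular parts, and a germ-level identification near the singular fibers from the classification of local models---then interpolating between them on an overlap annulus.

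First I would observe that on the punctured disc $\mathbb{D}_\epsilon\setminus\{0\}$ both $\pi$ and $\pi'$ restrict to regular Lagrangian torus fibrations. Because they induce the \emph{same} integral affine structure and come with preferred Lagrangian sections $s$ and $s'$, Arnold--Liouville yields canonical fiber-preserving symplectic identifications of each side with $T^*(\mathbb{D}_\epsilon\setminus\{0\})/\Lambda^*$ sending the sections to the zero section. Composing, this produces a fiber-preserving symplectomorphism
\[
\Psi\colon \pi^{-1}(\mathbb{D}_\epsilon\setminus\{0\})\xrightarrow{\sim}\pi'^{-1}(\mathbb{D}_\epsilon\setminus\{0\})
\]
over the identity of the base, carrying $s$ to $s'$. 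The sought-for map $\Phi$ is forced to coincide with $\Psi$ over $\mathbb{D}_\epsilon\setminus\overline{\mathbb{D}_\delta}$; the remaining task is to extend this restriction across the singular fibers.

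Next I would invoke the classification of germs of focus-focus local models (\cite{san} for a single node, \cite{alvaro} for higher multiplicity) to obtain, for some $0<\delta_0<\delta$, a fiber-preserving symplectomorphism $\Phi_0\colon \pi^{-1}(\overline{\mathbb{D}_{\delta_0}})\to\pi'^{-1}(\overline{\mathbb{D}_{\delta_0}})$ inducing the identity on the base, and, after composing with a fiber translation if needed, sending $s$ to $s'$. On the overlap annulus $A=\mathbb{D}_{\delta_0}\setminus\overline{\mathbb{D}_{\delta_0/2}}$, the composition $\Phi_0^{-1}\circ\Psi$ is a fiber-preserving section-preserving symplectic automorphism of $\pi^{-1}(A)$ over the identity; in the Arnold--Liouville trivialization, any such automorphism is fiberwise translation by a closed $\Lambda^*$-valued $1$-form $\alpha$ on $A$, whose class lies in $H^1(A;\Lambda^*)\cong\Lambda^*$. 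I would then argue that $[\alpha]$ must lie in the monodromy-invariant sublattice of $\Lambda^*$ (since both fibrations carry the same integral affine structure with matched sections), so that translation by a $1$-form representing $[\alpha]$ extends across the singular fibers as a fiber-preserving symplectic automorphism of $\pi^{-1}(\mathbb{D}_{\delta_0})$. Post-composing $\Phi_0$ with this extension reduces to the case $[\alpha]=0$, i.e.\ $\alpha=dH$; then a cutoff Hamiltonian $\chi\,\pi'^*H$ supported in a slightly larger annulus generates a fiber-preserving symplectomorphism of $X'$ killing $\alpha$ on a smaller subannulus $A'\subset A$. One final modification of $\Phi_0$ thus makes it agree with $\Psi$ on $\pi^{-1}(A')$, and setting $\Phi$ equal to $\Psi$ on $\pi^{-1}(\mathbb{D}_\epsilon\setminus\overline{\mathbb{D}_{\delta_0/2}})$ and equal to the modified $\Phi_0$ on $\pi^{-1}(\mathbb{D}_{\delta_0})$ produces the desired map, which furthermore satisfies the required properties on $\mathbb{D}_\epsilon\setminus\overline{\mathbb{D}_\delta}$.

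The hardest part will be justifying that the period class $[\alpha]$ indeed lies in the monodromy-invariant sublattice of $\Lambda^*$: this is the one place where the hypothesis of matching integral affine structures together with matched Lagrangian sections must be used in an essential way, and a clean argument likely requires spelling out how the fine data (choice of section versus choice of germ of local model) enter the Arnold--Liouville trivialization and observing the requisite cancellation. An alternative would be to use a stronger relative form of the classification theorems that keeps track of the section on the regular locus, which would package the obstruction directly into the classification statement; the shrinking freedom $0<\delta<\epsilon$ allowed in the conclusion is precisely what gives enough flexibility for such a gluing argument to succeed.
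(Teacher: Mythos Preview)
Your gluing step contains an internal inconsistency that undermines the argument. You arrange $\Phi_0$ to be fiber-preserving over the identity and, after a fiber translation, section-matching; then you assert that $\Phi_0^{-1}\circ\Psi$ is a nontrivial fiber translation whose class lives in $H^1(A;\Lambda^*)$. But in action--angle coordinates any fiber-preserving symplectomorphism over the identity has the form $(I,\theta)\mapsto(I,\theta+g(I))$ with $g$ a closed $1$-form on the base, and if it also preserves the zero section then $g(I)\in\Lambda^*_I$ for every $I$, so the induced map on each torus fiber is the identity. Under your stated hypotheses $\Phi_0^{-1}\circ\Psi$ would therefore already be the identity on $\pi^{-1}(A)$ and there is nothing to glue---which cannot be right, since that would prove the two local models are globally fiber-preservingly symplectomorphic, contradicting the fact that Ngoc's Taylor-series invariant is a genuine obstruction. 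The source of the trouble is the unjustified claim that $\Phi_0$ can be taken to cover the \emph{identity} on the base: the classifications in \cite{san} and \cite{alvaro} produce a fiber-preserving symplectomorphism only over \emph{some} base diffeomorphism $\phi_0$, and the discrepancy $\Phi_0^{-1}\circ\Psi$ then covers $\phi_0^{-1}$ rather than the identity. The obstruction you must actually analyze is this base mismatch together with the resulting section difference, and that is a different (and subtler) object than a class in $H^1(A;\Lambda^*)$.

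The paper takes a different route that avoids this gluing altogether. First, rather than invoking \cite{alvaro} for higher multiplicity, it uses the three-dimensional quotient picture of \S\ref{ss-3d} to split a multiplicity-$k$ node into $k$ multiplicity-$1$ nodes, reducing to the single-singularity case. For a single focus-focus point it then combines Ngoc's classification with Symington's theorem (in the form recorded in Evans' notes)---a Moser-type argument producing the global symplectomorphism directly, with the non-fiber-preserving behaviour confined to $\overline{\mathbb{D}_\delta}$---and a section-matching lemma of Solomon to arrange that $s$ goes to $s'$ on the outer annulus. This packages the delicate part into cited results rather than attempting to patch two fiber-preserving pieces by hand.
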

\begin{proof}
Using the technique introduced in Section \ref{ss-3d}, we can reduce to the case where the local models have only one singularity. In that case, the proof combines the main result (Theorem 2.1) of \cite{san}, Symington's Theorem (Theorem 4.6.1) from Evans' notes \cite{evans} and Lemma 3.2 of \cite{solomon}. We omit further details.
\end{proof}

A \emph{nodal slide} is an operation which inputs a nodal integral affine manifold $B$ and outputs another one. We take a node $n$ with multiplicity $k>0$ and a local eigenray $l$ with a defining map $\phi: [0,E)\to B$ for some $E\in (0,\infty)$ which we assume extends continuously to the injective map $\phi: [0,E]\to B$. Assume that $\phi(E)$ is a node with multiplicity $a\geq 0$. We can then modify the nodal integral affine structure on $B$ near $l$ so that now we have a node with multiplicity $k-1$ at $n$ and a node with multiplicity $a+1$ at $\phi(E)$. This operation was introduced in \cite{koso} with the name ''moving the worms". We omit further details on the construction but note that the operation allows a node to slide over (or even into) other nodes as long as it approaches and leaves them in monodromy invariant directions.

\begin{proposition}\label{prop-nodal-slide} Let nodal integral affine manifolds $B$ and $B'$ be related by nodal slides and consider compatible nodal Lagrangian fibrations $\pi : M\to B$ and  $\pi': M'\to B'$ with Lagrangian sections $s$ and $s'$ that do not pass through the critical points. 

If  $B$ and $B'$  are identical in the complement of a disjoint union $A\subset B$ and $A'\subset B'$ of compact straight line segments, then there is a symplectomorphism $\Phi: M\to M'$ so that there exist arbitrarily small neighborhoods $U$ and $U'$ of $A$ and $A'$ with the properties \begin{itemize}
\item $\Phi$ restricts to a fiber preserving symplectomorphism $M-\pi^{-1}(U)\to M'-(\pi')^{-1}(U').$
\item The  induced map $B-U\to B'-U'$ is the identity map.
\item $M-\pi^{-1}(U)\to M'-(\pi')^{-1}(U')$ sends $s$ to $s'$.
\end{itemize}
\end{proposition}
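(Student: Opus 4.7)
The plan is to reduce to a single nodal slide and then combine two ingredients: the Arnold--Liouville theorem to get a fiber preserving symplectomorphism over the common regular locus, and Theorem~\ref{thm-local} to extend this across the slide region.

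First I would reduce to the case of a single nodal slide. A sequence of slides produces intermediate nodal integral affine manifolds $B=B_0,B_1,\dots,B_N=B'$, each differing from the next only on one compact straight segment. By composing the resulting symplectomorphisms (and choosing compatible Lagrangian sections at each intermediate stage) it suffices to construct $\Phi$ for a single nodal slide supported on one segment $A\subset B$ with corresponding segment $A'\subset B'$. Denote by $j\colon B\setminus A\to B'\setminus A'$ the given integral affine isomorphism. I would then choose tubular neighborhoods $U\supset A$ and $U'\supset A'$, each diffeomorphic to an open disk, that are compatible under $j$ in the sense that $j$ identifies $\partial U$ with $\partial U'$ and extends to an integral affine isomorphism on a collar.

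Next, over the common locus I would build a fiber preserving symplectomorphism
\[
\Phi_0\colon \pi^{-1}(B\setminus U)\longrightarrow (\pi')^{-1}(B'\setminus U')
\]
covering $j$ and sending $s$ to $s'$. This is essentially a globalized Arnold--Liouville statement: a nodal Lagrangian torus fibration with a fixed Lagrangian section over an integral affine manifold is canonically isomorphic, as a fibration with section, to the quotient $T^*(B\setminus U)/\Lambda^*$ with its zero section. Since $j$ is an integral affine isomorphism and it carries $s|_{B\setminus U}$ to $s'|_{B'\setminus U'}$ (possibly after modifying $s,s'$ by closed one-forms supported in $U$ and $U'$ respectively, a routine adjustment), $\Phi_0$ is determined uniquely. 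The key observation making the extension step possible is that a nodal slide preserves the total multiplicity of critical values inside the sliding segment: a slide of a multiplicity $k$ node into a multiplicity $a$ node produces multiplicities $k-1$ and $a+1$, so $\pi^{-1}(U)$ and $(\pi')^{-1}(U')$ are local models with the same total number of focus-focus singularities and with integral affine structures on $U\setminus\{\text{nodes}\}$ and $U'\setminus\{\text{nodes}\}$ that become identified on the collar via $j$.

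Finally I would invoke Theorem~\ref{thm-local}. After shrinking $U$ and $U'$ slightly, identify them with a common disk $\mathbb{D}_\epsilon$ smoothly (not as integral affine manifolds), turning $\pi^{-1}(U)$ and $(\pi')^{-1}(U')$ into two local models over the same disk with the same number of singular fibers and, crucially, with integral affine structures on $\mathbb{D}_\epsilon\setminus\{0\}$ matching up to a PL homeomorphism (since they agree on the collar and each is of the form $B_{k_1+\dots+k_r}^{reg}$ around a collapsed configuration of nodes). After a preparatory move that concentrates all nodes of each local model near a single point of $\mathbb{D}_\epsilon$ (which one can realize by a further fiber preserving symplectomorphism inside the local model, using Theorem~\ref{thm-local} itself once the integral affine data has been matched), Theorem~\ref{thm-local} produces a symplectomorphism between these local models which is fiber preserving on $\pi^{-1}(U\setminus \overline{U_0})$ for arbitrarily small $U_0\subset U$, induces the identity there, and carries $s$ to $s'$. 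Gluing this with $\Phi_0$ on the overlap $B\setminus \overline{U_0}$ yields the required $\Phi\colon M\to M'$, and taking $U_0$ arbitrarily small realizes the ``arbitrarily small neighborhoods'' clause.

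The main obstacle I expect is the matching of the integral affine structures on the collar of $U$ and $U'$: the monodromies around $U$ in $B$ and around $U'$ in $B'$ are conjugate but the identification realizing this conjugation must be chosen compatibly with $j$. This is a bookkeeping problem that is resolved by noting that $j$ itself provides the required identification on the collar, after which both Arnold--Liouville and the generalized Theorem~\ref{thm-local} apply directly. Handling the Lagrangian sections in the presence of the cohomology class of the symplectic form (so that $s$ and $s'$ are genuinely matched and not merely isotopic) is the only place where the explicit structure of the local models, rather than just combinatorial data on the base, enters the argument.
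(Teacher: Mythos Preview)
Your approach is genuinely different from the paper's. The paper cites Symington's Moser argument as the original proof and then sketches an alternative, worked out in a representative example at the end of \S\ref{ss-3d}: one uses the global Hamiltonian $S^1$ action generated by the monodromy-invariant direction, passes to the three-dimensional quotient $Z=M_{\cR}/S$, and observes (via Proposition~\ref{prop-3d-smooth-st}) that $S$-invariant nodal Lagrangian fibrations correspond to admissible submersions $f\colon Z\to B$. The nodal slide is then realized by directly modifying $f$ to $f'$ inside the reduced symplectic surfaces $\tilde{J}^{-1}(c)$ so that the marked points $q(P)$ end up in fibers with the prescribed area separations; Propositions~\ref{prop-eig-proc} and \ref{prop-lag-to-eig} then identify the resulting eigenray diagram. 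No gluing is needed.

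Your gluing scheme has a genuine gap at the ``preparatory move'' step. Theorem~\ref{thm-local} compares two local models over $\mathbb{D}_\epsilon$ each having its \emph{unique} critical value at the origin and with \emph{identical} integral affine structures on $\mathbb{D}_\epsilon\setminus\{0\}$. In a nodal slide the two fibrations over $U$ and $U'$ have their critical values at different locations (and hence different integral affine structures on the punctured disk), so Theorem~\ref{thm-local} does not apply as stated. Your proposal to first ``concentrate all nodes near a single point'' is precisely a nodal slide inside the disk, so invoking it here is circular. (Note also that the proof of Theorem~\ref{thm-local} itself relies on the three-dimensional technique of \S\ref{ss-3d} to separate a multiplicity-$k$ node into $k$ simple ones; that technique is exactly what the paper uses to prove the present proposition directly.) If you want to rescue the gluing strategy, you would need an honest generalization of Theorem~\ref{thm-local} to local models with several critical values and only matching affine structure on a boundary collar; but proving such a statement is essentially equivalent to the proposition itself.
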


This was proved by \cite{symington} using a Moser argument. Below, at the end of Section \ref{ss-3d}, we sketch another proof using our three dimensional viewpoint on semi-toric fibrations.

{In} case $B=B'$ and $A=A'=\varnothing$, that is when there is actually no nodal slide, the statement is already non-trivial and is a consequence of Duistermaat's results \cite{duistermaat} along with Theorem \ref{thm-local}.  The result in this case explains the dependence of a compatible nodal Lagrangian torus fibration on the choice of fine data. 

\begin{remark}\label{rem-no-neg}
One might wonder why we assumed $k\geq 1$ in Definition \ref{def-nodal-int-aff}. Of course, $k=0$ corresponds to regular points. The reason we did not allow $k\leq -1$ is because for such $k$, one cannot fill in the fiber above the origin in the Lagrangian fibration $X(B_k^{reg})\to B_k^{reg}$ so that the result is a nodal Lagrangian torus fibration. This follows from Proposition 4.14 of \cite{symington} along with the fact that no neighborhood of the origin in $B_k$ can be embedded into a $B_l$ with $l>0$, where the origin maps to the origin and the embedding is integral affine on $B_k^{reg}$. The latter can be shown using that there is no convex polygon with two sides that contains the origin in its interior inside $B_k$ (while there is for all $B_l$ with $l>0$).
\end{remark}

\subsection{Eigenray diagrams, symplectic cluster manifolds, and  geometric boundedness}\label{SecEigenrayDiagrams}

%

A ray in $\mathbb{R}^2$ is the image of a map of the form $[0,\infty)\to \mathbb{R}^2$, $t\mapsto x+vt$, where $x,v\in \mathbb{R}^2$. Let us define an \textit{eigenray diagram} $\mathcal{R}$ to consist of the following data:

\begin{enumerate}
    \item A finite set of pairwise disjoint rays $l_i$, $i=1,\ldots, k,$   in $\bR^2$ with rational slopes.
        \item A finite set of points on each ray including the starting point. Let us call the set of all of them $N_{\mathcal{R}}$.
    \item A map $m_{\mathcal{R}}: N_{\mathcal{R}}\to \mathbb{Z}_{\geq 1}$.
\end{enumerate}

For each ray $l$ in $\cR$, we call the sum $$\sum_{n\in l\cap N_\cR}m_\cR(n)$$ the total multiplicity of $l$. If $n\in N_\cR$ is contained in the ray $l$, we define $l^n\subset l$ to be the subray starting at $n$.

\begin{remark}
Eigenray diagrams are special cases of Symington's base diagrams.
\end{remark}

\begin{remark}\label{rem-eigenray-def}
An eigenray diagram can be equivalently described by the finite multiset $\{\underbrace{l^n,\ldots,l^n}_{m_R(n)}\}_{n\in N_\cR}$, which is a finite multiset of rays with rational slopes any two of which are either disjoint or so that one is contained in the other. In fact, it is more appropriate to consider a somewhat weaker condition where we allow rays that are non-disjoint to intersect as long as they do so non-transversely. We refer to such a structure as a \emph{weak eigenray diagram}. {A cluster presentation as discussed in the introduction produces a multiset $\{\pi(L_p)\}_{p\in Crit\pi}$ which in general constitutes only a weak eigenray diagram.} A straightforward generalization of\emph{branch moves} can be used to turn a weak diagram to an actual eigenray diagram. Moreover, for cluster presentations this can be achieved by replacing some tails in the cluster presentation with their opposites. See the proof of Proposition \ref{prpClustEquiv}. While this process involves the choice of which tails to replace by their opposites, we ignore this slight ambiguity and abusively refer to \emph{the} eigenray diagram determined by a cluster presentation. We allow this abuse since the ambiguity has no effect on the integral affine structure.
\end{remark}

Let us define three operations on eigenray diagrams that we will need later:

\begin{itemize}
\item A \emph{nodal slide} of an eigenray diagram $\cR$ is an eigenray diagram $\cR'$ with the same number of rays  which are all either contained in or contain the rays of $\cR$. This induces a correspondence between the rays of $\cR$ and $\cR'$. We further require that the total multiplicities of the corresponding rays are the same.
\item A \emph{node removal} from an eigenray diagram $\cR$ means choosing an arbitrary element of $n\in N_\cR$, which is contained in a ray $l$, and \begin{enumerate}
\item if $m_\cR(n)=1$ and the total multiplicity of the ray that contains $n$ is $1$, we remove $l$ altogether,
\item if $m_\cR(n)=1$, the total multiplicity of the ray that contains $n$ is more than $1$ and $n$ is the boundary of $l$, we remove $n$ and modify $l$ to start at the next element of $N_\cR$,
\item if $m_\cR(n)=1$, the total multiplicity of the ray that contains $n$ is more than $1$ and $n$ is not the boundary of $l$, we remove $n$ from $N_\cR$,
\item if $m_\cR(n)>1$, then change $m_\cR(n)$ to $m_\cR(n)-1$.
\end{enumerate}After a node removal we end up with a new eigenray diagram. In the multiset description of Remark \ref{rem-eigenray-def}, a node removal at $n\in N_\cR$ means removing an element of the form $l^n$ from the multiset.
\item Let $\cR$ be an eigenray diagram. We call an element $n\in N_\cR$ on ray $l$ \emph{mutable} if $l\cap N_\cR=\{n\}$ and the straight line $w$ that contains $l$ does not intersect any ray of $\cR$ other than $l$. Applying a \emph{branch move} at a mutable $n\in N_\cR$ means to define a new eigenray diagram as follows\begin{enumerate}
\item Delete the ray $l$ and replace it with $\overline{w\setminus l}$
\item Let $e\in\mathbb{Z}^2$ be a primitive vector in the direction of $l$. Consider the PL map $\Psi:\bR^2\to \bR^2$ defined by $$\Psi(v)=\begin{cases}
v+m_\cR(n) det(e,v)e,& det(e,v)\geq 0\\
v,&\text{otherwise}
\end{cases}$$
\item Replace each ray $l'$ other than $l$ in $\mathcal{R}$ with $\Psi(l')$.
\item Replace $N_\cR$ with $\Psi(N_\cR)$ and the function $m_\cR$ with $m_\cR\circ \Psi^{-1}|_{\Psi{-1}(N_\cR)}$.
\end{enumerate}
\end{itemize}

Each eigenray diagram $\cR$ gives rise to a nodal integral affine manifold $B_\cR$ with a $PL$ homeomorphism $\psi_\cR: \mathbb{R}^2\to B_{\mathcal{R}} $ such that
\begin{itemize}
\item $\psi_\cR(N_{\mathcal{R}})$ is the set of nodes of $B_{\mathcal{R}}.$
  \item $\psi_\cR$ restricted to the complement of the rays in $\cR$ is an integral affine isomorphism onto its image.
    \item The multiplicity of a node $\psi_\cR(n)$ is $m_{\mathcal{R}}(n)$.
    \item For any $n\in N_\cR$, $\psi_\cR(l^n)$ is an eigenray of $\psi_\cR(n)$.
\end{itemize}  The construction involves  starting with the standard integral affine $\mathbb{R}^2$ and doing a modification similar to the one in the definition of $B_k$ for each element $n$ of $N_\cR$ along $l_n$. One can refer to this operation as {a} \emph{nodal integral affine surgery}. The order in which these surgeries are made does not change the resulting nodal integral affine manifold.

\begin{remark} A given nodal integral affine manifold may be isomorphic to $B_{\mathcal{R}}$ for multiple eigenray diagrams $\cR$. The main source of examples for this phenomenon are the branch moves. \end{remark}

Note that the restriction $\bR^2\setminus N_\cR\to B_\cR\setminus \psi_\cR(N_\cR)$ of the map $\psi_\cR$ is differentiable only in the complement of the rays of $\cR$. The restriction $\bR^2\setminus \bigcup l_j\to B_\cR\setminus \psi_\cR(\bigcup l_j)$ is a diffeomorphism, where $l_j$ are the rays of $\cR$.

We can equip $B_{\mathcal{R}}$ with a smooth structure as explained in the previous section as part of the filling in the nodal fibers process.  Let us call these \emph{compatible smooth structures}. Note that $B_\cR$ with a compatible smooth structure is abstractly diffeomorphic to $\bR^2.$


Any eigenray diagram $\mathcal{R}$ gives rise to a symplectic manifold equipped with a nodal Lagrangian torus fibration after making a choice of fine data. By abuse of notation we will denote each one of these by  $\pi_{\mathcal{R}}: M_{\mathcal{R}}\to B_{\mathcal{R}}$ and refer to them as compatible almost toric fibrations. This is an abuse of notation as different choices are known to not give rise to fiberwise symplectomorphic Lagrangian fibrations. On the other hand, it is well-known that different choices give rise to symplectomorphic $M_{\mathcal{R}}$ (Corollary 5.4 of \cite{symington}). A much more detailed statement  is the special case of Proposition \ref{prop-nodal-slide} that was mentioned right after that statement.\\

Let us now connect all this with the discussion of symplectic cluster manifolds from Section \ref{sss-intro-cluster}. We refer the reader to that section for definitions. Let us first explicitly prove the following elementary result that we stated in the introduction.

\begin{lemma}\label{lem-tail-eigenray}
Let $X^4$ be a symplectic manifold and $\pi: X\to B$ be a nodal Lagrangian torus fibration. Let $L$ be a Lagrangian tail emanating from a focus-focus point $p$ which surjects under $\pi$ onto the smooth ray $l$ emanating from $m=\pi(p)$. Then, $l$ is an eigenray of $p$ in the sense of Definition \ref{def-eigenray}. 
\end{lemma}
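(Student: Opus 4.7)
The plan is to track the tangent direction of $l$ via the Arnold--Liouville identification and show it is forced to lie in the monodromy-invariant subspace at $m$.

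First I would work on the regular locus. Fix $t>0$ with $b:=l(t)\in B^{reg}$ and write $T_b:=\pi^{-1}(b)$, $C:=L\cap T_b$. By assumption $C$ is a smooth circle in the torus $T_b$, disjoint from critical points, so it carries a well-defined homology class $[C]\in H_1(T_b,\bZ)$; this class is primitive since $C$ is an embedded circle and nonzero since $L$ is properly embedded (the cylinder $L\setminus\{p\}\to l\setminus\{m\}$ is nontrivial in relative homology). The Arnold--Liouville theorem identifies $T_bB$ with $H^1(T_b,\bR)$, with the integral lattice corresponding to $H^1(T_b,\bZ)$; under this identification, the Lagrangian condition on the properly embedded surface $L$ forces $l'(t)\in T_bB$ to annihilate $[C]$. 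Indeed, for any tangent vector $w\in T_pL$ projecting to $l'(t)$, and any tangent vector $v\in T_pL$ tangent to the circle $C$, one has $\omega(v,w)=0$; unwinding the Arnold--Liouville pairing identifies this with the cohomological pairing $\langle l'(t),[C]\rangle=0$. This pins $l'(t)$ in the $1$-dimensional annihilator $[C]^\perp\subset T_bB$.

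Next I would show the class $[C]$ is covariantly constant along $l\cap B^{reg}$ (with respect to the flat integral affine connection on the local system $H_1(T_b,\bZ)$): this follows because the assignment $b\mapsto L\cap T_b$ is a continuous family of embedded circles, and the monodromy of $H_1$ of the torus fibration is precisely the integral affine monodromy on $TB^{reg}$. Consequently $[C]^\perp$ is also flat, so $l'$ is covariantly constant, i.e.\ $l\cap B^{reg}$ is a straight line (integral affine immersion) in each developing chart.

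Then I would address the starting point. In a standard local model of the focus-focus singularity at $p$ (for instance via the semi-global model of V\~u Ng\d oc), any properly embedded Lagrangian plane $L$ through $p$ whose image is a smooth ray emanating from $m$ cuts each nearby smooth fiber in a circle bounding a disk in $L$ that contracts to $p$; this identifies $[C]$ with the vanishing cycle of the focus-focus singularity. Since the vanishing cycle is exactly the eigenvector of the local monodromy matrix (the unique eigenvalue is $+1$ for focus-focus singularities, with a one-dimensional fixed line spanned by the vanishing class), its annihilator $[C]^\perp\subset H^1$ is pointwise fixed by the dual monodromy. Therefore $l'(t)$ near $t=0$ lies in the monodromy-invariant line at $m$, verifying the eigenray condition at $a=0$.

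Finally, for any purported critical value $\phi(a)=m'\in N$ with $a>0$, the same analysis would require $[C]$ to be invariant under the monodromy at $m'$, hence to be a multiple of the vanishing cycle at $m'$; but such a class shrinks to the node $p'$ as $b\to m'$, contradicting the hypothesis that $L\cap T_{m'}$ is a circle disjoint from the critical points of $\pi$. Hence $l$ never re-enters $N$ after leaving $m$, so the eigenray condition at points $a>0$ holds vacuously, and $l$ is an eigenray of $p$ in the sense of Definition \ref{def-eigenray}. The main obstacle is making precise the Arnold--Liouville identification at the Lagrangian boundary of the regular locus and the vanishing cycle identification inside a standard focus-focus model; both are handled by passing to action-angle coordinates on a punctured neighborhood of $m$.
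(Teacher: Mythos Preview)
Your treatment of the regular locus and of the starting node $m=\pi(p)$ is correct and matches the paper's argument: you use the Arnold--Liouville identification $T_bB\simeq H^1(T_b,\bR)$, show that the Lagrangian condition forces $l'(t)$ to annihilate $[C]$, and at $m$ identify $[C]$ with the vanishing cycle so that its annihilator is the monodromy-invariant line.

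The gap is in your final paragraph. Your conclusion that ``$l$ never re-enters $N$ after leaving $m$'' is simply false: Lagrangian tails can and do project onto rays containing other nodes (this is exactly what happens for eigenray diagrams with several nodes on the same ray, cf.\ the proof of Proposition~\ref{prpClustEquiv}, where multiple disjoint tails lie over overlapping sub-rays). The error in your reasoning is the assertion that a class which is ``a multiple of the vanishing cycle at $m'$'' must be represented by a circle shrinking to the singular point $p'$. That is not so: the vanishing class at $m'$ is characterised homologically as the kernel of $H_1(F_b)\to H_1(F_{m'})$, and in the nodal fiber it can be represented by circles that stay away from $p'$. There is also a logical circularity: you invoke ``the same analysis would require $[C]$ to be invariant under the monodromy at $m'$'' as if this were already known, but that is precisely the eigenray condition you are trying to verify.

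The correct argument at such an $m'$ is the one the paper gives: since $L\cap F_{m'}$ is by hypothesis a circle disjoint from the focus-focus singularities, its homology class in the nodal fiber $F_{m'}$ is trivial (any loop in a pinched torus that avoids the pinch points is null-homologous). Hence $[C_b]$ for nearby smooth $b$ lies in the kernel of $H_1(F_b)\to H_1(F_{m'})$, i.e.\ is a multiple of the vanishing cycle at $m'$; by Zung's result this is the monodromy-invariant class, so $l'(t)\in [C_b]^\perp$ is fixed by the linear holonomy at $m'$. Replacing your last paragraph with this argument (rather than attempting to rule out the case) completes the proof.
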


\begin{proof}

{Let $\phi:[0,\infty)\to B$ be a smooth parametrization of $l$. By definition $\phi$ is an embedding. Fix an arbitrary orientation of the vector bundle $TB|_l\to l$. For each $b\in l,$ the tangent space of $l$ is contained in the kernel of some covector $\alpha_b\in T^*_bB$. We can choose $\alpha_b$'s such that $\alpha_b(v)>0$ if the ordered pair $\phi'(\phi^{-1}(b)),v$ is a positive basis of $T_bB$. For $b\neq m$, $L$ being Lagrangian implies that the symplectic dual tangent vector to $d\pi^*_x\alpha_b\in T_x^*X$ for all $x\in L\cap \pi^{-1}(b)$ is tangent to the circle $x\in L\cap \pi^{-1}(b)$\footnote{Here one needs to analyze the cases $T_x\pi^{-1}(b)$ and $T_xL$ being equal or not separately but they both lead to this conclusion.}. We can replace each $\alpha_b$ with a unique positive scalar multiple of it so that the flow that these vectors define on $L\cap \pi^{-1}(b)$ is $1$-periodic. This proves that the connected components of $l\setminus \text{critv.}(\pi)$ are all integral affine submanifolds. }

{It is a basic result of Zung from \cite{Zung} that for a smooth disk $D\subset B$ containing only one node, the monodromy invariant class in $H_1(\pi^{-1}(b),\mathbb{Z})$ for $b\in \partial D$ is the vanishing class. This finishes the proof using that the homology class of a loop in a nodal fiber is trivial if it is constant or if it never intersects the focus-focus singularities.}

\end{proof}

Our goal is to prove the following.

\begin{proposition}\label{prpClustEquiv}
Any symplectic manifold of the form $M_{\cR}$ is a 4-dimensional symplectic cluster manifold. Conversely, any 4-dimensional symplectic cluster manifold is symplectomorphic to $M_{\mathcal{R}}$ for some eigenray diagram $\mathcal{R}$.
\end{proposition}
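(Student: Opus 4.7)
The proof splits naturally into two directions, each of which I sketch in turn.

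For the first direction, I plan to fix a compatible almost toric fibration $\pi_\cR\colon M_\cR\to B_\cR$ and produce a cluster presentation directly from the combinatorics of $\cR$. Simple-connectedness of $B_\cR$ is immediate from its PL-homeomorphism to $\bR^2$. For each $n\in N_\cR$ the fiber above $\psi_\cR(n)$ contains $m_\cR(n)$ focus-focus critical points, and I would use the local model to extract that many pairwise disjoint vanishing thimbles along the monodromy-invariant direction $\psi_\cR(l^n)$, extending each to a Lagrangian tail by parallel transport along $\psi_\cR(l^n)$. Pairwise disjointness of the tails emanating from distinct nodes follows from pairwise disjointness of the rays of $\cR$. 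Weak geodesic completeness is transparent in the chart $\psi_\cR^{-1}$: an affine geodesic becomes a straight line in $\bR^2$, which either extends indefinitely or hits some $l_i$, and the hit point corresponds to a node of $\cR$ precisely when the geodesic is incomplete on $B_\cR^{\mathrm{reg}}$.

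For the converse, let $(\pi,\{L_p\})$ be a cluster presentation of $X$. By Lemma \ref{lem-tail-eigenray} each $\pi(L_p)$ is an eigenray, so the multiset $\{\pi(L_p)\}_p$ is at worst a weak eigenray diagram in the sense of Remark \ref{rem-eigenray-def}, since any two eigenrays emanating from the same node either coincide or are opposite. I would first apply branch moves --- replacing some tails by the opposite tail emanating from the same critical point --- to upgrade this to a genuine eigenray diagram without changing $X$ itself. I would then identify $B$ with $B_\cR$: the open complement $B^\circ := B\setminus\bigcup_p \pi(L_p)$ is a simply connected regular integral affine surface, and the developing map gives an integral affine immersion $D\colon B^\circ\to\bR^2$. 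Using weak geodesic completeness I plan to show that $D$ extends to a PL-homeomorphism $B\to\bR^2$ sending each $\pi(L_p)$ to a rational-slope ray, organizing the images into an eigenray diagram $\cR$ with $B\cong B_\cR$.

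To promote this identification of bases into a symplectomorphism $X\simeq M_\cR$, I would proceed in the style of Arnold--Liouville--Duistermaat. Over $B^\circ$ both $X$ and $M_\cR$ restrict to regular Lagrangian torus bundles with identified integral affine structures, so matching Lagrangian sections yield a fiber-preserving symplectomorphism on the regular locus; such sections exist because $B^\circ$ is simply connected. Near each nodal fiber, Theorem \ref{thm-local} produces a fiber-preserving symplectomorphism of local models that can be arranged to agree with the preceding identification on a neighborhood of the boundary of the local chart, and gluing yields a global symplectomorphism.

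The hardest step will be showing that the developing map $D$ extends to a PL-homeomorphism $B\to\bR^2$ --- in particular, that $D$ is injective on $B^\circ$ with image the complement of disjoint rays. This is exactly where weak geodesic completeness and simple connectedness have to combine non-trivially. A secondary delicate point is that the branch moves used to promote the weak eigenray diagram to a genuine one must be chosen so that the modified tails remain pairwise disjoint; I expect this to follow by induction on the number of opposite pairs of eigenrays at a common node, using the freedom to slightly Hamiltonian-perturb each replacement tail.
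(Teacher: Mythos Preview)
Your overall strategy matches the paper's, and the forward direction is essentially identical. The converse direction is where the approaches diverge, and your plan has a genuine gap at precisely the step you flag as hardest.

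The paper does not attempt to show directly that a developing map on $B^\circ = B\setminus\bigcup_p l_p$ extends to a PL-homeomorphism. Instead it isolates a separate statement (Proposition~\ref{propEignerayCharac}) and proves it by \emph{induction on the number of nodes}. At each step one performs an integral affine anti-surgery: excise a neighborhood of a single eigenray $l_p$ (with its node) and glue in a standard strip from $\bR^2$, producing a new nodal integral affine manifold $\tilde B$ with one fewer node. The non-trivial content is showing $\tilde B$ remains weakly geodesically complete; this is done via a Gauss--Bonnet argument ruling out an affine geodesic hitting the surgered ray twice. The base case (no nodes) is then the classical result of Auslander--Markus that a simply connected geodesically complete integral affine surface is $\bR^2$. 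Your direct developing-map plan runs into the problem that $B^\circ$ is \emph{not} geodesically complete --- geodesics can run into the excised rays --- so there is no off-the-shelf completeness argument forcing $D$ to be injective with the desired image. You would need to invent a substitute, and the paper's inductive surgery is exactly such a substitute.

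A minor correction: your ``secondary delicate point'' is a non-issue. The adjustment for overlapping eigenrays (what the paper calls \emph{bad pairs}) takes place entirely at the level of rays in $B$ --- one replaces $l_p$ by the opposite eigenray $l_p^-$, which is then contained in $l_q$ --- and Proposition~\ref{propEignerayCharac} only consumes this eigenray data on $B$. No Lagrangian tails in $X$ need to be modified or perturbed. Once $B\cong B_\cR$ is established, the symplectomorphism $X\simeq M_\cR$ follows from Proposition~\ref{prop-nodal-slide} in the degenerate (no-slide) case, which packages the Duistermaat and local-model arguments you describe.
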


The proof of the {converse} direction will rely on the following proposition concerning nodal integral affine manifolds. 
\begin{proposition}\label{propEignerayCharac}
Let $B$ be a {simply connected nodal integral affine manifold. Suppose there is a choice of proper eigenrays $l_p$ for each node $p\in N_B$ such that \begin{itemize}
\item For each $p$, one (and hence all) of the defining maps $\phi:[0,\infty)\to l_p$,  is a proper map. 
\item For any pair $p_1\neq p_2\in N_B$ we have either $l_{p_1}\cap l_{p_2}=\emptyset$ or one is included in the other. \item $B$ is weakly geodesically complete, i.e. an affine geodesic starting at any point $b\in B^{reg}$ in any direction can be extended indefinitely  unless it hits $\bigcup_{p\in N_B}l_p$.\end{itemize} Then there is an eigenray diagram $\cR$  and a PL-homeomorphism  $B\to B_{\cR}$ which is an integral affine isomorphism outside of the nodes.} 
\end{proposition}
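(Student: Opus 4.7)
The plan is to construct the desired PL homeomorphism via a developing map on the complement of the designated eigenrays. Set $B^{cut} := B \setminus \bigcup_{p \in N_B} l_p$. The nesting/disjointness hypothesis on the $l_p$'s makes $\bigcup_{p} l_p$ a closed tree-like subset of $B$, so since $B$ is simply connected I expect $B^{cut}$ to remain simply connected. Moreover $B^{cut} \subset B^{reg}$ (each node lies on its own eigenray), so $B^{cut}$ inherits an honest (non-nodal) integral affine structure. After choosing a basepoint $b_0 \in B^{cut}$ and an integral affine frame at $b_0$, a standard holonomy argument yields a unique integral affine immersion $D: B^{cut} \to \mathbb{R}^2$.

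Next I would show that $D$ is a diffeomorphism onto the complement of a finite family of rational rays in $\mathbb{R}^2$. The map $D$ carries affine geodesics in $B^{cut}$ to straight line segments in $\mathbb{R}^2$. Under the weak geodesic completeness hypothesis, any affine geodesic in $B^{cut}$ extends indefinitely unless it converges in finite time to a point of $\bigcup_{p} l_p$. Transported across $D$, this says that any straight line through a point of $D(B^{cut})$ extends indefinitely in $D(B^{cut})$ unless it terminates at an image of some $l_p$, which forces $D(B^{cut})$ to equal the complement in $\mathbb{R}^2$ of a union of rational rays. A further geodesic argument — lifting straight lines in the image back to $B^{cut}$ starting from $b_0$ — shows $D$ has no points with multiple preimages, so $D$ is a diffeomorphism onto its image.

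Third, I would extend $D$ to a PL map $\bar D: B \to \mathbb{R}^2$ by taking continuous limits across each $l_p$. On the two sides of $l_p \setminus N_B$ the one-sided limits of $D$ exist and are integral affine; because the monodromy around a node fixes the direction of its eigenray, the two limits agree as continuous maps of the underlying smooth manifolds, though the integral affine structure has a shear discontinuity across $\bar D(l_p)$. The nodes in $N_B$ are sent by $\bar D$ to a finite set in $\mathbb{R}^2$; declaring this set to be $N_\cR$, the images $\bar D(l_p)$ (amalgamated when nested) to be the rays of $\cR$, and $m_\cR$ to read off the multiplicities of the nodes of $B$, I obtain an eigenray diagram. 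The nesting/disjointness and rationality requirements for $\cR$ follow from the corresponding properties of the $l_p$'s. By construction $\bar D$ is an integral affine isomorphism outside $\bigcup l_p$ and PL everywhere, and composing with $\psi_\cR$ yields the required PL homeomorphism $B \to B_\cR$. The main obstacle I anticipate is establishing the global injectivity of $D$: local injectivity is automatic since $D$ is a local integral affine diffeomorphism, but ruling out non-trivial overlaps of the image requires a careful interplay between weak geodesic completeness and the tree-like structure of $\bigcup l_p$, and is the step where simple connectedness of $B^{cut}$ must really be exploited.
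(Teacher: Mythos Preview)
Your approach is genuinely different from the paper's. The paper argues by induction on the number of nodes. In the base case $|N_B|=0$, weak geodesic completeness becomes ordinary geodesic completeness and the Auslander--Markus theorem identifies $B$ with $\bR^2$. For the inductive step, the paper picks a node $p$ whose eigenray $l_p$ contains no other $l_q$, performs an \emph{integral affine anti-surgery} replacing a neighborhood of $l_p$ by a standard strip to obtain a nodal integral affine manifold $\tilde{B}$ with one fewer node, and then applies the inductive hypothesis. The nontrivial point is that $\tilde{B}$ is again weakly geodesically complete: the paper proves this by a Gauss--Bonnet argument, showing that an affine geodesic in $\tilde{B}$ cannot meet the image of $l_p$ twice without enclosing a flat disk whose boundary has total turning zero, contradicting $\alpha_0+\alpha_1=2\pi$.

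Your global developing-map strategy is reasonable, and your treatment of the continuous extension across $l_p$ is correct (the shear monodromy fixes the eigenray direction pointwise). The gap is exactly where you flag it: injectivity of $D$. The suggested argument---lift the straight segment from $D(b_0)$ to $q$ back to $B^{cut}$---fails because that segment will typically cross the images of the rays, so it does not lift to $B^{cut}$. Weak geodesic completeness gives you nothing for segments that leave the image. A cleaner route is to first extend to $\bar{D}:B\to\bR^2$ as you describe, check that $\bar{D}$ is a local PL homeomorphism (including at the nodes, via the local model $B_k$), and then prove $\bar{D}$ is \emph{proper}; a proper local homeomorphism to $\bR^2$ is a covering and hence, by simple connectedness of the target, a homeomorphism. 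But properness is where the real content lies: you must rule out a sequence $x_i\to\infty$ in $B$ with $\bar{D}(x_i)$ bounded, and this is precisely where weak geodesic completeness enters in a nontrivial way. The paper's inductive anti-surgery plus Gauss--Bonnet argument is one concrete way to discharge this; your sketch does not yet supply an alternative.
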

\begin{proof}
We prove this by induction on the number $m$ of points in $N_B$. First consider the case $m=0$ where $B$ is a simply connected geodesically complete integral affine surface. Then {a well known result by Auslander-Markus (the first corollary on page 145 of \cite{Auslander}) says that $B$ is integral affine isomorphic to $\bR^2$.}

We now proceed with the induction. Let $p\in N_B$ so that $l_p$ does not contain any of the eigenrays $l_{q}$ for $q\neq p$. Let $k$ be the multiplicity at $p$. {Consider the model $B_k$ and an eigenray $l_0$ in $B_k$. By definition, there is an embedding of an open neighborhood of the node $0$ in $B_k$ into $B$ as a nodal integral affine manifold\footnote{Recall that this was defined after Definition \ref{def-nodal-int-aff}}, which sends $0$ to $p$. Since a local eigenray needs to be sent to a local eigenray, we can assume that the points of $l_0$ are sent to $l_p$. We assume that the portion of $l_0$ that lies in this neighborhood is connected and moreover that the neighborhood can be shrunk to satisfy certain properties without mention. Let us call this embedding the local embedding. We claim that in fact there exists an open neighborhood $V$ of $l_0$ in $B_k$ and an embedding of nodal integral affine manifolds $\psi: V\to B$ mapping $l_0$ to $l_p$ which extends the local embedding. By shooting affine geodesics from $l_p$ in a transverse direction appropriately, one can extend the local embedding to a continuous map $\tilde{\psi}: \tilde{V}\to B$ such that $\tilde{V}^{reg}$ maps into $B^{reg}$ as an integral affine immersion,  where $\tilde{V}$ is an open neighborhood of $l_0$.  Using that $l_p$ is properly embedded as in the first bullet point of the statement, we can construct a neighborhood $V\subset \tilde{V}$ in which the restriction of $\tilde{\psi}$ is an injection and gives the desired embedding $\psi$ (cf. Lemma 7.2 of \cite{Kosinski}). We can also easily arrange $V$ to be the interior of a smooth submanifold with boundary. 

We denote the image of $\phi$ by $U\subset B$. $U$ has smooth boundary, and we can find a neighborhood of $\partial U$ which is integral affine isomorphic to a neighborhood of the boundary of some neighborhood $S$ of the non-negative part of the $x$ axis in $\bR^2$ in such a way that the isomorphism extends to a PL homeomorphism $S\to U.$} 
We define a new integral affine manifold $\tilde{B}=(B-U)\cup S$.\footnote{{In the parlance of Section \ref{ss-surgery}, this could be called an   \emph{integral affine anti-surgery}.}}

We now prove that $\tilde{B}$ is weakly geodesically complete. Note there is a PL homeomorphism $\iota:B\to\tilde{B}$ which is a nodal integral affine isomorphism on the complement of $l_p$.  Let $x\in\tilde{B}$ and let $\gamma$ be an affine ray in the complement of the eigenrays $\iota(l_q)$, $q\neq p\in N_B$ of $\tilde{B}$ emanating from $x$ and maximally extended in the positive direction. If $\gamma$ never passes through $\iota(l_p)$ then $\gamma$ is the image under $\iota$ of an affine ray in $B$ and thus extends at least until it hits an eigenray which is not $l_p$ by assumption. It remains to deal with the case that $\gamma$ intersects $\iota(l_p)$. We divide into two cases. If $l_p$ is contained in another eigenray $l_q$, then the complement of the eigenrays of $\tilde{B}$ is integral affine isomorphic to the complement of the eigenrays of $B$ and the weak geodesic completeness is automatically preserved. 

It remains to consider the case where $l_p$ is not properly contained in any other eigenray. Let $s_0$ be such that $\gamma(s_0)\in \iota(l_p)$ and consider the affine geodesic $\gamma_{s_0}:s\mapsto\gamma(s+s_0)$. We show that $\gamma_{s_0}$ does not intersect $\iota(l_p)$ again, which would finish the proof. Assume otherwise, $\gamma_{s_0}$ is contained in $B':=\tilde{B}\setminus \bigcup_{q\neq p\in N_B}\iota(l_q)$ which is a simply connected integral affine manifold and satisfies for some $s_1>  s_0$ that $\gamma(s_1)\in l_p$. By simple connectedness there is an affine immersion  of $B'$ into $\bR^2$ via a developing map which allows us to pull back a flat Riemannian metric on $B'$ so that the affine geodesics are geodesics of the metric.

 The loop formed by concatenating $\gamma([s_0,s_1])$ to the segment $\delta\subset \iota(l_p)$ connecting $\gamma(s_0)$ with $\gamma(s_1)$ encloses a simply connected region. By flatness of the metric and the vanishing of the geodesic curvature of $\gamma$ and $\delta$ the Gauss-Bonnet formula reads $2\pi=\alpha_0+\alpha_1,$ where $\alpha_0$ and $\alpha_1$ are the turning angles of the tangent vector to $\gamma * \beta$ at $\gamma(s_0)$ and $\gamma(s_1)$ respectively \footnote{See \cite[\S 4-5]{dCCS}.}. Relying again on flatness of the connection and geodesicity  we find that the tangent vector to $\gamma$ at $s_0$ is parallel to the tangent vector to $\gamma$ at $s_1$ with an analogous statement holding for $\delta$. Thus the turning angle from $\delta$ to $\gamma$ at one vertex is cancelled by the turning angle from $\gamma$ to $\delta$ at the other vertex. That is, $\alpha_0+\alpha_1=0$. This produces the desired contradiction.


Thus by the inductive hypothesis $\tilde{B}=B_{\tilde{\cR}}$ for some eigenray diagram $\tilde{\cR}$. $B$ is obtained from $\tilde{B}$ by a nodal integral affine surgery replacing $S$ with $V$. This is the same as introducing an additional ray $l$ to $\tilde{\cR}$ to form a new eigenray diagram $\cR$. 
\end{proof}

Armed with Proposition \ref{propEignerayCharac}, we can now proceed to prove Proposition \ref{prpClustEquiv}. 
\begin{proof}[Proof of Proposition \ref{prpClustEquiv}]
Given an eigenray diagram we can construct a cluster presentation for $M_{\cR}$ by lifting an eigenray of multiplicity $m$ to an $m$-tuple of pairwise disjoint Lagrangian tails. The set of all these tails can be taken to be pairwise disjoint. Indeed if a pair of Lagrangian tails lies over disjoint eigenrays they are automatically disjoint. If their eigenrays overlap, then the are both a family of loops representing the same homology class in the torus fibers, and thus can be made disjoint. {Eigenray diagrams satisfy the weak completeness condition since on the complement of the rays the intergral affine structure is inherited from $\bR^2$ which is geodesically complete.}

Conversely,  let $X$ be symplectic 4-manifold equipped with a cluster presentation $P=\{\pi,\{L_p\}_{p\in Crit\pi}\}$. Let $B$ be the base of $\pi$ with its induced nodal integral affine structure. {Note that it suffices to prove that $B$ is nodal integral affine isomorphic to $B_\cR$ for some eigenray diagram $\cR$ by Proposition \ref{prop-nodal-slide} (in the case where there is no nodal slide, $B=B'$, $A=A'=\varnothing$)}.
 
{Consider the eigenrays $l_p:=\pi(L_p)$ (recall Lemma \ref{lem-tail-eigenray}), for $p\in Crit\pi$. If $l_p$ and $l_q$ intersect, they must do so non-transversely, or else the corresponding Lagrangian tails would intersect. Let us call a pair $l_p,l_q$ such that $l_p\cap l_q$ is non-empty and neither of them is contained in the other a bad pair. If there were no bad pairs among $l_p:=\pi(L_p)$, $p\in Crit\pi$, we could associate exactly one eigenray to each node of $B$ and the conditions of Proposition \ref{propEignerayCharac} would be satisfied. }

{To deal with bad pairs, we note that if  $l_p,l_q$ is a bad pair then the other eigenray $l^{-}_p$ emanating from $p$ is fully contained in $l_q$. Moreover, if we assume that $l_q$ is a proper eigenray and it satisfies the first bullet point of Proposition \ref{propEignerayCharac}, then the same properties are automatically inherited by $l^{-}_p$. It is clear that we can do this replacement finitely many times to obtain a set of eigenrays $l_p'$, $p \in Critv.(\pi)$ which contain no bad pairs.}

{We claim that $B$ and the eigenrays $l_p'$ satisfy the conditions of Proposition \ref{propEignerayCharac}.  The properness as in the first bullet point follows from the fact that the Lagrangian tails we were given are properly embedded. The second bullet point is satisfied after the modifications as in the previous paragraph. For the third bullet point it remains to show that these modifications preserve weak geodesic completeness. The proof of this is exactly the same as the proof of weak completeness in the inductive step of Proposition \ref{propEignerayCharac}. We omit further details.}

\end{proof}

Using nodal slides and branch moves one immediately sees that a symplectic cluster manifold can be symplectomorphic to $M_\mathcal{R}$ for many different eigenray diagrams.

Consider the following important basic cases.

\begin{itemize}

    \item If we choose no rays in our eigenray diagram, we end up with the Lagrangian fibration $T^*T^2\to \mathbb{R}^2$. Let us denote this case by $\pi_0:M_0\to B_0$.
    \item If we choose one ray and only one node with multiplicity one, we end up with a symplectic manifold diffeomorphic to $\mathbb{C}^2-\{xy=1\}$. Let us denote this case by $\pi_1: M_1\to B_1$. Note that all choices of rays result in symplectomorphic manifolds. This is a completion of the Auroux fibration on $\mathbb{C}^2-\{xy=1\}$ with an explicit Kahler structure. See Section \ref{ss-surgery} for another concrete model.
\end{itemize}
\begin{proposition}\label{prpClstGeoBd}
Symplectic cluster manifolds are geometrically of finite type.\end{proposition}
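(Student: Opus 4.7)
The plan is to construct directly on $M_{\cR}$ a geometrically bounded compatible almost complex structure $J$ and an admissible exhaustion function $f$ factoring through $\pi_{\cR}$.

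For $J$, I would fix a compatible nodal Lagrangian torus fibration $\pi : M_{\cR} \to B_{\cR}$ and cover $B_{\cR}$ by two kinds of open sets: small disks $D_n$ around each node $n \in N_{\cR}$ on which $\pi^{-1}(D_n)$ is identified with the local model (an open subset of $\bC^2$ equipped with the standard K\"ahler form), and a locally finite cover of $B_{\cR}^{reg}$ by action-angle charts $U_i$ on which $\pi^{-1}(U_i)$ is symplectomorphic to an open subset of $T^*T^2 = \bR^2_p \times T^2_q$, arranged so that each intersection $U_i \cap U_j$ meets at most one ray of $\cR$ and the cover is compatible with the $D_n$ near nodes. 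I would take $J$ to be the standard integrable complex structure from the local model on each $\pi^{-1}(D_n)$, the standard $T^2$-invariant complex structure $J(\partial_{p_i})=\partial_{q_i}$ on each $\pi^{-1}(U_i)$, and glue using an $\omega$-compatible interpolation governed by a partition of unity with uniformly bounded derivatives.

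The verification that this $J$ is geometrically bounded is the heart of the argument. Inside any single chart of the cover, $g_J$ is a piece of the standard flat metric on $T^*T^2$ or of the standard K\"ahler metric on $\bC^2$, so sectional curvature is bounded and injectivity radius is bounded below by the minimum $T^2$-fiber diameter over the cover. Across a ray crossing in $B_{\cR}^{reg}$, the change of action-angle coordinates between $U_i$ and $U_j$ is a globally linear symplectomorphism: its derivative is the fixed element of $\mathrm{Sp}(4,\bZ)$ dictated by the shear monodromy $A_k$, and does not depend on position. Consequently the two local candidates for $J$ on $U_i \cap U_j$ differ by a constant matrix in either chart's coordinates, so the glued $J$ has $C^k$-norm bounded by constants times the derivatives of the partition of unity. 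Completeness of $g_J$ is ensured by properness of $\pi_\cR$ over an exhaustion of $B_{\cR}$ by compact sets.

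For $f$, using the $PL$-homeomorphism $\psi_{\cR} : \bR^2 \to B_{\cR}$ from Section \ref{SecEigenrayDiagrams}, I would take a small multiple of $|\psi_{\cR}^{-1}(\cdot)|^2$ on the complement of a compact neighborhood of the nodes, smoothed across each ray within a thin strip (the discontinuity of derivatives across a ray is controlled since the jump $A_k$ is linear), and extended smoothly inward so as to admit only compactly supported critical points. Setting $f := f_B \circ \pi_{\cR}$, the function $f$ is proper and bounded below. In any action-angle chart, $X_f = \sum_j (\partial f_B/\partial p_j)\,\partial_{q_j}$, hence $\|X_f\|_{g_J}$ and $\|\nabla X_f\|_{g_J}$ are controlled by $\|\nabla f_B\|$ and $\|\Hess f_B\|$, both uniformly bounded by construction, and the critical points of $f$ lie in the compact preimage of the critical set of $f_B$. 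The hard part throughout is the passage across rays; once one recognizes that the monodromy is literally a fixed integer matrix rather than a position-dependent distortion, both the bounded-geometry estimates for $J$ and the uniform bounds for $f_B$ reduce to routine partition-of-unity computations modeled on $\bR^2$.
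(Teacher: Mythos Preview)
Your construction of $J$ is sound and in fact takes a somewhat different route from the paper. The paper builds a Riemannian metric $g_B$ on $B_{\cR}^{reg}$ that is invariant under the $\bR_+$ translation action along each eigenray, then lifts it to $J$ via the horizontal/vertical splitting; geometric boundedness follows because any unit ball far out along a ray is carried by an isometry into a fixed compact region. Your argument instead exploits directly that the transition maps between action-angle charts are integral affine with \emph{constant} derivative, so the local flat $J$'s differ by a fixed conjugation and the glued $J$ has uniform $C^k$ bounds. Both arguments work; the paper's is more conceptual, yours is more hands-on.

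There is, however, a genuine error in your construction of the admissible function. You propose $f_B$ to be a small multiple of $|\psi_{\cR}^{-1}(\cdot)|^2$, i.e.\ essentially $\epsilon|p|^2$ in action-angle coordinates. But then $X_f = 2\epsilon(p_1\partial_{q_1}+p_2\partial_{q_2})$ has $\|X_f\|_{g_J} = 2\epsilon|p|$, which is unbounded as $|p|\to\infty$. So $f$ is not admissible: the required bound $\|X_f\|_{g_J}<C$ fails. Your claim that $\|\nabla f_B\|$ is ``uniformly bounded by construction'' is simply false for a quadratic function. The paper instead takes $h$ to be a \emph{piecewise linear} proper function on $\bR^2$ whose level sets cross the rays transversely, smoothed near each ray in an $\bR_+$-invariant way; such a function has gradient and Hessian bounded outside a compact set, and one checks that the gradient is bounded away from zero on each strip $V_i$ by writing it in the $(x_1,x_2)$ frame and observing that the $\partial_{x_1}$ coefficient is a nonzero constant. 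Replacing your quadratic by something with linear growth (and verifying the lower bound on $|\nabla h|$ near the rays) is the missing step.
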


\begin{proof}
We know that every symplectic cluster manifold is symplectomorphic to $M_{\cR}$ for some eigenray diagram $\mathcal{R}$. Let $\pi_\cR:M_\cR\to B_\cR$ be a compatible nodal Lagrangian torus fibration. To construct a geometrically bounded almost complex structure on $M_{\cR}$ we will first construct a Riemannian metric on $B^{reg}_{\cR}$.   A metric $g_B$ on $B^{reg}_{\cR}$ gives rise to a compatible almost complex structure $J_0$ on $M^{reg}:=\pi_{\cR}^{-1}(B^{reg}_{\cR})$ by the following procedure.  Consider the defining identification of $M^{reg}\subset M_{\cR}$ with $X(B^{reg})=T^*B^{reg}_{\cR}/\Lambda^*$. The sublattice $\Lambda^*\subset  T^*B^{reg}_{\cR}$ induces a flat connection $\nabla$ on $T^*B$ whose local flat sections are the real linear combinations of local sections of $\Lambda^*$. The connection $\nabla$ descends to a flat Ehresmann connection on $\pi_{\cR}$. This induces a splitting $TM^{reg}=H\oplus V$ into horizontal and vertical bundles.  Then for any $p\in M^{reg}$ we have canonical isomorphisms $V_p=T^*_{\pi(p)}B^{reg}$ and $H_p=T_{\pi(p)}B^{reg}$. The metric $g _B$ induces an isomorphism $T_{\pi(p)}B^{reg}=T^*_{\pi(p)}B^{reg}$. Denote by $j_p:H_p\to V_p$ the map induced by the latter isomorphism. Using $j$ we define an almost complex structure $J_0$ by $J_0(h+v)=jh-j^{-1}v$ for $h\in H,v\in V$.  $J_0$ is clearly a compatible almost complex structure on $M^{reg}$. To get an almost complex structure $J$ on $M_\cR$ we pick a compact set $K$ containing all the nodal fibers and modify $J_0$ arbitrarily inside $K$ so that it extends to an almost complex structure on $M_\cR$.

We now show that $g_B$ can be chosen in such a way that $g_J$ for $J$ as above is geometrically bounded. Let $B'_{\cR}\subset B_{\cR}$ be the complement of the eigenrays and let $g'_B$ be the standard flat metric on $B'_{\cR}$ considered as a subset of $\bR^2$.  
We need to modify $g'_B$ near the pre-image of each eigenray so that it extends across.
We are only interested in the behavior near infinity, so we shall specify this for a subray $l'_i$ of each eigenray $l_i$ whose closure does not contain any point of $N_{\cR}$. Fix a neighborhood $V_i$ of $l'_i$ that is integral affine isomorphic to the strip $l'_i\times (-2,2)$ so that $l'_i\subset V_i$ maps to $l'_i\times\{0\}$. 
Fix a non-zero vector $v$ tangent to $l_i$ and   pointing in the direction of $l_i$. Then there there is an action of the additive monoid $\bR_+$ on $V_i$ defined by
\begin{equation}\label{EqtransAction}
t\cdot (x_1,x_2)\mapsto (x_1+tv,x_2).
\end{equation}
The restriction of $g'_B$ to $V_i\setminus l'_i$ is invariant under this action. Indeed, {the action preserves $V_i\setminus l'_i$ and is an isometry of the standard metric.} Let $p$ be the boundary point of $l'_i$ and let $g''_B$ be a Riemannian metric on $V_i$ which is invariant under the $\bR_+$ action. Let $g_B$ be obtained by gluing together the metrics $g'_B,g''_B$ in a way that is compatible with the $\bR^+$ action. Doing this for all eigenrays produces a Riemannian metric $g_B$ on the complement of a  sufficiently large compact set $K\subset B_{\cR}$. We modify  $g_B$ near the boundary of $K$ and extend smoothly to obtain a Riemannian metric on all of $B_{\cR}$.

We show that the resulting $J$ is geometrically bounded. Any $x\in M_{\cR}$ which is outside of a large enough compact set satisfies either that on the ball $B_1(x)$ the metric is the standard flat metric or that $B_1(x)\subset U_i:=\pi_{\cR}^{-1}(V_i)$ for some $i$. In the first case we have the bounds on sectional curvature and injectivity radius for the flat metric. In the second case, there is an induced action of $\bR_+$ on $U_i$ by isometries {covering the action of \eqref{EqtransAction}. Moreover, the orbit of any strip of the form $\pi^{-1}_{\cR}((0,\epsilon)\times (-2,2))$ covers $\pi^{-1}_{\cR}(V_i)$.}  So $B_1(x)$ is isometric to $B_1(p)$ for $p$ in some a priori compact set producing a priori bounds on the geometry. It remains to establish completeness. The tail of any Cauchy sequence $x_i$ is contained in a ball of radius $1$ around a point and the closure in $M_{\cR}$ of any such ball is Cauchy complete by the same argument.

It remains to construct an admissible  function $f:M_{\cR}\to\bR_+$ all of whose critical points are contained in a compact set. Outside of a compact set the projection $\pi_{\cR}$ is a Riemannian submersion with respect to the metrics $g_J$ on $M_{\cR}$ and $g_B$ on $B_{\cR}$. Moreover, given a function $h:B_{\cR}\to \bR$, we have the relation
\begin{equation}
X_{h\circ\pi}=J\nabla h
\end{equation}
where $\nabla h$ is the gradient with respect to $g_B$ and we identify $T_{\pi(p)}B$ with $H^*_p$.  We have that $\nabla J$ is bounded by the same argument as for the sectional curvature. Thus to construct an  $f$ it suffices to construct a function $h:B_{\cR}\to \bR$ all of whose critical points are  contained in a compact set and so that $h$ has gradient and Hessian bounded from above.

To construct $h$ let $\tilde{h}:\bR^2\to\bR_+$ be a proper piecewise linear function which for some compact $K\subset \bR^2$ is smooth on $B'_{\cR}\setminus K$. Moreover, assume the level sets of $h$ meet each of the $l_i$ transversely with gradient pointing in the direction of increasing $l_i$. Fix a smooth function $\rho:(-2,2)\to[0,1]$ which is identically $1$ on $(-2,-1)\cup(1,2)$ and identically $0$ on a neighbourhood of $0$. Let $h$ be a function equalling $\tilde{h}$ away from the $V_i$ and some fixed compact set $K$ and  equalling
\begin{equation}
h=\rho(x_2)\tilde{h}+(1-\rho(x_2))x_1
\end{equation}
on $V_i$. $h$ is further extended to $K$ so as to be smooth and proper. It is clear that $h$ has its gradient and Hessian bounded from above. It remains to show the critical points are contained in a compact set. For this it suffices to estimate $\|\nabla h\|$ from below. The gradient is a non-zero  locally constant vector on the complement of $\cup V_i\cup K$. So it remains to estimate the gradient on $V_i$. We have
\begin{equation}
\nabla h=\rho\nabla\tilde{h}+(1-\rho)\nabla x_1+\rho'(\tilde{h}-x_1)\nabla x_2.
\end{equation}
The  assumption about the gradient of $\tilde{h}$ near $l_i$ guarantees that writing this as a linear combination of  $\nabla x_1$ and $\nabla x_2$, the coefficient of $\nabla x_1$ is a non-zero constant.
\end{proof}

%
%

\subsection{A three dimensional perspective on generalized semi-toric manifolds}\label{ss-3d}

This section is a preparatory one for the next two sections. It starts with a discussion of nodal Lagrangian fibrations on four dimensional symplectic manifolds equipped with a Hamiltonian circle action, which only has fixed points of weight $(1,-1)$ (see the discussion near Lemma \ref{lem-S-inv-J} for the definition) and no finite stabilizers. The key point here is a correspondence between circle action invariant Lagrangian foliations and certain smooth foliations on the three dimensional quotient space, see Proposition \ref{prop-3d-smooth-st} for the precise statement. The basic idea is not new, see Theorem 1.2 (3) of \cite{Gross00}, but it appears that the specific analysis of focus-focus singularities (as in Lemma \ref{lem-Eliasson}) might be. In the next section, where we will define the surgery/anti-surgery operation that was mentioned in Section \ref{sss-idea}, we will take advantage of the fact that Lagrangian foliations on two dimensional symplectic manifolds are simply codimension 1 foliations and hence are objects belonging to smooth topology.

The second point of this section is an algorithmic analysis of the nodal integral affine structure induced on the base of a nodal Lagrangian torus fibration that is invariant under the fixed circle action from the previous paragraph. Such a fibration corresponds to a foliation by circles on the three dimensional quotient space. Moreover, the Hamiltonian that generates the circle action descends to give a submersion of the quotient space over the real line. The level sets of this submersion are equipped with canonical symplectic structures by the symplectic quotient construction. The leaves of the foliation by circles are contained in these level sets. The upshot is that by considering the symplectic areas that lie in between leaves, we compute the desired nodal integral affine structure, more specifically we construct an eigenray diagram representation for it. The results are summarized in Propositions \ref{prop-eig-proc} and \ref{prop-lag-to-eig}.

We end the section with an indication of how the nodal slide symplectomorphisms are constructed using the three dimensional picture. Instead of a general proof, which would require setting up notation that will not be used later, we give an example that we believe explains the idea.\\

 Let us recall the definition of a focus-focus singularity as stated in Definition 1.1 of \cite{san2}\footnote{where there is also a rigorous proof of Eliasson's normal form theorem for such singularities} for the convenience of the reader.
\begin{definition}\label{def-definition1.1}
For a smooth map $F = (f_1, f_2):M\to \mathbb{R}^2$ on a symplectic 4-
manifold $M$ with $\{f_1,f_2\}=0$ and $df_1$ and $df_2$ linearly independent almost everywhere, $m$ is a \emph{critical point of focus-focus type} if \begin{itemize}
\item $dF(m) = 0$;
\item the Hessians $H_m(f_1)$ and $H_m(f_2)$ are linearly independent;
\item there exist a symplectic basis $e_1,f_1,e_2,f_2$ on $T_mM$ such that these Hessians are linear combinations of the focus-focus quadratic forms $e^1\wedge f^1+e^2\wedge f^2$ and $e^1\wedge f^2-e^1\wedge f^2$.\end{itemize}
\end{definition}

 In what follows it will be more convenient to talk about submersions over a given smooth base instead of foliations, which is what we did above in the introduction to this section. 

\begin{definition}\label{def-nodal-lag}
Let $X^4$ be a symplectic manifold and $B^2$ a smooth manifold. A smooth map $p: X\to B$ is called a nodal Lagrangian submersion if  \begin{itemize}
                                                                                                                          \item At every regular point $x\in X$, $ker(dp_x)\subset T_xX$ is a Lagrangian subspace.
                                                                                                                          \item Every critical point of $p$ is of focus-focus type.
                                                                                                                        \end{itemize}
If $p$ is also proper with connected fibers, we call it a nodal Lagrangian torus fibration.
\end{definition}

Let us start with a local discussion. Consider $\mathbb{C}^2$ with its standard Kahler structure and denote the complex coordinates by $z_1$ and $z_2$. The Hamiltonian function  $\mu(z_1,z_2):=\pi(|z_1|^2-|z_2|^2)$ generates a $S:=\mathbb{R}/\mathbb{Z}$ action on $\mathbb{C}^2$ by $\theta\cdot (z_1,z_2)=(e^{2\pi i\theta}z_1, e^{-2\pi i\theta}z_2)$. We have a smooth map $Hopf: \mathbb{C}^2\to \mathbb{C} \times\mathbb{R}$ defined by $$(z_1,z_2)\mapsto \left(2\pi z_1z_2, \mu(z_1,z_2)\right).$$ The fibers of $Hopf$ are precisely the orbits of the $S$-action.

\begin{lemma}\label{lem-Eliasson}
Consider a smooth map $f: \mathbb{C} \times\mathbb{R}\to \mathbb{R}^2$ of the form $(g,pr_{\mathbb{R}})$. The map $f\circ Hopf: \mathbb{C}^2\to \mathbb{R}^2$ is an $S$-invariant nodal Lagrangian fibration with a single focus-focus singularity at the origin if and only if $f$ is a submersion.
\end{lemma}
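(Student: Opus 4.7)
My plan is to reduce the claim to three separate checks by writing $f\circ Hopf=(g\circ Hopf,\mu)$.

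\textbf{Automatic properties.} Since $Hopf$ is the quotient map for the $S$-action and $\mu$ is its Hamiltonian, both $g\circ Hopf$ and $\mu$ are automatically $S$-invariant for any $g$, and $\{g\circ Hopf,\mu\}=0$. Hence the $S$-invariance of the components and the Poisson commutativity of the pair hold in both directions of the ``iff'', and what remains is purely a rank/singularity analysis.

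\textbf{Off the origin.} The map $Hopf$ is a principal $S^1$-bundle on $\bC^2\setminus\{0\}$, in particular a submersion there, and $dHopf|_0=0$. Thus for $p\neq 0$, $df|_{Hopf(p)}\circ dHopf|_p$ has rank $2$ if and only if $df|_{Hopf(p)}$ does. Since $Hopf$ maps $\bC^2\setminus\{0\}$ onto $(\bC\times\bR)\setminus\{(0,0)\}$, combined with Poisson commutativity this shows $f\circ Hopf$ is a Lagrangian submersion on $\bC^2\setminus\{0\}$ if and only if $f$ is a submersion on $(\bC\times\bR)\setminus\{(0,0)\}$, and it identifies the critical set of $f\circ Hopf$ (assuming $f$ is submersive at $(0,0)$) with $\{0\}$.

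\textbf{The critical point at the origin.} Expanding $g=g(0,0)+\alpha u+\beta v+\gamma r+O(2)$ with $w=u+iv$ and substituting, the quadratic part of $g\circ Hopf$ at $0$ is $2\pi\alpha\,Re(z_1z_2)+2\pi\beta\,Im(z_1z_2)+\gamma\mu$, while $\mu$ is already quadratic. The Hessians of $\mu$ and of $g\circ Hopf$ at $0$ are linearly independent precisely when $(\alpha,\beta)\neq(0,0)$, which by direct inspection of $df|_{(0,0)}$ is exactly the condition that $f$ be a submersion at $(0,0)$. This handles conditions (i)-(ii) of Definition \ref{def-definition1.1}. For condition (iii) I would appeal to Williamson's classification in dimension four via the spectrum of the linearized Hamiltonian vector fields. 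Since $S$-invariance forces the Cartan complement of $\bR\langle X_\mu\rangle$ to be a single line $\bR\cdot X_Q$ with $Q=\alpha\,Re(z_1z_2)+\beta\,Im(z_1z_2)$, the non-commutativity of $X_{Re(z_1z_2)},X_{Im(z_1z_2)}$ is sidestepped. A direct block computation yields $X_Q^2=(\alpha^2+\beta^2)I$, so $X_Q$ has eigenvalues $\pm\sqrt{\alpha^2+\beta^2}$; on each of its $2$-dimensional eigenspaces the commuting operator $X_\mu$ satisfies $X_\mu^2=-(2\pi)^2I$ and so contributes eigenvalues $\pm 2\pi i$. Consequently a generic $sX_\mu+tX_{g\circ Hopf}$ has spectrum $\{\pm t\sqrt{\alpha^2+\beta^2}\pm 2\pi i s\}$, a focus-focus quadruple $\{\pm\lambda,\pm\bar\lambda\}$ with $Re(\lambda),Im(\lambda)\neq 0$ whenever $s,t\neq 0$. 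Williamson's theorem then produces a symplectic basis bringing the Hessian pair into the focus-focus normal form, closing condition (iii).

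\textbf{Main obstacle and converse.} The only non-formal step is the verification of the focus-focus normal form, and the spectral argument above avoids constructing an explicit symplectic change of coordinates, so I expect it to be routine once the eigenvalues are in hand. The reverse implication follows directly: if $f\circ Hopf$ is a nodal Lagrangian fibration with a single focus-focus singularity at $0$, the rank argument of the second paragraph forces $f$ to be a submersion on $(\bC\times\bR)\setminus\{(0,0)\}$, and linear independence of the Hessians at $0$ forces $(\alpha,\beta)\neq 0$, i.e.\ $f$ is also a submersion at $(0,0)$.
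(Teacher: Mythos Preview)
Your proof is correct and follows the same overall decomposition as the paper: first the automatic $S$-invariance and Poisson-commutativity, then the rank analysis away from the origin via $Hopf$ being a submersion on $\bC^2\setminus\{0\}$, and finally the analysis of the singularity at $0$ by linearizing $g$ (equivalently, using that the components of $Hopf$ are homogeneous quadratic so only $dg|_{(0,0)}$ matters for the Hessians).

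The one point of genuine divergence is the verification of condition (iii), the focus-focus normal form. The paper observes that the auxiliary $S^1$-action $\theta\cdot(z_1,z_2)=(e^{2\pi i\theta}z_1,z_2)$ preserves $\mu$ while rotating $z_1z_2$, so by a symplectic change of coordinates one may reduce to $\beta=0$ (your notation) and then construct the required symplectic basis by hand in that single case; the explicit construction is omitted. You instead work invariantly: using that $X_{Re(z_1z_2)}$ and $X_{Im(z_1z_2)}$ anticommute (so $X_Q^2=(\alpha^2+\beta^2)I$) and that $X_\mu^2=-(2\pi)^2I$, you read off the joint spectrum of the commuting pair $(X_\mu,X_Q)$ and invoke the classification of Cartan subalgebras of $\mathfrak{sp}(4,\bR)$ (the ``pair'' version of Williamson) to land in the focus-focus block. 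Both routes are valid; the paper's symmetry reduction is more elementary but leaves the final check implicit, whereas your spectral argument is uniform in $(\alpha,\beta)$ and actually completes the verification, at the cost of citing a slightly less standard form of Williamson's theorem (for commuting pairs rather than a single quadratic form). Since you show a generic combination $sX_\mu+tX_Q$ has four distinct eigenvalues, the pair does span a Cartan subalgebra, so this invocation is justified.
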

\begin{proof}
Note that $$\mu=pr_{\mathbb{R}}\circ f \circ Hopf.$$ It is easy to see that the only critical point of $Hopf$ is the origin. Hence, if $f$ is a submersion, then $f\circ Hopf$ restricted to $\mathbb{C}^2\setminus \{0\}$ is a submersion. Moreover, it automatically satisfies the Lagrangian condition as well, because $ker(d(f\circ Hopf)_x)$ for $x\neq 0$ is spanned by the Hamiltonian vector $X_\mu(x)$ along with a vector tangent to the level set of $\mu$ passing through that point.

The only non-trivial point to check is that $(0,0)$ is a regular point of $f$ if and only if  $f\circ Hopf$ has a focus-focus type singularity at the origin, which is its only singularity. The focus-focus type condition on this singularity can be expressed concretely as a condition on the Hessians of the components of $f\circ Hopf$ with respect to the standard real coordinates $Re(z_1),Im(z_1),Re(z_2),Im(z_2)$ on $\mathbb{C}^2$ as in the last two bullet points of Definition \ref{def-definition1.1}. Since the components of $Hopf$ are homogeneous quadratic functions these Hessians only depend on the differential of $f$ at $(0,0).$ Therefore, without loss of generality, we can assume that $f$, or equivalently $g$, is a linear map: $$g(x,y,z)=ax+by+cz,$$ where $x+iy$ is the complex coordinate on $\mathbb{C}$, $z$ is the coordinate on $\mathbb{R}$ and $a,b,c$ are real numbers.

Of course $(0,0)$ is a regular point of $f$ if and only if $a$ or $b$ is non-zero. Moreover, the second bullet point in Definition \ref{def-definition1.1} is satisfied also if and only if   $a$ or $b$ is non-zero.  Hence, we need to show that as long as  $a$ or $b$ is non-zero the third bullet point in Definition \ref{def-definition1.1} is also satisfied. We can assume without loss of generality that $b=0$ because the Hamiltonian action on $\mathbb{C}^2$ by $\theta\cdot (z_1,z_2)=(e^{2\pi i\theta}z_1, z_2)$ preserves the function $\mu$ and rotates function $2\pi z_1z_2.$ In the case $a\neq 0$ and $b=0$, one can construct the desired symplectic basis by hand. 
\end{proof}

Before we introduce the general setup of this section let us recall some basic facts about symplectic representations of the circle group $S$. Assume that $(V,\Omega)$ is a finite dimensional symplectic vector space and that we have a representation $S\to Sp(V)$. 

\begin{lemma}\label{lem-S-inv-J}
There exists an $S$-invariant $\Omega$-compatible complex vector space structure $J:V\to V$.
\end{lemma}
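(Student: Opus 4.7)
The plan is to apply the standard averaging-plus-polar-decomposition construction for compatible complex structures, with the caveat that everything must remain $S$-equivariant.

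First, choose any $\Omega$-compatible complex structure $J_0$ on $V$ (such a $J_0$ exists by standard symplectic linear algebra), and set $g_0(u,v) := \Omega(u, J_0 v)$, which is a positive-definite inner product. Since $S$ is compact, one can average $g_0$ over the group to obtain an $S$-invariant inner product
\begin{equation*}
g(u,v) := \int_S g_0(\theta \cdot u, \theta \cdot v)\, d\theta.
\end{equation*}
By hypothesis $\Omega$ is already $S$-invariant, so at this point both $g$ and $\Omega$ are $S$-invariant.

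Next, define $A : V \to V$ by the equation $\Omega(u,v) = g(Au,v)$ for all $u,v\in V$. The nondegeneracy of $\Omega$ and symmetry of $g$ force $A$ to be invertible and $g$-skew-symmetric, so $-A^2 = A^T A$ is $g$-positive-definite. Let $|A| := \sqrt{-A^2}$, defined by functional calculus for symmetric positive operators, and set $J := A \, |A|^{-1}$. A direct computation using that $A$ commutes with $|A|$ (since $|A|$ is a function of $A^2$) gives $J^2 = -\mathrm{Id}$, and one checks that $\Omega(\cdot, J\cdot) = g(|A|\cdot,\cdot)$, so $J$ is $\Omega$-compatible.

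It remains to verify $S$-equivariance. The $S$-invariance of both $g$ and $\Omega$ implies that $A$ commutes with every element of $S$. Functional calculus preserves this commutation (one approximates $\sqrt{\,\cdot\,}$ by polynomials on the spectrum), hence $|A|$ and $|A|^{-1}$ also commute with the $S$-action. Therefore so does $J = A \, |A|^{-1}$, as required. There is no real obstacle here beyond bookkeeping; the only point that needs care is checking equivariance of the polar-decomposition factors, which is handled by the functional-calculus observation just mentioned.
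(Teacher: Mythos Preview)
Your proof is correct and coincides with the second of the two arguments the paper gives: average an inner product over $S$ and then run the polar-decomposition construction, noting that equivariance propagates through functional calculus. The paper also offers a shorter first argument via Lie theory---any compatible $J$ determines a maximal compact $U(V,J)\subset Sp(V)$, and since maximal compacts are all conjugate one can conjugate $J$ so that the image of $S\to Sp(V)$ lands in $U(V,J)$---but your approach is the more constructive one and is explicitly mentioned there as well.
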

\begin{proof} Let us choose a compatible complex structure $J$ on $V$, which gives rise to the maximal compact subgroup $U(V,J)\subset Sp(V).$ Using basic Lie theory we can find an element $A\in Sp(V)$ such the image of the representation lies in the conjugate subgroup $A^{-1}U(V,J)A.$ Conjugating $J$ with $A$, we find the desired complex structure.

A more constructive argument is to choose an arbitrary $S$-invariant inner product on $V$ by averaging and then use the polar decomposition argument to construct a compatible complex structure which is automatically $S$-invariant.\end{proof}

Fixing such a $J$, we can find a splitting of the unitary representation $V$ into one dimensional unitary subrepresentations. The latter are classified by integers. The multi-set of these integers is canonically defined and referred to as the weights of our representation.\\

Let $W^4$ be a symplectic manifold with a Hamiltonian $S$ action generated by $J:W\to \mathbb{R}$.  Assume that the action has finitely many fixed points and is free away from those. Denote by $P$ the set of fixed points. We make the crucial assumption:

\begin{itemize}
\item \emph{The action of $S$ on $T_pW\simeq\bC^2$ has weights $1$ and $-1$ for all $p\in P$.}
\end{itemize}

Consider the orbit space $Z:= W/S$ and let  $Z^\#:= Z\setminus q(P)$, where $q:W\to Z$ is the projection. Then $Z$ has a canonical topology, $Z^\#$ has a canonical smooth structure, and the  continuous map $\tilde{J}: Z\to \bR$ induced from $J$ by the universal property of the quotient is a smooth submersion when restricted to $Z^\#$.

\begin{proposition}\label{prop-3d-smooth-st}
$Z$ can be equipped with a smooth structure compatible with its topology and the smooth structure on $Z^\#$ such that \begin{enumerate}
\item $q: W\to Z$ is a smooth map.
\item $\tilde{J}$ is a smooth submersion.
\item Let $B$ be an arbitrary smooth two dimensional manifold. There is a one to one correspondence between the set of submersions $f: Z\to B$ which factorize\footnote{If a map $A\to C$ can be written as a composition $A\xrightarrow[]{g} B\to C$, we say that $g$ factorizes it.} $\tilde{J}$ and the set of $S$-invariant nodal Lagrangian submersions $\pi:W\to B$ with focus-focus singularities at $P$. The correspondence sends $f$ to $\pi=f\circ q.$
\end{enumerate}
\end{proposition}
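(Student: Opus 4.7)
The plan is to reduce everything to a local analysis near each fixed point $p\in P$. By the equivariant Darboux/symplectic slice theorem, a neighborhood of $p$ is equivariantly symplectomorphic to a neighborhood of $0\in\bC^2$ with its standard Hamiltonian $S$-action of weights $(1,-1)$, under which $J$ corresponds to $\mu+c_p$ for some constant. By Schwarz's theorem on smooth invariants of compact group actions, every smooth $S$-invariant function on $\bC^2$ is a smooth function of the generators $\mu$, $Re(z_1z_2)$, $Im(z_1z_2)$ of the ring of polynomial invariants, i.e.\ factors smoothly through $Hopf$. In particular $Hopf$ descends to a homeomorphism from a neighborhood of $q(p)$ in $Z$ onto a neighborhood of $0\in\bC\times\bR$, and I declare this to be a diffeomorphism. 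This defines a smooth chart on $Z$ around each $q(p)$.

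For parts (1) and (2): on $Z^\#$ the action is free, so $q\colon W\setminus P\to Z^\#$ carries the unique smooth structure making it a principal $S$-bundle. On the overlap with a chart around some $q(p)$, the restriction $Hopf\colon\bC^2\setminus\{0\}\to(\bC\times\bR)\setminus\{0\}$ is a smooth surjective submersion whose fibers are the $S$-orbits, and so descends to a diffeomorphism between the two smooth structures on the overlap; compatibility follows. Smoothness of $q$ is then immediate in either chart. For (2), $J$ has nowhere-vanishing differential on $W\setminus P$ since $X_J$ is non-zero whenever the action is free, so $\tilde J$ is a smooth submersion on $Z^\#$; in the Hopf chart around $q(p)$, $\tilde J$ is $\pi\cdot\mathrm{pr}_\bR+c_p$, which is also a smooth submersion.

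For the correspondence in (3), take first a submersion $f\colon Z\to B$ factorizing $\tilde J$ and set $\pi:=f\circ q$. This is smooth and $S$-invariant, and is a submersion where $q$ is (i.e., on $W\setminus P$). Near $p$, working in the local model and the Hopf chart, after a local coordinate change on $B$ using $\tilde J=h\circ f$ with $dh\neq 0$ we can write $f$ in the form $(g,\mathrm{pr}_\bR)$, and Lemma \ref{lem-Eliasson} then gives that $\pi=f\circ Hopf$ is a nodal Lagrangian submersion with a single focus-focus singularity at $p$. For the Lagrangian condition at a regular point $x$, note that $J=h\circ\pi$ forces $\ker(d\pi_x)\subset\ker(dJ_x)$. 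Passing to the Marsden--Weinstein quotient $Z_c:=J^{-1}(c)/S$ at $c=J(x)$, the map $f$ restricts to a submersion $f|_{Z_c}\colon Z_c\to h^{-1}(c)$ from a $2$-dimensional symplectic manifold to a $1$-manifold, so its fibers are automatically Lagrangian in $Z_c$. Pulling back through $q$ and using $q^*\omega_{\mathrm{red}}=\omega|_{J^{-1}(c)}$, together with $\omega(X_J,v)=dJ(v)=0$ for any $v\in\ker(d\pi_x)$, shows that $\ker(d\pi_x)$ is $\omega$-isotropic of dimension $2=\tfrac12\dim W$, hence Lagrangian.

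Conversely, given such a $\pi$, $S$-invariance produces a continuous factorization $f\colon Z\to B$, and the fact that $S$ preserves the fibers of $\pi$ implies $J$ is constant along them, so $J=h\circ\pi$ for some $h\colon B\to\bR$ and $\tilde J=h\circ f$. On $Z^\#$ smoothness of $f$ is immediate from $q$ being a surjective submersion. The main obstacle, as I expect, is smoothness of $f$ at each $q(p)$: this is precisely where Schwarz's theorem enters, guaranteeing that the smooth $S$-invariant components of $\pi$ factor smoothly through $Hopf$. Once $f$ is known to be smooth, Lemma \ref{lem-Eliasson} read in reverse, combined with the focus-focus assumption at $p$ and the factorization through $\tilde J$, yields that $f$ is a submersion at $q(p)$; away from $P$, $f$ is a submersion because $\pi=f\circ q$ is and $q$ is a surjective submersion. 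This completes the bijection.
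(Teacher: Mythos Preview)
Your treatment of parts (1), (2) and the forward direction of (3) is correct and in fact more detailed than the paper's proof, which simply invokes the equivariant Darboux theorem to produce the charts and then points to Lemma~\ref{lem-Eliasson} for (3). In particular your verification that the two smooth structures agree on the overlap (via $Hopf$ being a submersion on $\bC^2\setminus\{0\}$) and your symplectic-reduction argument for the Lagrangian condition are good additions.

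The genuine problem is your use of Schwarz's theorem for the converse direction of (3). The ring of polynomial invariants for the weight-$(1,-1)$ circle action on $\bC^2$ is generated by \emph{four} real polynomials $|z_1|^2$, $|z_2|^2$, $Re(z_1z_2)$, $Im(z_1z_2)$ (subject to the single relation $|z_1|^2|z_2|^2=|z_1z_2|^2$), not by the three components of $Hopf$. Schwarz's theorem therefore says a smooth invariant factors smoothly through a map to $\bR^4$ whose image is a quadratic cone, not through $Hopf$. Concretely, $|z_1|^2+|z_2|^2$ is a smooth invariant that descends via $Hopf$ to $\tfrac{1}{\pi}\sqrt{t^2+|w|^2}$, which is not smooth at the origin.

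This is not a gap that can be patched by the focus-focus hypothesis. Take $\pi_1 = Re(z_1z_2) + (|z_1|^2+|z_2|^2)^3$ on a small $S$-invariant ball $W$ around $0\in\bC^2$. The pair $(\pi_1,\mu)$ is smooth and $S$-invariant; the cubic term contributes nothing to the Hessian at $0$, so the singularity at $0$ is of focus-focus type by the computation in Lemma~\ref{lem-Eliasson}; and a direct check shows $d\pi_1,d\mu$ are independent on $W\setminus\{0\}$ once $W$ is small enough. But under $Hopf$ this map descends to $\bigl(\tfrac{Re(w)}{2\pi} + \tfrac{1}{\pi^3}(t^2+|w|^2)^{3/2},\, t\bigr)$, and $(t^2+|w|^2)^{3/2}$ is $C^2$ but not $C^3$ at the origin. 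So with the $Hopf$-chart smooth structure on $Z$ the induced $f$ fails to be smooth, and the bijection in (3) cannot hold in the stated generality. The paper's own proof is also silent on this direction, deferring entirely to Lemma~\ref{lem-Eliasson}, which already presupposes a smooth factorization through $Hopf$; in the paper's applications (integrable surgery and anti-surgery) the input fibration always comes from a local model that is manifestly of the form $f\circ Hopf$, so the issue does not arise there.
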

\begin{proof}
Let $p$ be a fixed point of the $S$-action. By the equivariant Darboux theorem it follows that $p$ has an $S$ invariant open neighborhood $V$ such that there is a commutative diagram of maps

\begin{align*}
\xymatrix{
V\ar[d]_q\ar[r]^\Phi&\mathbb{C}^2 \ar[d]^{Hopf} \\ q(V)\ar[r]^\phi&\mathbb{C}\times\mathbb{R}}
\end{align*} with the following properties:

\begin{itemize}
\item $\Phi(p)=0$.
\item $\Phi$ is a smooth embedding.
\item $\phi$ is s topological embedding
\item $\phi|_{q(V)\setminus q(p)}$ is a smooth embedding
\end{itemize}

Using a $\phi$ as above at each point of $q(P)\subset Z$, we extend the smooth structure on $Z^{\#}$ to $Z$. The choice of $\Phi$ is not unique and we simply make an arbitrary choice at every critical point. With this smooth structure the conditions in the statement are satisfied by the local results discussed above.
\end{proof}

Let us call a smooth structure of $Z$ as in Proposition \ref{prop-3d-smooth-st} a \emph{compatible  smooth structure} and a submersion $f: Z\to B$  as in (3) \emph{an admissible submersion}.

We fix a compatible smooth structure and an admissible submersion $f: Z\to B$ that is an $S^1$ bundle. Then the map  $\pi=f\circ q: W\to B$ is in fact a nodal Lagrangian torus fibration. Now we will describe a procedure to compute the nodal integral affine manifold structure on $B$ induced by $\pi$ .

 By definition, $J$ descends to a smooth function $j:B\to \mathbb{R}$ such that $J=j\circ \pi.$ Let us make some further assumptions tailored to the application that we have in mind that will simplify the discussion:

\begin{itemize}
\item The image and level sets of $j$ are connected.
\item There exists an open subset  $B_{out}\subset B$ disjoint from $\pi(P)$ such that for every $c\in\mathbb{R}$ with $j^{-1}(c)$ non-empty, $j^{-1}(c)\cap B_{out}$ is non-empty and connected, and $j^{-1}(c)\setminus B_{out}$ is connected. See Figure \ref{bout}.
\item We are given integral affine coordinates $g: B_{out}\to \mathbb{R}^2$ such that $x_2\circ g=j$ and $x_1$ increases as we go from $j^{-1}(c)\cap B_{out}$ to $j^{-1}(c)\setminus B_{out}$ whenever these are both non-empty for some $c\in\mathbb{R}$.
\end{itemize}

\begin{figure}
\includegraphics[width=0.6\textwidth]{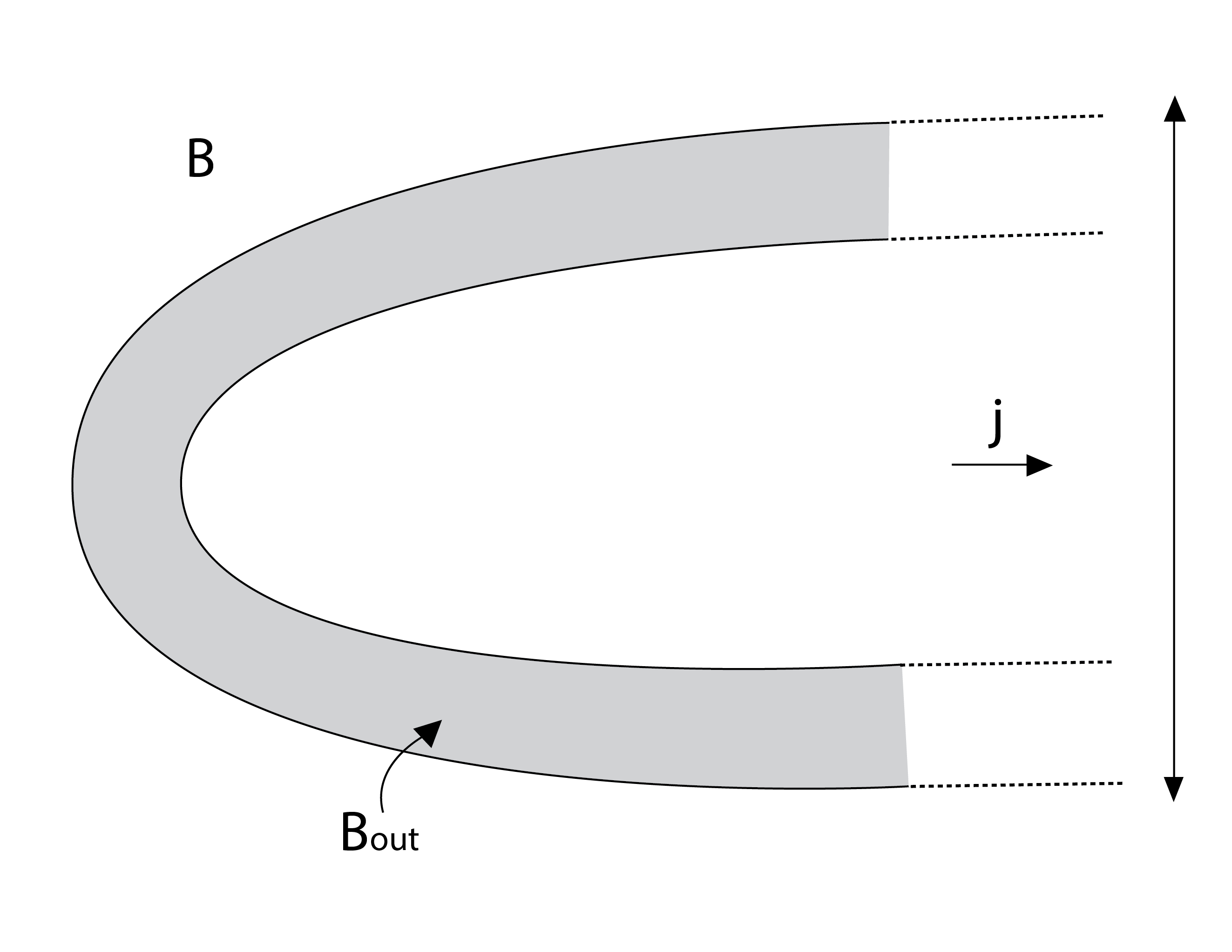}
\caption{}
\label{bout}
\end{figure}

\begin{proposition}\label{prop-eig-proc}

The following procedure extends $g$ to an injective continuous map $g^e: B\to \mathbb{R}^2$.

Let $x\in B$ with $j(x)=c$. Choose $y\in j^{-1}(c)\cap B_{out} $ and consider the domain $C\subset \tilde{J}^{-1}(c)$ whose boundary is the disjoint union of $f^{-1}(x)$ and $f^{-1}(y)$. We define $$g^e(x) :=g(y)+\left(\left|\int_C\omega_c\right|,0\right).$$
Here $\omega_c$ is the symplectic structure on $\tilde{J}^{-1}(c)$ as a symplectic reduction of the Hamiltonian action of $S$ .

\end{proposition}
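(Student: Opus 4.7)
The plan is to interpret the quantity $\left|\int_C \omega_c\right|$ as computing the difference in the second action coordinate between $x$ and $y$ along the fibration $\pi$, and then verify well-definedness, continuity, and injectivity in turn. First observe that since $f: Z \to B$ is an $S^1$-bundle factorizing $\tilde{J}$, each level set $\tilde{J}^{-1}(c) \subset Z$ is a smooth surface fibering via $f$ over the connected $1$-manifold $j^{-1}(c)$; a pair of disjoint circles $f^{-1}(x), f^{-1}(y)$ therefore determines a unique compact cylinder $C \subset \tilde{J}^{-1}(c)$ between them. The reduced form $\omega_c$ is smooth away from $q(P)\cap \tilde{J}^{-1}(c)$. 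At a critical value $c = J(p)$, an explicit computation with the Hopf model of Lemma \ref{lem-Eliasson} shows $\omega_c$ has a conical singularity of type $\sim 1/|w|$ at $q(p)$, which is integrable in dimension two; hence $\int_C \omega_c$ is always finite.

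To interpret the formula, I would pick a smooth section $\sigma: C \to J^{-1}(c)\subset W$ of the restricted quotient map $q_c := q|_{J^{-1}(c)}$, which exists because $C$ is a topological cylinder. Since $q_c^*\omega_c = \omega|_{J^{-1}(c)}$ and $q_c \circ \sigma = \mathrm{id}_C$, we have $\int_{\sigma(C)} \omega = \int_C \omega_c$. The boundary $\partial \sigma(C)$ consists of cycles in $\pi^{-1}(x)$ and $\pi^{-1}(y)$ transverse to the $S^1$-orbits, so by Stokes applied to a local primitive of $\omega$, this integral equals $I(y) - I(x)$, where $I$ denotes the second action coordinate associated with a transverse cycle. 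Different choices of section $\sigma$ differ by $S^1$-translation, which shifts the boundary cycles by multiples of the $S^1$-orbit; but the action coordinate of that orbit is $j$, which is constant on $j^{-1}(c)$, so $I(y)-I(x)$ is independent of $\sigma$. Since the integral affine coordinates $g$ on $B_{out}$ are adapted to $j$, the first component of $g$ coincides with $I$ up to additive constant and sign. Well-definedness of $g^e(x)$ now follows: for $y_1, y_2 \in j^{-1}(c) \cap B_{out}$, additivity of areas combined with this identification gives $g(y_1) + (|\int_{C_1} \omega_c|, 0) = g(y_2) + (|\int_{C_2} \omega_c|, 0)$, where $C_i$ is the cylinder between $f^{-1}(x)$ and $f^{-1}(y_i)$. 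The orientation convention that $x_1$ increases from $B_{out}$ into $B \setminus B_{out}$ makes the absolute values consistent. For $x \in B_{out}$, taking $y = x$ yields $C = \emptyset$, so $g^e$ extends $g$.

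Continuity of $g^e$ follows from continuity of the area integral as $x$ varies, even across critical values of $\tilde{J}$, since the conical singularity of $\omega_c$ is controlled by the Hopf model and dominated convergence. For injectivity, the second coordinate of $g^e(x)$ equals $j(x)$, so $g^e$ separates distinct level sets of $j$; within a single level set $j^{-1}(c)$, the cylinder $C$ grows strictly monotonically as $x$ moves in $j^{-1}(c) \setminus B_{out}$ away from the endpoint facing $B_{out}$, while $\omega_c$ is positive off its isolated singularities, so $\left|\int_C \omega_c\right|$ is strictly monotonic in $x$. The main obstacle in making the plan rigorous is the handling of the singular behavior of $\omega_c$ at critical values of $\tilde{J}$, but this is confined to a discrete family of level sets and controlled explicitly by Lemma \ref{lem-Eliasson}.
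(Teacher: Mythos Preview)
Your argument is correct and follows essentially the same route as the paper: both proofs lift the reduced area integral $\int_C\omega_c$ to a flux integral $\int_{C'}\omega$ in $W$ via a section of $q$, identify this with the action coordinate complementary to $j$, and deduce from this that $g^e$ extends $g$; injectivity and continuity are then handled exactly as you do. The only cosmetic difference is that the paper cites an explicit formula from \cite{BM09} for the reduced form near a weight $(1,-1)$ fixed point, whereas you compute the $1/|w|$ behavior directly from the Hopf model of Lemma~\ref{lem-Eliasson}; these amount to the same local computation.
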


\begin{proof}
The reduced symplectic structures on $\tilde{J}^{-1}(c)\setminus q(P)$ do not extend to $\tilde{J}^{-1}(c)\cap q(P)$ smoothly (if the latter is non-empty), but it follows from the equivariant Darboux theorem and the formula given in Equation (23) of \cite{BM09} (the case $n=1$ is relevant here) that the integral in the statement makes sense even if $C$ contains $q(P)$. The same formula implies the desired continuity as well. Injectivity follows from the simplifying  assumptions along with the fact that $\omega_c$ are area forms.

To see that $g^e$ extends $g$, we reinterpret the integrals that appear in statement as flux integrals on $W$. Consider a $C$ as in the statement and assume that it is disjoint from $q(P)$. Choose a smooth lift $C'\subset W$ of $C$ along $q$. It follows from the definition of symplectic reduction that \begin{equation}\label{eq-flux-red} \left|\int_C\omega_c\right|=\left|\int_{C'} \omega\right|,\end{equation}where the integral on the right is of course a flux integral by definition. This finishes the proof.

\end{proof}

Inside $g^e(B)\subset \mathbb{R}^2$ we have the set $g^e(\pi(P))$ that is disjoint from $g^e(B_{out})$. For each $n\in g^e(\pi(P))$, we consider the horizontal ray that starts at $n$ and does not intersect $g(\mathcal{L}_{out})$ (i.e. goes in the positive direction). Denote the union of all these rays by $Cut$. It  follows from Equation \eqref{eq-flux-red} that the restriction $g^e|_{B\setminus (g^e)^{-1}(Cut)}$ is an integral affine map. Moreover, the integral affine structure in the punctured neighborhood of a point in $\pi(p)$, for any $p\in P$, is determined by $k=\#\mathcal{L}_{\pi(p)}\cap q(P)$ using Proposition 4.14 of \cite{symington}.

\begin{remark}
Note that the map $g^e$ is a special case of the cartographic maps defined by \cite{Ratiu}.
\end{remark}

The upshot is that the map $g^e$ along with the data of the number of points belonging to $q(P)$ in each fiber of $\pi$ determines the nodal integral affine structure of $B$. Thinking of the connected components of $Cut$ as rays, this is of course nothing but the data of an eigenray diagram, along with the open set $g^e(\mathcal{L})$ which contains all the rays. Hence, we can apply the constructions of the previous section - but replacing $\mathbb{R}^2$ with $g^e(B)$. By construction the nodal integral affine manifold obtained from this eigenray diagram on $g^e(B)$ reproduces $B$ with its existing nodal integral affine manifold structure. The following summarizes the discussion.

\begin{proposition}\label{prop-lag-to-eig}
The nodal Lagrangian torus fibration $\pi: W\to B$ is compatible with the  eigenray diagram on $g^e(B)$ that we just described.\end{proposition}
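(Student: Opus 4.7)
The proposition is essentially a packaging of what has already been observed in the discussion preceding it, so my plan is to organize the verification as three checks corresponding to the three pieces of data defining the nodal integral affine structure: the integral affine structure on the regular locus, the multiplicities at the nodes, and the direction of the eigenray at each node.

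First, on the regular locus, the restriction $g^e|_{B\setminus (g^e)^{-1}(Cut)}$ is an integral affine isomorphism onto $g^e(B)\setminus Cut\subset\bR^2$. The coordinate $x_2=j$ is manifestly an action coordinate on $B$ since $J$ generates a $1$-periodic Hamiltonian $S$-action, and the complementary action coordinate $x_1$ on $B\setminus(g^e)^{-1}(Cut)$ is exactly read off by the flux formula used to define $g^e$: the identity \eqref{eq-flux-red} rewrites the reduced-space integral $\int_C\omega_c$ as a flux integral over a lift $C'\subset W$, which by definition is the $x_1$-action-coordinate difference. So the integral affine structure on $B$ pulled back under $(g^e)^{-1}$ agrees with the standard one on $g^e(B)\setminus Cut$.

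Second, for each $b\in\pi(P)$, the multiplicity of $b$ as a node of $B$'s nodal integral affine structure is the number of focus-focus singularities in the fiber $\pi^{-1}(b)=q^{-1}(f^{-1}(b))$, namely $\#(f^{-1}(b)\cap q(P))$. This is precisely the multiplicity attached to $g^e(b)$ in the eigenray diagram under construction, so the multiplicities coincide. By Proposition~4.14 of \cite{symington}, the nodal integral affine structure in a punctured neighborhood of a node is determined up to isomorphism by the multiplicity together with the choice of eigenray direction, so having matched the regular integral affine structure and the multiplicities it only remains to check that the eigenrays align.

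Third, I would verify that the monodromy-invariant direction at $g^e(b)$ is horizontal, which matches the horizontal orientation of the rays in $Cut$. For any focus-focus point $p\in P$ lying over $b$, the vanishing cycle in a nearby regular fiber is homologous to a small $S$-orbit near $p$: in the Eliasson normal form provided by Lemma \ref{lem-Eliasson}, the $S$-orbit through $(z_1,0)$ (resp.\ $(0,z_2)$) bounds the disk $\{(w,0):|w|\leq |z_1|\}$ (resp.\ $\{(0,w):|w|\leq|z_2|\}$) which collapses to the fixed point as we approach the critical fiber. Under the action-angle identification $T_b^*B\cong H_1(\pi^{-1}(b);\bR)$, the class of this $S$-orbit is dual to $dJ=dx_2$, because $J$ is the action coordinate generating the $S$-flow. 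Consequently the monodromy-invariant direction in $T_bB$ is the kernel of $dx_2$, i.e.\ the horizontal direction, and it points towards increasing $x_1$ (this is the direction along which one may push out of $b$ without meeting another node, which is precisely how $Cut$ was oriented). Combining the three verifications, the nodal integral affine structure on $B$ is isomorphic via $g^e$ to the one produced by the eigenray diagram on $g^e(B)$, which is the content of compatibility.

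The main obstacle I anticipate is the third step: making rigorous the identification of the vanishing cycle with the $S$-orbit class and tracking that the horizontal ray, as oriented in the construction of $Cut$, indeed matches the monodromy-invariant direction with the correct orientation. Everything else is bookkeeping, but this cycle/orientation identification must be performed carefully in the local Eliasson model and then propagated globally by continuity of homology classes in regular fibers.
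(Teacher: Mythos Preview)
Your proposal is correct and follows essentially the same approach as the paper. The paper treats this proposition as a summary of the preceding discussion rather than giving a separate proof: it records that $g^e$ is integral affine away from $Cut$ (via \eqref{eq-flux-red}) and that the local structure at each node is determined by the multiplicity (via Symington's Proposition~4.14), and declares the proposition proven. Your three-step decomposition is exactly this, with the addition that you explicitly verify the monodromy-invariant direction is horizontal (your step~3), which the paper leaves implicit. Your identification of the vanishing cycle with the $S$-orbit class is correct and is the same observation used in the proof of Lemma~\ref{lem-tail-eigenray} (citing Zung); the orientation worry you flag is not really an issue, since the branch cut lies on the positive-$x_1$ side of each node simply because $g^e$ was built by extending the given integral affine chart $g$ from $B_{out}$ outward in that direction.
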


Let us end this section with the promised sketch of the remaining argument in the proof of Proposition \ref{prop-nodal-slide} from Section \ref{ss-nodal-int}. We will show the argument in an example, which we believe contains the essential idea. Consider the eigenray diagrams $\cR$ and $\cR'$ both with rays contained in the non-negative real axis and where \begin{itemize}
\item $\cR$ has a single node with multiplicity $2$ at the origin.
\item $\cR'$ has a node at the origin and another one at $(1,0)$ both with multiplicity $1$.
\end{itemize}

Consider a nodal Lagrangian torus fibration $\pi: M_\cR\to B_\cR$ compatible with $\cR$. Notice that the function $$J:=pr_2\circ \psi^{-1}_\cR\circ \pi$$ generates an $S$ action on $M_\cR$ that satisfies the conditions of Proposition \ref{prop-3d-smooth-st}.

We will prove that there exists another nodal Lagrangian fibration $\pi': M_\cR\to B_\cR,$ where we are thinking of $B_\cR$ as a smooth manifold, such that \begin{itemize} \item For an arbitrarily small neighborhood $U$ of $\psi_\cR([0,1]\times \{0\})\subset B_\cR$, on $M_\cR\setminus \pi^{-1}(U)$ we have $\pi=\pi'$.  \item For a set $V$ of the form $\psi_\cR([-a,\infty)\times [-b,b])\subset B_\cR$ with $a,b>0$ which contains $U$, and with $g: B_\cR\setminus V\to \mathbb{R}^2$ defined as restriction of $\psi_\cR^{-1}$, the procedure explained after Proposition \ref{prop-eig-proc} outputs precisely the eigenray diagram $\cR'$.
\end{itemize}

 Let $Z:=M_\cR/S$ and equip it with a compatible smooth structure. By part (3) of Proposition \ref{prop-3d-smooth-st}, $\pi$ corresponds to an admissible submersion $f: Z\to B_\cR$. We construct the desired $\pi'$ by modifying $f$ inside $U$ to another admissible submersion $f'$. Let us consider the fibers of $f$ in $J^{-1}(0)$ with its symplectic structure $\omega_0$. We see both points of $q(P)$ inside the same fiber $\alpha$ of $f$. We choose $f'$ so that one is contained in fiber $\beta$ and the other in $\gamma$ with the following properties: \begin{itemize}\item $\beta$ is Hamiltonian isotopic to $\alpha$ and it coincides with it near the point of $q(P)$ that they share\item the area from $\beta$ to   $\gamma$  is equal to $1$.\end{itemize} We can arrange $f'$ so that it agrees with $f$ outside of the preimage of $U$. Figure \ref{slide} explains what we just said. Defining $\pi'$ to be the Lagrangian fibration corresponding to $f'$ as in  Proposition \ref{prop-3d-smooth-st} (3), we get the desired result.

\begin{figure}
\includegraphics[width=0.6\textwidth]{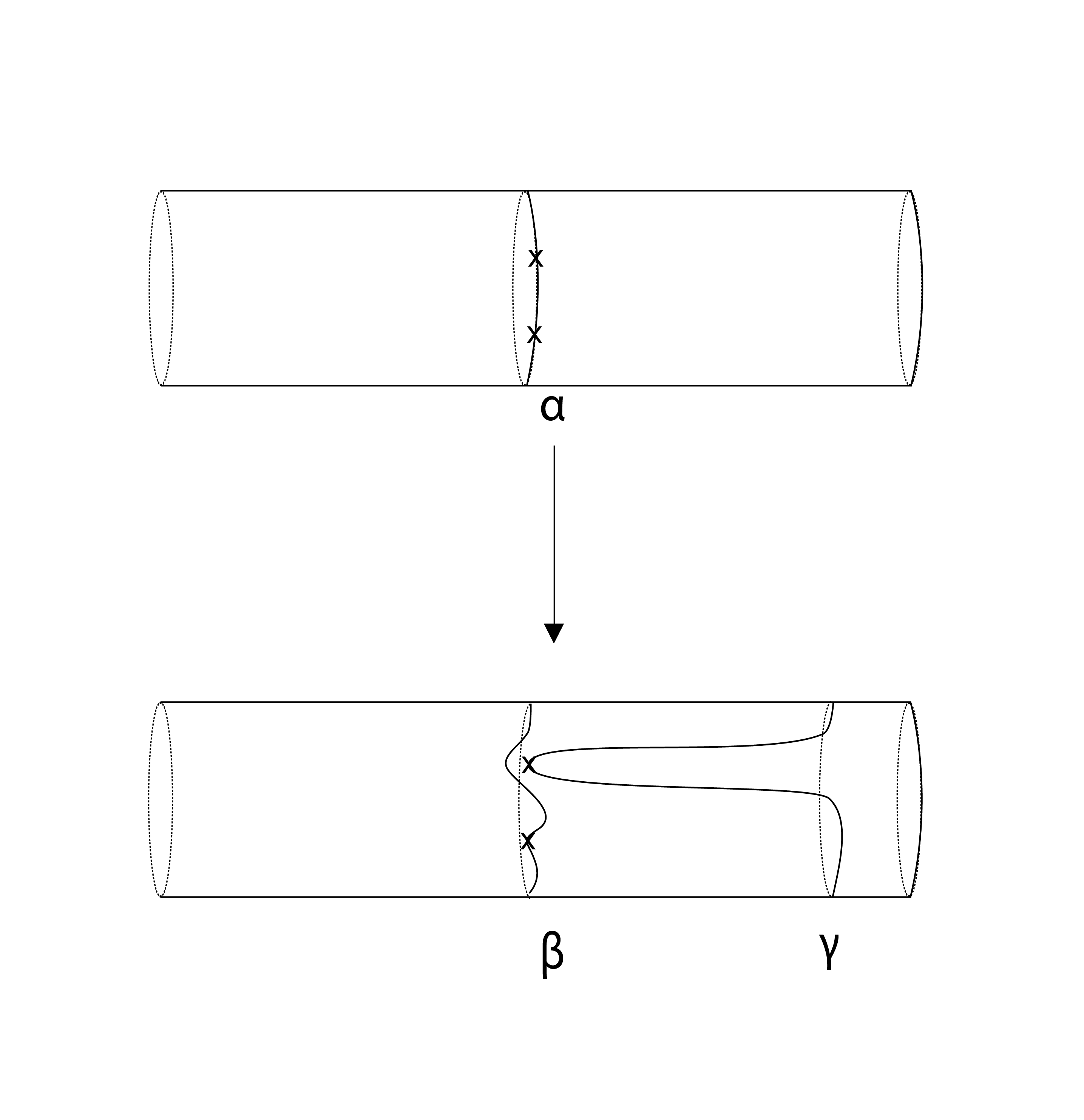}
\caption{A depiction explaining the change of foliation in the reduced space after a nodal slide.}
\label{slide}
\end{figure}

\subsection{Integrable surgery/anti-surgery in four dimensional Lagrangian submersions}\label{ss-surgery}
In this section, we formalize the notions of integrable surgery and anti-surgery. Integrable surgery is an explicit operation that takes in \begin{itemize} \item a four dimensional symplectic manifold $M$ \item a properly embedded Lagrangian plane $L$ \item a nodal Lagrangian submersion $\pi: M\setminus L\to B$ \item a Hamiltonian circle action near $L$ that preserves the submersion and rotates $L$,\end{itemize} and outputs a nodal Lagrangian submersion $\pi^{new}: M\to B$. Anti-surgery on the other hand, starts with the same data except that instead of a nodal Lagrangian submersion on $M\setminus L$, we have one on $M$ and we produce a nodal Lagrangian submersion on $M\setminus L$. As their name suggests these operations are inverse to each other in a suitable sense.

\begin{remark}
The key idea in the construction of integrable surgery/anti-surgery is the one formalized in Proposition \ref{prop-3d-smooth-st} especially part (3). It would be possible to simplify the discussion if we only wanted to discuss anti-surgery, but we preferred to give a unified statement and refer to Proposition \ref{prop-3d-smooth-st} even when a weaker statement would suffice. In particular, for anti-surgery the fact that if we start with a nodal Lagrangian submersion then we end up with a Lagrangian submersion with only focus-focus type singularities is trivial as it is directly inherited from the original submersion. For surgery on the other hand, we add new singularities to the submersion and the fact that they are of focus-focus type is non-trivial. This is one of the main points of Proposition \ref{prop-3d-smooth-st} with the relevant analysis done in Lemma \ref{lem-Eliasson}.
\end{remark}

In both operations if we start with a nodal Lagrangian \emph{torus fibration}, we end up with a nodal Lagrangian torus fibration. This is the case in which integrable surgery/anti-surgery is most useful. A crucial point is that even though the fibrations before and after the surgery/anti-surgery are over the same smooth manifold $B$, the two nodal integral structures on $B$ are different. In the case most relevant to us where the original fibrations are complete and we choose $L$ to be a Lagrangian tail, the effect of anti-surgery is to remove an eigenray from the corresponding eigenray diagram - we are using the terminology of Remark \ref{rem-eigenray-def} in this sentence. The analysis that goes into the proof of this statement is contained in Propositions \ref{prop-eig-proc} and \ref{prop-lag-to-eig}. The point that is specific to this case is that removal of a one dimensional submanifold from a symplectic surface does not change its symplectic area (see Section \ref{sss-idea}.)

We now introduce the local model. Consider the $S$-action on $\mathbb{C}^2$ as in the previous section and the $S$-invariant Lagrangian plane $L\subset \mathbb{C}^2$ given by $$L:=\{Re(z_1z_2)\geq 0\}\cap \{Im(z_1z_2)= 0\}\cap \{|z_1|^2-|z_2|^2=0\}=\{(z,\bar{z})\mid z\in\mathbb{C}\}.$$ Let $l:=Hopf(L)=\mathbb{R}_{\geq 0}\times \{0\}\subset \mathbb{C}\times \mathbb{R}$. It is elementary to show that $l$ has an open neighborhood $U$ such that there exists a diffeomorphism $$
\Phi: U\to U\setminus l$$ which preserves the projection to $\mathbb{R}$ and is the identity near the boundary of $U$. By the latter we mean that there is a closed subset $U^{in}\subset U$ containing $l$ such that $\Phi$ is the identity map on $U\setminus U^{in}$. We give an explicit example in Lemma \ref{lem-ex-slide}. Let us call such a neighborhood $Hopf^{-1}(U)$ a \emph{sliding neighborhood} of $L$ and $\Phi$ a witness. 

\begin{figure}
\includegraphics[width=\textwidth]{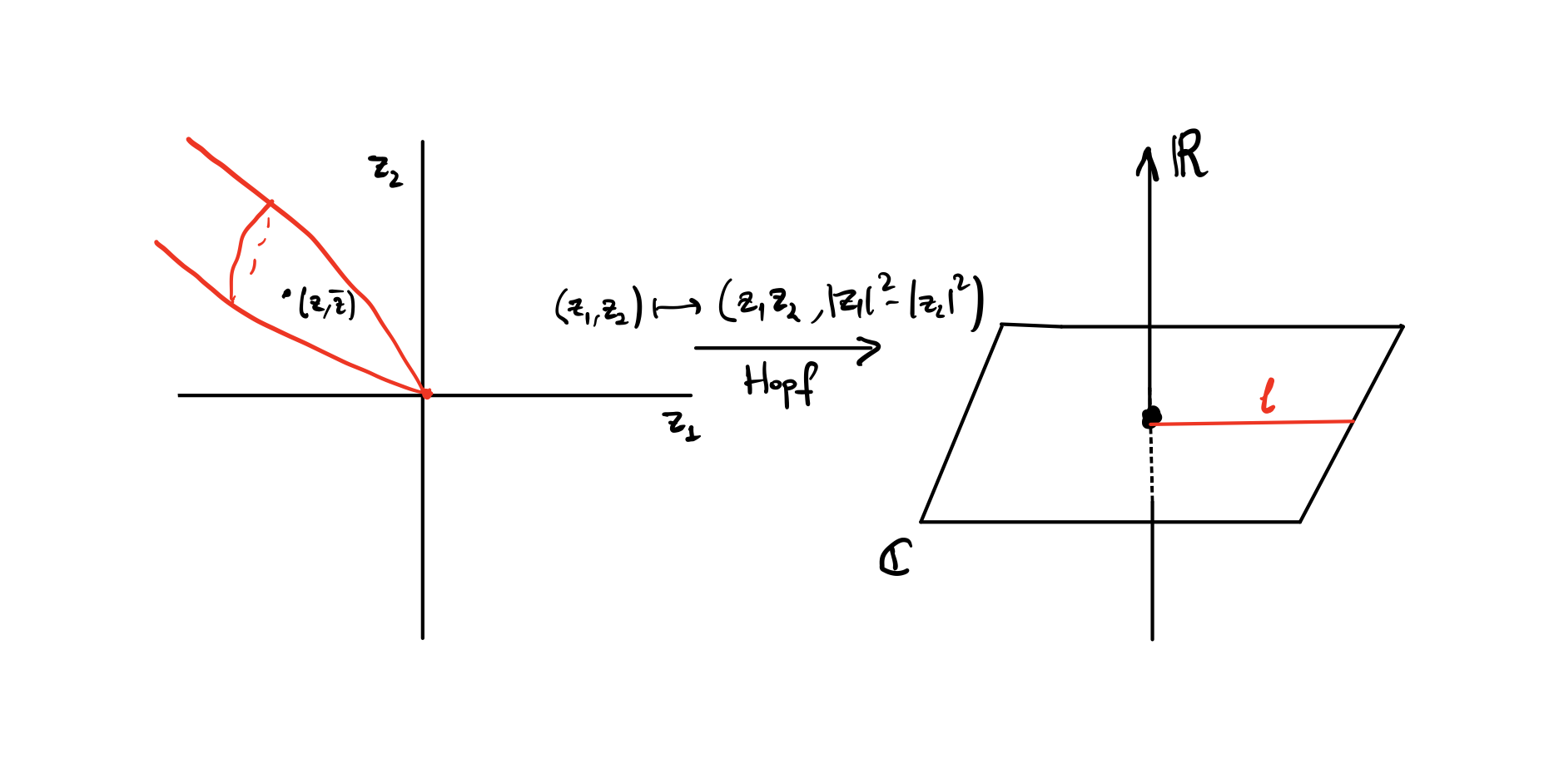}
\caption{A schematic depiction of the Hopf map. In the domain we show a cartoon image of the Lagrangian plane $L$ and in the target is a real picture of its image under the Hopf map.}
\label{fighopf}
\end{figure}

\begin{remark} We actually do not need $\mathbb{C}^2\setminus Hopf^{-1}(U)$ in what follows. We just used the Hopf fibration picture to give a convenient model for the equivariant Weinstein neighborhood of a Lagrangian plane with the circle action that rotates the plane.\end{remark}

\begin{lemma}\label{lem-ex-slide} Choose arbitrary positive real numbers $a,b,c$ such that $a,b>c$. Identify $\bC$ with $\bR^2$ in the standard way and let $$U= (-b,\infty)\times (-a,a)\times (-a,a)\subset \mathbb{C}\times \mathbb{R}.$$
Then, there exists a diffeomorphism $G: U\setminus l\to U$ with the following properties
\begin{enumerate}
	\item $G$ preserves $pr_\mathbb{R}$
	\item $G$ is the identity outside of  $V:=(-c,\infty)\times (-c,c)\times (-c,c)$.		
	\end{enumerate}
\end{lemma}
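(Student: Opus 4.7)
The plan is to build $G$ slice-by-slice in the $x_3$-direction, since $G$ must preserve $\mathrm{pr}_\mathbb{R}$. For each $x_3 \in (-a,a)$ I need a map $G^{x_3}$ on the slice at height $x_3$. Because $l$ is contained in the single slice $x_3 = 0$, for $x_3 \neq 0$ each $G^{x_3}$ is simply a self-diffeomorphism of the open half-strip $(-b, \infty) \times (-a, a)$, while $G^0$ must be a diffeomorphism of the same strip with the half-line $l^0 := [0, \infty) \times \{0\}$ removed onto the full strip. Taking $G^{x_3} = \mathrm{id}$ for $|x_3| \geq c$, the problem reduces to building $G^0$ together with a smooth family interpolating between $G^0$ and $\mathrm{id}$ as $|x_3|$ grows from $0$ to $c$.

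The main step is the two-dimensional construction of $G^0$, which by restriction reduces to building a diffeomorphism from $V^0 \setminus l^0$ onto $V^0$ that extends by the identity outside $V^0$; here $V^0 := (-c,\infty)\times (-c,c)$. The set $V^0 \setminus l^0$ is simply connected (any loop encircling $l^0$ can be slid over its open endpoint at the origin) and has exactly one end, hence is diffeomorphic to $V^0 \cong \mathbb{R}^2$ as an abstract open surface. I would construct an explicit diffeomorphism which equals the identity near the three finite sides of $\partial V^0$ as follows: pick a smooth "teardrop" region $W \subset V^0$ containing $l^0$, which widens near the origin and whose two boundary curves tend tangentially to each other as $x_1 \to +\infty$; map $W \setminus l^0 \to W$ by a smooth analog of the inverse squaring map centered at the origin, which unfolds the two sides of the slit and joins them through the open endpoint; and finally bridge smoothly to the identity in an annular collar inside $V^0 \setminus W$.

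Next I extend to all $x_3$ by fixing a smooth cutoff $\chi: \mathbb{R} \to [0, 1]$ with $\chi(0) = 1$ and $\chi \equiv 0$ for $|x_3| \geq c$, then combining this with a second cutoff that shrinks the nontrivial region of $G^{x_3}$ toward $l^0$ as $|x_3|$ decreases. The resulting $G^{x_3}$ is a self-diffeomorphism of the full slice for $x_3 \neq 0$, with its support avoiding the $x_1$-axis, and recovers $G^0$ at $x_3 = 0$. The properties (1) and (2) in the statement are then manifest from the construction.

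The main obstacle will be verifying that the family combines into a genuinely smooth map on the three-dimensional open set $U \setminus l$: the displacement field of $G^0$ is not smooth at $l^0$, so the second cutoff must be designed so that the deformation decays compatibly near $l$ as $|x_3|$ grows away from $0$. This is ultimately feasible because $l$ has codimension $2$ in $U$ and is removed from the domain, so the limiting singular behavior of $G^0$ concentrates along $l$ itself and does not obstruct smoothness on $U \setminus l$.
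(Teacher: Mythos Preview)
Your approach is substantially more complicated than necessary, and the smoothness verification you flag as ``the main obstacle'' is a genuine gap that you do not close. The interpolation description is also somewhat confused: you say the second cutoff ``shrinks the nontrivial region of $G^{x_3}$ toward $l^0$ as $|x_3|$ decreases,'' but at $x_3=0$ the map $G^0$ is nontrivial on all of your teardrop $W$, not only near $l^0$, so it is unclear what family you actually intend. The ``inverse squaring'' construction of $G^0$ is plausible as a topological sketch, but making it smooth, equal to the identity near $\partial V^0$, and compatible with a smooth $x_3$-family would take real work that is not supplied.

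The paper avoids all of this by a much simpler device: it preserves not only $pr_{\mathbb{R}}$ but also the $y$-coordinate, taking
\[
G(x,y,z) = (x+\psi(x,y,z),\, y,\, z)
\]
for a smooth $\psi \geq 0$ on $U\setminus l$ that vanishes on $U\setminus V$, tends to $+\infty$ along $l$, and satisfies $\partial_x\psi \geq 0$. On each line $\{(y,z)=\text{const}\}$ the map is $x\mapsto x+\psi$, which has derivative $\geq 1$, equals the identity near $x=-b$, and is surjective onto $(-b,\infty)$ (for $(y,z)\neq(0,0)$ trivially; for $(y,z)=(0,0)$ because $\psi\to\infty$ as $x\to 0^-$). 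Smoothness on $U\setminus l$ is immediate since $\psi$ is smooth there. The key insight you are missing is that nothing forces you to solve a two-dimensional unfolding problem in each $z$-slice: by also fixing $y$, the construction becomes essentially one-dimensional and the delicate smoothness issue near $l$ evaporates.
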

\begin{proof}
We choose $G$ of the form $$(x,y,z)\mapsto (x+\psi(x,y,z),y,z),$$where $\psi: U\setminus l\to \mathbb{R}$ is a non-negative function equal to $0$ on $U\setminus V$, converges to $+\infty$ on $l$ and satisfies $\partial_x\psi\geq 0$. It is easy to construct such a function.

It follows that $G$ is injective and surjective. Moreover, the Jacobian of $G$ is clearly invertible at every point, which finishes the proof.
\end{proof}

Let us also note the following useful lemma.

\begin{lemma}\label{lem-slide-ab}
Any $S$-invariant neighborhood of $L$ contains a sliding neighborhood.
\end{lemma}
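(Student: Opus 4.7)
The plan is to descend to the reduced space $\mathbb{C}\times\mathbb{R}$ via the Hopf map, construct a suitably tapered open tube around $l$ inside the descended neighborhood, and produce the required diffeomorphism by a fiberwise $x$-shift that generalizes the construction in Lemma~\ref{lem-ex-slide} to tubes of variable radius. First, I would reduce to a problem in the reduced space: because the fibers of $Hopf\colon\mathbb{C}^{2}\to\mathbb{C}\times\mathbb{R}$ are exactly the $S$-orbits and $Hopf$ is an open map, any $S$-invariant open set $W\subset\mathbb{C}^{2}$ satisfies $W=Hopf^{-1}(W')$ with $W':=Hopf(W)$ open in $\mathbb{C}\times\mathbb{R}$. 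If $W\supset L$, then $W'$ is an open neighborhood of $l=\mathbb{R}_{\ge 0}\times\{0\}$. It therefore suffices to produce, inside any open neighborhood $W'$ of $l$, an open set $U\supset l$ and a diffeomorphism $\Phi\colon U\to U\setminus l$ preserving the projection to $\mathbb{R}$ and equal to the identity outside some closed subset of $U$: the preimage $Hopf^{-1}(U)$ is then the desired sliding neighborhood of $L$ inside $W$.

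Next I would construct $U$ as a tube around $l$ whose cross-sectional radius is allowed to shrink, so as to fit into $W'$. Because $W'$ is open and $l$ is closed and properly embedded, a standard partition-of-unity smoothing yields $\delta>0$ and a smooth, positive, monotonically non-increasing function $\rho\colon(-\delta,\infty)\to(0,\infty)$ with
\[
U:=\{(x,y,z)\in\mathbb{C}\times\mathbb{R}\mid -\delta<x,\ y^{2}+z^{2}<\rho(x)^{2}\}\subset W'.
\]
Monotonicity of $\rho$ ensures that each horizontal line $L_{y,z}=\{(\,\cdot\,,y,z)\}$ meets $U$ in a single open interval, so that the fiberwise reparametrization in $x$ below is unambiguous.

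With $U$ in place, I would build $\Phi$ by adapting the recipe of Lemma~\ref{lem-ex-slide} to the tapered setting. Its inverse $G:=\Phi^{-1}\colon U\setminus l\to U$ will take the form $G(x,y,z)=(x+\psi(x,y,z),y,z)$ for a smooth nonnegative function $\psi$ on $U\setminus l$ which vanishes outside a smaller coaxial sub-tube $V\subset U$ and near $x=-\delta$, blows up along $l$, and satisfies $1+\partial_{x}\psi>0$. Given such a $\psi$, the verification that $G$ is a global diffeomorphism proceeds fiberwise exactly as in Lemma~\ref{lem-ex-slide}: on the axial line $\{y=z=0\}$ it restricts to a diffeomorphism $(-\delta,0)\to(-\delta,\infty)$, while on every other horizontal line it is a self-diffeomorphism of an interval that is the identity outside $L_{y,z}\cap V$.

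The main technical step is constructing $\psi$ with the required smoothness across the endpoint $(0,0,0)$ of $l$, where the geometry of approach to $l$ transitions from radial (for $x\ge 0$) to longitudinal (for $x<0$). I would take $\psi$ to be a product $\chi_{1}(x)\cdot\chi_{2}\!\left(\frac{y^{2}+z^{2}}{\rho(x)^{2}}\right)\cdot d_{\mathrm{sm}}(x,y,z)^{-1}$, where $d_{\mathrm{sm}}=y^{2}+z^{2}+\sigma(x)^{2}$ is a smooth function vanishing exactly on $l$ (obtained from a smooth nonnegative $\sigma$ with $\sigma^{-1}(0)=[0,\infty)$ and $\sigma'\le 0$), and $\chi_{1},\chi_{2}$ are cutoffs ensuring $\psi$ vanishes near $x=-\delta$ and outside $V$. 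A direct computation gives $\partial_{x}(1/d_{\mathrm{sm}})=-2\sigma\sigma'/d_{\mathrm{sm}}^{2}\ge 0$, and the cutoffs can be chosen with $x$-derivatives small enough to preserve the inequality $1+\partial_{x}\psi>0$ globally. Pulling $U$ back by $Hopf$ then yields a sliding neighborhood of $L$ contained in $W$.
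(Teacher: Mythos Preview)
Your reduction to the quotient space and the outline of building a witness fiberwise are sound, but the specific formula $\psi=\chi_1\chi_2\cdot d_{\mathrm{sm}}^{-1}$ does not in general satisfy $1+\partial_x\psi>0$. The issue is the middle factor: $\partial_x\bigl(\chi_2(r^2/\rho(x)^2)\bigr)$ involves $\rho'(x)/\rho(x)^3$, which is dictated by the shape of $W'$ and cannot be made small by adjusting the cutoff profile. Concretely, if $W'$ forces $\rho(x)\le e^{-x}$, then on the horizontal line at height $r=e^{-X}$ the factor $1/d_{\mathrm{sm}}$ equals $e^{2X}$ for all $x\ge 0$, while the support of $\chi_2$ along that line has $x$-length bounded independently of $X$; thus $\psi$ must drop from $\sim e^{2X}$ to $0$ over a fixed-length interval, forcing $\partial_x\psi\sim -e^{2X}$. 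Equivalently, the map $x\mapsto x+\psi$ on that line would have to be a self-bijection of an interval of length $x_{\max}(r)\sim X$, which is incompatible with $\psi$ taking values of order $e^{2X}$. So the construction breaks down whenever the tube tapers faster than $1/d_{\mathrm{sm}}$ permits.

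The paper circumvents this by taking a different route: instead of tailoring the witness to a tapered tube, it first produces a smooth open embedding of the \emph{standard} box $(-b,\infty)\times(-a,a)^2$ into $W'$ that preserves the $(y,z)$-coordinates and restricts to the identity on $l$. This is done by choosing a bump function $h$ supported in $W'$ with $h\equiv 1$ on $l$, and flowing the slice $\{-b\}\times(-a,a)^2$ forward along $h\partial_x$; ODE uniqueness guarantees each flowline stays in $\{h>0\}$, so the resulting map is an injective immersion. One then transports the already-constructed witness of Lemma~\ref{lem-ex-slide} through this embedding. Your approach could be repaired by replacing $d_{\mathrm{sm}}^{-1}$ with a blow-up factor adapted to the tube's $x$-length, but at that point you are essentially re-deriving the paper's reparametrization.
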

\begin{proof}
Any $S$-invariant neighborhood of $L$ is of the form $Hopf^{-1}(W)$, where $W$ is a neighborhood of $l$ in $\bC\times \bR$. It will be convenient to identify $\bC\times \bR$ with $\bR^2\times \bR=\mathbb{R}^3$ and denote the coordinates by $x,y,z$. In this identification $pr_\bR$ is the $z$-coordinate and $l$ is the non-negative part of the $x$-axis.

We will construct a smooth embedding of $(-b,\infty)\times (-a,a)\times (-a,a)$ for some $a,b>0$ into $W$, which is the identity map when restricted to $l$ and preserves both $y$ and $z$ components. This will finish the proof by the previous lemma.

Choose a $b>0$ such that $[-b,\infty)\times\{0\}\times\{0\}$ is contained inside $W$ and an $a>0$ such that $\{-b\}\times  (-a,a)\times (-a,a)$ is also contained inside $W$. Choose a smooth function $h: W\to [0,1]$ with the properties \begin{itemize}
\item $h$ is equal to $1$ on $[-b,\infty)\times\{0\}\times\{0\}$.
\item The support of $h$ is contained in $W$.
\end{itemize}
Now consider the vector field $h\partial_x$ on $W$. By construction, this vector field is complete and let us denote its flow by $\phi^t$. We define the desired embedding $(-b,\infty)\times (-a,a)\times (-a,a)\to W$ by $$(x,y,z)\mapsto \phi^{x+b}(-b,y,z).$$ 

\end{proof}

The following is a consequence of the (proof of) equivariant tubular neighborhood theorem, existence of an invariant complementary Lagrangian subbundle at a Lagrangian submanifold setwise preserved under a symplectic compact Lie group action\footnote{This can be proved by first choosing one without the invariance property, then identifying all possible choices fiberwise with symmetric bilinear forms on the dual to the tangent space and using the averaging trick.}  and the equivariant Weinstein-Darboux neighborhood theorem (Theorem 3.2 of \cite{eqwein}).

\begin{proposition}\label{prop-eq-wein}
Let $W$ be a four dimensional symplectic manifold and let the Lagrangian $L'\subset W$ be the image of a proper embedding $\phi: \bR^2\to W$. Assume that there is a symplectic $S$ action on $W$ that preserves $L'$ setwise and the induced action on $\bR^2$ rotates the plane around the origin. Then there is an $S$ invariant neighborhood $V$ of $L$ inside $\mathbb{C}^2$ and an $S$-equivariant symplectic embedding $V\to W$ sending $L$ to $L'$.
\end{proposition}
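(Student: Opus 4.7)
The plan is to reduce the statement to the equivariant Weinstein neighborhood theorem applied to $L' \subset W$ and to $L \subset \mathbb{C}^2$ separately, and then to compose the resulting $S$-equivariant symplectomorphisms. First, observe that both $L$ and $L'$ are diffeomorphic to $\mathbb{R}^2$ and, by assumption, the induced $S$-action on each (via $\phi^{-1}$ in the case of $L'$, and the explicit formula $(z,\bar z)\mapsto (e^{2\pi i\theta}z, e^{-2\pi i\theta}\bar z)$ in the case of $L$) is by standard rotations about the origin. Hence, after composing $\phi$ with an $S$-equivariant diffeomorphism of $\mathbb{R}^2$ if necessary, we may assume the embedding $\phi : L \to L'$ is itself $S$-equivariant. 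This reduces the problem to constructing $S$-equivariant symplectomorphisms from a neighborhood of $L$ (resp. $L'$) to a neighborhood of the zero section of $T^*\mathbb{R}^2$ equipped with the cotangent lift of the rotation action.

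Next I would construct, at each point $p \in L'$, an $S$-invariant Lagrangian complement to $TL'$ in $TW|_{L'}$. Fix an arbitrary (non-invariant) Lagrangian complement bundle $E_0$. The space of Lagrangian subspaces of $T_pW$ complementary to $T_pL'$ is an affine space modeled on the symmetric bilinear forms on $T_pL'$, via the identification $E \cong T_pL'^*$ provided by $\omega$. Averaging a smooth choice of such forms (determined by $E_0$ and its translates under $S$) over $S$ using Haar measure produces an $S$-invariant Lagrangian complement $E \subset TW|_{L'}$. The same construction gives an $S$-invariant Lagrangian complement to $TL$ inside $T\mathbb{C}^2|_L$.

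Now apply the proof of the equivariant Weinstein--Darboux theorem (Theorem 3.2 of \cite{eqwein}). Using the $S$-invariant Lagrangian complement, $\omega$ identifies a neighborhood of $L'$ (resp. $L$) in the ambient manifold with an $S$-invariant neighborhood of the zero section in $T^*L'$ (resp. $T^*L$) as $S$-manifolds. A Moser-type argument, carried out $S$-equivariantly by averaging the primitives of the relevant closed $2$-forms, upgrades this to an $S$-equivariant symplectomorphism. The only subtlety in the non-compact case is controlling the size of the Weinstein neighborhood: since $\phi$ is a proper embedding and the $S$-action is by isometries of any chosen $S$-invariant metric on a tubular neighborhood, one obtains an honest open $S$-invariant neighborhood (not merely a germ), as needed.

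Composing the two $S$-equivariant symplectomorphisms through the common identification $T^*L \cong T^*L'$ (using the $S$-equivariant diffeomorphism $\phi$), and restricting to the intersection of the two image neighborhoods, yields the desired $S$-equivariant symplectic embedding $V \to W$ with $V \subset \mathbb{C}^2$ an $S$-invariant neighborhood of $L$ mapping to an $S$-invariant neighborhood of $L'$. The main obstacle I would anticipate is the non-compactness of $L$: one must verify that the various averaging and Moser constructions produce genuine neighborhoods rather than shrinking to the Lagrangian itself. This is handled by exploiting properness of $\phi$ together with the fact that $S$ is compact, so that $S$-invariant neighborhoods are obtained by intersecting neighborhoods with their $S$-translates.
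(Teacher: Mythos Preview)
Your proposal is correct and follows essentially the same approach as the paper, which merely sketches the argument by invoking the equivariant tubular neighborhood theorem, the existence of an invariant Lagrangian complement via the averaging trick (exactly as in your second paragraph), and the equivariant Weinstein--Darboux theorem of \cite{eqwein}. You have supplied the details the paper omits, including the identification of the two $S$-actions on $\bR^2$ and the care needed for non-compactness, along precisely the lines the paper indicates.
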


By Proposition \ref{prop-3d-smooth-st}, for any smooth manifold $B^2$ there is a one to one correspondance between $S$ invariant nodal Lagrangian submersions $Hopf^{-1}(U)\to B$  with a single  focus-focus singularity at $0$ and submersions $U\to B$ which factorize $pr_\mathbb{R}$. Similarly, there is a correspondence between $S$ invariant Lagrangian submersions $Hopf^{-1}(U)\setminus L\to B$ and submersions $U-l\to B$ which factorize $pr_\mathbb{R}$.  On the other hand using the diffeomorphism $\Phi$, we obtain a one-to-one correspondence between submersions $U\to B$ and submersions $U-l\to B$ which respects the factorization of $pr_\mathbb{R}$ property. Putting these three correspondences together, we obtain the local version of our operation. Using the fact the $\Phi$ is the identity near the boundary of $U$, we obtain our integrable anti-surgery by implanting this operation into symplectic manifolds equipped with nodal Lagrangian submersions. Let us now spell this out formally.

\begin{definition}
Let $X$ be a four dimensional symplectic manifold and let the Lagrangian $L'\subset X$ be the image of a proper embedding $\phi: \bR^2\to X$. Assume that there is a symplectic $S$ action in a neighboorhood $W$ of $L'$ that preserves $L'$ setwise and the induced action on $\bR^2$ rotates the plane around the origin up to conjugation by a diffeomorphism $\bR^2\to \bR^2$. Combining Propositions \ref{lem-slide-ab} and \ref{prop-eq-wein}, we obtain an $S$-equivariant symplectic embedding $$f: V\to W\subset X$$ for some sliding neighborhood $V=Hopf^{-1}(U)$ of $L$ with witness $\Phi: U\to U\setminus l$ such that $f(L)=L'$. Let us also fix a closed subset $U^{in}\subset U$ such that $l$ is contained in $U^{in}$ and $\Phi$ restricted to $U\setminus U^{in}$ is the identity map. Set $V^{in}:=Hopf^{-1}(U^{in})$.
\begin{itemize}
\item Let $\pi: X\to B$ be a nodal Lagrangian submersion such that $\pi$ restricted to $W$ is $S$ invariant and has only one critical point at the fixed point of the $S$-action on $L'$.  Then, we can produce a nodal Lagrangian submersion $$\pi^{new}: X\setminus L'\to B$$ as follows. On $X-f(V^{in})$, we set $\pi^{new}=\pi$; and on $f(V-L)$, we define $$\pi^{new}= \pi\circ f\circ \Phi^{-1}\circ Hopf\circ f^{-1}.$$ We say that  $(X\setminus L', \pi^{new})$ is obtained from $(X,\pi)$ using \emph{integrable anti-surgery} along $L'$ supported at $W$. 
\item Let $\pi: X\setminus L'\to B$ be a nodal Lagrangian submersion such that $\pi$ restricted to $W\setminus L'$ is $S$ invariant and has no critical points. Then, we can produce a nodal Lagrangian submersion  $$\pi^{new}: X\to B$$ as follows. On $X-f(V^{in})$, we set $\pi^{new}=\pi$; and on $f(V)$, we define $$\pi^{new}= \pi\circ f\circ \Phi\circ Hopf\circ f^{-1}.$$ This is indeed a nodal Lagrangian submersion because of Proposition \ref{prop-3d-smooth-st}. We say that $(X,\pi)$  is obtained from $(X\setminus L', \pi^{new})$ using \emph{integrable surgery} along $L'$ supported at $W$. 
\end{itemize} 

\end{definition}

\begin{lemma}\label{lmSurgPresPrp}
If $\pi$ is proper, so is $\pi^{new}$.
\end{lemma}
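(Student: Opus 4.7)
The plan is to fix a compact $K \subset B$ and decompose $(\pi^{new})^{-1}(K)$ as the union of a piece outside the sliding neighborhood and a piece inside, then verify compactness of each. On $X \setminus f(V^{in})$ one has $\pi^{new} = \pi$, so $(\pi^{new})^{-1}(K) \cap (X \setminus f(V^{in}))$ is a closed subset of $\pi^{-1}(K)$ and hence compact by properness of $\pi$. The real work lies inside $f(V)$.

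There I would exploit the $S$-invariance of $\pi$ on $W$ (or on $W \setminus L'$ in the surgery case) together with the $S$-equivariance of $f$: the composite $\pi \circ f$ factors through the Hopf map as $\tilde\pi \circ \mathit{Hopf}$ for a smooth submersion $\tilde\pi : U \to B$ (respectively $\tilde\pi : U \setminus l \to B$), exactly as in Proposition \ref{prop-3d-smooth-st}. Since $\mathit{Hopf}$ is surjective and closed, being the orbit map of a proper $S^1$-action, the identity $\tilde\pi^{-1}(K) = \mathit{Hopf}\bigl((\pi \circ f)^{-1}(K)\bigr)$ together with properness of $\pi \circ f$ (which follows from properness of $\pi$ and $f$ being a topological embedding) shows that $\tilde\pi$ is proper. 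With the factorization in hand, the surgery/anti-surgery formula takes the form $\pi^{new} = \tilde\pi \circ \Phi^{\pm 1} \circ \mathit{Hopf} \circ f^{-1}$ on $f(V)$ (respectively $f(V \setminus L)$), so the preimage of $K$ inside this region equals $f\bigl(\mathit{Hopf}^{-1}\bigl(\Phi^{\mp 1}(\tilde\pi^{-1}(K))\bigr)\bigr)$.

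Compactness of the latter set is then straightforward. Since $\Phi$ is a diffeomorphism between $U$ and $U \setminus l$, the set $\Phi^{\mp 1}(\tilde\pi^{-1}(K))$ is a compact subset of $\bC \times \bR$. The Hopf map satisfies the following elementary boundedness property: from $|z_1 z_2| \le M$ and $\bigl||z_1|^2 - |z_2|^2\bigr| \le M$, writing $a = |z_1|$, $b = |z_2|$ with $a \ge b$, the factorisation $(a+b)(a-b) \le M$ combined with $ab \le M$ forces $a, b$ to be bounded (for large $a$ one has $b \le M/a$, and the first bound then forces $a^2 - a\sqrt{M/2} \le M$). Hence $\mathit{Hopf}^{-1}$ sends compact subsets of $\bC \times \bR$ to compact subsets of $\bC^2$. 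Applying the topological embedding $f$ preserves compactness, and taking the union with the first piece yields compactness of $(\pi^{new})^{-1}(K)$.

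The only bookkeeping subtlety, which I would verify but do not expect to pose any real obstacle, is that in the anti-surgery case the domain of $\pi^{new}$ is $X \setminus L'$, so one must check the preimage avoids $L'$. This is automatic from the construction: the image of $\Phi$ lies in $U \setminus l$, so $\mathit{Hopf}^{-1}$ of $\Phi(\tilde\pi^{-1}(K))$ lies in $V \setminus L$, and $f$ maps this into $f(V) \setminus L' \subset X \setminus L'$.
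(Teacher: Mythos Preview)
There is a genuine gap in the step where you claim that $\tilde\pi$ is proper. You justify this via ``properness of $\pi \circ f$ (which follows from properness of $\pi$ and $f$ being a topological embedding)'', but this implication is false: $f:V\to X$ is only an \emph{open} embedding coming from an equivariant Weinstein neighborhood, and an open embedding followed by a proper map is almost never proper. Concretely, $(\pi\circ f)^{-1}(K)=f^{-1}\bigl(\pi^{-1}(K)\cap f(V)\bigr)$, and $\pi^{-1}(K)\cap f(V)$ is the intersection of a compact set with an open set, which need not be compact; pulling back along the homeomorphism $f$ then yields a set that is closed in $V$ but can be unbounded in $\bC^2$ (points of $\pi^{-1}(K)$ lying on the frontier of $f(V)$ correspond to escape to infinity in $V$). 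Once $\tilde\pi^{-1}(K)$ fails to be compact, applying $\Phi^{\mp1}$, $\mathit{Hopf}^{-1}$ and $f$ no longer produces a compact set, and the argument for the second piece of your decomposition breaks down. Your boundedness estimate for $\mathit{Hopf}^{-1}$ is correct but is applied to a set that is not known to be compact.

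The paper's proof sidesteps exactly this issue by enlarging the region on which one factors through the orbit space: rather than working on $f(V)$, it passes to the $\pi$-\emph{saturated} open set $\mathcal{U}:=\pi^{-1}(\pi(f(V)))$. The restriction $\pi|_{\mathcal{U}}:\mathcal{U}\to\pi(\mathcal{U})$ is then genuinely proper (its preimages coincide with those of $\pi$ itself), so the induced map $\tilde\pi:Z\to\pi(\mathcal{U})$ on the orbit space $Z=\mathcal{U}/S$ is proper. The sliding map $\Phi$, which is the identity near $\partial U$, extends by the identity to a diffeomorphism $\chi$ of $Z$ (resp.\ $Z\setminus q(L')$), and one obtains $\pi^{new}|_{\mathcal{U}}=\tilde\pi\circ\chi\circ q$ as a composition of three proper maps. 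Your decomposition idea and your use of the Hopf quotient are on the right track; the fix is to carry them out on the saturated neighborhood rather than on $f(V)$ alone.
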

\begin{proof}
$\pi^{new}$ can alternatively be written as follows. Let $Z$ be the orbit space of $U:=\pi^{-1}(\pi(f(V)))$. Let $q: U\to Z$ denote the quotient map. Let $\chi:Z\setminus q(L')\to Z$ be the map which is identity outside of $f(V)/S$ and is intertwined with $\Phi$ under the map induced by $f$.  Let  $\tilde{\pi}:Z\to \pi(U)$ be defined by $\pi=\tilde{\pi}\circ q$. Then each of the maps $\tilde{\pi},q,\chi$ is proper. But $\pi^{new}|_U=\tilde{\pi}\circ\chi\circ q$. This proves the claim.
\end{proof}


\begin{remark}\label{rem-Loo} This is a long remark about an application of integrable surgery, which is not used elsewhere in the paper. Consider the symplectic manifold $X=\mathbb{C}^2\setminus \{xy=1\}$ with the form $$Im\left(\frac{dx\wedge dy}{xy-1}\right).$$ We will construct a nodal Lagrangian torus fibration $X\to B$ with a single singularity and such that $B$ with its induced nodal integral affine structure is isomorphic to $B_1$. $X$ is therefore a symplectic cluster manifold and is symplectomorphic to what is denoted by $M_1$ in this paper.

Let us consider the embedded Lagrangian plane $L':=\{x=0\}$ and the embedding $$f_x: (\mathbb{C}^*)^2\to X, (s,t)\mapsto \left(s, \frac{t+1}{s}\right),$$ whose image is precisely $X-L'$. The symplectic form on $X-L'$ is easily computed to be $Im\left(\frac{ds}{s}\wedge \frac{dt}{t}\right)$ and on $X-L'$ we can consider the Lagrangian torus fibration $$(s,t)\mapsto (\log |s|,\log |t|).$$ This is of course nothing but the standard Lagrangian torus fibration on $T^*T^2$ denoted by $\pi_0: T^*T^2\to B_0=\mathbb{R}^2.$

Consider the action $\theta\cdot (x,y)=(e^{2\pi i\theta}x, e^{-2\pi i\theta}y)$ near $L'$ (the action is defined on all of $X$ but we only need it near $L'$, which is an important point for further applications). Then one can do integrable surgery to the Lagrangian torus fibration on $X\setminus L'$ obtained by pushing-forward $\pi_0$ along $L'$ and define the result of our operation to be the desired $X\to B$. The completeness of this Lagrangian nodal fibration follows immediately from Propositions \ref{prop-eig-proc} and \ref{prop-lag-to-eig} and the fact that removing a one dimensional subset does not change the area of a symplectic surface.

This method immediately generalizes to give a proof of Theorem \ref{thm-Loo}.
\end{remark}

\subsection{Proof of Theorem \ref{tmCompEmbLagTails}}\label{SecProofOfThm2}
 
Recall that in the introduction before the statement of Theorem \ref{tmCompEmbLagTails} we had mentioned that we would in fact prove a more refined version, where we also discuss how the complete embeddings interact with the nodal Lagrangian fibrations. Below we state and prove that statement.

\begin{theorem}\label{prop-node-removal}
Let $\mathcal{R}$ be an eigenray diagram, and $\mathcal{R}'$ be obtained from it by a node removal at $n\in N_\cR$. Consider compatible nodal Lagrangian torus fibrations $\pi_\cR: M_\cR\to B_\cR$ and $\pi_{\cR'}: M_{\cR'}\to B_{\cR'}$.  Choose a Lagrangian tail $L_n$ lying above $\psi_\cR(l_n).$

Then, there is a symplectic embedding $$\iota_{\mathcal{R}',\mathcal{R}}:M_{\mathcal{R}'}\to M_\mathcal{R}$$ whose image is $M_\cR\setminus L_n$. Moreover, we can choose $\iota_{\mathcal{R}',\mathcal{R}}$ so that for an arbitrarily small\footnote{for simplicity let us mean by this that for any fixed compact set in the complement one can choose a neighborhood disjoint from it for which the result is true, but one could prove it for an actually arbitrarily small neighborhood} neighborhood $U$ of $\psi_\cR(l_n\cup N_\cR)$ and $U':=\psi_{\cR'}\circ (\psi_{\cR})^{-1}(U) $ we have \begin{itemize}
\item $\iota_{\mathcal{R}',\mathcal{R}}$ restricts to a fiber preserving symplectomorphism $M_{\cR'}-\pi_{\cR'}^{-1}(U')\to M_\cR-\pi_\cR^{-1}(U)$.
\item The  induced map $B_{\cR'}-U'\to B_\cR-U$ is equal to $\psi_{\cR}\circ (\psi_{\cR'})^{-1}$.
\item $M_{\cR'}-\pi_{\cR'}^{-1}(U')\to M_\cR-\pi_\cR^{-1}(U)$ sends the zero section to the zero section.
\end{itemize}
\end{theorem}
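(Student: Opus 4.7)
The plan is to apply integrable anti-surgery, as developed in Section \ref{ss-surgery}, to the nodal Lagrangian torus fibration $\pi_\cR$ along the Lagrangian tail $L_n$, and then compare the resulting fibration with $\pi_{\cR'}$ using the uniqueness part of the theory of compatible fibrations. I would proceed in four stages.

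First, I would set up the equivariant data needed to invoke integrable anti-surgery. Because $l_n$ is an eigenray, the linear holonomy around $\psi_\cR(n)$ preserves the tangent direction to $l_n$, so there is a Hamiltonian $S^1$-action defined in a neighborhood of $L_n$ (generated by the primitive integral covector perpendicular to $l_n$) which rotates $L_n$ and has its unique fixed point at the critical point of $\pi_\cR$ that lies on $L_n$. Appealing to Proposition \ref{prop-eq-wein}, I would fix an $S^1$-equivariant Weinstein neighborhood $f: \mathrm{Hopf}^{-1}(U) \hookrightarrow M_\cR$ of $L_n$ intertwining the model action on $\mathbb{C}^2$ with the action above, where $U$ is a sliding neighborhood of $l \subset \mathbb{C} \times \mathbb{R}$. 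By Lemma \ref{lem-slide-ab} we may take $U$ as small as desired.

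Second, I would perform the anti-surgery of Section \ref{ss-surgery} along $L_n$ inside this equivariant neighborhood. This produces a nodal Lagrangian submersion $\pi^{new}: M_\cR \setminus L_n \to B_\cR$ that coincides with $\pi_\cR$ off an arbitrarily small neighborhood of $L_n$, has all the critical points of $\pi_\cR$ except the one on $L_n$, and is proper by Lemma \ref{lmSurgPresPrp}. Thus $\pi^{new}$ is a nodal Lagrangian torus fibration.

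Third, I would identify the nodal integral affine structure on $B_\cR$ induced by $\pi^{new}$. Using the three-dimensional perspective of Section \ref{ss-3d}, applied in the equivariant neighborhood where the $S^1$-symmetry is defined, the cartographic procedure of Proposition \ref{prop-eig-proc} combined with Proposition \ref{prop-lag-to-eig} expresses this structure by an eigenray diagram. The key computation, which is exactly the area argument sketched in Section \ref{sss-idea}: in a neighborhood of $l_n$ the coordinate $i$ generating the local $S^1$-action remains integral affine with respect to $\pi^{new}$, while along its level sets the new cylinders traced transverse to the $S^1$-orbits have infinite area because we have only removed a one-dimensional subset from each reduced surface. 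This shows that under $\pi^{new}$ the neighborhood of $l_n$ becomes integral affine isomorphic to an ordinary strip. Hence the multiset of eigenrays determined by $\pi^{new}$ is precisely the one obtained from $\cR$ by a node removal at $n$, i.e.\ the diagram $\cR'$, and the PL identification between the two base structures is $\psi_\cR \circ \psi_{\cR'}^{-1}$.

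Fourth, I would compare $\pi^{new}$ with the given fibration $\pi_{\cR'}$. Both are nodal Lagrangian torus fibrations over the same nodal integral affine manifold (after identifying bases via $\psi_\cR \circ \psi_{\cR'}^{-1}$), with the same combinatorial data. Invoking the $A = A' = \varnothing$ case of Proposition \ref{prop-nodal-slide}, which asserts that any two such compatible fibrations admit a fiber-preserving symplectomorphism taking zero section to zero section away from arbitrarily small neighborhoods of the nodal locus, produces the desired $\iota_{\cR', \cR}: M_{\cR'} \to M_\cR \setminus L_n$ with all three claimed compatibility properties. The hard part will be the cartographic computation in the third step: one needs to check carefully that within the equivariant Weinstein chart the flux integrals defining the extended coordinate map absorb the removed tail without a jump, so that the transverse coordinate to $l_n$ extends smoothly and the multiplicity at $n$ drops by exactly one; once this area bookkeeping is in hand, the other steps are direct applications of results already established in the paper.
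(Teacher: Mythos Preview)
Your approach matches the paper's almost exactly: perform integrable anti-surgery along $L_n$, use the cartographic procedure of Propositions \ref{prop-eig-proc} and \ref{prop-lag-to-eig} to read off the resulting eigenray diagram, and then invoke Proposition \ref{prop-nodal-slide} to compare with $\pi_{\cR'}$.

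There is one subtlety you gloss over. In step 3 you assert that the eigenray diagram determined by $\pi^{new}$ is \emph{precisely} $\cR'$, and in step 4 you accordingly invoke only the $A=A'=\varnothing$ case of Proposition \ref{prop-nodal-slide}. This is not quite justified when the ray $l$ containing $n$ carries other nodes. The anti-surgery modifies the foliation of the reduced space $\tilde{J}^{-1}(0)$ in a neighborhood of $q(L_n)$, and the cartographic coordinate $g^e$ locates each remaining node on $l$ via an area integral between $f^{new}$-fibers in that reduced space. Since those fibers have been altered near $q(L_n)$, the first coordinate assigned to a node $m\neq n$ on $l$ need not coincide with its original position in $\cR$; all you can conclude from the infinite-area argument is that the total multiplicity along the ray has dropped by one and that the remaining nodes still lie on the same horizontal ray. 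In other words, $\pi^{new}$ is compatible with an eigenray diagram obtained from $\cR$ by the node removal \emph{and possibly some nodal slides along $l$}. The paper states exactly this and then appeals to the general (nodal-slide) case of Proposition \ref{prop-nodal-slide} to absorb the discrepancy. Your argument goes through once you make this adjustment in step 4.
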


\begin{proof} Without loss of generality, assume that $l_n$ is the non-negative part of the real axis. Let us choose $U$ so that the nodes of $B_\cR$ that are contained in $U$ all lie inside $\psi_\cR(l_n)$ and that $U$ is simply connected. Then, the integral affine function $x_2\circ \psi_\cR^{-1}$ on $U\cap B_\cR^{reg}$ extends to a smooth function $j: U\to \mathbb{R}$. We now impose further restrictions on $U$. Namely, there exists an open subset $U_{out}\subset U$  so that $\psi_\cR(l_n)$ is contained in $U\setminus\overline{U_{out}}$; and $U$, $U_{out}\subset U$ and $\psi_\cR^{-1}: U_{out}\to \mathbb{R}^2$ satisfies the three bullet points listed before Proposition \ref{prop-eig-proc}.

The function $$J:= \pi_\cR\circ j: \pi_\cR^{-1}(U)\to \mathbb{R}$$ generates a Hamiltonian circle ($S$) action whose fixed points are the critical points of $\pi_\cR$ with values inside $U$. $S$ action preserves $L_n$ setwise and rotates it around $n$. Of course, inside $\pi_\cR^{-1}(U)$ where it is defined, $S$ also preserves $\pi_\cR$. Hence, we can do integrable anti-surgery to $(M_\cR,\pi_\cR)$ along $L_n$ supported at an $S$ invariant neighborhood not containing any other critical points of $\pi_\cR$ and contained inside $\pi_\cR^{-1}(U\setminus\overline{U_{out}})$. By Lemma \ref{lmSurgPresPrp} we obtain a nodal Lagrangian torus fibration $\pi^{new}: M_\cR\setminus L_n\to B_\cR$. 

Now we consider the continuous extension $U\to \mathbb{R}^2$ that is produced by Proposition \ref{prop-eig-proc} for $\pi^{new}$ restricted to $\pi_\cR^{-1}(U)\setminus L_n$. We claim that its image is the entire  $\pi_\cR^{-1}(U)$. The reason is that even after we remove $L_n/S$, each reduced space inside the quotient  $\pi_\cR^{-1}(U)/S$ has infinite area. We can also analyze the eigenray diagram data on $\pi_\cR^{-1}(U)$ that appears in Proposition \ref{prop-lag-to-eig}. By construction all the nodes in this eigenray diagram will be contained in the intersection of the $x$ axis with $\pi_\cR^{-1}(U\setminus\overline{U_{out}})$ and the rays will go along the $x$-axis in the positive direction. The total multiplicity of these nodes will be one less than the total multiplicity of the nodes originally contained in $l_n$.

Using that $\pi^{new}=\pi_\cR$ outside of $\pi_\cR^{-1}(U\setminus\overline{U_{out}})$, we therefore deduce that the nodal integral affine structure on $ B_\cR$ induced by $\pi^{new}$ is given by an eigenray diagram obtained from $\cR$ by the removal of $l_n$ and possibly some nodal slides. Using nodal slide symplectomorphisms of Proposition \ref{prop-nodal-slide}, we finish the proof.
\end{proof}

\begin{proof}[Proof of Theorem \ref{tmCompEmbLagTails}]
This immediately follows from Theorem \ref{prop-node-removal} and Proposition \ref{prpClustEquiv}.
\end{proof}

\subsection{Relative symplectic cohomology for symplectic cluster manifolds}\label{ss-rel-koso}
Let $\mathcal{R}$ be an eigenray diagram and $\pi_{\mathcal{R}}: M_{\mathcal{R}}\to B_{\mathcal{R}}$ be a compatible nodal Lagrangian torus fibration. We claim that $M_{\cR}$ admits a unique (up to homotopy) trivialization of $\Lambda^{top}_\mathbb{C}TM_\cR$ such that the Lagrangian torus fibers are Maslov zero.

Let us justify existence first. Let $M_\cR^{reg}:=\pi_\cR^{-1}(B_\cR^{reg})$. Let us denote by $V$ the Lagrangian distribution on $M_\cR^{reg}$ obtained as the vertical subbundle of $\pi_\cR$. Choosing an orientation of $B_\cR^{reg}$ we can define the element $$\frac{\partial}{\partial \theta_1}\wedge \frac{\partial}{\partial \theta_2}\in \Gamma(M_\cR^{reg}, \Lambda^{top}_\mathbb{R}V) ,$$ where $\theta_1,\theta_2$ are arbitrary oriented angle coordinates on fiber tori. 

We have a canonical isomorphism of complex vector bundles over $M_\cR^{reg}$: $$T_JM_\cR^{reg}\simeq V^\mathbb{C}:=V\otimes_\mathbb{R}\mathbb{C}$$ such that $V$ is sent identically to $V\otimes_\mathbb{R}\mathbb{R}\subset V\otimes_\mathbb{R}\mathbb{C}$, where $J$ is an arbitrary compatible almost complex structure. For two such $J_1$ and $J_2$, an elementary argument shows that the induced isomorphism between $T_{J_1}M_\cR^{reg}$ and $T_{J_2}M_\cR^{reg}$ belongs to the homotopy class of isomorphisms obtained using the contractibility of compatible almost complex structures.

Now $\frac{\partial}{\partial \theta_1}\wedge \frac{\partial}{\partial \theta_2}\otimes 1$ gives a trivialization $\Omega^{reg}$ of $$\Lambda^{top}_\mathbb{C}T_JM_\cR^{reg}\simeq (\Lambda^{top}_\mathbb{R}TV)^\mathbb{C}$$ such that the natural pairing $<d\theta_1\wedge d\theta_2, Re(\Omega^{reg})>$ is everywhere positive. It is easy to see that the Lagrangian torus fibers are Maslov zero with respect to this trivialization.


Near the singular fibers we pick trivializations using holomorphic volume forms on local models, see \cite[Section 5]{Auroux} for the once pinched singular fiber and use fiberwise abelian covers in the multiply covered case. The Lagrangian fibers in these models are special Lagrangians, which we assume have phase $0$.  Therefore for any node $b\in B_\cR$, there is a neighborhood open disk $D_b\subset B_\cR$, a complex almost complex structure $J_b$ on $X_b:=\pi^{-1}_\cR(D_b)$ and a trivialization $\Omega_b$ of $\Lambda^{top}_\mathbb{C}T_{J_b}X_b$ which also makes $<d\theta_1\wedge d\theta_2, Re(\Omega_b)>$ everywhere positive. 

Now choosing a partitions of unity subordinate to $$B_\cR=B^{reg}_\cR\cup \bigcup D_b,$$ we can patch together $\Omega^{reg}$ and $\Omega_b$'s to obtain the desired trivialization.

For uniqueness, note that by dividing two trivializations we find a map $M_\cR\to \mathbb{C}^*$. The trivializations are homotopic if the corresponding class $a$ in $H^1(M_\cR;\mathbb{Z})$ is zero. Let $F$ be a Lagrangian torus fiber. We know that $$H^1(M_\cR;\mathbb{Z})\to H^1(F;\mathbb{Z})$$ is an injective map. The fact that $F$ is Maslov $0$ for trivializations show that $a$ maps to $0$ and hence it is zero to begin with as desired.


Using this grading datum, for $K\subset B_{\mathcal{R}}$ a compact subset, we define $$\mathcal{F}_{\mathcal{R}}(K):=SH^0_M(\pi^{-1}(K))\otimes_{\Lambda_{\geq 0}}\Lambda.$$ Restriction maps turn $\mathcal{F}$ into a presheaf. In fact, much more non-trivially, using Theorem 1.3.4 of \cite{varolgunes} and the fact\footnote{see Section 5 of \cite{GroVar} for details on index computations} that $$SH^*_M(\pi^{-1}(K))=0, \text{ for }*<0,$$ we obtain the following.

\begin{theorem}
$\mathcal{F}_{\mathcal{R}}$ is a sheaf with respect to the Grothendieck topology of compact subsets and finite covers.
\end{theorem}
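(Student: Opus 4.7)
The plan is to combine the Mayer--Vietoris descent theorem of \cite{varolgunes} with the stated vanishing $SH^{*}_{M}(\pi^{-1}(K))=0$ for $*<0$, and then pass to $\Lambda$-coefficients.

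First, I would unpack the sheaf condition for a finite compact cover $K=K_1\cup\cdots\cup K_n$. Writing $\hat K_I:=\pi^{-1}\bigl(\bigcap_{i\in I}K_i\bigr)$, the condition amounts to exactness of
\begin{equation*}
0\to\cF_\cR(K)\to\bigoplus_i\cF_\cR(K_i)\to\bigoplus_{i<j}\cF_\cR(K_i\cap K_j).
\end{equation*}
Since $\pi$ is continuous (and the intersection commutes with preimages), one has $\hat K_I=\bigcap_{i\in I}\hat K_i$, so the Čech combinatorics of $\{K_i\}$ covering $K$ match those of $\{\hat K_i\}$ covering $\pi^{-1}(K)$. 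Theorem~1.3.4 of \cite{varolgunes} then produces a Čech-type spectral sequence
\begin{equation*}
E_1^{p,q}=\bigoplus_{|I|=p+1}SH^{q}_{M}(\hat K_I)\;\Longrightarrow\;SH^{p+q}_{M}(\pi^{-1}(K)),
\end{equation*}
with $d_1$ the Čech differential assembled from restriction maps.

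Second, I would exploit the vanishing hypothesis. Since $SH^{q}_{M}(\hat K_I)=0$ for $q<0$, the spectral sequence lives in $q\geq 0$ and $p\geq 0$. In total degree $0$, the only possibly nonzero graded piece of the abutment is $E_\infty^{0,0}$, so $SH^{0}_{M}(\pi^{-1}(K))\simeq E_\infty^{0,0}$. Moreover every incoming differential $d_r$ at $(0,0)$ originates from $E_r^{-r,\,r-1}$ (zero, as $p<0$) and every outgoing differential lands in $E_r^{r,\,1-r}$ (zero for $r\geq 2$ by the vanishing in negative $q$). Hence $E_\infty^{0,0}=E_2^{0,0}=\ker\!\bigl(d_1:\bigoplus_i SH^0_M(\hat K_i)\to\bigoplus_{i<j}SH^0_M(\hat K_{ij})\bigr)$, which is exactly the desired sheaf-condition kernel at the level of $SH^{0}_{M}$.

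Third, I would descend to $\cF_\cR$ by tensoring with $\Lambda$. Since $\Lambda$ is the localization of $\Lambda_{\geq 0}$ at the multiplicative set $\{T^\lambda\}_{\lambda\geq 0}$, it is flat over $\Lambda_{\geq 0}$, so tensoring preserves both the injectivity of the leftmost map and the kernel description, yielding the sheaf condition for $\cF_\cR$. The main obstacle is ensuring that the Čech-type spectral sequence cited from \cite{varolgunes} holds at the full generality of $n$-fold finite covers (and not merely for pairs) and is compatible on the $E_1$-page with the restriction maps defining the presheaf structure of $\cF_\cR$; if it is only proved there for binary covers, the reduction is standard: induct on $n$ by applying the binary Mayer--Vietoris long exact sequence to $K'=K_1\cup\cdots\cup K_{n-1}$ and $K_n$, using $SH^{-1}_{M}=0$ at each step to obtain left-exactness, and then a diagram chase to propagate the kernel description through the Čech sequence.
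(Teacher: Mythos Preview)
Your proposal is correct and follows essentially the same approach as the paper: the paper does not give a detailed argument but simply invokes Theorem~1.3.4 of \cite{varolgunes} (the Mayer--Vietoris property) together with the vanishing $SH^{*}_{M}(\pi^{-1}(K))=0$ for $*<0$, and you have correctly spelled out how these two ingredients combine via the edge of the Mayer--Vietoris spectral sequence (or, equivalently, by induction on the size of the cover using the long exact sequence and negative-degree vanishing), followed by the flat base change to $\Lambda$.
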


Using our locality theorem along with local computations we can fully compute this sheaf restricted to polygonal domains (which are allowed to contain nodes in their interior). This will appear in our forthcoming paper. Here we will be content with explaining the role played by the locality theorem that we proved in the present paper.

Let us note a sample local result.

\begin{proposition}
Let $P\subset B_0=\mathbb{R}^2$ be a compact convex rational polygon. Define the Kontsevich-Soibelman algebra of functions $KS(P)$ as the completion of $\Lambda[(\mathbb{Z}^n)^{\vee}]$ with respect to the valuation $$val(\sum a_{\alpha}z^{n_{\alpha}})=\min_{\alpha, p\in P}(val(a_{\alpha})+n_{\alpha}(p)).$$

Then $SH_M^0(\pi_0^{-1}(P),\Lambda)$ is canonically isomorphic to $KS(P)$.
\end{proposition}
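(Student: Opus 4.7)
The plan is an explicit Floer computation on $T^*T^2$ modeled on \cite[Eq.~(3.4)]{seidel}, tracking Novikov weights carefully. First I would pick an acceleration datum $(H_\tau,J_\tau)$ with $H_\tau(q,p)=h_\tau(p)$ for a monotone family of smooth strictly convex functions $h_\tau\colon\mathbb{R}^2\to\mathbb{R}$ that are non-positive on $P$ and tend pointwise to $+\infty$ off $P$ (e.g.\ a smoothing of $\tau\cdot d_P$, with $d_P$ the Euclidean distance to $P$). The $1$-periodic orbits of $H_\tau$ form Morse-Bott $T^2$-families indexed by $n\in\mathbb{Z}^2$: the family in class $n$ sits over the unique $p_\tau(n)\in\mathbb{R}^2$ with $\nabla h_\tau(p_\tau(n))=n$. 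A small $\tau$-dependent Morse perturbation on the $T^2$-factor breaks each family into four non-degenerate orbits; with respect to the fixed trivialization of the canonical bundle their Conley-Zehnder indices are $0,1,1,2$, producing exactly one degree-$0$ generator $z^n_\tau$ per class.

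Next I would verify that the Floer differential vanishes on degree-$0$ generators and that the continuation maps $HF^0(H_\tau)\to HF^0(H_{\tau'})$ are diagonal in the $z^n_\tau$'s with Novikov scalar entries. Both assertions rest on the $T^2$-symmetry of $H_\tau$ (and of the monotone homotopies between them): the Floer equation preserves the homology class of the asymptotes up to short trajectories inside a single Morse-Bott family, which combine to produce the standard $H^*(T^2)$ Morse-Bott contribution per class. Combined with the usual convex-Hamiltonian action-index bounds, this yields $z^n_\tau\mapsto T^{E_\tau^{\tau'}(n)}\,z^n_{\tau'}$, where $E_\tau^{\tau'}(n)$ is the topological energy of the explicit $T^2$-invariant continuation solution sitting over the straight segment from $p_\tau(n)$ to $p_{\tau'}(n)$ in $\mathbb{R}^2$.

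A direct computation gives $\mathcal{A}_{H_\tau}(\gamma^n_\tau)=n\cdot p_\tau(n)-h_\tau(p_\tau(n))+O(\epsilon)$, and as $\tau\to\infty$ the critical point $p_\tau(n)$ converges to the point $p_\infty(n)\in P$ maximizing $p\mapsto n\cdot p$ on $P$ while $h_\tau(p_\tau(n))\to 0$; hence $\mathcal{A}(\gamma^n_\tau)\to \max_{p\in P}n\cdot p$, and the accumulated Novikov weight converges similarly. Passing to $\varinjlim_\tau HF^0(H_\tau)\otimes_{\Lambda_{\geq 0}}\Lambda$ yields a $\Lambda$-vector space freely spanned by classes $z^n$, $n\in\mathbb{Z}^2$, equipped with the valuation $\mathrm{val}(z^n)=\min_{p\in P}(-n\cdot p)$. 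Absorbing the sign $n\mapsto -n$ dictated by the cohomological grading convention gives $\mathrm{val}(z^n)=\min_{p\in P}n\cdot p$, which is exactly the valuation defining $KS(P)$; the telescope completion of \S\ref{Sec:Overview} then matches the valuation completion of $KS(P)$, producing the claimed canonical isomorphism.

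The main obstacle I expect is the simultaneous verification, uniformly in $\tau$ and $n$, of Morse-Bott non-degeneracy after perturbation and of the vanishing of the degree-$0$ differential; both appear in essence in \cite{CFHW94,seidel}, but require care because the Morse-Bott families accumulate near $\partial P$ as $\tau\to\infty$, so the perturbations and action-index estimates must be chosen uniformly across the limit.
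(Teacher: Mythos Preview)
The paper does not actually prove this proposition. It is stated in \S\ref{ss-rel-koso} as a ``sample local result'' and the surrounding text explicitly defers the full computation to forthcoming work (``Using our locality theorem along with local computations we can fully compute this sheaf\dots This will appear in our forthcoming paper''). The only indications of method are the remark in the introduction that ``an extension of the computation of Seidel from \cite{seidel} Equation (3.4)'' gives the result, and the comment in \S\ref{sss-on-tor} that the torsion-freeness for $\pi^{-1}(P)$ with $P$ convex ``can be derived by a direct computation using Viterbo type acceleration data along with careful perturbations to deal with the Morse-Bott $T^n$-families of orbits that appear (similar to Section \ref{ss-non-fin-tor}).''

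Your proposal is exactly along these hinted lines: base-dependent convex Hamiltonians $h_\tau$, Morse--Bott $T^2$-families indexed by $\mathbb{Z}^2$, small perturbations to non-degenerate orbits, and action/energy bookkeeping to identify the limiting valuation. This is the expected approach and, as far as there is anything in the paper to compare against, it matches. One small point worth tightening: the passage from accumulated continuation weights to the valuation $\min_{p\in P} n\cdot p$ deserves a cleaner statement than ``absorbing the sign $n\mapsto -n$ dictated by the cohomological grading convention''; the sign is a matter of the action convention (whether one uses $\int p\,dq - H\,dt$ or its negative) rather than of grading, and it is better to fix the convention at the outset so that the valuation comes out directly. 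The uniformity issue you flag at the end is genuine and is precisely the kind of thing the paper alludes to with ``careful perturbations''; \S\ref{ss-non-fin-tor} gives a template for how the authors handle this in a closely related setting.
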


As an immediate corollary we obtain the following.

\begin{theorem}
Let $\cR$ be an eigenray diagram. Let us take a compact convex polygon $P$ in $\mathbb{R}^2$ which is disjoint from all the rays of $\mathcal{R}$. Then there is an induced  isomorphism $$\mathcal{F}_{\mathcal{R}}(\psi_\cR(P))\to KS(P).$$
\end{theorem}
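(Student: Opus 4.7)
The plan is to combine Theorem \ref{tmCompEmbLagTails} with the main locality theorem \ref{thmLocalityRelFinal} and the preceding proposition that identifies $SH^0_{M_0}(\pi_0^{-1}(P))\otimes\Lambda$ with $KS(P)$.

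First, I would apply Theorem \ref{tmCompEmbLagTails} with $S=\mathrm{Crit}(\pi)$, or rather invoke the more detailed Theorem \ref{prop-node-removal} and iterate it to remove all nodes of $\cR$ one by one. This furnishes a complete embedding $\iota:M_0=T^*T^2\hookrightarrow M_\cR$ whose image is $M_\cR\setminus\bigcup_{p}L_p$ for some pairwise disjoint choice of Lagrangian tails $L_p$. The crucial additional feature I would extract from the three bullet points in Theorem \ref{prop-node-removal} (applied iteratively) is that, outside an arbitrarily small neighborhood of $\psi_\cR(N_\cR)\cup\bigcup_{p}\psi_\cR(l_p)$, the embedding $\iota$ is fiber preserving and the induced base map agrees with $\psi_\cR$. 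Since $P$ is compact, convex, and disjoint from every ray of $\cR$, I may shrink these neighborhoods so that they remain disjoint from $P$. Then $\iota$ restricts to a fiber preserving symplectomorphism $\pi_0^{-1}(P)\xrightarrow{\sim}\pi_\cR^{-1}(\psi_\cR(P))$.

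Next, I would invoke the locality theorem \ref{thmLocalityRelFinal} applied to the embedding $\iota$ and the compact subset $K:=\pi_0^{-1}(P)\subset M_0$. The manifold $T^*T^2$ is trivially geometrically of finite type, and $M_\cR$ is so by Proposition \ref{prpClstGeoBd}, so the geometric hypotheses are met. It remains to verify the torsion finiteness hypothesis, and this is provided by the second bullet point of Section \ref{sss-on-tor}: for a convex compact $P\subset\bR^n$, $SH^*_{T^*T^n}(\pi^{-1}(P))$ has no torsion whatsoever, so in particular has homologically finite torsion in degree $0$. Theorem \ref{thmLocalityRelFinal} then produces a locality isomorphism
\[
SH^0_{T^*T^2}(\pi_0^{-1}(P))\xrightarrow{\sim}SH^0_{M_\cR}(\pi_\cR^{-1}(\psi_\cR(P))).
\]

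Finally, tensoring this isomorphism with $\Lambda$ over $\Lambda_{\geq 0}$ and composing with the canonical isomorphism $SH^0_{T^*T^2}(\pi_0^{-1}(P))\otimes_{\Lambda_{\geq 0}}\Lambda\simeq KS(P)$ from the preceding proposition yields the desired isomorphism $\cF_\cR(\psi_\cR(P))\simeq KS(P)$ by unfolding the definition $\cF_\cR(K)=SH^0_{M_\cR}(\pi_\cR^{-1}(K))\otimes_{\Lambda_{\geq 0}}\Lambda$. The only step that requires any care is checking that iterating Theorem \ref{prop-node-removal} preserves the fiber preserving behavior over $P$, but since each application of that theorem only modifies the fibration in a neighborhood of a single ray (which is disjoint from $P$ by hypothesis), this follows by induction. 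I would also note that the resulting isomorphism depends on the choice of cluster presentation underlying $\iota$; this dependence is precisely the wall crossing phenomenon described in Section \ref{ss-WC}.
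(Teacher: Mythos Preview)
Your proposal is correct and matches the paper's intended argument: the theorem is stated in the paper as ``an immediate corollary'' of the preceding proposition, and the corollary is obtained precisely by combining the complete embedding of Theorem~\ref{prop-node-removal} (iterated over all nodes), the torsion-freeness of $SH^*_{T^*T^2}(\pi_0^{-1}(P))$ for convex $P$, and the locality Theorem~\ref{thmLocalityRelFinal}. Your care in checking that the iterated node removals leave the fibration untouched over $P$ is exactly the point, and your closing remark about dependence on the cluster presentation is consistent with the paper's Section~\ref{ss-WC}.
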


We stress that the isomorphism in the statement is determined by an eigenray diagram representation. Representing symplectic cluster manifolds by different eigenray diagrams, we obtain distinct locality isomorphisms of the same form. The simplest example of this phenomenon was explained in the introduction Section \ref{ss-WC}.

\subsection{Mirror symmetry}\label{ss-ms}

In this section we will write down some concrete conjectures regarding mirror symmetry for cluster symplectic manifolds. Their proof is the subject of work currently in progress.

Let us start with a simplifying assumption. Let us call an eigenray diagram \textbf{simple} if

\begin{itemize}
\item There are no parallel rays.
\item The only nodes are the starting points of the rays.
\item All multiplicities are $1$.
\end{itemize}

\begin{remark}
Our techniques extend to non-simple eigenray diagrams as well but we want to keep things simple here.
\end{remark}

Let us call an exact eigenray diagram (recall from Remark \ref{remExacEigenray}) \emph{strongly exact} if there exists a domain in $B_{\mathcal{R}}$ which contains all the nodes and which is convex with respect to the integral affine structure. If a point in $\bR^2$ is contained in all the lines containing the rays of $\mathcal{R}$, let us call it a valid basepoint. Given  a strongly exact simple eigenray diagram $\mathcal{R}$ and a choice of valid basepoint, we can consider the Viterbo symplectic cohomology of the Liouville manifold $M_{\mathcal{R}}$ (again recall from Remark \ref{remExacEigenray})) as defined in \cite{seidelbiased}, in particular the commutative $k$-algebra $SH^0_{Vit}(M_{\mathcal{R}})$.

Assume (on top of strong exactness) that $SH^0_{Vit}(M_{\mathcal{R}})$ is smooth. In this case, we expect that the Stein analytic space $Spec^{an}(SH^0_{Vit}(M_{\mathcal{R}})\otimes\Lambda)$ (see Section 5.1 of \cite{temkin}) is a bona fide non-archimedean mirror to $M_\cR$. This is strongly supported by recent results of Pomerleano \cite{pomerleano} (e.g. Corollary 1.4). To be precise, by being mirror we mean that the Conjectures \ref{conj-Stein-cont}-\ref{con-BV} below hold.

As examples let $Y_0=Spec(\Lambda[\xi^{\pm},\eta^{\pm}]) $ and $Y_1=Spec(\Lambda[x,y,u^{\pm}]/(u(xy-1)-1).$ We define the canonical inclusion $\iota: Y_0\to Y_1$ by $$x\mapsto \xi, y\mapsto\frac{1}{\xi}\left(1+\frac{1}{\eta}\right)\text{ and } u\mapsto \eta.$$ We can analytify these: $\mathcal{Y}_0, \mathcal{Y}_1$ and $\mathcal{Y}_0\to \mathcal{Y}_1$. Our claim is that these are mirror to $M_0$, $M_1$ and $M_0\to M_1$. To construct the candidate mirror $\mathcal{Y}_\cR$ for a general simple eigenray diagram $\mathcal{R}$ we need to do gluing.

Let us call a pair $(v,f)$, where $v\in \mathbb{Z}^2$ is primitive (in particular non-zero) and $f\in \mathbb{R},$ a primitive integral vector with flux. We can associate to each simple eigenray diagram with $k$ rays a $k$-tuple $\{(v_i,f_i)\}_{i=1}^k$ of primitive integral vectors with fluxes. The primitive vectors are the primitive directions of the rays of $\mathcal{R}$ and fluxes are given by $det(r_i,v_i)$ where $r_i$ is the position (with respect to the origin which is an arbitrary choice) of the node $i$. Let us call this the seed data of the eigenray diagram.

Let $\{(v_i,f_i)\}_{i=1}^k$ be the seed data of $\mathcal{R}$. For each $i$, we choose an integral affine transformation of $\mathbb{R}^2$ that sends $(v_i,f_i)$ to $((1,0),0)$ and consider the corresponding automorphism $T_i$ of  $Y_0$. Then, we take $k$ copies of $Y_1$ and glue them to each other along the images of $\iota\circ T_i: Y_0\to Y_1$. We obtain a scheme $Y_{\mathcal{R}}$.

\begin{remark}
This scheme also admits a description as a Kaliman modification of a complete toric surface over $\Lambda$, but one has to blow-up along points with non-zero valuations in the toric divisor.
\end{remark}

Let $\mathcal{Y}_{\mathcal{R}}$ be the analytification of $Y_{\mathcal{R}}.$

\begin{remark}
As special cases of Theorem \ref{tmCompEmbLagTails} above we obtain complete embeddings $$\iota_i: M_{1}\to M_{\mathcal{R}},$$ for every $i=1,\ldots ,k$. Let us call the images of these embeddings $V_i$. We also obtain an embedding $$\iota: M_0\to M_{\mathcal{R}},$$ whose image we call $W$. These satisfy the following properties \begin{enumerate}
\item $\bigcup_{i\in I} V_i= M_{\mathcal{R}}$.
\item $V_i\cap V_j =W$ for any $i\neq j$.
\end{enumerate}
The defining decomposition of $\mathcal{Y}_{\mathcal{R}}$ should be thought of as a mirror to this decomposition.
\end{remark}

\begin{conjecture}\label{conj-strongly-exact-mirror}
When $\cR$ is strongly exact and $SH^0_{Vit}(M_{\mathcal{R}})$ is smooth, then $\mathcal{Y}_{\mathcal{R}}$ and $Spec^{an}(SH^0_{Vit}(M_{\mathcal{R}})\otimes\Lambda)$ are canonically isomorphic.
\end{conjecture}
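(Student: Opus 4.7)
The plan is to prove the conjecture by leveraging the complete embeddings from Theorem \ref{tmCompEmbLagTails} together with the locality theorem and a Mayer--Vietoris computation for the cover $M_\cR = \bigcup_i V_i$ with $V_i = \iota_i(M_1)$ and pairwise intersections $V_i\cap V_j = W = \iota(M_0)$ already noted in the paper. On the mirror side, $\mathcal{Y}_\cR$ is by construction glued from $k$ copies of $\mathcal{Y}_1$ along $\mathcal{Y}_0$ via the morphisms $\iota\circ T_i$, so the combinatorics of the cover matches the defining gluing of $\mathcal{Y}_\cR$. The strategy is to upgrade this combinatorial match to an isomorphism of analytic spectra by identifying the local Floer-theoretic computations with the algebras of analytic functions on the pieces, and showing the transition maps on symplectic and algebraic sides agree.

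First I would choose a cofinal sequence of compact subsets $K_n\subset M_\cR$ compatible with the cover, writing $K_n = \bigcup_i K_{n,i}$ with $K_{n,i}\subset V_i$ compact and $K_{n,i}\cap K_{n,j}\subset W$ for $i\neq j$. Applying Theorem \ref{thmLocalityRelFinal} to the complete embeddings $\iota_i$ (using that the compact subsets in question sit inside index-bounded Liouville domains, so the torsion hypothesis is satisfied by the last bullet of Section \ref{sss-on-tor}), I obtain canonical ring isomorphisms $SH^0_{M_\cR}(K_{n,i})\simeq SH^0_{M_1}(\iota_i^{-1}(K_{n,i}))$ and likewise for the intersections via the embedding $\iota:M_0\to M_\cR$. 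Under Conjectures \ref{conj-Stein-cont}--\ref{con-BV}, which we take as the working definition of mirror for $M_0$ and $M_1$, the right-hand sides are (after $-\otimes_{\Lambda_{\geq 0}}\Lambda$) algebras of analytic functions on the corresponding admissible opens of $\mathcal{Y}_1$ and $\mathcal{Y}_0$.

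Next, I would apply the Mayer--Vietoris sheaf property recalled in Section \ref{ss-rel-koso} (based on Theorem 1.3.4 of \cite{varolgunes}) together with the BV-algebra compatibility of the locality isomorphisms (Theorem \ref{thmLocalityTrunc} and its passage to relative $SH$) to assemble the pieces into a description of $\mathcal{F}_\cR(K_n)$ as the algebra of analytic functions on a canonical compact analytic subdomain $\mathcal{Y}_{\cR,n}\subset\mathcal{Y}_\cR$. Passing to the inverse limit and applying Conjecture \ref{conj-Stein-cont} identifies $\varprojlim_n \mathcal{F}_\cR(K_n)$ with the global sections $\mathcal{O}(\mathcal{Y}_\cR)$. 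In the strongly exact case, the exhaustion by sublevels of a plurisubharmonic function compatible with the Liouville structure allows the standard comparison $SH^0_{Vit}(M_\cR)\otimes\Lambda\simeq \varprojlim_n SH^0_{M_\cR}(K_n)\otimes\Lambda$, akin to what is invoked in Section \ref{ss-non-fin-tor}, and the isomorphism of rings yields an isomorphism of non-archimedean analytic spectra by Stein analyticity.

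The hard part will be matching the gluing data: on the symplectic side the transition between the two locality identifications $(\iota_i)_*$ and $(\iota_j)_*$ over the overlap $W$ is an automorphism of the local Kontsevich--Soibelman algebra associated to a polygonal subdomain of $W$, and one must show it coincides with the algebraic transition map $\iota\circ T_j\circ(\iota\circ T_i)^{-1}$ defining $Y_\cR$. This is a global wall-crossing statement of the form indicated in Section \ref{ss-WC}, where the single-node case was described; the general case should reduce to it by decomposing the discrepancy along the contributions of individual nodes crossed when mutating from one eigenray presentation to another, using the functoriality of locality isomorphisms under composites of complete embeddings (Theorem \ref{thmLocalityTruncFunc}). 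A secondary but expected-to-be-routine obstacle is making Conjecture \ref{conj-Stein-cont} precise in enough generality to run the inverse-limit argument; we expect this to follow from index-boundedness of the relevant Liouville domains together with the torsion control of Section \ref{sss-on-tor}.
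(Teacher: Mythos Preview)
The paper does not prove this statement: it is explicitly labeled a \emph{Conjecture}, and the surrounding Section~\ref{ss-ms} opens by saying that the conjectures there are ``the subject of work currently in progress.'' There is therefore no proof in the paper to compare your proposal against.

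Your proposal is also not a proof. You invoke Conjectures~\ref{conj-Stein-cont}--\ref{con-BV} as inputs (``which we take as the working definition of mirror for $M_0$ and $M_1$''), but these are themselves open conjectures stated in the same section, so you are assuming unproven statements of comparable strength to the one you are trying to establish. You also rely on the wall-crossing identification from Section~\ref{ss-WC}, but that section only announces the expected formula and defers the computation to ``a future paper''; even the single-node case is not established here. Finally, the comparison $SH^0_{Vit}(M_\cR)\otimes\Lambda\simeq \varprojlim_n SH^0_{M_\cR}(K_n)\otimes\Lambda$ that you describe as ``standard'' is not proven in the paper and is not obviously true in general (the paper's Section~\ref{ss-non-fin-tor} discusses related issues but for a different purpose). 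What you have outlined is a reasonable heuristic sketch of why the conjecture should hold, consistent with the paper's own informal motivation, but it does not constitute a proof.
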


In order to explain the conjectures that spell out what we expect from mirror symmetry, we need to introduce some terminology. Let us define the following Grothendieck topology on the nodal integral affine manifold $B_{\mathcal{R}}$ for a simple eigenray diagram $\mathcal{R}$. An admissible polygon is roughly any compact domain with piecewise linear boundary where each of the boundary edges have rational slope and the boundary does not contain any node. An admissible covering is any finite covering of these admissible polygons by admissible polygons. From now on we consider $\mathcal{F}_{\mathcal{R}}$ as a sheaf over this Grothendieck topology, and continue to denote it by $\mathcal{F}_{\mathcal{R}}$

Let $\mathcal{Y}$ be a $\Lambda$-analytic space in the sense of Berkovich. We call a map $\mathcal{Y}\to B_{\mathcal{R}}$ continuous if  the preimage of each admissible polygon is a compact analytic domain of $\mathcal{Y}$ and the preimages of admissible coverings are admissible coverings in $\mathcal{Y}$. We call such a continuous map Stein if the preimage of a sufficiently small convex admissible polygon is an affinoid domain.

\begin{conjecture}\label{conj-Stein-cont}
There is a Stein continuous map $\mathcal{Y}_{\mathcal{R}}\to B_{\mathcal{R}}$ such that the push-forward of the structure sheaf of $\mathcal{Y}_\cR$ is isomorphic to $\mathcal{F}_\cR.$
\end{conjecture}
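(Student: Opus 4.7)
The plan is to construct the Stein continuous map $\mathcal{Y}_\cR \to B_\cR$ by gluing local models associated with the complete embeddings $\iota_i: M_1 \to M_\cR$ produced by Theorem \ref{tmCompEmbLagTails}, using the standard tropicalization of the torus as a baseline and compensating across each eigenray via the explicit algebraic gluing built into the definition of $Y_\cR$. First I would handle the base case $\cR = \emptyset$: on $\mathcal{Y}_0 = (\mathbb{G}_{m,\Lambda}^2)^{an}$ take the standard tropicalization $\mathrm{trop}(\xi,\eta) = (-\mathrm{val}(\xi),-\mathrm{val}(\eta))$, whose preimage over a rational convex polygon $P$ is the affinoid with coordinate ring $KS(P)$. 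The sample local result in \S\ref{ss-rel-koso} identifies this with $\mathcal{F}_0(P)$, and Mayer--Vietoris for $\mathcal{F}_\cR$ bootstraps this to arbitrary admissible polygons.

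Second, I would construct $\mathcal{Y}_1 \to B_1$ for the single-ray diagram. Away from the eigenray the map is forced by the open embedding $\mathcal{Y}_0 \hookrightarrow \mathcal{Y}_1$ combined with the PL identification $B_0 \setminus \{\mathrm{ray}\} \cong B_1 \setminus \{\mathrm{ray}\}$. Near the node, after choosing a toric chart adapted to the eigenray, tropicalization of the defining equation $u(xy-1)=1$ of $Y_1$ in Berkovich coordinates extends the map continuously across the wall, and the Stein property is verified by explicit inequalities on Laurent coefficients. To identify the pushforward with $\mathcal{F}_1$ for polygons disjoint from the ray, apply Theorem \ref{thmLocalityRelFinal} to the complete embedding $M_0 \hookrightarrow M_1$ and quote the $\cR=\emptyset$ case. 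For polygons meeting the node, compute $SH^0_{M_1}(\pi_1^{-1}(P)) \otimes \Lambda$ by a direct Hamiltonian--Floer calculation using acceleration data with Morse--Bott perturbations near the focus--focus fiber, as sketched in \S\ref{ss-non-fin-tor}. Finally, for a general simple eigenray diagram $\cR$ with $k$ rays, assemble $\mathcal{Y}_\cR \to B_\cR$ from the local maps on each $\iota_i$--chart: by construction $\mathcal{Y}_\cR$ is glued from $k$ copies of $\mathcal{Y}_1$ along $\mathcal{Y}_0$ using the tropes $T_i$, and the embeddings $\iota_i$ realize the mirror decomposition $M_\cR = \bigcup V_i$, $V_i \cap V_j = W \simeq M_0$. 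The functoriality of locality (Theorem \ref{thmLocalityTruncFunc} together with Theorem \ref{thmLocalityRelFinal}(2)(3)) ensures the transition maps on overlaps match the algebraic gluing defining $Y_\cR$, and the sheaf property of $\mathcal{F}_\cR$ established in \S\ref{ss-rel-koso} upgrades the local agreement to a global isomorphism of sheaves on $B_\cR$.

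The hard part will be Step 2 at the node, i.e.\ identifying the analytic geometry of $\mathcal{Y}_1$ near the wall with $\mathcal{F}_1$ there. Concretely, one must count Floer trajectories in $M_1$ contributing to restriction maps between compact subsets that straddle the focus--focus fiber, and match the resulting wall-crossing formula with the change-of-chart transformation built into $Y_1 = \mathrm{Spec}(\Lambda[x,y,u^\pm]/(u(xy-1)-1))$. As foreshadowed in \S\ref{ss-WC}, the two distinct locality embeddings $\Phi_1,\Phi_2: T^*T^2 \to M_1$ differ precisely by the non-monomial automorphism $(x,y)\mapsto(x,y(1+x^{-1}))$ of $KS(Q)$, which is the restriction of the $\iota$ from \S\ref{ss-ms}; giving a rigorous proof of this matching is the technical heart of the conjecture and is expected to rest on the local Fukaya category framework under joint development with Abouzaid. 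All other steps in the plan use tools already developed in the present paper together with standard facts about Berkovich tropicalization.
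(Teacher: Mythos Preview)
The statement you are attempting to prove is labeled as a \emph{conjecture} in the paper, and the paper does not supply a proof. The surrounding text in \S\ref{ss-ms} is explicit about this: ``In this section we will write down some concrete conjectures regarding mirror symmetry for cluster symplectic manifolds. Their proof is the subject of work currently in progress.'' Immediately after Conjecture~\ref{conj-Stein-cont} the authors add: ``In our future work, we will give another construction of a non-archimedean mirror for which this conjecture is much more obvious. This is a more local construction that directly glues affinoid domains together.'' So there is no paper proof to compare your proposal against.

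That said, your outline is a reasonable sketch of a strategy, and you correctly isolate the crux: matching the wall-crossing automorphism of $KS(Q)$ induced by the two locality isomorphisms $\Phi_1,\Phi_2$ with the algebraic transition $\iota:Y_0\to Y_1$. You also correctly acknowledge that this step is not actually carried out and would rest on machinery not developed in the present paper. In that sense your proposal is not a proof but a roadmap, which is appropriate given the statement's status. Note, however, that the authors' hinted approach differs somewhat from yours: rather than constructing a map out of the globally defined $\mathcal{Y}_\cR$ and then verifying the pushforward agrees with $\mathcal{F}_\cR$, they indicate an alternative construction of the mirror by directly gluing affinoid domains, for which the conjecture is expected to be more transparent. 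Your approach instead takes $\mathcal{Y}_\cR$ as given and builds the map chart-by-chart; this is closer in spirit to verifying Conjecture~\ref{conj-strongly-exact-mirror} simultaneously, but places the full weight on the Floer-theoretic wall-crossing computation at each node.
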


In our future work, we will give another construction of a non-archimedean mirror for which this conjecture is much more obvious. This is a more local construction that directly glues affinoid domains together. As of now we do not know whether these two non-archimedean mirrors are the same, but we expect them to be. To spell this out as a precise conjecture would require us to introduce too much new notation so we omit it.

We expect that the next conjecture follows from the spectral sequence in \cite{varolgunes}.

\begin{conjecture}\label{conj-poly-vec}
There exists a ring isomorphism $$SH_{M_{\mathcal{R}}}^k(M_{\mathcal{R}},\Lambda)\to\bigoplus_{p+q=k} H^p(\mathcal{Y}_{\mathcal{R}},\Lambda^qT\mathcal{Y}_{\mathcal{R}}),$$ where the right hand is the Cech cohomology with respect to an affinoid or Stein cover.
\end{conjecture}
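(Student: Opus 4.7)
The plan is to derive the conjecture from the Mayer–Vietoris spectral sequence for relative symplectic cohomology of \cite{varolgunes} together with the Čech-to-sheaf spectral sequence for polyvector fields on the mirror, with the local terms identified by combining the locality theorem \ref{thmLocalityRelFinal} with explicit local computations on $T^*\bT^2$ and $M_1$.

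First, I would fix a finite admissible Stein cover $\{P_\alpha\}_{\alpha\in A}$ of a large convex polytope $P\subset B_\cR$ in the Grothendieck topology of admissible polygons introduced in Section \ref{ss-rel-koso}, with the following combinatorial properties: each $P_\alpha$ is convex and contains at most one node of $\cR$ in its interior; and every non-empty intersection $P_I:=\bigcap_{\alpha\in I} P_\alpha$ with $|I|\geq 2$ is a convex rational polygon disjoint from all nodes. Writing $K_I=\pi_\cR^{-1}(P_I)$, the polygonal pieces $K_I$ with $|I|\geq 2$ lie in the image of one of the complete embeddings $T^*\bT^2\hookrightarrow M_\cR$ produced by Theorem \ref{tmCompEmbLagTails}, and hence the locality theorem gives $SH^*_{M_\cR}(K_I,\Lambda)\simeq SH^*_{T^*\bT^2}(\pi_0^{-1}(P_I),\Lambda)$. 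A direct acceleration-data computation on $T^*\bT^2$ (extending $SH^0\simeq KS(P_I)$ by carefully perturbing the Morse–Bott $\bT^2$-families of $1$-periodic orbits as in Section \ref{ss-non-fin-tor}) yields $SH^q_{T^*\bT^2}(\pi_0^{-1}(P_I),\Lambda)\simeq KS(P_I)\otimes_\Lambda \Lambda^q(\Lambda^2)$, which is exactly the module of sections of $\Lambda^q T$ on the affinoid piece of $\mathcal{Y}_\cR$ over $P_I$, since the tangent sheaf of $(\mathbb{G}_{m,\Lambda}^2)^{an}$ is globally trivialized by $x\partial_x, y\partial_y$. For the pieces $P_\alpha$ containing a single node, I would reduce by locality to the model $M_1$ and compute $SH^*_{M_1}(\pi_1^{-1}(P_\alpha),\Lambda)$ using the local singular fiber model combined with Pascaleff-type acceleration arguments \cite[Theorem 6.19]{Pascaleff}; under the identification from Conjecture \ref{conj-Stein-cont}, the answer matches $H^0$ of the polyvector fields $\Lambda^q T\mathcal{Y}_\cR$ on the corresponding affinoid piece.

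Second, I would apply the Mayer–Vietoris spectral sequence of \cite{varolgunes} to the cover $\{K_\alpha\}$ of $\pi_\cR^{-1}(P)$:
\begin{equation*}
E_1^{p,q}=\bigoplus_{|I|=p+1} SH^q_{M_\cR}(K_I,\Lambda)\Longrightarrow SH^{p+q}_{M_\cR}(\pi_\cR^{-1}(P),\Lambda).
\end{equation*}
By the local identifications of the previous step, the $E_1$-page coincides with the Čech complex $\check{C}^p(\mathcal{U},\Lambda^q T\mathcal{Y}_\cR)$ for the corresponding Stein cover $\mathcal{U}$ of the analytic preimage of $P$ in $\mathcal{Y}_\cR$, and I would verify that the two differentials agree via the naturality of continuation maps with respect to the restriction maps on polyvector fields. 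Since a Stein cover is a Leray cover for coherent sheaves, the $E_2$-page computes $H^p(\mathcal{Y}_\cR|_P,\Lambda^q T)$, and the higher differentials vanish; summing $p+q=k$ gives the claimed isomorphism over any fixed compact polytope $P$. Finally, passing to a cofinal exhaustion of $B_\cR$ by such polytopes and invoking that relative $SH$ commutes with the appropriate telescope constructions (and that on the mirror side Stein continuity of $\mathcal{Y}_\cR\to B_\cR$ from Conjecture \ref{conj-Stein-cont} gives a compatible exhaustion of $\mathcal{Y}_\cR$) yields the isomorphism over all of $M_\cR$.

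The main obstacle is twofold. The genuinely new computation is the identification, for a polygon $P_\alpha$ containing a node, of $SH^*_{M_1}(\pi_1^{-1}(P_\alpha),\Lambda)$ with the polyvector fields on the corresponding piece of $\mathcal{Y}_\cR$; this requires an involved perturbation argument for orbits clustering around the singular fiber together with an analysis of the wall-crossing contributions to the differential. More delicate still is ensuring that the isomorphism respects the full $BV$/Gerstenhaber algebra structure rather than just the graded module structure: one must identify the pair-of-pants product on the spectral sequence side with the Schouten–Nijenhuis wedge and bracket on $\bigoplus_q\Lambda^q T\mathcal{Y}_\cR$, and verify compatibility of the BV operator with the divergence operator on polyvector fields. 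This should follow from naturality of the product in the Mayer–Vietoris spectral sequence combined with the explicit form of the product in the local $KS(P_I)$-computations, but making it rigorous is the most technically demanding step.
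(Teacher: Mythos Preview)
The statement you are attempting to prove is labeled a \emph{Conjecture} in the paper, and the paper does not supply a proof. The only remark the paper makes is the single sentence preceding the statement: ``We expect that the next conjecture follows from the spectral sequence in \cite{varolgunes}.'' So there is no proof in the paper to compare against; your proposal is an elaboration of exactly the strategy the authors gesture at, and in that sense it is aligned with their expectations.

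That said, your outline is not a proof, and you are candid about this. A few concrete issues are worth flagging. First, your argument is conditional on Conjecture \ref{conj-Stein-cont}, which is also unproven in the paper; without that Stein continuous map you cannot identify the local pieces of the Čech complex on the mirror with anything. Second, the local computation at a nodal polygon (reducing to $M_1$ via locality and then computing $SH^*_{M_1}(\pi_1^{-1}(P_\alpha),\Lambda)$ in all degrees) is not carried out anywhere in the paper or in the references you cite; the Pascaleff-type arguments you invoke handle degrees $0$ and $1$ under additional hypotheses, not the full graded module. Third, the degeneration of the Mayer--Vietoris spectral sequence at $E_2$ is not automatic: you assert that higher differentials vanish because the mirror-side Čech spectral sequence degenerates (Stein covers being Leray), but you have only identified the $E_1$-pages, not shown that the two spectral sequences are isomorphic as spectral sequences. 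Finally, the passage from compact polytopes $P$ exhausting $B_\cR$ to the full $M_\cR$ requires an inverse-limit argument for relative $SH$ that is not developed in the paper for this purpose.

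In short: your sketch is the right shape and matches what the authors say they expect, but it remains a program rather than a proof, exactly as the paper's labeling of the statement as a conjecture indicates.
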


To make a final speculation note that we can equip $Y_\mathcal{R}$ with an algebraic volume form, which gives rise to an analytic volume form on $\mathcal{Y}_{\mathcal{R}}$.

\begin{conjecture}
The Stein continuous map from Conjecture \ref{conj-Stein-cont} is a non-archimedean SYZ fibration.
\end{conjecture}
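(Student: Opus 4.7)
The plan is to proceed fiber-by-fiber and ultimately reduce everything to the standard local models, leveraging the locality theorem proven in the body of the paper. First I would fix a precise definition of a non-archimedean SYZ fibration following the general framework of Kontsevich-Soibelman and Nicaise-Xu-Yu: a Stein continuous map $p\colon \mathcal{Y}\to B$ onto a nodal integral affine manifold $B$ such that (i) over every regular point $b\in B^{reg}$ the fiber $p^{-1}(b)$ is a Berkovich analytic torus of the form $\mathrm{Sp}^{an}\bigl(\Lambda\langle x_1^{\pm},x_2^{\pm}\rangle\bigr)$ in suitable local affinoid coordinates, (ii) the local affinoid coordinates are determined up to the monomial transformations dictated by the integral affine transition functions on $B^{reg}$, and (iii) the fibers over the nodes carry the expected degenerate model (for us, the mirror degeneration of a focus-focus singularity, namely a nodal analytic curve in a small $(\Lambda^*)$-family). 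Spelling out this definition carefully is the first piece of bookkeeping.

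Next I would verify property (i) by combining Theorem \ref{tmCompEmbLagTails} with the locality package. For a regular $b\in B_\cR^{reg}$ choose a small compact convex rational polygon $P$ containing $b$ and disjoint from every eigenray in some cluster presentation. Then $\pi_\cR^{-1}(P)$ is contained in the image of a complete embedding $T^*T^2\hookrightarrow M_\cR$, and the main theorem of the paper identifies $SH^0_{M_\cR}(\pi_\cR^{-1}(P))\otimes\Lambda$ with $SH^0_{T^*T^2}(\pi_0^{-1}(Q))\otimes\Lambda\simeq KS(Q)$, where $Q$ is the integral-affine image of $P$. The fiber of $\mathcal{Y}_\cR\to B_\cR$ over $b$ is then the Berkovich spectrum of $\varinjlim_{P\ni b}KS(Q)$ which, by a standard computation with the Kontsevich-Soibelman algebra, is the analytic torus whose coordinate ring is $\Lambda\langle x_1^{\pm},x_2^{\pm}\rangle$ graded by the monomial valuation determined by the tangent cone at $b$. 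Shrinking $P$ and using the naturality of the locality isomorphism with respect to restriction shows that the assignment $b\mapsto p^{-1}(b)$ is Stein in the sense of Conjecture \ref{conj-Stein-cont} and realizes the promised torus structure.

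For property (ii), the point is to understand how two different cluster presentations — equivalently, two different eigenray diagram representatives of the same affine manifold — produce compatible local trivializations. Here the wall-crossing discussion of Section \ref{ss-WC} is the key input: given two cluster presentations containing a common convex polygon $P$, the two locality maps $SH^0_{T^*T^2}(\pi_0^{-1}(Q))\to SH^0_{M_\cR}(\pi_\cR^{-1}(P))$ differ by the standard wall-crossing automorphism of $KS(Q)$, which is precisely the monomial-plus-wall transformation that appears in the non-archimedean integral affine transition function on $B_\cR^{reg}$. This would promote the pointwise torus statement to the statement that the tropicalization map $\mathcal{Y}_\cR\to B_\cR$ recovers the given integral affine structure away from the nodes.

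The main obstacle I expect is property (iii), i.e. the behavior at nodes. One must show that near a node $n\in N_\cR$ of multiplicity $k$, the fiber $p^{-1}(n)$ is (up to the conjectured mirror) the expected degenerate curve obtained as the central fiber of the Gross-Siebert mirror of a focus-focus singularity. Morally, this should follow by reducing to the case $M_1$ via the embedding $M_1\hookrightarrow M_\cR$ from Theorem \ref{tmCompEmbLagTails} and then computing $SH^0_{M_1}(\pi_1^{-1}(P))\otimes\Lambda$ for $P$ a small polygon around the node, after which Conjecture \ref{conj-strongly-exact-mirror} identifies this with the coordinate ring of the affinoid $\{xy=1+u,\ |u|\leq\epsilon\}\subset \mathcal{Y}_1$. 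Carrying this out rigorously requires both the local computation of relative symplectic cohomology in the presence of a focus-focus fiber (the forthcoming computations promised in Section \ref{ss-rel-koso}) and enough control over the scattering diagram to match the walls emanating from $n$ with the gluings defining $\mathcal{Y}_\cR$. Modulo these local computations, the rest of the argument is essentially a combination of Theorems \ref{tmCompEmbLagTails}, \ref{thmLocalityRelFinal} and Conjecture \ref{conj-Stein-cont}.
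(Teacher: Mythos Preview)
The statement you are attempting to prove is explicitly labeled a \emph{Conjecture} in the paper, and the paper provides no proof. Section \ref{ss-ms} opens by saying ``we will write down some concrete conjectures regarding mirror symmetry for cluster symplectic manifolds. Their proof is the subject of work currently in progress,'' and this conjecture is one of them. So there is no paper proof to compare your proposal against.

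As for your proposal itself: it is not a proof but a conditional strategy, and you are candid about this. Your argument for (i) and (ii) already presupposes Conjecture \ref{conj-Stein-cont} (the existence of the Stein continuous map whose pushforward is $\mathcal{F}_\cR$), and your treatment of (iii) explicitly invokes Conjecture \ref{conj-strongly-exact-mirror} and the ``forthcoming computations'' of relative symplectic cohomology near a focus-focus fiber that the paper only promises. So what you have written is an outline of how one might hope to deduce this conjecture from the other conjectures in the same section together with local computations not yet carried out. That is a reasonable heuristic roadmap, and it is in the spirit of the paper's own informal discussion, but it does not constitute an independent proof: the logical dependencies you list are genuine, and none of them is established in the paper.

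One further gap worth flagging: even granting all the inputs you cite, you have not addressed the role of the algebraic volume form on $\mathcal{Y}_\cR$, which the paper introduces immediately before stating the conjecture. The notion of a non-archimedean SYZ fibration in the Kontsevich--Soibelman or Nicaise--Xu--Yu sense is tied to a volume form (the fibration is supposed to be the essential skeleton / tropicalization with respect to that form), and your outline does not explain why the particular map of Conjecture \ref{conj-Stein-cont} coincides with the one determined by the volume form.
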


We equip the polyvector fields in Conjecture \ref{conj-poly-vec} with a differential by transporting the deRham differential.
\begin{conjecture}\label{con-BV}This differential is compatible with the BV operator on the A-side.\end{conjecture}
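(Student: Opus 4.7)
The plan is to reduce the global compatibility of the BV operator with the de Rham--transported differential to a local statement via the Čech-type sheaf structure established in Conjecture~\ref{conj-Stein-cont}. First, I would cover $B_{\mathcal{R}}$ by small convex admissible polygons $P_i$ which are disjoint from the eigenrays of $\mathcal{R}$. Over each such $P_i$, Theorem~\ref{thmLocalityRelFinal}, combined with Proposition~\ref{prpClstGeoBd}, identifies $SH^*_{M_\mathcal{R}}(\pi_\mathcal{R}^{-1}(P_i))$ with $SH^*_{T^*\mathbb{T}^2}(\pi_0^{-1}(P_i))$, and the mirror affinoid $\mathcal{Y}_\mathcal{R}|_{P_i}$ with the corresponding piece of $\mathcal{Y}_0 = \mathrm{Spec}^{\mathrm{an}}(\Lambda[\xi^{\pm},\eta^{\pm}])$. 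Crucially, the proof of Theorem~\ref{thmLocalityTrunc} shows the locality isomorphisms intertwine the BV operators (step (5) in that proof is indicated but analogous to the product case proved there). Hence the compatibility on $M_\mathcal{R}$ reduces, locally, to the same compatibility on $T^*\mathbb{T}^2$ with the standard volume form $d\log\xi \wedge d\log\eta$.

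Second, I would settle the local case on $T^*\mathbb{T}^n$. Here the target of Conjecture~\ref{conj-poly-vec} is $\bigoplus_{p+q=k} H^p(\pi_0^{-1}(P)^\vee, \Lambda^q T)$ where the volume form is $d\log x_1 \wedge \cdots \wedge d\log x_n$, and the transported differential is the classical divergence $\partial_\Omega$. For a suitably chosen acceleration datum adapted to the toric $T^n$-action (Viterbo acceleration with Morse-Bott perturbation as in \cite{CFHW96} and the discussion in Section~\ref{ss-non-fin-tor}), the generators of $SH^*_{T^*\mathbb{T}^n}(\pi_0^{-1}(P))$ are indexed by elements of $\mathbb{Z}^n$ with multiplicities given by $\Lambda^q(\mathbb{R}^n)$, matching polyvector fields degree-wise. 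The BV operator, defined by counting Floer cylinders with an $S^1$-marked point and a rotation insertion, admits a Morse-Bott reduction to the fiber over each action-critical torus. In the Morse-Bott limit, this reduction is classical and yields precisely the divergence; this is the standard input from the circle action localization available on $T^*Q$ with $Q$ a torus.

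Third, I would globalize by verifying that the wall-crossing transformations of Section~\ref{ss-WC} act on the mirror by \emph{volume-preserving} automorphisms. Any two locality identifications over the same $P_i$ differ by such a transformation, and the de Rham differential on forms --- hence the divergence on polyvectors --- is intrinsic to the volume form. Therefore if two locality maps induce the same de Rham-transported differential on each chart, they automatically glue into a well-defined global differential on $\bigoplus_{p+q=k} H^p(\mathcal{Y}_\mathcal{R}, \Lambda^q T\mathcal{Y}_\mathcal{R})$ that agrees with the BV operator on $SH^*$. The functoriality from Theorem~\ref{thmLocalityTruncFunc} makes the patching on triple overlaps automatic.

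The main obstacle will be the local chain-level computation of the BV operator on $SH^*_{T^*\mathbb{T}^n}(\pi_0^{-1}(P))$. At the level of relative symplectic cohomology over the Novikov ring, the Morse-Bott analysis must be upgraded from the $p=0$, $q=0$ case (where it underlies the identification with $KS(P)$) to all bidegrees, which requires a careful choice of abstract perturbations respecting the $T^n$-symmetry and a verification that the signed count of index-zero cylinders with a marked-point twist equals the algebraic divergence. A conceptually cleaner route, which I would pursue in parallel, is to identify $SH^*$ with the Hochschild cohomology of an appropriate full subcategory of the Fukaya category via closed-open maps, and apply the general cyclic-open/Calabi-Yau formalism; but this lies outside the self-contained framework of the present paper and depends on the local Fukaya category work alluded to in Section~\ref{sss-intro-cluster}.
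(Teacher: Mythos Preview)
The statement you are attempting to prove is labelled a \emph{conjecture} in the paper, and the paper does not supply a proof. Section~\ref{ss-ms} opens by announcing that the conjectures listed there, including Conjecture~\ref{con-BV}, are ``the subject of work currently in progress,'' and the section is explicitly described as ``brief and sketchy'' in \S\ref{ss-WC}. There is therefore no proof in the paper to compare your proposal against.

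What you have written is a plausible research plan rather than a proof, and you seem aware of this: you flag the Morse--Bott computation of the BV operator on $T^*\bT^n$ as an unresolved obstacle and float an alternative route via Hochschild cohomology that you acknowledge lies outside the paper's framework. Several of the ingredients you invoke are themselves not established in the paper. Step~(5) of the proof of Theorem~\ref{thmLocalityTrunc} treats the product in detail but only says ``We will leave compatibility with the BV operator to the reader''; the ring isomorphism of Conjecture~\ref{conj-poly-vec} that you need as input is itself only conjectural; and the claim that wall-crossing transformations are volume-preserving is asserted but not proven here. So while the localize--compute--globalize strategy is reasonable and likely close to what the authors have in mind for their forthcoming work, it cannot be completed from the results actually proven in this paper.
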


\section{$3$-dimensional examples: Semitoric fibrations}\label{Sec:GrossFibrations}

In \cite[Theorem 2.4]{Gross00}  Gross introduced a class of examples of special Lagrangian torus fibrations on complex toric Calabi-Yau's after removing a certain complex codimension one smooth subvariety. We will restrict ourselves to the three complex dimensional case. The important features for us are that these are singular Lagrangian torus fibrations $\cL: M\to B$ which admit a Lagrangian section, which are invariant under a global Hamiltonian $2$-torus action, and whose set of singular values is a 1-dimensional graph which will allow integrable anti-surgery. More precisely,

\begin{definition}\label{def-3d-semitoric}
A \emph{semitoric 3-fibration} consists of the data $(M,H,B,f_{\mu},\cL)$ where
\begin{itemize}
\item  $M$ is a symplectic manifold of dimension $6$,
\item  $H$ is a two dimensional torus with dual Lie algebra $\fh^*$,
\item  $B$ is $3$-manifold equipped with a smooth submersion $f_{\mu}:B\to\fh^*$ making it into an $\bR$-bundle,
\item $\cL:M\to B$ is a Lagrangian torus fibration with singularities. That is, $\cL$ is smooth proper surjection so that the non-singular fibers are Lagrangian.   
\end{itemize}
We require the data to satisfy the following conditions.
\begin{itemize}
\item The map $\mu:=f_{\mu}\circ\cL$ generates an $H\simeq\bT^2$ action with no finite stabilizers.
\item The set of singular values of $\cL$ is a $1$-dimensional trivalent graph $\Delta\subset B$ which is contained in a smooth section of $f_{\mu}$. We refer to the locus of these singular values as the \emph{discriminant locus $\Delta\subset B$}.
\item The singular points of $\cL$ are precisely those contained in lower dimensional orbits of $H$. Moreover, each singular fiber contains exactly one connected lower dimensional orbit of $H$. 
\item $\cL$ admits a Lagrangian section $\sigma:B\to M$ which does not pass through any singular point.
\end{itemize}
\end{definition}
Instead of recalling Gross' construction in full generality here, we construct these Lagrangian fibrations in two examples. 

\begin{example}\label{ex-gross1}
Let $X=\bC^3$ and let $\pi:\bC^3\to\bC$ be defined by $(z_1,z_2,z_3)\mapsto z_1z_2z_3$. Let $M=X\setminus\pi^{-1}(1)$.  We initially consider $M$ as a symplectic manifold with a symplectic structure $\omega_0$ obtained by restricting the standard one from $\bC^3$. This symplectic structure is not a priori geometrically bounded. However, we shall see below there is a canonical way to inflate $\omega_0$ near the divisor so that it becomes geometrically bounded.

Let $G=\bR^3/\bZ^3$ with its diagonal action of $\bC^3$ and let $H\subset G$ be the $2$-torus which preserves the function $\pi$. Concretely, $H$ is the sub-torus defined by the equation $x_1+x_2+x_3=0$ and $\fh^*$ is the quotient $\left(\bR^3\right)^*/\text{span}\{e_1^\vee+e_2^\vee+e_3^\vee\}$. The open subset $M\subset X$ is $H$-invariant, so it carries an $H$ action. The action of $H$ is Hamiltonian (as is that of $G$). 

Let $B=\bR^3$ and let $\cL:M\to B$ be given by 
$$
(z_1,z_2,z_3)\mapsto \left(\ln |1-z_1z_2z_3|, \frac1{2}(|z_1|^2-|z_3|^2),\frac1{2}(|z_2|^2-|z_3|^2)\right).
$$
Then $\cL$ is a Lagrangian torus fibration on $M$. Indeed, the map $\cL$ is easily seen to be proper. To see that the components commute with each other note the last two components of $\cL$ generate the action of $H$. Finally, the function $\pi$ and therefore the Hamiltonian $H_0(x)=\ln |1-\pi(x)|$ for $x\in M$ is constant along orbits of the torus $H$ and so it commutes with the last two components.

Identifying $\fh^*$ with $\bR^2$ via the basis which is dual to the basis $\{e_1-e_2,e_1-e_3\}$ of $\fh$ we let $f_{\mu}$ be the projection $\bR^3\to\bR^2\simeq\fh^*$ to the last two components. 

We verify that $(M,H,B,f_{\mu},\cL)$ satisfies all the conditions. 
\begin{itemize}
\item By construction $f_{\mu}\circ\cL$ generates the $H$ action which has no finite stabilizers.
\item The set of singular points of $\cL$ is the set of critical points of $\pi=z_1z_2z_3$. That is, the set of points where two coordinates vanish. These are also precisely the points with non-trivial stabilizers for the action of $H$. The orbits of these points form circles, except the origin where it is just a point.

The singular values are thus contained in the plane $\{x_3=0\subset B=\bR^3\}$. The subset of this where two given coordinates among $z_1,z_2,z_3$ vanish projects under  $\cL$ to a properly embedded trivalent graph $\Delta$ with one vertex. For each point $x=(x_1,x_2,0)\in\Delta$ the set of critical points of $\cL$ in the preimage $\cL^{-1}(x)$ is contained inside an intersection of two coordinate hyperplanes and is an orbit with moment value $(x_1,x_2,0)$. All such orbits are lower dimensional and connected. 

\item Finally, a Lagrangian section $\sigma$ may be constructed as follows. All the critical points occur in the fiber $\pi^{-1}(0)$. Pick any properly embedded real line $\gamma$ in $\bC\setminus\{1\}$ with $\lim_{t\to\infty}\gamma(t)=1$, $\lim_{t\to-\infty}\gamma(t)=\infty$ and $\gamma(t)\neq 0$ for all $t$. For any $t\in \bR$ the fiber $\pi^{-1}(t)$ is symplectomorphic to $\bR^2\times\bT^2$. So we define $\sigma$ by taking the zero section in one such fiber and using symplectic parallel transport along $\gamma$.  To see that the parallel transport does not go off to infinity note that symplectic parallel transport preserves the moment map $\mu:=f_{\mu}\circ\cL$. Indeed $\mu$ generates an action which is tangent to the fibers of $\pi$. Now use the fact that  $\mu$ is proper along the fibers of $\pi$.
\end{itemize}
\end{example}

We consider now one example where $M$ is not exact. It is not hard to see that $M$ is in fact not even exact in the complement of any compact set. 

\begin{example}\label{ex-gross2}
Consider the quasi-projective variety $X=\cO(-1)\oplus\cO(-1)\to \bC P^1$. As a complex algebraic variety it can be obtained as the projective GIT quotient of $\bC^4$ with the trivial line bundle by the $\bC^*$ action
\begin{equation}\label{eqGitQuot}
t\cdot(z_1,z_2,z_3,z_4,\lambda)=(tz_1,tz_2,t^{-1}z_3,t^{-1}z_4, t^{-1}\lambda).
\end{equation}
Concretely, the complex points of $X$ can be identified with the quotient of  $(\bC^2\setminus\{(0,0)\})\times \bC^2$ by the same action (restricted to $\lambda=0$). 
On the other hand, all semi-stable orbits are stable in this case and the GIT quotient \eqref{eqGitQuot} corresponds by the Kempf-Ness theorem to a symplectic reduction of $\bC^4$ by the Hamiltonian $S^1$ action  generated by the moment map $\mu(z_1,z_2,z_3,z_4)=|z_1|^2+|z_s|^2-|z_3|^2-|z_4|^2$. The latter action is obtained from \eqref{eqGitQuot} ($\lambda=0$) by restricting to $t\in S^1\subset\bC^*$. The symplectic quotient involves the choice of a real number $c\in\bR_{>0}$ so that $X=\mu^{-1}(c)/S^1$. Considering $X$ as a symplectic quotient for $c=1$, it inherits a Hamiltonian $$G:=\bT^4/\{(t,t,t^{-1},t^{-1})\mid t\in S^1\}$$ action with moment map $\mu^{G}:X\to \{x_1+x_2-x_3-x_4=0\}\subset (\mathfrak{t}^4)^*$. This map is induced from the restriction (domain and target) of the standard moment map $\mathbb{C}^4\to (\mathfrak{t}^4)^*$ (with image $\bR_{\geq 0}^4$) to $\mu^{-1}(1)\to \{x_1+x_2-x_3-x_4=1\}.$ In particular the moment map image is obtained by slicing $\bR_{\geq 0}^4$ with the hyperplane $\{x_1+x_2-x_3-x_4=1\}$. Note that the Lie algebra $\mathfrak{t}^4$ has an implicit basis $e_1,e_2,e_3,e_4$ and the linear coordinates $x_1,x_2,x_3,x_4$ on $(\mathfrak{t}^4)^*$ are defined with respect to this basis. On $(\mathfrak{t}^4)^*$ we also get the dual basis $e_1^\vee,e_2^\vee,e_3^\vee,e_4^\vee.$

Denote by $H\hookrightarrow G$ the sub-torus whose action preserves the product $z_1z_2z_3z_4$. $H$ acts on $M$ in a Hamiltonian way. We have that $\fh^*$ is naturally isomorphic to $$\{x_1+x_2-x_3-x_4=0\}/\text{span}(e_1^\vee+e_2^\vee+e_3^\vee+e_4^\vee).$$
The moment map $X\to \fh^*$ is the composition of $\mu^{G}$ with the canonical quotient map.

The global function $\pi:\bC^4\to\bC, (z_1,z_2,z_3,z_4)\mapsto z_1z_2z_3z_4$ is constant on the orbits of the $\bC^*$ action  \eqref{eqGitQuot}. So, it descends to a global function on $X$, which we still denote by $\pi$.  As before, consider the variety $M=X\setminus \pi^{-1}(1)$. 


$M$ carries the function $H_0(x)=\ln |1-\pi(x)|$ for $x\in M$. The function $\pi$ and therefore the Hamiltonian $H_0$ is constant along orbits of the torus $H$. It follows that combining $H_0$ with the moment map $X\to \fh^*$ restricted to $M$ we obtain a Lagrangian torus fibration $\mathcal{L}:M\to \bR\times \fh^*$ with singularities. We define $f_{\mu}$ to be the projection to $\fh^*$.

Verification that  this data conforms with the definition of a semi-toric 3-fibration is the same as in the previous example and is omitted.

\end{example}


\begin{figure}
\includegraphics[width=0.9\textwidth]{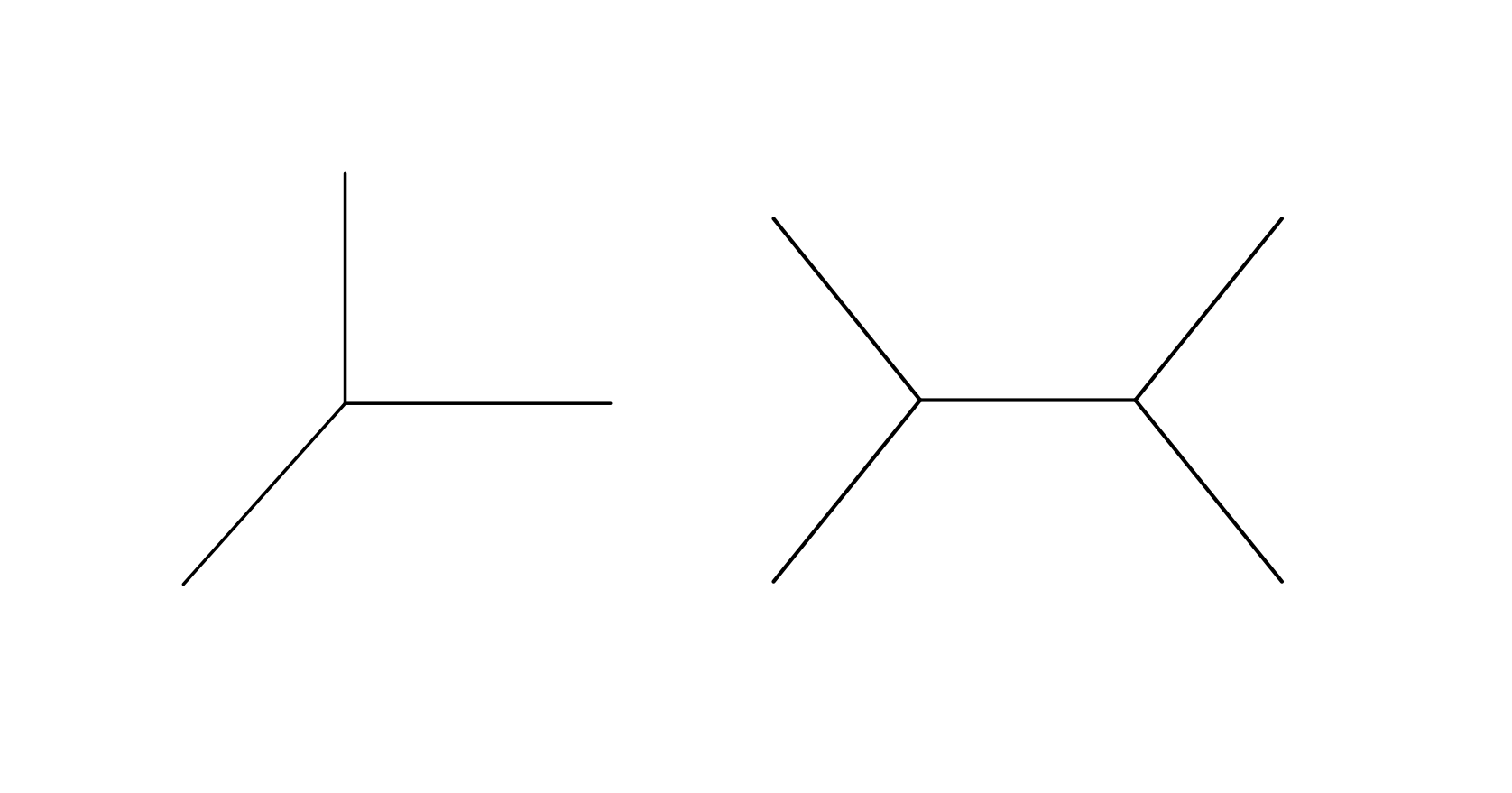}
\caption{The diagram $\Delta$ in the case of Example \ref{ex-gross1} (left) and Example \ref{ex-gross2} (right)}
\label{figDiagarm}
\end{figure}

We proceed with the general discussion. Given a semi-toric 3-fibration we pick a basis for $\fh^*$, inducing a pair of functions $\eta_1,\eta_2: B\to\bR^2$, and a transverse coordinate $\eta_0:B\to\bR$ such that $\Delta\subset \eta^{-1}(0)$. Such a coordinate exists by the assumption that $f_{\mu}$ is an $\bR$-bundle with a given section whose image contains $\Delta$\footnote{The image of the section divides each fiber into two connected components. Because the base is contractible we can make a choice of a smoothly varying positive/negative components. Choosing a Riemannian metric on $B$, we can define $\eta(x)$ for $x$ belonging to $\bR$ fiber $F$ as $\pm$ the length of the part of $F$ between the section and $x$.}.  This produces  a diffeomorphism $\eta=(\eta_0,\eta_1,\eta_2):B\to\bR^3$. In this notation, we  write
\begin{equation}
(H_0,H_{F_1},H_{F_2})=\eta\circ{\cL}
\end{equation}

The set of regular values in the base $B_{reg}:=B\setminus \Delta$ carries an integral affine structure induced by the fibration $\cL$. This structure is defined by taking the set of integral affine functions on a simply connected $V\subset B_{reg}$ to consist of functions $f: U\to\bR$ such that the Hamiltonian flow of $f\circ \cL$ is $1$-periodic. From this description it is clear that the functions $\eta_1,\eta_2$ are integral affine coordinates.

The function $\eta_0$, on the other hand, is not integral affine.  Moreover, there is a possibility that the lines $f_{\mu}=Const$ in $B$ map under integral affine coordinates to intervals which are not the entire real line. This motivates the following definition. 

\begin{definition}\label{dfHorizontalCompleteness}
We say that a semitoric SYZ fibration is \emph{horizontally complete} if for any $c\in\fh^*\setminus f_{\mu}(\Delta)$ we have that the line $f_{\mu}^{-1}(c)$ is integral affine isomorphic to $\bR$ while for $c\in f_{\mu}(\Delta)$ we have that
$f_{\mu}^{-1}(c)\setminus\Delta$ is integral affine isomorphic to $\bR\setminus\{0\}$. 
\end{definition}



\begin{lemma}\label{lmHorizontalCompletion}
Given a semitoric 3-fibration $(M,H,B,f_{\mu},\cL)$ there exist a  symplectic manifold $\overline{M}$, a smooth $3$-manifold  $\overline{B}\simeq\bR^3$ and a {horizontally complete} semitoric 3-fibration $\overline{\cL}:\overline{M}\to \overline{B}$ fitting into a commutative diagram
\begin{equation}
\xymatrix{M\ar[d]^{\cL}\ar[r]&\overline{M}\ar[d]^{\overline{\cL}}\\B\ar[rd]\ar[r]&\overline{B}\ar[d]\\&\fh^*},
\end{equation}
where the upper horizontal arrow is a symplectic embedding and the lower horizontal arrow is a smooth embedding.
\end{lemma}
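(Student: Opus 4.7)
The plan is to attach a cylindrical end above each portion of $B$ where horizontal completeness fails, and to extend $B$ correspondingly. All the action takes place in $B_{\text{reg}}$, since $\Delta$ lies in a single section of $f_\mu$ and therefore does not interact with asymptotic behaviour along $f_\mu$-fibers.

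First, on $B_{\text{reg}}$ introduce the integral affine coordinate $\tilde\eta_0$ along $f_\mu$-fibers, measured as the action variable from the reference section containing $\Delta$ via the reductions $M_c := \mu^{-1}(c)/H$; concretely, for $b\in f_\mu^{-1}(c)$ with $\eta_0(b)>0$ (say), the value $\tilde\eta_0(b)$ equals the symplectic area of the region in $M_c$ between the circle fibers $\tilde H_0=0$ and $\tilde H_0=\eta_0(b)$ of the descended Hamiltonian. This is smooth and strictly monotone along each fiber since the reduced symplectic forms are positive, and it extends smoothly across $\Delta$ because $\Delta$ lies in the zero section of $f_\mu$ and the reduced spaces are smooth away from $\Delta$. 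Using the trivialization $B\simeq \bR\times \fh^*$ given by $f_\mu$, set $\overline{B}:=\bR\times\fh^*\cong\bR^3$ with $\overline{f_\mu}$ the projection, and embed $B\hookrightarrow\overline{B}$ by $b\mapsto(\tilde\eta_0(b),f_\mu(b))$. Each $\overline{f_\mu}$-fiber is now integral affine isomorphic to $\bR$, into which $f_\mu^{-1}(c)$ embeds as an open interval (with the singular point of $\Delta$ removed if $c\in f_\mu(\Delta)$).

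Next, cover a neighbourhood of the closure of $\overline{B}\setminus B$ in $\overline{B}$ by open sets $U_\alpha$ small enough that $U_\alpha\cap B\subset B_{\text{reg}}$. Over each $U_\alpha\cap B$, the action-angle theorem together with the Lagrangian section $\sigma$ and the free $H$-action yields an $H$-equivariant symplectomorphism $\cL^{-1}(U_\alpha\cap B)\simeq (U_\alpha\cap B)\times\bT^3$ with the standard symplectic form $\sum dp_i\wedge dq_i$ pulled back via the integral affine coordinates on $U_\alpha\cap B$; the $\bT^2$-factor is the $H$-orbit direction and the remaining $S^1$-factor is the circle generated by $\tilde\eta_0$. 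Extend the chart tautologically to $U_\alpha\times\bT^3$ with the same standard form, and glue it to $M$ via this identification. Performing this construction over all $U_\alpha$ produces $\overline{M}$ together with the projection $\overline{\cL}:\overline{M}\to\overline{B}$; the Hamiltonian $H$-action extends on the $\bT^2$-factors; the section $\sigma$ extends as the zero section in each new chart; the discriminant $\overline{\Delta}$ coincides with $\Delta$; and horizontal completeness holds by construction.

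The main technical obstacle will be verifying that the local action-angle charts $U_\alpha\times\bT^3$ glue consistently into a single $\overline{M}$. This reduces to checking that the cocycle of transition functions, originally defined on $U_\alpha\cap U_\beta\cap B$, extends to the full overlap $U_\alpha\cap U_\beta\subset\overline{B}$. Since on $B$ the transitions are classified by the integer-affine structure of the base together with a choice of zero section, and since $\overline{B}$ by construction carries an integral affine structure extending that of $B$ along the fiber direction while $\sigma$ provides a global zero section eliminating the usual affine-torsor ambiguity, the extension is automatic; a partition-of-unity argument on the reduced symplectic forms then smooths out any remaining dependence on choices, and properness of $\overline{\cL}$ follows from Lemma \ref{lmSurgPresPrp}-type considerations applied to the product model.
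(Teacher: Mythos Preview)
Your approach is in the right spirit—attach product pieces at infinity where horizontal completeness fails—and this is essentially what the paper does. However, there is a genuine gap in your treatment of the integral affine structure.

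You claim that the reduced-area function $\tilde\eta_0$ is a global integral affine coordinate on $B_{\text{reg}}$, that it extends smoothly across $\Delta$, and that $b\mapsto(\tilde\eta_0(b),f_\mu(b))$ embeds $B$ into $\overline B=\bR^3$ compatibly with the integral affine structures. This cannot be correct: $B_{\text{reg}}$ has nontrivial monodromy around $\Delta$ (this is precisely what makes the fibers over $\Delta$ singular), so it admits no global integral affine embedding into $\bR^3$. Your gluing argument then rests on the false assertion that ``$\overline B$ by construction carries an integral affine structure extending that of $B$ along the fiber direction,'' and the concluding appeal to a partition-of-unity argument on reduced symplectic forms is too vague to repair this. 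A related issue is your choice of the singular section $\eta_0=0$ as the reference for $\tilde\eta_0$: the reduced spaces $M_c$ degenerate there, so smoothness of $\tilde\eta_0$ at $\Delta$ is not clear.

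The paper's proof sidesteps the monodromy by working with \emph{two} simply connected charts $V^\pm=B\setminus P^\mp$ rather than a single coordinate. On each $V^\pm$ one obtains a genuine integral affine coordinate $i^\pm$ via flux of a transverse cycle from a \emph{regular} basepoint, shows that $(i^\pm,\eta_1,\eta_2):V^\pm\to\bR^3$ is an open embedding, and then attaches product pieces of $T^*\bT^3$ only beyond the hypersurfaces $\eta_0=\pm\epsilon$. Because the attachment takes place entirely in regions where $\cL$ is already a trivial torus bundle with Lagrangian section, no cocycle across $\Delta$ ever needs extending: one simply sets $\overline B=\overline V^+\cup B\cup\overline V^-$ and similarly for $\overline M$. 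Your argument could be salvaged by treating the $\eta_0>0$ and $\eta_0<0$ halves separately (your $U_\alpha$ naturally fall into these two groups with no cross-overlaps), but as written the single-coordinate construction does not go through.
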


\begin{remark}
It can be shown that  $(\overline{M},\overline{\cL})$ is {geodesically complete in the sense that all rays which don't hit $\Delta$ extend indefinitely. Moreover, it is} unique up to fiberwise symplectomorphism {with this completeness property}. We don't use this so we omit a detailed discussion, but  we refer to $(\overline{M},\overline{\cL})$ as \emph{the completion of $(M,\cL)$}. 
\end{remark}
\begin{proof}

Before diving into the details of the proof we explain the main idea. We have that the components $\eta_1,\eta_2$ of $\eta$ are integral affine coordinates on $B$ while the coordinate $\eta_0$ is not. We cover the complement of $\Delta\subset B$ by a pair of subsets $V^{\pm}$ as follows. Let $P^{\pm}=\eta^{-1}(f_{\mu}(\Delta)\times\bR_{\pm})\subset B$ and let $V^{\pm}=B\setminus P^{\mp}$.  
\begin{figure}
\includegraphics[width=\textwidth]{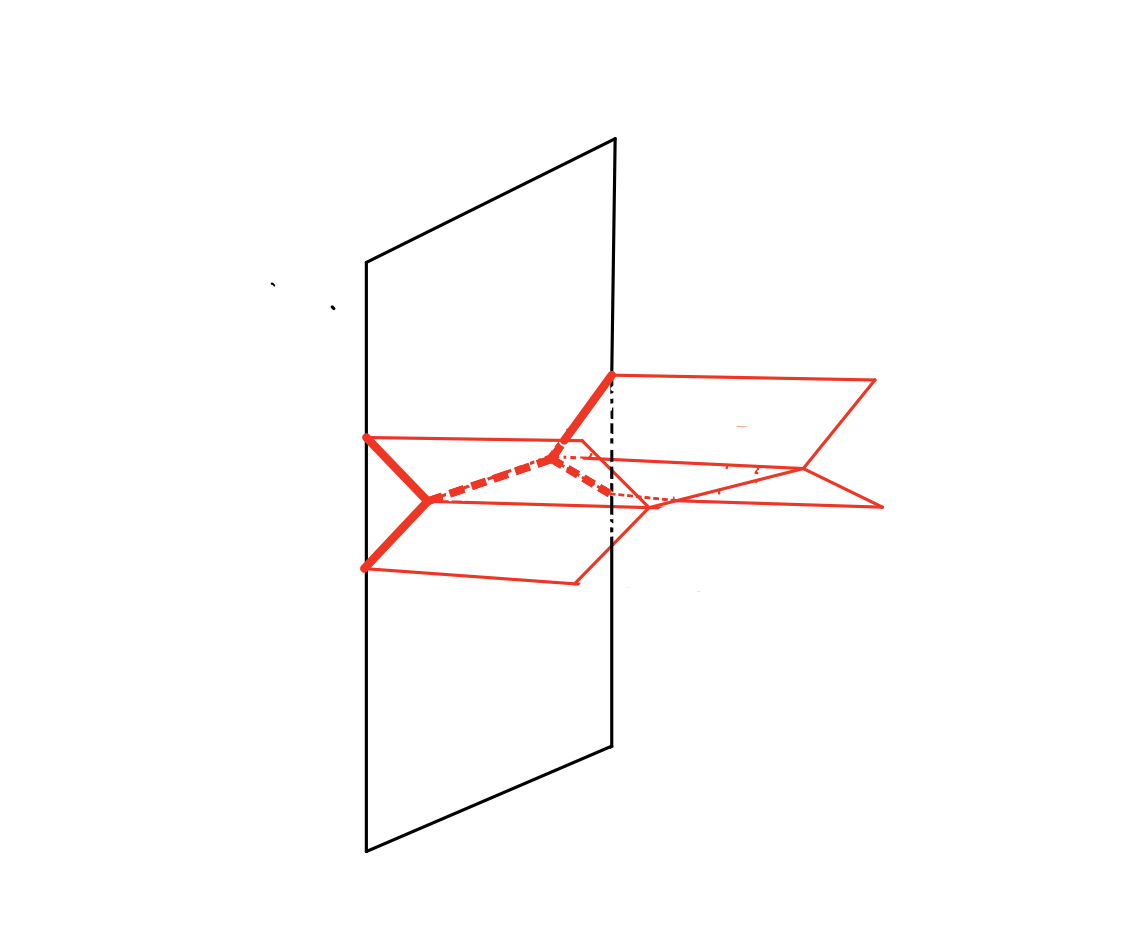}
\caption{The set $P^+$ in the case of Example \ref{ex-gross2}.}
\label{figP}
\end{figure}
The subsets $V^{\pm}$ are simply connected and so we replace the coordinate $\eta_0$ by an integral affine coordinate $i^{\pm}$ so that $i^{\pm},\eta_1,\eta_2$ form an integral affine coordinate system on $V^{\pm}$. The lack of horizontal completeness is symptomized by the possibility that along lines defined by $(\eta_1,\eta_2)=const$ the functions $i^+$ and $i^-$ are bounded above or below respectively. 
\begin{figure}
\includegraphics[width=\textwidth]{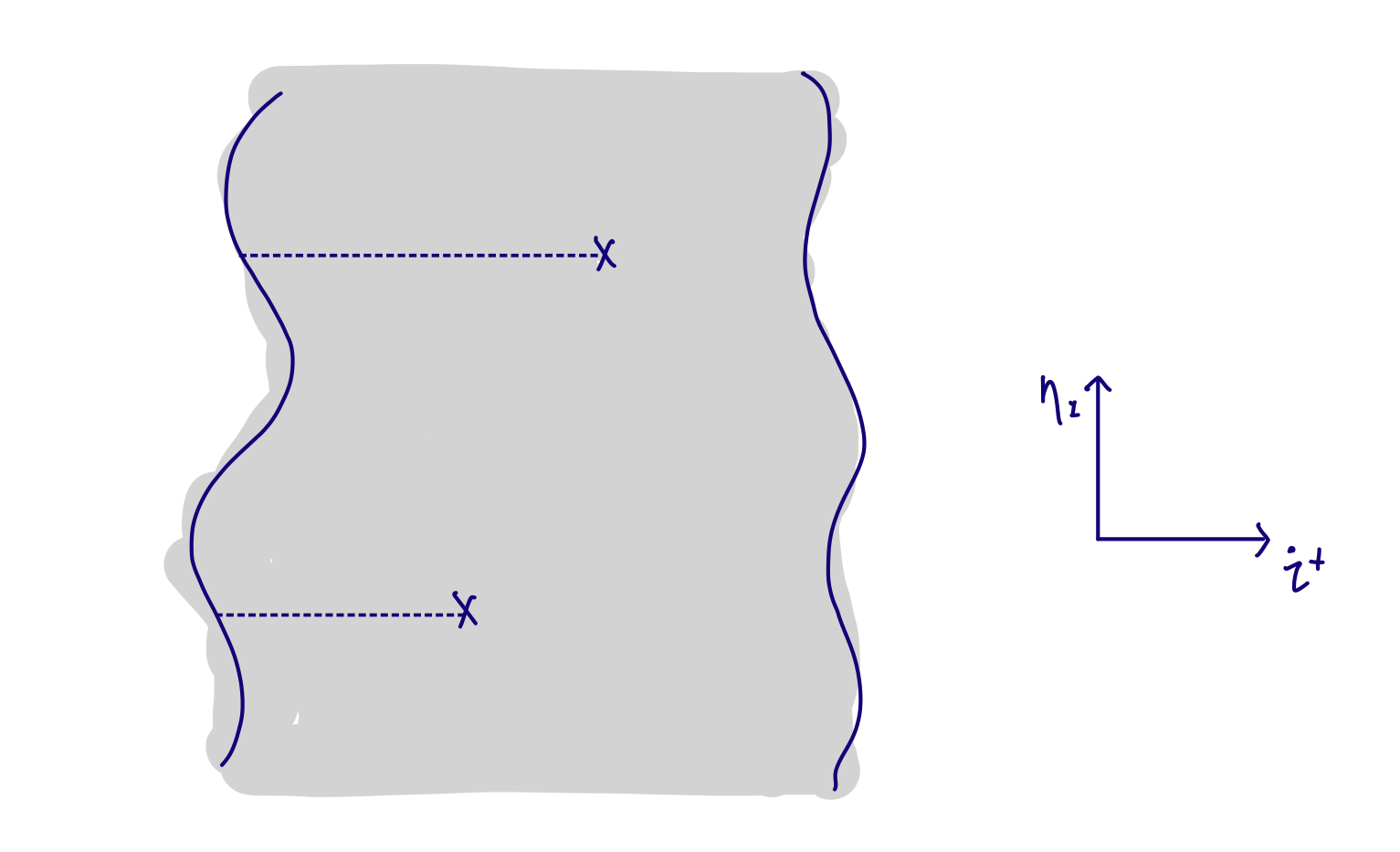}
\caption{Proof of Lemma \ref{lmHorizontalCompletion}. A slice $\eta_2=Const$ of the image of {$V^+$} in the case of example \ref{ex-gross2} under the affine coordinates $(i^+,\eta_1,\eta_2).$ Due to the incompleteness, the image has boundary given by the thick vertical lines.}
\label{figEmbedding}
\end{figure}

To deal with this, write $\phi^{\pm}=(i^{\pm},\eta_1,\eta_2)$. Then we show that the maps $\phi^+,\phi^-$ define open embeddings of $B$ into $\bR^3$.  For any $\epsilon>0$, the hypersurface $\eta_0^{-1}(\pm\epsilon)$ maps under $\phi^{\pm}$ to some hypersurface $S^{\pm}$ of $\bR^3$ which disconnects $\bR^3$ into two components. The horizontal completion $\overline{B}$ of $B$ is achieved by attaching the points of $\bR^3$ to the right of the image $S^+$ and to the left of  $S^-$, where the gluing is carried out via the identifications given by $\phi^{\pm}$ respectively. The horizontal completion $\overline{M}$ of $M$ is then achieved in the obvious way since near each of the hypersurfaces $S^{\pm}$ the fibrations is a product fibration. 

Now to the details. Let  $U^{\pm}:=\cL^{-1}(V^{\pm})\subset M$. Pick $p\in V^+\cap V^-$ and let $L_p:=\cL^{-1}(p)$. Note that $L_p$ is a smooth fiber.  Pick a primitive generator $\gamma$ of $H_1(L_p;\bZ)$ transverse to the torus orbits of $(H_{F_1},H_{F_2})$. Let $I_0^{\pm}:U^{\pm}\to \bR$ be the function giving the symplectic flux of the cylinder traced by transporting $\gamma$ from  the point $p$ along any path contained in $V^{\pm}$.

We can factor  $I_0^{\pm} =i^{\pm}\circ \cL$ for some unique function $i^{\pm}:V^{\pm}\to\bR$. We show now that the triple of functions $(i^{\pm},\eta_1,\eta_2):V^{\pm}\to\bR^3$ is an open embedding. For this let $f^{\pm}=i^{\pm}\circ\eta^{-1}$. We have that $\partial_{x_0}f^{\pm}(x_0,x_1,x_2)\neq 0$ for all $x\in \eta^{-1}(V^{\pm})$.  Indeed, otherwise the functions $(I^{\pm}_0,H_{F_1},H_{F_2})$ would be dependent functions in contradiction to the Arnold-Liouville theorem. From this it follows that the map $\eta^{-1}(V^{\pm})\to\bR^3$ defined by
\begin{equation}
(x_0,x_1,x_2)\mapsto (f^{\pm}(x_0,x_1,x_2), x_1,x_2)
\end{equation}
is an injective immersion and thus, being of full dimension, an  open embedding. This is equivalent to $(i^{\pm},\eta_1,\eta_2)$ being an open embedding.

Under the above embedding, for any $\epsilon>0$ the hypersurfaces $\eta_0=\pm\epsilon$ map to hypersurfaces $S^{\pm}\subset\bR^3$ which are each the graph of a function $x_0=g^{\pm}(x_1,x_2)$. By picking a Lagrangian section of $\cL$ in the region $\eta_0\geq\epsilon$ (resp. $\eta_0\leq-\epsilon$) we obtain a symplectic fiberwise embedding of $\eta\circ\cL^{-1}(\{x_0<\epsilon\})$ (resp. $\eta\circ\cL^{-1}(\{x_0>-\epsilon\})$) into the open submanifold of $\bR^3\times\bT^3=T^*\bT^3$ given by $\{x_0<g^+(x_1,x_2)\}$ (resp. $\{x^0>g^-(x_1,x_2)\}$).   Use this to define the completions $\overline{U}^{+}\to\overline{V}^{+}$ (resp. $\overline{U}^-\to\overline{V}^-$) by attaching the regions to the right of $S^+$ (resp. to the left of  $S^-$) times $\bT^3$. Finally, define
\begin{equation}
\overline{M}:=(\eta\circ \cL)^{-1}([-\epsilon,\epsilon])\cup \overline{U}^{+}\cup \overline{U}^{-}
\end{equation}
and,

\begin{equation}
\overline{B}:=\overline{V}^{+}\cup B\cup\overline{V}^{-}.
\end{equation}
$\overline{M}$ is equipped with a Lagrangian torus fibration over $\overline{B }$ obtained by extending $\cL$ in the obvious way, which fits into a commutative diagram as in the statement of the Lemma. 

\end{proof}

\begin{example}
Another way to achieve the completion in the case of the complement of an anti-canonical divisor in a toric Calabi-Yau 3-fold is to fix $M,\cL$ and modify the symplectic form as follows. Let $\omega_1$ be the symplectic form on $\bC\setminus\{1\}\simeq \bC^*$ defined by pull-back of the standard form $ds\wedge dt$ on the cylinder $\bR\times S^1$. Then consider the symplectic form $\omega=\omega_0+\pi^*\omega_1$. Then $\omega$ is $H$ invariant and the system $\cL$ is still involutive. Moreover, it is easy to see that $\cL$ is now horizontally complete.  
\end{example}

Proceeding with the general discussion  we assume from now on that $\cL$ is horizontally complete and show that $(M,\omega)$ admits a pair of complete embeddings of $T^*\bT^3$ covering the complement of the critical points of $\cL$. More precisely, consider as before the subsets $P^{\pm}=\eta^{-1}(\eta(\Delta)\times\bR_{\pm})\subset B$. Then we prove the following proposition
\begin{proposition}\label{proComlpeteT3embedding}
There exist co-isotropic stratified 4-dimensional subspaces $C^{\pm}\subset\cL^{-1}(P^{\pm})$ such that denoting $M^{\pm}:=M\setminus C^{\mp}$ there is a symplectomorphism
\begin{equation}
\iota^{\pm}:M^{\pm}\to T^*\bT^3.
\end{equation}
Moreover, for any convex open neighborhood $\cN(P^{\mp})\subset \bR^3$ of the stratified space $P^{\mp}$ we can choose $\iota^{\pm}$   to agree  on  $M\setminus \cL^{-1}(\cN(P^{\mp}))$ with the embedding into $T^*T^3$ defined by the action angle coordinates.
\end{proposition}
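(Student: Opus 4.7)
The plan is to develop a $3$-dimensional analog of the integrable anti-surgery construction of Section \ref{ss-surgery}, with the Lagrangian tail $L_p$ replaced by the stratified coisotropic $C^{\mp}$. First I will define $C^{\pm}\subset\cL^{-1}(P^{\pm})$ as the $H$-invariant lift of $P^{\pm}$: over a point $b\in P^{\pm}$ with $\eta_0(b)\neq 0$ it is the $2$-dimensional $H$-orbit in the smooth Lagrangian $3$-torus $\cL^{-1}(b)$; over interior edge points of $\Delta$ it is the circle orbit of the $S^1$-stabilizer in the singular fiber; and over trivalent vertices of $\Delta$ it is an $H$-fixed point. These strata assemble into a stratified coisotropic whose top stratum is a smooth coisotropic $4$-manifold.

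Working on $M\setminus C^{\mp}$, I will construct a new, non-singular Lagrangian torus fibration $\pi^{\mathrm{new}}:M\setminus C^{\mp}\to B$ by modifying $\cL$ in an arbitrarily small $H$-invariant neighborhood $W$ of $C^{\mp}$ whose image in $B$ is contained in $\cN(P^{\mp})$. Along an interior edge point of $\Delta$, quotienting by the appropriate $S^1\subset H$ reduces the problem to the $4$-dimensional anti-surgery of Section \ref{ss-surgery} applied to the symplectic reduction, with the role of the Lagrangian tail played by the $S^1$-quotient of $C^{\mp}$. Near a trivalent vertex I build an $H$-equivariant local model patterned on $(\bC^3,\omega_{\mathrm{std}})$ as in Example \ref{ex-gross1}, together with an $H$-invariant, $\mu$-preserving diffeomorphism of $U\setminus\mathrm{image}(C^{\mp})$ onto $U$ that is the identity near $\partial U$, in direct analogy with the diffeomorphism $\Phi$ from Section \ref{ss-surgery} and Lemma \ref{lem-ex-slide}. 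The three local modifications glue with the unchanged portion of $\cL$ outside $W$ to yield a globally defined proper Lagrangian submersion $\pi^{\mathrm{new}}$, by the argument of Lemma \ref{lmSurgPresPrp}.

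To identify $M^{\pm}$ with $T^*\bT^3$ it suffices, by Arnold--Liouville together with the Lagrangian section inherited from $\sigma$, to show that the integral affine structure on $B$ induced by $\pi^{\mathrm{new}}$ is globally isomorphic to the standard one on $\bR^3$. The affine coordinates $\eta_1,\eta_2$ persist because the $H$-action is undisturbed. For the transverse coordinate I apply the flux argument used in Proposition \ref{prop-eig-proc} and Section \ref{sss-idea}: the symplectic area of a cylinder obtained by transporting a transverse cycle along any line $\{(\eta_1,\eta_2)=\mathrm{const}\}$ in $B$ is unaffected by removing the $C^{\mp}$ stratum from the corresponding reduced surface. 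Combined with the horizontal completeness hypothesis, which forces these lines to have infinite length in both directions prior to modification, one deduces that the new integral affine structure extends to a global isomorphism $B\simeq\bR^3$. The prescribed agreement with action-angle coordinates on $M\setminus\cL^{-1}(\cN(P^{\mp}))$ is then automatic since the anti-surgery is supported in $\cL^{-1}(\cN(P^{\mp}))$.

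The main obstacle will be the local model at the trivalent vertices of $\Delta$. Along edge interiors, symplectic reduction by $S^1\subset H$ reduces the construction to the well-understood $4$-dimensional case. At a vertex, however, three such reductions compete simultaneously; the full $\bT^2$-action has a fixed point and the $H$-orbit structure degenerates along all three adjacent edges at once. Constructing the explicit $H$-equivariant, $\mu$-preserving diffeomorphism removing $C^{\mp}$ near such a vertex, and verifying that its effect on the action coordinate is precisely to extend it smoothly across the vertex image in $\bR^3$, is the principal technical content required beyond a direct adaptation of the $4$-dimensional techniques.
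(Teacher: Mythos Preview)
Your plan is in the right spirit, but it takes a harder route than the paper, and the obstacle you flag at trivalent vertices is exactly what the paper's approach sidesteps.

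The paper does not work edge-by-edge via $S^1$-reductions and then confront the vertex. Instead it passes \emph{globally} to the full $\bT^2$-orbit space $Y=M/H$. The two lemmas immediately preceding the proof show that $Y$ is globally homeomorphic to $B\times S^1$ via a map $\psi$ built from the section $\sigma$ and the flow of $H_0$ on $Y^{\#}$, with $\psi$ a diffeomorphism off $\Delta\times\{1\}$. In this model $\tilde{C}^{\pm}=\psi(P^{\pm}\times\{1\})$, and one constructs in one stroke a diffeomorphism $\chi^{\pm}:Y\setminus\tilde{C}^{\pm}\to Y$ preserving the projection to $\fh^*$ and equal to the identity outside an $\epsilon$-neighborhood of $\tilde{C}^{\pm}$: just push in the $\eta_0$-direction by a function blowing up along $P^{\pm}\times\{1\}$. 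The new fibration is then $\cL^{\pm}:=\tilde{\cL}\circ\chi^{\pm}\circ q$, which is proper, smooth, and has Lagrangian fibers because any function factoring through $q$ Poisson commutes with $H_{F_1},H_{F_2}$. No local model on $M$ near a vertex is ever invoked; the trivalent structure of $\Delta$ is entirely absorbed into the global identification $Y\simeq B\times S^1$.

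By contrast, your edge-wise $S^1$-reductions use a \emph{different} circle $S^1\subset H$ along each edge, so at a vertex you would have to reconcile three distinct $4$-dimensional anti-surgeries whose underlying reductions do not agree. That is genuine extra work you have not supplied, and it is not obvious it can be done cleanly. A smaller point: your definition of $C^{\pm}$ over the smooth part of $P^{\pm}$ says ``the $2$-dimensional $H$-orbit'' in $\cL^{-1}(b)$, but there is a circle's worth of such orbits; the paper's $\psi$ is precisely what singles out a coherent choice extending the singular orbits over $\Delta$. The concluding steps you outline---Arnold--Liouville over $B\simeq\bR^3$, and the horizontal-completeness argument that each reduced fiber of $f_{\mu}\circ\tilde{\cL}$ in $Y\setminus\tilde{C}^{\pm}$ is a complete cylinder---do match the paper.
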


While Proposition \eqref{proComlpeteT3embedding} is somewhat heavy in notation, both the statement and the proof  are in complete analogy with the discussion of integrable anti-surgery in dimension $2$ in the case of a single node. The reader may find it helpful to understand the analogy before diving into the proof. In the case of the focus focus singularity, we have a pair of affine rays $l^{\pm}$ emanating from the nodal value. These are characterized by the constancy of the moment map of the $S^1$ action.  We then construct a pair of Lagrangian tails $L^{\pm}$ by smoothly choosing a lift of each point of $l^{\pm}$ to an orbit of the $S^1$ action. An essential step in the construction is presenting the orbit space of the complement to a tail as a union of symplectic cylinders of infinite area in both directions. 

In the present case, instead of a singular value we have a $1$-dimensional set $\Delta$ of singular values, and instead of a global $S^1$ action we have a global $\bT^2$ action. For each singular value $x\in\Delta$ we have a pair $l^\pm$ of affine rays emanating from $x$ on which the moment map is constant. The sets $P^{\pm}$ are each the union of such rays going off in the $\pm$ direction. The sets $C^{\pm}$ are a smooth lift of each point of $P^{\pm}$ to an orbit of the $2$-torus action. The generalization of the notion of a Lagrangian tail in this dimension thus turns out to be a $4$-dimensional coisotropic space.

As in the case of nodal singularities, the key to the construction of the complete embeddings is the structure of the $\bT^2$ orbit space $Y=M/H$. {In this case we will again present the complement $(M\setminus C^{\pm})/H$ as a family of symplectic cylinders of infinite area in both directions.} Denote by $Y^{\#}\subset Y$ the regular orbits. From now on we identify $H$ with $\bT^2$ using the basis  that we chose for $\fh^*$.  Consider the commutative diagram
\begin{equation}
\xymatrix{M\ar[rd]^{\cL}\ar[r]^q& Y\ar[d]^{\tilde{\cL}}\\& B}
\end{equation}
Then we have
\begin{lemma}
There is a fiberwise homeomorphism $\psi:B\times S^1\to Y$ fitting into a commutative diagram
\begin{equation}
\xymatrix{B\times S^1\ar[rd]^{p_1}\ar[r]^{\psi}& Y\ar[d]^{\tilde{\cL}}\\& B}.
\end{equation}
Moreover, $\psi$ restricts to a diffeomorphism
\begin{equation}
B\times S^1\setminus (\Delta\times \{1\})\to Y^{\#}.
\end{equation}
\end{lemma}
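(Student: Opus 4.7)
The plan is to trivialize the projection $\tilde{\cL}:Y\to B$ as a topological $S^1$-bundle using the Lagrangian section together with a canonical periodic Hamiltonian flow transverse to the $H$-orbits. First, I would note that the Lagrangian section $\sigma:B\to M$, which by hypothesis avoids the singular points of $\cL$, descends to a smooth section $\bar{\sigma}:B\to Y$ whose image lies in $Y^{\#}$. This gives a reference point in each fiber of $\tilde{\cL}$.

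Next, I would identify the fibers of $\tilde{\cL}$ individually. Over $b\in B\setminus\Delta$, the fiber $\cL^{-1}(b)$ is a Lagrangian $3$-torus on which $H\simeq\bT^2$ acts with no finite stabilizers, so the quotient $\cL^{-1}(b)/H$ is canonically diffeomorphic to $S^1$. Over $b\in\Delta$, I would use the equivariant Weinstein neighborhood theorem combined with the normal form for focus-focus singularities (in the guise of Lemma \ref{lem-Eliasson} and the Hopf description of Section \ref{ss-3d}) to put a neighborhood of the singular $H$-orbit in the standard $Hopf$ model. From this local picture one reads off that $\tilde{\cL}^{-1}(b)$ is still homeomorphic to $S^1$, with the image of the singular $H$-orbit being one distinguished point, locally corresponding to the origin in $\mathbb{C}\times\mathbb{R}$ under the model map of Lemma \ref{lem-Eliasson}.

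To globally trivialize, I would construct a canonical $S^1$-action on $Y$. Fix a regular fiber $L_p=\cL^{-1}(p)$ and a primitive class $\gamma\in H_1(L_p;\mathbb{Z})$ transverse to the $H$-orbits in $L_p$. Symplectic parallel transport of $\gamma$ along paths in $B\setminus\Delta$, computed as in the construction of the flux functions $I_0^{\pm}$ in the proof of Lemma \ref{lmHorizontalCompletion}, produces (locally) an $H$-invariant Hamiltonian on $M^{\#}$ whose Hamiltonian vector field is transverse to the $H$-orbits and whose flow has period $1$ by Arnold--Liouville together with the integrality of $\gamma$. This flow descends to a smooth free $S^1$-action $\Phi^{\theta}$ on $Y^{\#}$. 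Using the horizontal completeness assumption, combined with the local Hopf model near singular orbits, I would check that $\Phi^{\theta}$ extends to a continuous $S^1$-action on all of $Y$ whose fixed points are exactly the singular $H$-orbits.

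Finally, I would define
\[
\psi(b,e^{2\pi i\theta}):=\Phi^{\theta}(\bar{\sigma}(b)),
\]
after shifting the origin of the flow parameter so that for each $b\in\Delta$ the value $\theta=0$ corresponds to the singular orbit in $\tilde{\cL}^{-1}(b)$; continuity of this shift follows from the local Hopf model. Commutativity of the diagram is automatic since $\Phi$ preserves fibers of $\tilde{\cL}$ and $\bar{\sigma}$ is a section. Fiberwise bijectivity is a consequence of the previous step, and continuity follows from continuity of $\Phi$ and $\bar{\sigma}$. Restricted to $B\times S^1\setminus(\Delta\times\{1\})$, both $\Phi$ and $\bar{\sigma}$ are smooth and the $S^1$-direction is transverse to the kernel of $d\tilde{\cL}$, so $\psi$ is a local diffeomorphism there, and the bijection onto $Y^{\#}$ upgrades it to a diffeomorphism. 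The main obstacle is Step~3: extending the parallel-transport-defined vector field on $M^{\#}$ to a continuous flow on $M$ whose descent to $Y$ has the singular orbits as fixed points and whose time-$1$ return is the identity. This is exactly where the semi-toric structure, the weight $(1,-1)$ hypothesis, and the horizontal completeness of $\cL$ together with the explicit Hopf normal form all enter, in order to match the periodic transverse flow on the regular locus with the degeneration model at focus-focus points.
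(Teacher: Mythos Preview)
Your approach has a genuine gap at the step you yourself flag as the main obstacle, and the proposed resolution cannot work as stated. The Arnold--Liouville period-$1$ flow $\Phi^{\theta}$ is only defined on $\tilde{\cL}^{-1}(B\setminus\Delta)$, not on all of $Y^{\#}$: over $b\in\Delta$ the set $\tilde{\cL}^{-1}(b)\cap Y^{\#}$ is an open interval, which admits no free $S^1$-action. More seriously, any continuous fiber-preserving extension of $\Phi^{\theta}$ to $Y$ that fixes the singular orbit would restrict to an $S^1$-action on the circle $\tilde{\cL}^{-1}(b)$ with a fixed point, hence would be the trivial action on that fiber; then $\psi(b,\cdot)=\Phi^{\cdot}(\bar\sigma(b))$ would be constant and $\psi$ not injective. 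Concretely, if one compares with the paper's flow one sees that the period-$1$ generator equals $\tau(b)\cdot X_{H_0}$ on the reduced space, and since $\tau(b)\to\infty$ as $b\to\Delta$ while $X_{H_0}$ stays bounded and nonvanishing on $Y^{\#}$, the period-$1$ vector field genuinely blows up there. Neither the Hopf normal form nor horizontal completeness changes this.

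The paper avoids the problem by \emph{not} attempting to build a periodic action. It uses the flow $\phi_t$ of the globally smooth Hamiltonian $H_0$, which descends to $Y^{\#}$ without any singularity, and records the (variable, possibly infinite) return time $\tau(b)$. One then forms $B\times\bR/\!\sim$ with $(z,t)\sim(z,t+\tau(z))$, identifies this fiberwise with $B\times S^1\setminus(\Delta\times\{1\})$, and fills in the missing points by hand as the unique lower-dimensional orbits. The homeomorphism property is obtained by a properness argument rather than by any flow extension. If you want to salvage your outline, replace the Arnold--Liouville angle by the $H_0$-flow together with its return-time function; the reparametrization to $S^1$ then has to be done fiber by fiber rather than via a global $S^1$-action.
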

\begin{proof}
The Hamiltonian $H_0$ commutes with the $\bT^2$ action and so induces a smooth flow $t\mapsto\phi_t$ on $Y^{\#}$. For a point $p\in M$ denote by $H\cdot p\in Y$ the $\bT^2$-orbit through $p$. Fix a section $\sigma$ of $\cL$ which does not pass through any singular point.  Consider the map
\begin{equation}
B\times\bR\to Y^{\#},\quad (z,t)\mapsto \phi_t(H\cdot\sigma(z)).
\end{equation}
Let $\tau:B\to\bR\cup\{\infty\}$ be the function assigning to $z\in B$ the infimal number so that $\phi_\tau (H\cdot \sigma(z))=H\cdot\sigma(z)$. Note $\tau(z)=\infty$ exactly for $z\in\Delta$. By the Implicit Function Theorem $\tau$ is smooth on $B\setminus\Delta$ since the flow $\phi_t$ has no fixed points. Define an equivalence relation $\sim$ on $B\times\bR$ by $(z,t)\sim (z,t+\tau(z))$ whenever $\tau(z)<\infty$. Let $Y^o=B\times\bR/\sim$. Note that each fiber of $Y$ over $\Delta$ has exactly one  singular orbit. So, we diffeomorphically identify $Y^o$ fiberwise with $B\times S^1\setminus\Delta\times\{1\}$. We then have a diffeomorphism
\begin{equation}
\psi: Y^o\to Y^{\#}.
\end{equation}

Extend $\psi$ to a map $B\times S^1\to Y$ by mapping each point of the form $(z,1)$ for $z\in \Delta$ to the unique lower dimensional orbit in the fiber over $z$. Then this  extended map is a  continuous bijection by construction. Moreover it is proper. Indeed, the pre-image of a compact set projects to a bounded subset of $B$ and so is closed and bounded.  $Y$ is Hausdorff since it is the quotient of $M$ by a compact Lie group. By general nonsense, a continuous proper bijection to a Hausdorff space is closed.  It follows that the extended map $\psi$ is a homeomorphism.


\end{proof}

Henceforth we consider $Y$ as a smooth manifold with the smooth structure induced by $\psi$. Let $\tilde{C}^{\pm}:=\psi(P^{\pm}\times\{1\})\subset Y$. Denote by $P^{\pm}_{\epsilon}$ the $\epsilon$ neighborhood of $P^{\pm}$ in $B$.  Let $\tilde{C}^{\pm}_{\epsilon}:=\psi(P^{\pm}_{\epsilon}\times[e^{-i\epsilon},e^{i\epsilon}])$.

\begin{lemma}
There exists a diffeomorphism $\chi^{\pm}:Y\setminus \tilde{C}^{\pm}\to Y$ preserving the fibers of $f_{\mu}\circ\psi$ and which is identity away from $\tilde{C}^{\pm}_{\epsilon}$. 
\end{lemma}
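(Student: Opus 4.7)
The plan is to transport the problem to the product $B\times S^1$ via $\psi$, which, by the smooth structure we placed on $Y$, is a diffeomorphism. Under this identification $\tilde{C}^{\pm}$ becomes $P^{\pm}\times\{1\}$, the map $f_{\mu}\circ\psi$ becomes the projection $(\eta_0,\eta_1,\eta_2,\theta)\mapsto(\eta_1,\eta_2)$ under $\eta:B\to\bR^3$, and $\tilde{C}^{\pm}_\epsilon$ corresponds to $P^{\pm}_\epsilon\times[e^{-i\epsilon},e^{i\epsilon}]$. It thus suffices to build a smooth diffeomorphism $\tilde\chi^+:(B\times S^1)\setminus(P^+\times\{1\})\to B\times S^1$ which preserves $(\eta_1,\eta_2)$ and is the identity outside $P^+_\epsilon\times[e^{-i\epsilon},e^{i\epsilon}]$. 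Then $\chi^+:=\psi\circ\tilde\chi^+\circ\psi^{-1}$ will have the desired properties. (The construction for $\chi^-$ is symmetric.)

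The idea is a fibered version of the one-dimensional slide already used in Lemma~\ref{lem-ex-slide}: take $\tilde\chi^+$ of the form
\[
\tilde\chi^+(\eta_0,\eta_1,\eta_2,\theta)=\bigl(\eta_0+h(\eta_0,\eta_1,\eta_2,\theta),\,\eta_1,\,\eta_2,\,\theta\bigr),
\]
where $h:(B\times S^1)\setminus(P^+\times\{1\})\to[0,\infty)$ is a smooth function satisfying (a) $\mathrm{supp}(h)\subset P^+_\epsilon\times[e^{-i\epsilon},e^{i\epsilon}]$, (b) $h(p)\to+\infty$ as $p$ tends to $P^+\times\{1\}$, and (c) $\partial_{\eta_0}h>-1$ everywhere on its domain. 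Granted such an $h$, the map $\tilde\chi^+$ is a diffeomorphism: it clearly preserves the projection to $\fh^*$ and equals the identity outside $P^+_\epsilon\times[e^{-i\epsilon},e^{i\epsilon}]$ by (a); condition (c) combined with (a)--(b) ensures that for each fixed $(\eta_1,\eta_2,\theta)$ the one-variable map $\eta_0\mapsto\eta_0+h(\eta_0,\eta_1,\eta_2,\theta)$ is a strictly increasing smooth bijection of its domain onto $\bR$; and the Jacobian of $\tilde\chi^+$ is upper triangular with positive diagonal, so $\tilde\chi^+$ is a local, and hence global, diffeomorphism.

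It remains to construct $h$. At smooth strata of $P^+\times\{1\}$ (an edge of $f_{\mu}(\Delta)$ times the half-line $\{\eta_0\geq 0\}$ times $\{1\}$) a tubular neighborhood is at hand, and the model $h=\chi(r)\phi(\eta_0)/r^N$ works, where $r$ is a smooth transverse distance to the stratum, $\chi$ is a cutoff supported in the $\epsilon$-neighborhood, and $\phi$ is a fixed bump in the $\eta_0$-direction chosen monotone enough that $\partial_{\eta_0}h>-1$. One then patches the locally defined $h$'s by a partition of unity subordinate to a finite cover of $P^+_\epsilon\times[e^{-i\epsilon},e^{i\epsilon}]$ chosen so that each chart is modeled on a regular point, on a boundary point $\{\eta_0=0\}$, or on the trivalent vertex of $f_{\mu}(\Delta)$; conditions (a) and (c) are preserved under convex combinations, and the blow-up condition (b) is preserved because near any point of $P^+\times\{1\}$ at least one local term diverges.

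The main technical point is ensuring that (c) continues to hold after patching near the vertices of $f_{\mu}(\Delta)$, where three sheets of $P^+$ meet. This can be arranged from the outset by choosing the local model bumps $\phi$ with a common lower bound $\partial_{\eta_0}\phi\geq -\tfrac{1}{2}$ and requiring each $\chi$ to be $\eta_0$-independent; under those choices any convex combination also satisfies $\partial_{\eta_0}h>-1$. Once $h$ is obtained and the fiberwise bijection check is verified as above, the diffeomorphism $\chi^+=\psi\circ\tilde\chi^+\circ\psi^{-1}$ satisfies the conclusion of the lemma.
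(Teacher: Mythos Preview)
Your overall plan—transport to $B\times S^1$ via $\psi$ and slide in the $\eta_0$-direction by a nonnegative function $h$ satisfying (a)--(c)—is exactly the paper's approach, and your reduction of the lemma to the construction of such an $h$ is correct. The paper, however, builds $h$ far more directly: it takes $h(\eta_0,\eta_1,\eta_2,\theta)=g^+(\eta_0)\cdot f(\eta_1,\eta_2,\theta)$, where $f\ge 0$ is a single function on $\fh^*\times S^1\setminus f_\mu(\Delta)\times\{1\}$ supported near and blowing up along $f_\mu(\Delta)\times\{1\}$, and $g^+$ is a smooth monotone non-decreasing cutoff in $\eta_0$. Since the blow-up factor is independent of $\eta_0$ and $(g^+)'\ge 0$, one gets $\partial_{\eta_0}h=(g^+)'\,f\ge 0$ for free, so condition (c) holds automatically and no partition of unity is needed.

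Your construction of $h$ has a genuine gap precisely at condition (c). With the local model $h=\chi(r)\,\phi(\eta_0)/r^N$ one computes $\partial_{\eta_0}h=\chi(r)\,\phi'(\eta_0)/r^N$; because $\chi(r)/r^N\to\infty$ as $r\to 0$, a bound such as $\phi'\ge -\tfrac12$ yields only $\partial_{\eta_0}h\ge -\tfrac12\,\chi(r)/r^N$, which is \emph{not} bounded below by $-1$. Convex combinations via a partition of unity do not rescue this, for the same reason. The fix is to insist that the $\eta_0$-factor be monotone non-decreasing (this is compatible with the support condition, since $P^+_\epsilon$ is unbounded in the positive $\eta_0$-direction); once you impose $\phi'\ge 0$ you have essentially recovered the paper's product construction and the patching argument becomes superfluous. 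As a secondary point, $P^+_\epsilon\times[e^{-i\epsilon},e^{i\epsilon}]$ is noncompact, so a finite cover by bounded charts is not literally available; the paper's global product form sidesteps this as well.
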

\begin{proof}
Working with the global identification of $X$ with $B\times S^1$, this claim is a straightforward exercise in differential topology. We nevertheless spell out the details. Let $h:\fh^*\times S^1\setminus\Delta\times\{1\}\to\bR_+$ be  a function which is supported in an $\epsilon$ neighborhood of $f_{\mu}(\Delta)\times\{1\}$ and converges to infinity on $\Delta\times\{1\}$. Let $f=f_{\mu}^{-1}\circ h$. Let $g^+:\bR\to [0,1]$ be a smooth monotone function which is supported on $[-\epsilon,\infty)$ and is identically $1$ on $[0,\infty)$. Let $g^-:\bR\to [0,1]$ be defined by $g^{-1}(t)=g^+(-t)$.

Let
\begin{equation}
\tilde{\eta}^{\pm}_0:=\eta_0\pm g^{\pm}f
\end{equation}

Define
\begin{equation}
\chi^{\pm}:(\eta_0,\eta_1,\eta_2,\phi)\mapsto (\tilde{\eta}^{\pm}_0,\eta_1,\eta_2,\phi).
\end{equation}

We verify that this is indeed a diffeomorphism as desired. First note the Jacobian is triangular with functions that are pointwise $\geq 1$ on the diagonal. It thus suffices to verify that the fiber $(h_1,h_2,\phi)=Const$ in $Y\setminus\tilde{C}$ is mapped bijectively to the fiber $(\eta_1,\eta_2,\phi)=Const$ in $Y$.  Injectivity is clear since $\frac{\partial \tilde{\eta}^{\pm}_0}{\partial \eta_0}\geq 1$. As for surjectivity, the image $\chi((\eta_1,\eta_2,\phi)=Const)$ is an interval by continuity, so it suffices to verify that it is unbounded in both directions. This is clear from the definition of $\tilde{\eta}^{\pm}_0$. Preservation of the fibers and the support property are both immediate by construction.

\end{proof}

The fibers of $f_{\mu}\circ\tilde{\cL}:Y\setminus \tilde{C}^{\pm}\to\fh^*$
each carry a sympelctic structure by symplectic reduction.
\begin{lemma}
The fibers are each symplectomorphic to the complete cylinder $\bR\times S^1$.
\end{lemma}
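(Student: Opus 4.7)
The plan is to identify each fiber of $f_\mu\circ\tilde{\cL}:Y\setminus \tilde C^{\pm}\to \fh^*$ with $\bR\times S^1$ topologically, and then upgrade this to a symplectic identification via a Moser-type classification. Fix $c\in\fh^*$ and set $\ell_c:=f_\mu^{-1}(c)\subset B$; denote by $\omega_c$ the reduced symplectic form on the fiber
\[
F_c:=(f_\mu\circ\tilde{\cL})^{-1}(c)\cap(Y\setminus\tilde C^{\pm})
\]
obtained by Marsden--Weinstein reduction of $(M,\omega)$ by $H$ at level $c$.

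The first step is to pin down the topology of $F_c$. Via $\psi$, the set $\tilde{\cL}^{-1}(\ell_c)\subset Y$ is identified with $\ell_c\times S^1$. When $c\notin f_\mu(\Delta)$, $\tilde C^{\pm}$ is disjoint from this fiber and $F_c\simeq \ell_c\times S^1\simeq \bR\times S^1$ by horizontal completeness. When $c\in f_\mu(\Delta)$, the hypothesis that $\Delta$ lies in a section of $f_\mu$ forces $\ell_c\cap\Delta=\{p_c\}$ to be a single point. Writing $(\eta_0,\theta)$ for the coordinates on $\ell_c\times S^1\simeq \bR\times S^1$ coming from the integral affine coordinate along $\ell_c$ and the $\psi$-coordinate on $S^1$, the fiber $F_c$ is obtained from $\bR\times S^1$ by removing the closed half-line $\{\eta_0\geq 0\}\times\{1\}$. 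Passing to the conformal model $\bR\times S^1\simeq\bC^*$ via $(\eta_0,\theta)\mapsto e^{\eta_0+i\theta}$, this half-line corresponds to $[1,\infty)\subset\bC$, and $\bC^*\setminus[1,\infty)=(\bC\setminus[1,\infty))\setminus\{0\}$ is an open disk minus an interior point, hence diffeomorphic to $\bC^*\simeq\bR\times S^1$. A direct inspection of complements of compact sets confirms that $F_c$ has exactly two ends in either case.

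Next I would determine the behaviour of $\omega_c$ at each end. By Arnold--Liouville, on the regular $H$-orbits there exist action--angle coordinates $(I_0,\theta_0)$ in which $\omega_c=dI_0\wedge d\theta_0$, and $I_0$ is, up to an additive constant, the integral affine coordinate along $\ell_c^{reg}:=\ell_c\setminus\{p_c\}$. Horizontal completeness states that $I_0$ is a bijection of $\ell_c^{reg}$ onto $\bR$ in the regular case and onto $\bR\setminus\{0\}$ in the singular case. Under the cylinder identification from Step~1, the two ends of $F_c$ correspond to $I_0\to -\infty$ and $I_0\to +\infty$ respectively, and the $\omega_c$-area of each end equals $2\pi$ times a quantity that diverges. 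The local model for the reduced form at a focus--focus singular orbit, as in formula~(23) of \cite{BM09}, ensures that the area in a small neighborhood of the removed orbit is finite, so all the area is concentrated at the two ends of $F_c$.

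Finally I would invoke the Moser-type classification of area forms on open surfaces: any two smooth area forms on $\bR\times S^1$ that assign infinite area to each of the two ends are conjugate by an area-preserving diffeomorphism. Both $\omega_c$ and the standard $dx\wedge d\theta$ satisfy this, yielding the desired symplectomorphism $(F_c,\omega_c)\simeq(\bR\times S^1,dx\wedge d\theta)$. The main obstacle I anticipate is in the singular case: one must verify that the two ``halves'' of the fiber on either side of the removed singular orbit together correspond, through the unwinding $\bC^*\setminus[1,\infty)\simeq\bC^*$, to a single pair of ends of the abstract cylinder with infinite area on each. This reduces to combining horizontal completeness of $I_0$ with the finite-area behaviour of $\omega_c$ near the focus--focus orbit, both of which are available but require careful bookkeeping.
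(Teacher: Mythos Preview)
Your approach is essentially the same as the paper's: identify the fiber topologically as a cylinder, show each end has infinite $\omega_c$-area using horizontal completeness, and conclude via the Moser/Greene--Shiohama classification of area forms on open surfaces. The paper leaves the last step implicit and is terser about the topology.

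The difference lies in how the singular case is handled, and this is exactly where you flag an obstacle. You try to track the two ends through the conformal identification $\bC^*\setminus[1,\infty)\simeq\bD^*$ and control the area near the removed singular orbit via a local model (citing \cite{BM09}, equation~(23)). The paper bypasses both issues with a simpler device: fix a non-contractible circle (a fiber of $\tilde{\cL}$ over a regular value on the intact side), so that $F_c$ splits into two components; for the intact side, flux and horizontal completeness give infinite area directly; for the slit side $N$, observe that the removed half-ray has Lebesgue measure zero, so $\operatorname{area}(N)=\operatorname{area}(N')$ where $N'$ is $N$ with the ray filled back in, and then bound $\operatorname{area}(N')$ below by the area of an $S^1$-invariant sub-annulus avoiding the singular orbit, which is again infinite by flux and horizontal completeness. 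This avoids any appeal to the local form of $\omega_c$ near the singular orbit and makes the ``careful bookkeeping'' you anticipate unnecessary. Note also that your reference to \cite{BM09} is borrowed from the four-dimensional discussion in \S\ref{ss-3d}; in the six-dimensional setting the singular orbits are circles (or points at trivalent vertices), so that formula does not apply verbatim, whereas the measure-zero trick works uniformly.
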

\begin{proof}
Abbreviate $F:=f_{\mu}\circ\tilde{\cL}$. First we consider  fibers $F_x$ over points $x\not\in f_{\mu}(\Delta)$. Let $\gamma$ be a generator of $H_1(F_x;\bZ)$ and  let $\tilde{\gamma}$ be a lift of $\gamma$ to $H_1(q^{-1}(F_x);\bZ)$ . It suffices to show that the flux of $\gamma$ along fibers of the projection to {$\fh^*$} is unbounded both above and below. This is readily seen to be equivalent to the flux of $\tilde{\gamma}$ along fibers of {$f_{\mu}$} being unbounded both above and below. {This follows by horizontal completeness}.

We now consider the slightly more involved case where $x\in f_{\mu}(\Delta)$. For definiteness consider the case of $Y\setminus C^-$. The fiber $F_x$ is then the complement of a half infinite ray in a cylinder, so half the fibers of the projection to $B$ are contractible and we cannot talk about the flux of $\gamma$ along the fibers in the negative direction. However it suffices to show that each component of the complement of a non-contractible fiber of the projection to $B$ has infinite area. This clear for the positive component $F$ arguing by flux and {horizontal} completeness as before. For the negative component $N$ we argue as follows. Let $N'$ be obtained from $N$ be filling the ray back in. Since the ray has measure $0$, we have $area(N')=area(N)$. Let $N''\subset N'$ be an $S^1$ invariant subset which does not contain the singular point. Then the $area(N')\geq area(N'')$. Now arguing again by flux and {horizontal} completeness we have that $area(N)\geq area(N'')=\infty$.
\end{proof}

\begin{proof}[Proof of Proposition \ref{proComlpeteT3embedding}]
We take $C^{\pm}:=q^{-1}(\tilde{C}^{\pm})$. 
Let $M^{\pm}:=M\setminus C^{\mp}$. We define a map $\cL^{\pm}:M^{\pm}\to B$ by
\begin{equation}
\cL^{\pm}:=\tilde{\cL}\circ \chi^{\pm}\circ q.
\end{equation}
Then
\begin{itemize}
\item $\cL^{\pm}$ is proper, since it's a composition of proper maps.
\item $\cL^{\pm}$ is a smooth submersion, since it's a composition of smooth submersions.
\item $\cL^{\pm}$ has Lagrangian fibers. For this note that the components of $\cL^{\pm}$ are $\tilde{H}_0,H_{F_1},H_{F_2}$ where $\tilde{H}_0=\eta_0\circ\chi\circ q$. Now since $\tilde{H}_0$ is factors through the quotient map $q:M\to X$ it is constant on the flowlines of the  $H_{F_i}$ and thus Poisson commutes with each of  them.
\end{itemize}

It follows that $\cL^{\pm}$ is smooth Lagrangian torus fibration over $B$ with no singular fibers. Since $B$ is diffeomorphic to $\bR^3$ it follows that there exists a global Lagrangian section. Picking a base point $q\in B$ and a basis $\gamma_0,\gamma_1,\gamma_2$ for $H_1(L;\bZ)$, the Arnold Liouville Theorem  produces a commutative diagram
\begin{equation}
\xymatrix{M^{\pm}\ar[d]^{\cL^{\pm}}\ar[r]&\bR^3\times\bT^3\ar[d]\\
		B\ar[r]^{i^{\pm}}&\bR^3}
\end{equation}
where the upper horizontal arrow is a symplectic immersion which is a diffeomorphism in each fiber and the lower one is a smooth immersion. To conclude the claim it suffices to prove that the map $i^{\pm}$ is a bijection. For this note we may take the last two components of $i^{\pm}$ to be given by the functions $\eta_1,\eta_2$. This corresponds to taking $\gamma_1,\gamma_2$ to span the homology of a $\bT^2$ orbit.  Thus to conclude the claim we need to establish for each $c\in\fh^*$  the line $f_{\mu}^{-1}(c)$ is mapped under the first component $i^{\pm}_0$ bijectively to  real line. First note that the line $f_{\mu}^{-1}(c)$ is non-empty. Indeed, $f_{\mu}:B\to\fh^*$ is a surjection. 
Without loss of generality, the  base point $q$ is in $B\setminus P^{\pm}$ and lies on $f_{\mu}^{-1}(c)$. Observe now that the symplectic area of a cylinder traced out by $\gamma$ over a line $f_{\mu}=c$ in any way is the same as the area of its image under $q$ with respect to the reduced symplectic form. Now, we have shown that  reduction $(f_{\mu}\circ\tilde{\cL})^{-1}(c)$ is symplectomorphic to a complete cylinder. Thus for any submersion to $\bR$ the image of the associated integral affine coordinate is $\bR$.

\end{proof}

\appendix
\section{Floer theory on open manifolds}\label{AppDissRev}
In this appendix review the notion of \emph{dissipative Floer data} introduced in \cite{groman}. The properties that we need are summarized in Propositions \ref{tmSummary00} and \ref{tmSummary}. These are applied in \S\ref{FloerPackage} to produce
 the \emph{Hamiltonian Floer theory package} going into the construction of symplectic cohomology type invariants of compact subsets  on an open manifold.  The results presented in the present appendix are mostly contained  in \cite{groman}. 

\subsection{Intermittent boundedness}
For a Riemannian metric $g$ on a manifold $M$ and a point $p\in M$ we denote by $\inj_g(p)$ the radius of injectivity and by $\Sec_g(p)$ the maximal sectional curvature at $p$. We drop $g$ from the notation when it is clear from the context.
\begin{definition}\label{dfStrBound}
Let $(M,g)$ be a Riemannian manifold. For $a\geq 1$, the metric $g$ is said to be \emph{strictly $a$-bounded} at a point $p\in M$ if the closed ball $B_{1/a}(p)$ is compact,  for all $x\in B_{1/a}(p)$ we have $|\Sec(x)|\leq a^2,$ and for all $x\in B_{\frac1{2a}}(p)$ we have $\inj(x)\geq\frac1{2a}$.  For a  subset $K\subset M$ we say that the geometry of $g$ is strictly $a$-bounded on $K$ if it is strictly $a$-bounded at every point of $K$. We say that the geometry of $g$ is \emph{$a$-bounded}  on $K$  if there exists another Riemannian metric $g'$ on $M$ which is strictly $a$-bounded at for all $p\in K$, and for all $x\in B_{1/a}(K;g')$, the ball of radius $1/a$ around $K$ with respect to the metric $g'$, we have that
\begin{equation}
\frac1{a}|v|_{g'}<|v|_g<a|v|_{g'}.
\end{equation}
In this situation we say that \emph{$g$ is $a$-bounded on $K$ with witness supported on $V=B_{1/a}(K;g')$}.
\end{definition}

\begin{remark}
In all the examples we are aware of,  there exists $J$ so that the metric $g_J$ on the underlying manifold $M$ is strictly bounded. The main reason for introducing the more cumbersome but flexible notion of non-strict boundedness is it is easier to establish it for the Gromov metric associated with a Floer datum. This is the content of the next lemma.
\end{remark}

\begin{lemma}[A criterion for $a$-boundedness of the Gromov metric.]\label{lmCritABound}
Let $J$ be an $\omega$-compatible almost complex structure on $M$. Suppose $J$ is strictly $a$-bounded on some compact $K\subset M$. Let $U\subset\Sigma$ be a compact set on which the Riemannina metric determined by  $\omega_{\Sigma}$ and $j_{\Sigma}$ is strictly $a$-bounded.  Let $\fH$ be a Hamiltonian function valued $1$-form for which $|\nabla \fH|<a$ on the ball $B$ of radius $1/a$ around $U\times K$ with respect to the product metric. Here we consider $\fH$  as a $1$-form on $\Sigma\times M.$ Then $g_{J_\fH}$ is $a$-bounded on $U\times K$ with witness supported on $B$.

\end{lemma}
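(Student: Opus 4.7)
The plan is to take $g' = g_\Sigma \times g_J$, the product metric on $\Sigma \times M$, as the witness. The argument has two main components: verifying that $g'$ is strictly $a$-bounded (up to enlarging $a$ by a universal factor) on $U \times K$, and then showing that $g_{J_\fH}$ is equivalent to $g'$ on the ball $B$ of radius $1/a$ around $U\times K$, with equivalence constant controlled by the bound on $|\nabla \fH|$.

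First, I would check that the product metric $g' = g_\Sigma \times g_J$ is strictly $Ca$-bounded at every point of $U\times K$. The sectional curvature of a product is controlled by the maximum of the factor curvatures, so $|\mathrm{Sec}_{g'}| \leq a^2$ there. The injectivity radius at a point $(z,x)$ of a product metric is at least $\min(\mathrm{inj}_{g_\Sigma}(z), \mathrm{inj}_{g_J}(x))$ provided the factors are separately bounded, and since both factors are strictly $a$-bounded, this yields $\mathrm{inj}_{g'} \geq \frac{1}{2a}$ on a $\frac{1}{2a}$-neighborhood. This is a routine application of the product structure together with the Cartan--Hadamard-type comparison implicit in strict $a$-boundedness.

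Next, I would compare $g_{J_\fH}$ with $g'$ on $B$. Using the matrix form \eqref{eqGrTrick} of $J_\fH$ and the formula $\omega_\fH = \pi_1^*\omega_\Sigma + \pi_2^*\omega + d\fH$, a direct computation for a tangent vector $(v,w) \in T_z\Sigma \oplus T_xM$ yields an expression of the schematic form
\begin{equation*}
g_{J_\fH}((v,w),(v,w)) = |v|_{g_\Sigma}^2 + |w|_{g_J}^2 + Q_\fH(v,w),
\end{equation*}
where the correction $Q_\fH$ is at most quadratic in $X_\fH(v)$ and bilinear in $w$ and $X_\fH(v)$. The hypothesis $|\nabla \fH| < a$ on $B$ bounds $|X_\fH(v)|_{g_J}$ by a universal multiple of $a\,|v|_{g_\Sigma}$, since $X_\fH$ is the $\omega$-dual of $\fH$ and $\omega$, $J$ are compatible. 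A Cauchy--Schwarz argument then bounds $|Q_\fH(v,w)|$ by $C(a) \, g'((v,w),(v,w))$ for a constant $C(a)$ depending only on $a$. That $g'$ and $g_{J_\fH}$ define equivalent norms on each tangent space of $B$ with equivalence constant $C'(a)$ follows; absorbing this into $a$ (equivalently, relabelling $a$) gives the required bound $\frac{1}{a}|v|_{g'} < |v|_{g_{J_\fH}} < a|v|_{g'}$ on $B$.

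The main obstacle is the careful bookkeeping of constants: both the equivalence factor between $g'$ and $g_{J_\fH}$ and the strict $a$-boundedness of the product metric only hold after replacing $a$ by some $Ca$. I would follow the convention of \cite{groman} and state the lemma with this implicit constant-enlargement understood, or alternatively unwind the dependence explicitly by working first with a slightly smaller radius and slightly better bounds on the factors, so that the final ``$a$-boundedness with witness $g'$ supported on $B$'' statement is obtained exactly as written. The remaining verification---that the witness $g'$ is supported on $B = B_{1/a}(U\times K; g')$---is immediate from the choice $g' = g_\Sigma \times g_J$.
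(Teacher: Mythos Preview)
Your proposal is correct and follows the same approach as the paper: take the product metric $g_\Sigma\times g_J$ as the witness and verify equivalence with the Gromov metric via the explicit formulas \eqref{eqGrTrick} and \eqref{eqGrTrick2}. The paper's proof is a single sentence recording exactly this observation, while you have spelled out the strict $a$-boundedness of the product and the Cauchy--Schwarz estimate on the correction term; your remarks about constant bookkeeping are also in line with the paper's Remark~\ref{rmNonStrAdj}.
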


\begin{proof}
 Examining equations \eqref{eqGrTrick} and \eqref{eqGrTrick2} we see that the metric $g_{J_{\fH}}$ is $a$ equivalent to the product metric $g_{j_{\Sigma}}\times g_J$ on $\Sigma\times M$ .
\end{proof}

\begin{remark}\label{rmNonStrAdj}
The hypothesis of strict boundedness in Lemma \ref{lmCritABound} can be weakened to mere boundedness at the price of replacing the constant $a$ bounding the Gromov metric by the constant $a^2$ and replacing $B$ by an appropriate open neighborhood $B'$ of $U\times K$.
\end{remark}
\begin{remark}
The proof of Lemma \ref{lmCritABound} is sufficiently robust to allow $C^0$-small domain dependent perturbations of $J$ while maintaining $a$-boundedness.
\end{remark}
The key to obtaining $C^0$ estimates is the is the following monotonicity lemma.  For a $J$-holomorphic map $u:\Sigma\to M$ and for a measurable subset $U\subset \Sigma$ write
\begin{equation}
E(u;U):=\int_Uu^*\omega.
\end{equation}
\begin{lemma}\label{lmMonEst++}[\textbf{Monotonicity} \cite{Sikorav94} ]
Let $g_J$ be strictly $a$-bounded at $p\in M$. Let $\Sigma$ be a compact Riemann surface with boundary and let $u:\Sigma\to M$ be $J$-holomorphic such that $p$ is in the image of $u$ and such that
\[
u(\partial \Sigma)\cap B_{1/a}(p)=\emptyset.
\]
Then there is a universal constant $c$ such that
\begin{equation}
E\left(u;u^{-1}(B_{1/a}(p))\right)\geq\frac c{a^2}.
\end{equation}
If instead, we only require that $g_J$ is $a$-bounded at $p$ with witness supported on some neighborhood $V$ of $p$, we get the inequality
\begin{equation}
E\left(u;u^{-1}(V)\right)\geq\frac c{a^4}.
\end{equation}
\end{lemma}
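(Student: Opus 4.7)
The plan is to reduce this to the classical monotonicity formula for $J$-holomorphic curves, originally due to Sikorav \cite{Sikorav94}, by a rescaling argument, and then transfer bounds from the witness metric $g'$ to $g_J$ at the cost of additional powers of $a$. The key input is the basic identity $u^*\omega = \tfrac{1}{2}|du|^2_{g_J}\,d\mathrm{vol}_\Sigma$ for $J$-holomorphic $u$, which identifies $E(u;U)$ with the $g_J$-area of the image, so that $J$-holomorphic curves are (branched) minimal surfaces for $g_J$ and the classical first variation argument applies.

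For the strict case, I would first exploit the natural scaling: replacing $g_J$ by $a^2 g_J$ rescales sectional curvature by $a^{-2}$, injectivity radius by $a$, and energy by $a^2$, so after rescaling we may assume $a=1$ and must prove $E\bigl(u;u^{-1}(B_1(p))\bigr)\geq c$ for a universal $c>0$. Set $f(r):=E(u;u^{-1}(B_r(p)))$ for $r\in(0,\tfrac12]$. The standard derivation, using the coarea formula together with the fact that a minimal surface in a geodesic ball of radius less than the injectivity radius and with $|\Sec|\leq 1$ satisfies an isoperimetric inequality of the form $\mathrm{length}(\partial) \geq 2\sqrt{\pi}\,\mathrm{area}^{1/2}\cdot(1-Cr^2)$, yields the differential inequality $2f(r)\leq r\,f'(r)(1+Cr^2)$ for a.e. $r$. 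Dividing by $r^2 f$ and integrating from $0$ to $\tfrac12$, using $f(r)>0$ once $r$ exceeds the distance from $p$ to the nearest branch of $u$ (which is $0$ since $p$ is in the image), gives $f(\tfrac12)\geq c_0$ for a universal constant $c_0$. The boundary hypothesis $u(\partial\Sigma)\cap B_{1/a}(p)=\emptyset$ ensures the first variation argument is applied to a compact surface whose boundary lies outside the ball. Rescaling back gives $E\geq c/a^2$.

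For the non-strict case, one cannot directly apply the strict result to $(M,g')$ since $u$ is $J$-holomorphic and $J$ need not be compatible with $g'$. Instead, I would repeat the monotonicity argument intrinsically, but measure distances using $g'$ and area using $g_J$. Specifically, define $\tilde f(r):=E\bigl(u;u^{-1}(B^{g'}_r(p))\bigr)$; here $g'$-geodesic balls behave well (bounded curvature, large injectivity radius on $B^{g'}_{1/a}(p)$). The first variation computation now produces $\tilde f(r) \leq C a^2\cdot r\,\tilde f'(r)$, the extra factor $a^2$ arising from the comparison $\tfrac1a g' < g_J < a g'$ used to bound the $g'$-length of the level curve $u^{-1}(\partial B^{g'}_r(p))$ in terms of its $g_J$-length (which is what enters the coarea formula for $g_J$-area). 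Integrating from $0$ to $1/a$ gives $\tilde f(1/a)\geq c/a^4$, completing the proof, with the $a^{-4}$ arising as $a^{-2}$ from the ball radius and another $a^{-2}$ from the metric comparison.

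The main technical obstacle is the careful tracking of the $a$-powers in the non-strict case, where the monotonicity argument combines metric data from $g'$ (curvature, injectivity radius, and hence isoperimetric control on geodesic spheres) with area data from $g_J$ (which is what $E(u)$ measures). Everything else is a routine adaptation of the Sikorav--Gromov monotonicity formula; the strict case is essentially textbook once one uses the rescaling symmetry.
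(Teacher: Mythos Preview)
The paper does not give its own proof of this lemma; it simply cites Sikorav and uses the result as a black box. So there is no paper-proof to compare against, and your sketch is being judged on its own.

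Your treatment of the strict case is fine and is essentially the standard rescaling-plus-Sikorav argument.

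The non-strict case, however, has a real gap. You assert the differential inequality $\tilde f(r)\leq C a^{2}\, r\,\tilde f'(r)$ and then claim that ``integrating from $0$ to $1/a$ gives $\tilde f(1/a)\geq c/a^{4}$''. But this inequality integrates to
\[
\tilde f(r_2)\;\geq\;\tilde f(r_1)\left(\frac{r_2}{r_1}\right)^{1/(Ca^{2})},
\]
so that only $\tilde f(r)/r^{1/(Ca^{2})}$ is (essentially) nondecreasing, not $\tilde f(r)/r^{2}$. Feeding in the small-$r$ behaviour $\tilde f(r_1)\sim c\,r_1^{2}$ and letting $r_1\to 0$ yields $0$, not a positive lower bound, because $2-1/(Ca^{2})>0$ for $a\geq 1$. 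In other words, the multiplicative $a^{2}$ you place in front of $r\tilde f'(r)$ destroys the argument: the whole point of monotonicity is that the constant in $2\tilde f(r)\leq r\tilde f'(r)(1+\text{small})$ must be \emph{exactly} $2$ (up to lower-order corrections) so that one recovers quadratic growth. Any constant strictly larger than $1/2$ on the right collapses the lower bound.

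Your heuristic in the last sentence --- ``$a^{-2}$ from the ball radius and another $a^{-2}$ from the metric comparison'' --- is in fact the correct accounting, but it is not what your differential inequality produces. To make this rigorous one must either (i) establish the sharp inequality $2\tilde f(r)\leq r\tilde f'(r)(1+O(1))$ with the metric-comparison loss appearing instead in the initial density $\liminf_{r\to 0}\tilde f(r)/r^{2}\geq \pi/a^{2}$, or (ii) bypass the differential inequality entirely, e.g.\ by passing to $g'$-normal coordinates where $g'$ is uniformly comparable to Euclidean on a ball of radius $\sim 1/a$, and invoking the \emph{tame} version of Sikorav's monotonicity (where one works with an auxiliary metric and tracks the taming constants $\omega(v,Jv)\geq a^{-2}|v|_{g'}^{2}$, $|\omega|_{g'}\leq a^{2}$) with explicit dependence on those constants. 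Either route requires more than the first-variation sketch you gave.
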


From this it is deduced in \cite{Sikorav94} that if $J$ is geometrically bounded, then there is an a priori estimate on the diameter of a $J$-holomorphic curve in terms of its energy. However, the topology of the space of geometrically bounded almost complex structures is unknown and is not likely to be contractible. Instead,  \cite{groman}  utilizes the local nature of the monotonicity inequality to replace geometric boundedness by the weaker condition of intermittent boundedness. This still produces a priori estimates but is also a contractible condition in an appropriate sense .

\begin{definition}[Intermittently bounded Riemannian metric]\label{dfIntBounded}
We say that a complete Riemannian metric $g$ is \emph{intermittently bounded}, abbreviated \emph{i-bounded}, if there is a constant $a\geq 1$, an exhaustion $K_1\subset K_2\subset \dots$ of $M$ by compact sets, and open neighborhoods $V_i$ of $\partial K_i$,  such that the following holds.
\begin{enumerate}
    \item $V_i\cap V_{i+1}=\emptyset$.
    \item $g$ is $a$-bounded on $\partial K_i$ with witness supported on $V_i$. 
\end{enumerate}
The data $(a,\{K_i,V_i\}_{i\geq 1})$ is called taming data for $(M,g)$. We say the \emph{taming data is supported on an open  $V\subset M$} if $V_i\subset V$ for all $i$.

Given a length non-increasing map $\pi:M\to N$ of complete Riemannian manifolds, we say that $g$ is \emph{intermittently bounded relative to $\pi$} if there is a finite open cover $\cC$ of $N$ such that  for each $U\in\cC$ there is an exhaustion $K_1\subset K_2\subset \dots$ of $\pi^{-1}(U)$ and open neighborhoods $V_i$ of $\partial K_i$
\begin{enumerate}
	\item $\pi|_{K_i}$ is proper
   	 \item $V_i\cap V_{i+1}=\emptyset$
    	\item $g$ is $a$-bounded on $\partial K_i$ with witness supported on $V_i$.
\end{enumerate}

\end{definition}
\begin{example}
If $g$ is geometrically bounded, meaning that $g$ is equivalent to an $a$-bounded metric $g'$ on $M$, it is i-bounded. In this case, we can take the taming data to be $\{a, K_i=B_{3i a}(p;g'),B_a(\partial K_i;g')\}$ for some arbitrary point $p\in M$.
\end{example}
\begin{remark}
In \cite{groman}, instead of a fixed constant $a$, there are constants $a_i$ such that the geometry is $a_i$ bounded on $\partial K_i$. These are required to satisfy $\sum \frac1{a_i^4}=\infty$. We have no use for this weakening in the present setting.
\end{remark}

\begin{definition}[Intermittently bounded Floer data]
\begin{itemize}
\item We say that an almost complex structure is intermittently bounded if $g_J$ is an intermittently  bounded Riemannian metric.
\item  An $s$-independent Floer datum $(H,J)$ is said to be intermittently bounded if the almost complex structure   $J_H$ on $\bR\times S^1\times M$ is intermittently bounded relative to the projection $\pi_1:\Sigma\times M\to \Sigma$.
\item More generally, given a Riemann surface $\Sigma$ with cylindrical ends we say that a monotone Floer datum $(\alpha,H,J)$ on $\Sigma$ is intermittently bounded if $g_{J_{\fH}}$ is intermittently bounded relative to the projection $\pi_1:\Sigma\times M\to \Sigma$. 
\item A $k$-parameter family $(\alpha,H,J)_{t\in[0,1]^k}$ of i-bounded Floer data on a fixed Riemann surface  is said to be \emph{uniformly i-bounded} if there is an $\epsilon>0$ such that for each $t_0\in [0,1]^k$ there is a cover $C$ of $\Sigma$ as in the previous item so that for each element $U$ of the cover , the taming data $(a,\{K_i,V_i\})$ for  $J_{\fH_t}|_{\overline{U}\times M}$  can be chosen fixed on the $\epsilon$ neighborhood of $t_0$.
\end{itemize}
\end{definition}
\begin{remark}
Note that the condition of intermittent boundedness of a monotone Floer datum $(\alpha,H,J)$ is weaker than the requirement of intermittent boundedness of the metric $g_{J_{\fH}}$. The latter condition would produce $C^0$ estimates for \emph{arbitrary} $J_{\fH}$-holomorphic curves whereas the given condition produces $C^0$ estimates only for $J_{\fH}$-holomorphic \emph{sections} of the projection $\Sigma\times M\to \Sigma$. We need the weaker condition for the existence of monotone continuation maps between arbitrary ordered pairs of Floer data.
\end{remark}


Intermittently bounded Floer data satisfy the following two properties.
\begin{proposition}\label{prpMonHo}
Let $(J_1,H_1)$ and $(J_2,H_2)$ be intermittently bounded and suppose $H_1\leq H_2$. Then there exists a monotone intermittently bounded Floer datum $(\fH,J_z)$ on the cylinder $\bR\times S^1$ which equals $(H_1,J_1)$ for $s\ll 0$ and $(H_2,J_2)$ for $s\gg 0$. Moreover, any such pair of intermittently bounded monotone homotopies is connected  by a uniformly intermittently bounded family of monotone homotopies. 
\end{proposition}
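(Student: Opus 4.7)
The plan is to construct the monotone homotopy by splicing the two ends via a convex combination, and then to verify intermittent boundedness by transferring the taming data of the ends onto a suitable open cover of the cylinder.

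First I would fix a smooth non-decreasing cutoff $\beta:\bR\to[0,1]$ with $\beta(s)=0$ for $s\leq -T$ and $\beta(s)=1$ for $s\geq T$, and set $H^s:=(1-\beta(s))H_1+\beta(s)H_2$. Since $H_2\geq H_1$ pointwise, $\partial_sH^s=\beta'(s)(H_2-H_1)\geq 0$, so this piece of the Floer datum is automatically monotone. For the almost complex structure, I would pick a smooth path $J^s$ of $\omega$-compatible almost complex structures with $J^s=J_i$ outside $[-T,T]$, using the fact that at each point of $M$ the space of compatible complex structures on the tangent space is contractible (a symmetric space of non-compact type), so such an interpolation exists globally and can be made arbitrarily slow by taking $T$ large.

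Next I would verify intermittent boundedness of $g_{J_{\fH}}$ relative to the projection $\pi_1:\bR\times S^1\times M\to\bR\times S^1$. I would cover the cylinder by three open sets $U_-=(-\infty,-T+1)\times S^1$, $U_0=(-T,T)\times S^1$, and $U_+=(T-1,\infty)\times S^1$. On $U_\pm$ the datum is $s$-independent and equal to $(H_i,J_i)$, so by hypothesis the associated Gromov metric is intermittently bounded there, with taming data pulled back from $(H_i,J_i)$. On the compact-in-$\Sigma$ set $\overline{U_0}$, properness of $\pi_1|_{K_i}$ forces exhaustions of the form $K_i=\overline{U_0}\times L_i$ for some exhaustion $L_i$ of $M$; to achieve $a$-boundedness on $\partial K_i$ it suffices, by Lemma \ref{lmCritABound} together with Remark \ref{rmNonStrAdj}, to bound $J^s$ and $\nabla\fH^s$ in terms of the analogous bounds at the ends, which follows once $\beta'$ and the $s$-derivative of $J^s$ are taken small enough (by choosing $T$ large). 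The $L_i$ can be selected to be consistent with the exhaustions given by the taming data of $(H_i,J_i)$ on the overlaps with $U_\pm$, yielding compatible global taming data.

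For the second assertion, given two monotone intermittently bounded homotopies $(\fH^0,J^0)$ and $(\fH^1,J^1)$ between the same ends, I would interpolate them linearly in an auxiliary parameter $t\in[0,1]$ by setting $\fH^{s,t}=(1-t)\fH^{s,0}+t\fH^{s,1}$, noting that the monotonicity condition $\partial_s H\geq 0$ is preserved by convex combinations, and by choosing $J^{s,t}$ as a smooth path in the contractible space of compatible almost complex structures extending this interpolation. Uniform intermittent boundedness follows because the argument above produces taming data that depends continuously on the datum, so on a sufficiently small neighborhood of any $t_0\in[0,1]^k$ the same cover, the same exhaustions, and the same constant $a$ (possibly slightly enlarged) work.

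The main obstacle I expect is the uniformity of constants in the $a$-bound on $g_{J_\fH}$ across the transition region $U_0$: controlling the Gromov metric requires bounding the $s$-derivatives of both $H^s$ and $J^s$ simultaneously, which forces $T$ to be chosen in a way depending on the $a$-bounds at both ends. The technical heart of the argument is therefore the careful comparison of the constants coming from the two ends and the explicit production of a single $a$ that works for the spliced datum, as well as for small perturbations thereof in the family version.
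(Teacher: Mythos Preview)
Your construction has a genuine gap in the verification of intermittent boundedness on the transition region $U_0$. You propose to apply Lemma \ref{lmCritABound} there, which requires the underlying almost complex structure $J^s$ on $M$ to be $a$-bounded on the boundaries $\partial L_i$ of some exhaustion. But making $T$ large only slows down the $s$-dependence; for each intermediate value of $s$ the structure $J^s$ is a fixed $\omega$-compatible almost complex structure on $M$, determined by your chosen path and independent of $T$. Nothing in the hypotheses forces such an intermediate $J^s$ to be $a$-bounded on \emph{any} prescribed hypersurface of $M$: the taming data for $(H_1,J_1)$ and $(H_2,J_2)$ live on potentially disjoint regions of $M$, and an interpolated $J^s$ may fail to inherit bounds from either. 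Your remark that ``the $L_i$ can be selected to be consistent with the exhaustions given by the taming data of $(H_i,J_i)$'' glosses over exactly this point, and your closing paragraph misidentifies the obstacle as one of uniformity of constants rather than of existence.

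The paper's proof avoids this entirely by making the interpolation depend on the point $x\in M$, not only on $s$. One first finds disjoint open sets $V_0,V_1\subset M$ such that the taming data for $(H_i,J_i)$ is supported in $\bR\times S^1\times V_i$, then uses a cutoff $\chi:M\to[0,1]$ with $\chi\equiv 0$ on $V_0$ and $\chi\equiv 1$ on $V_1$ to build the homotopy. The result is that on $(-\infty,2/3]\times S^1\times V_0$ the datum literally equals $(H_0,J_0)$, and on $[1/3,\infty)\times S^1\times V_1$ it equals $(H_1,J_1)$. Covering the cylinder by $(-\infty,2/3)\times S^1$ and $(1/3,\infty)\times S^1$, the taming data of the two ends transfer verbatim, with no need to bound the geometry of any interpolated structure. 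This spatial separation of the taming supports is the missing idea in your argument.
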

\begin{proof}
Fix two disjoint open sets $V_1,V_2\subset M$ such that there is taming data for $J_{H_i}$ supported in $\bR\times S^1\times V_i$ for $i=0,1$. For the existence of such disjoint sets see the proof of  Theorem 4.7 in \cite{groman}. We spell details for constructing a \emph{monotone} homotopy. We may assume that each of the $V_i$ is a disjoint union of pre-compact sets. Let $\chi: M\to[0,1]$ be a function which equals $0$ on $V_0$ and $1$ on $V_1$. Let $f:\bR\to\bR$ be a monotone function which is identically $0$ for $t\leq 0$ and identically $1$ for $t\geq 1/3$. Let $g:M\times \bR\to[0,1]$ be defined by
\[
g(x,s)= f(1-s)f(s)\chi(x)+1-f(1-s).
\]
Then $g$ is monotone increasing in $s$, identically $0$ for all $x$ when $s\leq0$ and identically $1$ for all $x$ when $s\geq 1$. Take $H_s=g(x,s)H_1+(1-g(x,s))H_0$. Then $H_s$ is also monotone increasing in $s$. Moreover, $H$ is fixed and equal to $H_0$ on $(-\infty,2/3]\times S^1 \times V_0$ and to $H_1$ on $[1/3,\infty)\times S^1\times V_1$. Let $J_s$ be any homotopy from $J_0$ to $J_1$ which is fixed and equal to $J_0$ on $(-\infty,2/3]\times S^1 \times V_0$   and to $J_1$ on $[1/3,\infty)\times S^1\times V_1$. Then $J_{H_s}$ is i-bounded as a monotone Floer datum since it coincides with $J_{H_0}$ on $[1/3,\infty)\times S^1\times V_1$ and with $J_{H_1}$ on $[1/3,\infty)\times S^1\times V_1$. Contractibility of the set of all such homotopies is similar.

\end{proof}

For the second property we first recall the definition of \emph{geometric energy}. For a solution $u$ to Floer's equation \eqref{eqFloer} we define the geometric energy
\begin{equation}
E_{geo}(u):=\int_{\Sigma}\left\|du-X_{J}\right\|^2.
\end{equation}
\begin{proposition}\label{prpC0IB}
Let $\Sigma$ be a connected Riemann surface with cylindrical ends and let $(H,\alpha,J)$ be an intermittently bounded monotone Floer datum on $\Sigma$. Then there is a metric $\mu:(\Sigma\times M)^2\to\bR_{\geq 0}$ and a constant $C$ such that the following holds
\begin{itemize}
\item A subset of $\Sigma\times M$ is bounded with respect to $\mu$ if and only if it is bounded with respect to $g_{J_{\fH}}$.
\item
Let $u:\Sigma\to M$ be a solution to Floer's equation and let  $p_1,p_2\in\Sigma.$ 
Then
\begin{equation}
\mu(\tilde{u}(p_1)),(\tilde{u}(p_2)) \leq C\left(E_{geo}(u)+d(p_1,p_2)+1\right)
\end{equation}
Here the distance $d(p_1,p_2)$ is with respect to the metric $g_{j_{\Sigma}}$.
\end{itemize}
Given a uniformly i.b. family of Floer data, the metric $\mu$ and the constant $C$ can be chosen to be fixed for the entire family.
\end{proposition}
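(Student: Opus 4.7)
The approach is to construct $\mu$ as a ``dissipation distance'' built from the i-bounded structure, and to bound it via wall-counting by applying the monotonicity inequality (Lemma \ref{lmMonEst++}) to the $J_{\fH}$-holomorphic graph $\tilde{u}$. The method follows the template of \S5--6 of \cite{groman}.

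First I would construct $\mu$ locally on each chart $U$ of the finite cover $\cC$ of $\Sigma$. Using the i-bounded taming data $(a,\{K_i^U, V_i^U\}_{i\ge 1})$ on $\pi_1^{-1}(U)$, define a level function $N_U:\pi_1^{-1}(U)\to\bR_{\ge 0}$ which records the smallest index $i$ with $q\in K_i^U$, smoothed so that $N_U$ is Lipschitz between adjacent walls in $g_{J_{\fH}}$. Then set
\begin{equation*}
\mu(q_1,q_2):=\inf_{\gamma}\Bigl(\mathrm{length}_{g_{J_{\fH}}}(\gamma)+\sum_{U\in\cC}\mathrm{Var}_U(N_U,\gamma)\Bigr),
\end{equation*}
where $\gamma$ runs over piecewise-smooth paths from $q_1$ to $q_2$ in $\Sigma\times M$ and $\mathrm{Var}_U$ is the total variation of $N_U$ on $\gamma\cap\pi_1^{-1}(U)$. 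The first bullet is then immediate: $\mu$-boundedness forces both bounded $g_{J_{\fH}}$-diameter and containment in finitely many $K_i^U$, which by the $\pi_1$-properness of each $K_i^U$ and the finiteness of $\cC$ is equivalent to $g_{J_{\fH}}$-boundedness.

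Next I would bound $\mu(\tilde{u}(p_1),\tilde{u}(p_2))$ by wall-counting. Fix a minimizing path $\gamma$ in $\Sigma$ from $p_1$ to $p_2$ of length $\approx d(p_1,p_2)$. Each wall $V_i^U$ crossed by $\tilde{u}\circ\gamma$ supplies, by the $a$-boundedness of $\partial K_i^U$ with witness in $V_i^U$, a Gromov-metric ball $B_{1/a}(\tilde{u}(p))\subset V_i^U$ on which the monotonicity inequality of Lemma \ref{lmMonEst++} gives a contribution of at least $c/a^4$ to the symplectic energy of $\tilde{u}$ over its preimage. Since $V_i^U\cap V_{i+1}^U=\emptyset$, and since $\pi_1$ is length non-increasing (Lemma \ref{lmGrNonDec}), the $\Sigma$-projections of these balls have radius $\le 1/a$ and can be taken pairwise disjoint within a tubular neighborhood $S$ of $\gamma$ of $\omega_{\Sigma}$-area $\le C(d(p_1,p_2)+1)$. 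Writing $k$ for the total wall count along $\gamma$ and using the cohomological formula $E(\tilde{u};S)=E_{\mathrm{top}}(u;S)+\mathrm{Area}(S;\omega_{\Sigma})$, we obtain
\begin{equation*}
k\cdot\tfrac{c}{a^4}\le E(\tilde{u};S)\le E_{\mathrm{top}}(u)+C(d(p_1,p_2)+1).
\end{equation*}

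To convert from $E_{\mathrm{top}}$ to $E_{\mathrm{geo}}$ I would use that $(\alpha,H,J)$ is $s$-independent on each cylindrical end, where the standard action identity gives $E_{\mathrm{top}}(u;\mathrm{end})=E_{\mathrm{geo}}(u;\mathrm{end})$ directly. The discrepancy is supported on the compact core $\Sigma_0$ and equals $\int_{\Sigma_0}\partial_s H\circ u$, which is bounded by a datum-dependent constant $C_{\fH}$ once an a priori bound on $u|_{\Sigma_0}$ is established; such a bound follows, in turn, from the monotonicity estimate on $\Sigma_0\times M$ applied to a bounded subdomain (bootstrapping within the compact core itself). Combining these estimates yields $k\le C(E_{\mathrm{geo}}(u)+d(p_1,p_2)+1)$, and, after adding the $g_{J_{\fH}}$-length contribution (controlled via Cauchy--Schwarz and absorbed by AM--GM) and the bounded additive error from chart transitions in the finite cover $\cC$, we conclude the desired bound on $\mu$. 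For a uniformly i-bounded family the taming data is, by hypothesis, locally constant in the parameter, so the construction yields a uniform $\mu$ and $C$ on finitely many $\epsilon$-neighborhoods covering $[0,1]^k$. The main obstacle is the compatibility between the Gromov-radius $1/a$ required for monotonicity and the $\Sigma$-area bound: the strict boundedness condition in Definition \ref{dfStrBound} together with the disjointness $V_i\cap V_{i+1}=\emptyset$ is exactly what makes this compatibility possible, and this is the technical heart of the dissipativity framework.
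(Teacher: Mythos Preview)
Your overall strategy—wall-counting via the monotonicity lemma applied to the $J_{\fH}$-holomorphic graph—is the paper's approach. But your specific construction of $\mu$ introduces a genuine gap.

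The paper's metric does \emph{not} contain a $g_{J_\fH}$-length term. For a path $\alpha$ it sets $N_a(\alpha)$ to be the maximal number of $a$-bounded points along $\alpha$ with pairwise disjoint witnesses, and then
\[
\mu_a(x,y)=\min_\alpha N_a(\alpha)+1+\lceil d(\pi_1(x),\pi_1(y))\rceil,
\]
where the minimum is only over paths that are \emph{lifts with respect to $\pi_1$ of paths in $\Sigma$}. For the candidate path $\tilde{u}\circ\gamma$ (with $\gamma$ a $\Sigma$-geodesic from $p_1$ to $p_2$) the base term is exactly $d(p_1,p_2)$, and the wall count $N_a$ is what the monotonicity argument bounds, after localizing the witnesses to the tube $B_1(\gamma)\subset\Sigma$ via Lemma~\ref{lmGrNonDec}. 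By contrast, you include $\mathrm{length}_{g_{J_\fH}}(\gamma)$ in $\mu$ and propose to control it for $\tilde{u}\circ\gamma$ ``via Cauchy--Schwarz and AM--GM.'' Cauchy--Schwarz only gives $\int_\gamma|d\tilde{u}|\,d\ell\le \ell(\gamma)^{1/2}\bigl(\int_\gamma|d\tilde{u}|^2\,d\ell\bigr)^{1/2}$, and the \emph{line} integral $\int_\gamma|d\tilde{u}|^2\,d\ell$ is not controlled by the two-dimensional energy $E_{geo}(u)$; no a priori gradient bound is available here. Since the Gromov metric is only intermittently bounded, the $g_{J_\fH}$-length of $\tilde{u}\circ\gamma$ between consecutive walls can be arbitrarily large even at small energy. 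This is precisely why the paper's $\mu$ records only the $\Sigma$-distance of the projected path and the wall count.

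A secondary issue: your bootstrap for converting $E_{top}$ to $E_{geo}$ is circular. You want to bound $\int_{\Sigma_0}(\partial_sH)\circ u$ using an a priori $C^0$ bound on $u|_{\Sigma_0}$ obtained ``from the monotonicity estimate''—but producing such a bound is the content of the proposition itself, and in any case $\partial_sH$ is not assumed bounded on $\Sigma_0\times M$, so a diameter bound on $\tilde{u}(\Sigma_0)$ would not suffice without knowing where in $M$ it sits. The paper instead writes the split $E(\tilde{u};S)=E_{geo}(u;S)+\int_S\omega_\Sigma$ directly and sums over the disjoint witnesses inside $B_1(\gamma)$, avoiding any global $E_{top}$ versus $E_{geo}$ comparison.
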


\begin{proof}
By definition $g_{J_{\fH}}$ is intermittently bounded relative to $\pi_1$. Let $\cC$ be an open cover and for $U\in\cC$ let $K_1\subset K_2\dots\subset\Sigma\times M$ be an exhaustion as in the  definition of intermittently bounded relative to $\pi_1$. Let $a=\max_{U\in\cC} \{a_U\}$.

Given a path $\alpha$ in $\Sigma\times M$ let $N_a(\alpha)$, for any $a\geq 1,$ be  the maximal integer  so that there are $N$ distinct points $\{x_i\}$ in the image of $\alpha$ satisfying
\begin{itemize}
\item $x_i$ is $a$-bounded with witness supported in an open neighborhood $V_i$.
\item $i\neq j\Rightarrow V_i\cap V_j=\emptyset$.
\end{itemize}
For $x\neq y\in\Sigma\times M$  define
\begin{equation}
\mu_a(x,y):=\min_{\alpha}N_a(\alpha)+1+\lceil d(\pi_1(x),\pi_1(y))\rceil,
\end{equation}
where the minimum is over all paths in $\Sigma\times M$ \emph{which are lifts with respect to $\pi_1$ of a path in $\Sigma$}.  Call such paths \emph{admissible}. Then $\mu_a$ is readily seen to be a metric.  It is also clear that for $x\in\Sigma\times M$ and any real number $R$, the $\mu_a$-ball of radius $R$ is bounded with respect to the standard metric. Indeed, a bound on the distance $\mu_a$ implies for each admissible path $\gamma$ and each $U\in\cC$ a bound on the number of levels $\partial K_i$ transversed by the part the part of $\gamma$ in $\pi^{-1}(U)$. The claim follows by finiteness of the cover $\cC$. The converse is obvious.

Finally,  by monotonicity, for any $i$ we have
\begin{equation}
E(\tilde{u};\tilde{u}^{-1}(V_i))\geq\frac{c}{a^4}.
\end{equation}
But
\begin{equation}
E(\tilde{u};\tilde{u}^{-1}(V_i))=E_{geo}(u;\tilde{u}^{-1}(V_i))+\int_{\tilde{u}^{-1}(V_i)}\omega_{\Sigma}.
\end{equation}

Now $V_i$ can be taken to be the ball of radius $2/a$ with respect to some metric $g'$ which is $a$ equivalent to $g$. So, $V_i\subset B_1(x_i;g)$.  Since $\pi_1$ does not increase lengths, we have
\begin{equation}
\tilde{u}^{-1}(V_i)\subset \tilde{u}^{-1}( B_1(x_i;g))\subset B_{1}(\pi_1(x_i))
\end{equation}
where on the right we consider the geodesic ball with respect to the metric on $\Sigma$.  It follows that for a minimizing geodesic $\gamma$ in $\Sigma$ going from $p_1$ to $p_2$ we have
\begin{equation}
\mu(\tilde{u}(p_1),(\tilde{u}(p_2)) \leq  \frac{a^4}{c}\left(E_{geo}(u;B_{1}(\gamma))+\int_{B_{1}(\gamma)}\omega_{\Sigma}\right)+d(p_1,p_2)+1.
\end{equation}
Clearly,
\begin{equation}
\int_{B_{1}(\gamma)}\omega_{\Sigma}\leq C'( d(p_1,p_2)+1),
\end{equation}
for an appropriate constant $C'$ depending on the metric on $\Sigma$. The proposition follows by rearranging the last two inequalities.



\end{proof}
\subsection{Loopwise dissipativity}
There are two ways in which compactness may fail for the set of solutions to Floer's equation on an open manifold. One is the development of a singularity at some $z$ in the domain. This problem is taken care of by Proposition \ref{prpC0IB} provided we consider only Floer data for which the Gromov metric is intermittently bounded. The other way in which compactness may fail is by breaking at infinity. To take care of this we introduce the following notion.

We refer to a Floer solution on a domain of the form  $[a,b]\times \bR/\bZ$, with $a<b\in\bR$ as a \emph{partial Floer trajectory}. We do not take $a,b$ fixed in the following.

\begin{definition}\label{dfGamma}
 Let $F:M\to\bR$ be a function and $r_1<r_2$ be real numbers. For a smooth $H:\bR/\bZ\times M\to\bR$  and an $\omega$-compatible almost complex structure $J$  define $\Gamma^F_{H,J}(r_1,r_2)$ as the infimum over all $E$ for which there is a partial Floer trajectory   $u$ of geometric energy $E$  with one end of $u$ contained in $F^{-1}((-\infty,r_1))$ and the other end in $F^{-1}((r_2,\infty))$. Note that $\Gamma^F_{H,J}(r_1,r_2)$ may take the value of infinity. Let us also define $\Gamma_{H,J}(r_1,r_2):=\Gamma^H_{H,J}(r_1,r_2).$
\end{definition}
We say that $(H,J)$ is \emph{loopwise dissipative (LD)} if for some (hence any)  proper bounded below function $F$, we have $\Gamma^F_{H,J}(r_1,r)\to\infty$ as $r\to\infty$ for all $r_1$. We say that $(H,J)$ is \emph{robustly loopwise dissipative (RLD)} if  in the uniform $C^1\times C^0$ topology determined by $g_J$ there is an open neighborhood of the datum $(H,J)$ such that all elements are loopwise dissipative. 


Finally, we can introduce the main definition of this appendix.

\begin{definition}\label{dfDissipative}
A Floer datum $(H,J)$ is \emph{dissipative} if it is satisfies
\begin{enumerate}
\item The Gromov metric $g_{J_H}$ is intermittently bounded.
\item $(H,J)$ is robustly loopwise dissipative.
\end{enumerate}
A monotone Floer datum $(\fH,J_z)$ on a Riemann surface $\Sigma$ is called \emph{dissipative} if the Gromov metric $g_{J_\fH}$ is intermittently bounded and for all ends $i$ we have that $(H_i,J_i)$ is dissipative.
A family of dissipative  Floer data  on a fixed Riemann surface and with fixed dissipative data on the ends is said to be \emph{ dissipative} if it is uniformly intermittently bounded.
\end{definition}

Before proceeding further we comment on the definition of dissipative data.
\begin{itemize}
\item  The set of dissipative Floer data is is closed under the action of symplectomorphisms. That is, if $(H,J)$ is dissipative, so is $(H\circ\psi,\psi^*J)$ for any symplectomorphism $\psi$.
\item It follows from the definition that if $(H,J)$ is dissipative, so is any datum which coincides with $(H,J)$ outside of a compact set up to the addition of a constant. Also, a sufficiently small perturbation of a dissipative datum in the uniform $C^1\times C^0$ topology determined by $g_J$  is dissipative.
\item For the full Hamiltonian Floer theory package it is also necessary to introduce a more general notion of a dissipative family of Floer data which allows broken data. For a definition see \cite{groman}[Definition 5.6].

\end{itemize}

It is easy to produce intermittently bounded Floer data on a geometrically bounded manifold or to verify intermittent boundedness of a given Floer datum. The condition of loopwise dissipativity on the other hand refers to the space of solutions to a non-linear partial differential equation. Thus it is not a priori clear from the definition that dissipative data exist on a general geometrically bounded symplectic manifold.  In this regard we state the following general proposition that is proven in \cite{groman}.
\begin{theorem}[Existence of dissipative data] \label{tmSummary0}
If $J$ is geometrically bounded and $H$ is proper and has sufficiently small Lipschitz constant outside of a compact set then $(H,J)$ is dissipative.
\end{theorem}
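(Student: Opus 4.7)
The plan is to verify the two clauses of Definition \ref{dfDissipative} separately: (1) intermittent boundedness of the Gromov metric $g_{J_H}$ on $\mathbb{R}\times S^1 \times M$ relative to the projection $\pi_1$, and (2) robust loopwise dissipativity of $(H,J)$. Throughout, write $\epsilon$ for the Lipschitz constant of $H$ outside some fixed compact set $K_0$, and let $a\geq 1$ be a constant such that $g_J$ is $a$-bounded on $M$ (which exists by geometric boundedness).

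For clause (1), I would exhibit taming data for $g_{J_H}$ as follows. Cover $\mathbb{R}\times S^1$ by finitely many precompact open strips $\{U_\alpha\}$ whose geometry is uniformly bounded (this is trivial since the cylinder is homogeneous). On each $\pi_1^{-1}(U_\alpha) = U_\alpha \times M$, use the properness of $H$ to build an exhaustion $K_1^\alpha \subset K_2^\alpha \subset \cdots$ of $U_\alpha \times M$ by sets of the form $U_\alpha \times H^{-1}([0,c_i])$, with $V_i^\alpha$ chosen as pairwise disjoint tubular neighborhoods of $U_\alpha \times \partial H^{-1}(c_i)$. Since $g_J$ is strictly $a$-bounded on each $\partial H^{-1}(c_i)$ and the Lipschitz constant of $H$ controls $\nabla\mathfrak{H}$ outside $K_0$ (where $\mathfrak{H}=Hdt$), Lemma \ref{lmCritABound} (together with Remark \ref{rmNonStrAdj} to absorb constants) shows that $g_{J_H}$ is $a'$-bounded on $\partial K_i^\alpha$ with witness supported in $V_i^\alpha$, for a uniform constant $a'=a'(a,\epsilon)$. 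The finitely many $K_i^\alpha$ taken together provide the required taming data.

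For clause (2), I would invoke \cite[Lemma 6.9]{groman} (the core estimate that was quoted verbatim in the proof of Lemma \ref{lmLoopDisLin}): provided $\epsilon$ is sufficiently small relative to the geometry constant $a$, there exist $R=R(a)>0$ and $\delta=\delta(a)>0$ such that $\Gamma_{H,J}(h,h+R)\geq \delta$ for every $h\in H(M\setminus K_0)$. From this I would deduce loopwise dissipativity by the subdivision argument in the proof of Lemma \ref{lmLoopDisLin}: given a partial Floer trajectory with one end in $F^{-1}(-\infty,r_1)$ and the other in $F^{-1}(r,\infty)$, compare with the proper function $H$ (using properness of both $F,H$ to translate between level sets), chop the range of $H$-values into $\lfloor (r-r_1)/R\rfloor$ many intervals of length $R$, and sum the corresponding $\delta$'s. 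This produces $\Gamma^F_{H,J}(r_1,r)\geq \delta \lfloor (r-r_1)/R\rfloor - C$, which tends to $\infty$ as $r\to\infty$. Robustness is automatic: the constants $\epsilon,R,\delta$ in \cite[Lemma 6.9]{groman} depend only on the geometric bound for $g_J$ and the Lipschitz bound on $H$, both of which are open conditions in the uniform $C^1\times C^0$ topology determined by $g_J$; hence the same constants work after any sufficiently small perturbation of $(H,J)$.

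The hard part will be the key estimate \cite[Lemma 6.9]{groman} that underlies clause (2). The difficulty is non-trivial because a Floer trajectory with one end deep in a sublevel set and the other deep in a superlevel set of $H$ might in principle oscillate through narrow regions and thus, a priori, carry bounded geometric energy per unit $H$-change. The smallness of the Lipschitz constant is essential: it forces the trajectory to actually move a geometric distance comparable to $R/\epsilon$ in $M$, at which point the monotonicity lemma for the Gromov metric (Lemma \ref{lmMonEst++}) via the techniques of Section \ref{ss-diss-c0} produces the lower bound on energy. Moreover, smallness of $\epsilon$ is needed to rule out, via a Lemma \ref{lemNoNonTOrbits}-type argument, the existence of non-constant $1$-periodic orbits in the region of small Lipschitz constant that could otherwise serve as low-energy obstacles to the diffusion of trajectories across level sets. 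The rest of the proof is organizational and follows directly from the tools already available in the excerpt.
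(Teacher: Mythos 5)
The paper's ``proof'' of Theorem \ref{tmSummary0} is a one-line citation to \cite[Theorem 6.6]{groman}; no argument is given in the text. Your proposal therefore does strictly more than the paper: you reconstruct the reduction to the two clauses of Definition \ref{dfDissipative} and handle each with tools the paper develops for \emph{other} purposes (Lemma \ref{lmCritABound} for intermittent boundedness, and the quantitative machinery of \S\ref{ss-diss-c0}, itself resting on \cite[Lemma 6.9]{groman}, for loopwise dissipativity). The logical status of the two arguments is ultimately the same --- both defer to \cite{groman} for the crux, you at the level of Lemma 6.9, the paper at the level of Theorem 6.6 --- and your identification of where the real difficulty lies (the small Lipschitz constant forcing genuine geometric displacement, fed into the monotonicity lemma) matches the paper's own commentary in Lemma \ref{lmLoopDisLin} and Remark \ref{remHeinUsher}. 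So the proposal is acceptable as an outline, and arguably more informative than the printed proof.

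Two points deserve correction or sharpening. First, you cannot cover the non-compact cylinder $\bR\times S^1$ by \emph{finitely many precompact} strips; since the datum is $s$-independent, the right move is to take the single open set $U=\bR\times S^1$ in Definition \ref{dfIntBounded} and the translation-invariant exhaustion $K_i=\bR\times S^1\times H^{-1}((-\infty,c_i])$, whose properness over $\Sigma$ follows from properness of $H$. Second, your subdivision step for clause (2) --- ``chop the range of $H$-values into intervals of length $R$ and sum the $\delta$'s'' --- glosses over the fact that $\Gamma$ is defined via boundary \emph{circles} lying entirely in sub/superlevel sets, so one must control the oscillation $g(s)-f(s)=\max_t H(u(s,t))-\min_t H(u(s,t))$ before the bands can be stacked; this is exactly the dichotomy (large oscillation on a single circle versus many disjoint crossings) that the inductive construction of the $s_i$ in the proof of Lemma \ref{lmLoopDisLin} is designed to handle, and you should either cite that argument or reproduce it. With those repairs, applying Lemma \ref{lmLoopDisLin} with $F=H$ for $h_1$ large (where the Lipschitz hypothesis holds) gives $\Gamma_{H,J}(h_1,h_2)\geq C(h_2-h_1-R)\to\infty$, which is loopwise dissipativity; robustness then follows as you say, since all estimates tolerate $C^0$-perturbations of $J$ and $C^1$-perturbations of $H$ (non-strict $a$-boundedness is built into Definition \ref{dfStrBound} for precisely this reason).
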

\begin{proof}
This is Theorem 6.6 in \cite{groman}.
\end{proof}
\begin{itemize}
\item The theorem guarantees there is an abundance of dissipative Floer data on any geometrically bounded manifold. Indeed on such a manifold one can take the distance function to a point and perturb it slightly to obtain a proper smooth Lipschitz function, then scale it so the Lipschitz constant becomes small.

\item At first sight  Theorem \ref{tmSummary0} appears to be of limited value since it imposes severe restrictions on the growth of $H$ at infinity. However, as shown in \cite{groman}, by the continuity property of Floer cohomology we can use Hamiltonians of small Lipschitz constant to construct Floer cohomologies of all proper Hamiltonians.

\item The set of dissipative Floer data is  not limited to those guaranteed to by the first part of the theorem. In particular, the linear at infinity Hamiltonians  which are typically considered in the literature together with contact type almost complex structures are dissipative if the slope is not in the period spectrum. This is shown in \cite{groman}.
\end{itemize}

We conclude with the two properties of dissipative data that we will be using.

\begin{proposition}[A priori estimate]\label{tmSummary00}
A dissipative Floer datum  determines for every positive real number $E$ and any compact set $K\subset M$ a constant $R=R(E,K)$ such that the following holds. If $u$ is any solution to Floer's equation corresponding to the given datum, and $u$ has energy $\leq E$ and intersects $K$, then the image of $u$ is contained in the ball 
$B_R(K)$ of radius $R$ around $K$. Moreover, the constant $R(E,K)$ does not change if the Floer datum is altered outside of the ball $B_R(K)$. Given a  dissipative family of Floer data, the constant $R(E,K)$ can be taken uniform in the family. 
\end{proposition}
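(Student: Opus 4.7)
The plan is to exploit the two conditions in the definition of dissipativity in complementary ways: intermittent boundedness of the Gromov metric controls bounded-energy solutions over any bounded subdomain of $\Sigma$ via Proposition \ref{prpC0IB}, while robust loopwise dissipativity rules out escape to infinity along the cylindrical ends. Fix a proper bounded-below Lipschitz function $F : M \to \bR$ (for instance a smoothing of $x \mapsto d(x,K)$) and choose $r_1$ with $K \subset F^{-1}(-\infty, r_1)$, so that sublevel sets of $F$ give an exhaustion of $M$ by neighborhoods of $K$ with $F^{-1}(-\infty, r) \subset B_{\rho(r)}(K)$ for an explicit $\rho(r)$ depending only on $F$.

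For a Floer solution $u$ on $\bR \times S^1$ with $E_{top}(u) \leq E$ that meets $K$, robust loopwise dissipativity gives $\Gamma^F_{H,J}(r_1, r_2) \to \infty$ as $r_2 \to \infty$, so I can pick $r_2 = r_2(E,K)$ with $\Gamma^F_{H,J}(r_1, r_2) > E$. I claim $u \subset F^{-1}(-\infty, r_2)$, which gives $u \subset B_R(K)$ with $R = \rho(r_2)$ depending only on $E$ and $K$. If some point of $u$ lay in $F^{-1}(r_2, \infty)$ I would extract $s_1 < s_2$ with $u(\{s_i\}\times S^1)$ on the appropriate side of each level set, and then $u|_{[s_1, s_2]\times S^1}$ would be a partial Floer trajectory of geometric energy at most $E_{top}(u) \leq E < \Gamma^F_{H,J}(r_1, r_2)$, contradicting the definition of $\Gamma^F$. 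Producing such horizontal circles from pointwise information is where Proposition \ref{prpC0IB} intervenes: it controls the $M$-oscillation of $u$ on a slice $\{s\}\times S^1$ of bounded length, and combined with the Lipschitz property of $F$ this forces $F \circ u$ to have uniformly bounded oscillation on each slice, so that enlarging $r_1$ and $r_2$ by a fixed amount produces the desired slices whenever $u$ meets both level sets at points. For a general Riemann surface $\Sigma$ with cylindrical ends, decompose $\Sigma = \Sigma_0 \cup \bigcup_i E_i$; Proposition \ref{prpC0IB} directly bounds $u(\Sigma_0)$ in terms of $E$ and $\diam(\Sigma_0)$, and the cylindrical argument applies on each end $E_i$ with $K$ replaced by an enlargement containing the image of $\partial \Sigma_0$ under $u$.

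For the addendum that altering the datum outside $B_R(K)$ does not change $R$: once a valid $R$ has been produced for the given datum, I enlarge it slightly so that the competing partial Floer trajectories in the definition of $\Gamma^F_{H,J}(r_1,r_2)$ of geometric energy at most $E$ are themselves confined to $B_R(K)$; this is a self-improvement bootstrap using the just-obtained a priori estimate, applied to the partial trajectories themselves. The relevant value of $\Gamma^F_{H,J}(r_1,r_2)$ then depends only on the datum inside $B_R(K)$, so the whole argument persists under alterations outside. Uniformity in a dissipative family follows because robust loopwise dissipativity is stable in the $C^1 \times C^0$ topology encoded in the family, and intermittent boundedness is uniform by definition of a dissipative family; hence $r_2$ and therefore $R$ can be taken uniform in the parameter.

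The main technical obstacle is the slice-finding step, i.e.\ upgrading a pointwise statement about where $u$ lies to a slice-wise statement. This will follow the pattern of Lemmas \ref{lemmaLipBounEnd} and \ref{lemmaLipBounEnd2}: use Proposition \ref{prpC0IB} together with the Lipschitz bound on $F$ to control the oscillation of $F \circ u$ on $\{s\}\times S^1$, and then either locate the desired slices or bound $E_{geo}(u)$ directly via $\Gamma^F$. Once this slice-finding tool is in place, the remainder of the proof is a routine packaging of loopwise dissipativity with the Gromov-metric estimate of Proposition \ref{prpC0IB}.
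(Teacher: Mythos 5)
Your argument is correct and is essentially the paper's proof: both decompose $\Sigma$ into a compact piece $\Sigma_0$ plus cylindrical ends, use Proposition \ref{prpC0IB} to control the $\mu$-diameter of $\tilde u(\Sigma_0)$ and of the slices $\tilde u(s,\cdot)$, and use loopwise dissipativity (the function $\Gamma^F$) to force every slice in the ends to meet an a priori compact set, the only difference being that you phrase the last step as extracting a sub-cylinder violating $\Gamma^F_{H,J}(r_1,r_2)>E$ while the paper states the contrapositive. The one point to tighten is the slice-finding step: the uniform control on a slice comes from its bounded $\mu$-diameter together with the fact that it meets a fixed compact set (plus properness of $F$), not from a Lipschitz estimate for $F$ alone, since $\mu$-balls are metrically bounded but not with a uniform radius.
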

\begin{proof}
For $H$ proper this is Theorem 6.3 in \cite{groman}. The proof does not really change when $H$ is not assumed proper. We briefly spell this out for the case of a single Floer datum. For such a datum $\Sigma$ there is a connected compact Riemann surface $\Sigma_0\subset\Sigma$ with boundary, such that $\Sigma\setminus\Sigma_0$ consists of ends. First observe there is an a priori compact set $K'$ determined by $K$ and $E$ such that $u(\Sigma_0)$ meets $K'$. Indeed, either $\Sigma_0$  meets $K$ in which case there is nothing to prove, or one of the ends meets $K$ and the claim follows by loopwise dissipativity since each end shares a boundary with $\Sigma_0$. By Proposition \ref{prpC0IB} we have an a priori estimate on the $\mu$-diameter of $\tilde{u}(\Sigma_0)$ as well as on the $\mu$-diameter of $\tilde{u}(s,\cdot)$ where $s$ is in any of the ends. It follows by loopwise dissipativity again that there is a compact set $K''$ determined a priori by $K,E$ such that for all $s$,   $\tilde{u}(s,\cdot)$ intersects $K''$. Applying Proposition \ref{prpC0IB} again we obtain an estimate as desired.


\end{proof}




\begin{proposition}\label{tmSummary}

If $H_1\leq H_2$ and $(H_i,J_i)$ are dissipative, then there exists a monotone dissipative homotopy from $(H_1,J_1)\to (H_2,J_2)$. Moreover, any such pair of dissipative homotopies are connected by a dissipative family of monotone homotopies. This holds also if one or both of the homotopies is broken.
\end{proposition}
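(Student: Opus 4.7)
The plan is to derive Proposition \ref{tmSummary} essentially as a bookkeeping consequence of Proposition \ref{prpMonHo}, by checking that the definition of a dissipative family reduces, with dissipative ends fixed, to a condition about the Gromov metric of the homotopy that Proposition \ref{prpMonHo} already produces.

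First, I would treat the unbroken case. Unwinding Definition \ref{dfDissipative} for a monotone homotopy $(\fH,J_z)$ on $\Sigma = \bR\times S^1$ with given ends $(H_i,J_i)$: dissipativity requires only that the Gromov metric $g_{J_\fH}$ be intermittently bounded relative to $\pi_1$ and that the ends be dissipative. The latter is given by hypothesis, and Proposition \ref{prpMonHo} supplies a monotone interpolating datum $(\fH,J_z)$ whose Gromov metric is intermittently bounded, by exploiting two disjoint open sets $V_0,V_1$ where taming data for $J_{H_0}$ and $J_{H_1}$ are supported. In the notation of that proof, the cutoff function $g(x,s)$ is constructed so that the interpolation is locally trivial in $s$ on $(-\infty,2/3]\times S^1\times V_0$ and on $[1/3,\infty)\times S^1\times V_1$, hence the taming data of the two ends can be used verbatim on those regions, yielding intermittent boundedness of $g_{J_\fH}$ relative to $\pi_1$ on a cover of $\bR\times S^1$.

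Second, I would handle contractibility. Given two monotone dissipative homotopies $(\fH^0,J^0_z)$ and $(\fH^1,J^1_z)$ with the same dissipative ends, Proposition \ref{prpMonHo} (second assertion) connects them by a family of monotone homotopies whose Gromov metrics are \emph{uniformly} intermittently bounded. Again, the support properties of the interpolation mean that the taming data for each parameter value can be chosen identical to the endpoint data on the subsets $[T,\infty)\times S^1\times V_1$ and $(-\infty,-T]\times S^1\times V_0$ for $T$ large; on the compact remainder, uniform intermittent boundedness holds by a standard compactness-of-parameter argument combined with the openness of strict $a$-boundedness under $C^1\times C^0$-small perturbations (cf.\ Lemma \ref{lmCritABound} and Remark \ref{rmNonStrAdj}). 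With the ends held fixed and already dissipative, this is exactly the definition of a dissipative family of homotopies.

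Finally, for the broken case, I would reduce to the unbroken one piece by piece. A broken monotone homotopy between $(H_1,J_1)$ and $(H_2,J_2)$ is a tuple of monotone homotopies $(H^{(k)},J^{(k)}_z)$ with $H^{(k)}|_{s=+\infty} = H^{(k+1)}|_{s=-\infty}$, each end being dissipative by an inductive choice (using Theorem \ref{tmSummary0} or simply picking intermediate dissipative Hamiltonians from the directed set $\cH_K$ of Section \ref{FloerPackage}). To each adjacent pair I apply the unbroken result above to obtain a monotone dissipative homotopy, and the collection is automatically a dissipative broken family because intermittent boundedness is a local condition on each component and the ends are shared. Contractibility in the broken case follows componentwise from the unbroken contractibility together with the contractibility of the space of interior dissipative ``break'' data, which itself reduces to Theorem \ref{tmSummary0} applied to small Lipschitz perturbations.

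The only potentially subtle point is the \emph{uniformity} of the intermittent boundedness in the connecting family, since a priori the exhaustions $\{K_i,V_i\}$ witnessing intermittent boundedness might vary wildly with the parameter. The way around this, which I expect is the main technical step, is to notice that away from a compact region of $\bR\times S^1$ the interpolating homotopies coincide with the endpoint homotopies by construction, so the tails of the exhaustions can be taken parameter-independent; on the remaining compact region one uses the openness of the strict $a$-boundedness condition to cover the compact parameter space with finitely many open sets on which a single choice of exhaustion works. This is precisely the kind of argument that underlies Lemma \ref{lmCritABound} and is carried out in detail in \cite[\S5]{groman}.
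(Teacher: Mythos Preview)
Your proposal is correct and follows essentially the same approach as the paper: the paper's proof is the one-liner ``This follows from Proposition \ref{prpMonHo} by Definition \ref{dfDissipative},'' and your write-up simply unpacks this, observing that dissipativity of a homotopy (or family) with given dissipative ends reduces to intermittent boundedness (resp.\ uniform intermittent boundedness) of the Gromov metric, which is exactly what Proposition \ref{prpMonHo} provides. Your additional discussion of uniformity and of the broken case is more detail than the paper gives, but it is in the same spirit and introduces no new ideas beyond the cited proposition and definition.
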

\begin{proof}
This follows from  Proposition \ref{prpMonHo} by Definition \ref{dfDissipative}.
\end{proof}

\bibliographystyle{plain}
\bibliography{../locality} 

\begin{thebibliography}{10}

\bibitem{Abouzaid14}
Mohammed Abouzaid.
\newblock Symplectic cohomology and {V}iterbo’s theorem, {F}ree loop spaces
  in geometry and topology.
\newblock {\em IRMA Lect. Math}, 24:271--485, 2015.

\bibitem{aak}
Mohammed Abouzaid, Denis Auroux, and Ludmil Katzarkov.
\newblock Lagrangian fibrations on blowups of toric varieties and mirror
  symmetry for hypersurfaces.
\newblock {\em Publications math{\'e}matiques de l'IH{\'E}S}, 123(1):199--282,
  2016.

\bibitem{AbouzaidSylvan2021}
Mohammed Abouzaid and Zachary Sylvan.
\newblock {H}omological {M}irror {S}ymmetry for local {S}{Y}{Z} singularities.
\newblock {\em arXiv preprint arXiv:2107.05068}, 2021.

\bibitem{Auroux}
Denis Auroux.
\newblock Mirror symmetry and t-duality in the complement of an anticanonical
  divisor.
\newblock {\em Journal of G{\"o}kova Geometry Topology}, 1:51--91, 2007.

\bibitem{Auslander}
L~Auslander and L~Markus.
\newblock Holonomy of flat affinely connected manifolds.
\newblock {\em Annals of Mathematics}, pages 139--151, 1955.

\bibitem{bardwell}
Sam Bardwell-Evans, Man-Wai~Mandy Cheung, Hansol Hong, and Yu-Shen Lin.
\newblock {S}cattering {D}iagrams from {H}olomorphic {D}iscs in {L}og
  {C}alabi-{Y}au {S}urfaces.
\newblock {\em arXiv preprint arXiv:2110.15234}, 2021.

\bibitem{BM09}
Ricardo~Castano Bernard and Diego Matessi.
\newblock Lagrangian 3-torus fibrations.
\newblock {\em Journal of Differential Geometry}, 81(3):483--573, 2009.

\bibitem{borman}
Matthew~Strom Borman, Nick Sheridan, and Umut Varolgunes.
\newblock Quantum cohomology as a deformation of symplectic cohomology.
\newblock {\em arXiv preprint arXiv:2108.08391}, 2021.

\bibitem{solomon}
Ricardo Casta{\~n}o-Bernard, Diego Matessi, and Jake~P Solomon.
\newblock Symmetries of {L}agrangian fibrations.
\newblock {\em Advances in mathematics}, 225(3):1341--1386, 2010.

\bibitem{CSN}
Kwokwai Chan, Siu-Cheong Lau, and Naichung~Conan Leung.
\newblock S{YZ} mirror symmetry for toric {C}alabi-{Y}au manifolds.
\newblock {\em J. Differential Geom.}, 90(2):177--250, 2012.

\bibitem{CFH95}
K.~Cieliebak, A.~Floer, and H.~Hofer.
\newblock Symplectic homology {II}: {A} general construction.
\newblock {\em Math. Z.}, 218:103--122, 1995.

\bibitem{CFHW96}
K.~Cieliebak, A.~Floer, H.~Hofer, and K.~Wysocki.
\newblock Applications of symplectic homology {II}: {S}tability of the action
  spectrum.
\newblock {\em Math. Z.}, 223:27--45, 1996.

\bibitem{Eliashberg}
Kai Cieliebak and Yakov Eliashberg.
\newblock {\em From Stein to Weinstein and back: symplectic geometry of affine
  complex manifolds}, volume~59.
\newblock American Mathematical Soc., 2012.

\bibitem{dCCS}
Manfredo~P Do~Carmo.
\newblock {\em Differential geometry of curves and surfaces: revised and
  updated second edition}.
\newblock Courier Dover Publications, 2016.

\bibitem{duistermaat}
Johannes~J Duistermaat.
\newblock On global action-angle coordinates.
\newblock {\em Communications on pure and applied mathematics}, 33(6):687--706,
  1980.

\bibitem{eqwein}
Shubham Dwivedi, Jonathan Herman, Lisa~C Jeffrey, and Theo Van~den Hurk.
\newblock {\em Hamiltonian Group Actions and Equivariant Cohomology}.
\newblock Springer, 2019.

\bibitem{Eichhorn91}
J{\"u}rgen Eichhorn.
\newblock The boundedness of connection coefficients and their derivatives.
\newblock {\em Mathematische Nachrichten}, 152(1):145--158, 1991.

\bibitem{EG}
Yakov Eliashberg and Mikhael Gromov.
\newblock Convex symplectic manifolds.
\newblock {\em Several complex variables and complex geometry, Part},
  2(135-162):7, 1991.

\bibitem{evans}
Jonny Evans.
\newblock Lagrangian torus fibrations (notes in progress).
\newblock \url{http://jde27.uk/misc/ltf.pdf}.
\newblock Accessed: 2021-10-15.

\bibitem{FH94}
A.~Floer and H.~Hofer.
\newblock Symplectic homology. {I}. {O}pen sets in {${\bf C}^n$}.
\newblock {\em Math. Z.}, 215(1):37--88, 1994.

\bibitem{CFHW94}
Andreas Floer, Helmut Hofer, and Krzysztof Wysocki.
\newblock Applications of symplectic homology {I}.
\newblock {\em Mathematische Zeitschrift}, 217(1):577--606, 1994.

\bibitem{Gammage2021}
Benjamin Gammage.
\newblock Local mirror symmetry via {S}{Y}{Z}.
\newblock {\em arXiv preprint arXiv:2105.12863}, 2021.

\bibitem{GromanToAppear}
Yoel Groman.
\newblock Torsion of non-exact embeddings of {L}iouville domains.
\newblock {\em to appear}.

\bibitem{groman}
Yoel Groman.
\newblock Floer theory and reduced cohomology on open manifolds.
\newblock {\em arXiv preprint arXiv:1510.04265}, 2015.

\bibitem{GromanMerry}
Yoel Groman and Will~J Merry.
\newblock The symplectic cohomology of magnetic cotangent bundles.
\newblock {\em arXiv preprint arXiv:1809.01085}, 2018.

\bibitem{GroVar}
Yoel Groman and Umut Varolgunes.
\newblock Closed string mirrors of symplectic cluster manifolds.
\newblock {\em arXiv preprint arXiv:2211.07523}, 2022.

\bibitem{Gross00}
Mark Gross.
\newblock Examples of special {L}agrangian fibrations.
\newblock In {\em Symplectic geometry and mirror symmetry ({S}eoul, 2000)},
  pages 81--109. World Sci. Publ., River Edge, NJ, 2001.

\bibitem{ghk}
Mark Gross, Paul Hacking, and Sean Keel.
\newblock Mirror symmetry for log {C}alabi-{Y}au surfaces {I}.
\newblock {\em Publications Math{\'e}matiques de l'IHES}, 122(1):65--168, 2015.

\bibitem{hacking2020homological}
Paul Hacking and Ailsa Keating.
\newblock Homological mirror symmetry for log {C}alabi-{Y}au surfaces.
\newblock {\em arXiv preprint arXiv:2005.05010}, 2020.

\bibitem{hacking}
Paul Hacking and Sean Keel.
\newblock Mirror symmetry and cluster algebras.
\newblock In {\em Proceedings of the International Congress of Mathematicians:
  Rio de Janeiro 2018}, pages 671--697. World Scientific, 2018.

\bibitem{Hein}
Doris Hein.
\newblock The conley conjecture for irrational symplectic manifolds.
\newblock {\em Journal of Symplectic Geometry}, 10(2):183--202, 2012.

\bibitem{hicks}
Jeffrey Hicks.
\newblock Tropical {L}agrangian hypersurfaces are unobstructed.
\newblock {\em Journal of Topology}, 13(4):1409--1454, 2020.

\bibitem{HoferSalamon}
Helmut Hofer and Dietmar~A Salamon.
\newblock {F}loer homology and {N}ovikov rings.
\newblock In {\em The Floer memorial volume}, pages 483--524. Springer, 1995.

\bibitem{HoferZhender}
Helmut Hofer and Eduard Zehnder.
\newblock {\em Symplectic invariants and Hamiltonian dynamics}.
\newblock Birkh{\"a}user, 2012.

\bibitem{kang}
Jungsoo Kang.
\newblock Symplectic homology of displaceable {L}iouville domains and leafwise
  intersection points.
\newblock {\em Geometriae Dedicata}, 170(1):135--142, 2014.

\bibitem{keating}
Ailsa Keating.
\newblock Homological mirror symmetry for hypersurface cusp singularities.
\newblock {\em Selecta Mathematica}, 24(2):1411--1452, 2018.

\bibitem{koso}
Maxim Kontsevich and Yan Soibelman.
\newblock Affine structures and non-archimedean analytic spaces.
\newblock In {\em The unity of mathematics}, pages 321--385. Springer, 2006.

\bibitem{Kosinski}
Antoni~A Kosinski.
\newblock Differential manifolds, volume 138 of.
\newblock {\em Pure and Applied Mathematics}.

\bibitem{Kyler}
Emmy Murphy and Kyler Siegel.
\newblock Subflexible symplectic manifolds.
\newblock {\em Geometry \& Topology}, 22(4):2367--2401, 2018.

\bibitem{Pascaleff}
James Pascaleff.
\newblock On the symplectic cohomology of log {C}alabi--{Y}au surfaces.
\newblock {\em Geometry \& Topology}, 23(6):2701--2792, 2019.

\bibitem{Ratiu}
{\'A}lvaro Pelayo, Tudor~S Ratiu, and San Vu~Ngoc.
\newblock The affine invariant of proper semitoric integrable systems.
\newblock {\em Nonlinearity}, 30(11):3993, 2017.

\bibitem{alvaro}
Alvaro Pelayo and Xiudi Tang.
\newblock Vu {N}goc's conjecture on focus-focus singular fibers with multiple
  pinched points.
\newblock {\em arXiv preprint arXiv:1803.00998}, 2018.

\bibitem{pomerleano}
Daniel Pomerleano.
\newblock Intrinsic mirror symmetry and categorical crepant resolutions.
\newblock {\em arXiv preprint arXiv:2103.01200}, 2021.

\bibitem{san2}
Vu~San~Ngoc and Christophe Wacheux.
\newblock Smooth normal forms for integrable {H}amiltonian systems near a
  focus--focus singularity.
\newblock {\em Acta Mathematica Vietnamica}, 38(1):107--122, 2013.

\bibitem{seidelerratum}
Paul Seidel.
\newblock Erratum to “{A} biased view of symplectic cohomology”.
\newblock \url{http://www-math.mit.edu/~seidel/errata/biased-erratum.pdf}.
\newblock Accessed: 2021-10-15.

\bibitem{seidelbiased}
Paul Seidel.
\newblock A biased view of symplectic cohomology.
\newblock {\em Current developments in mathematics}, 2006(1):211--254, 2006.

\bibitem{seidel}
Paul Seidel.
\newblock Some speculations on pairs-of-pants decompositions and {F}ukaya
  categories.
\newblock {\em Surveys in Differential Geometry}, 17(1):411--426, 2012.

\bibitem{shende}
Vivek Shende, David Treumann, and Harold Williams.
\newblock On the combinatorics of exact lagrangian surfaces.
\newblock {\em arXiv preprint arXiv:1603.07449}, 2016.

\bibitem{Sikorav94}
Jean-Claude Sikorav.
\newblock Some properties of holomorphic curves in almost complex manifolds.
\newblock In {\em Holomorphic curves in symplectic geometry}, volume 117 of
  {\em Progr. Math.}, pages 165--189. Birkh\"auser, Basel, 1994.

\bibitem{yuhan}
Yuhan Sun.
\newblock Index bounded relative symplectic cohomology.
\newblock {\em arXiv preprint arXiv:2109.06146}, 2021.

\bibitem{symington}
Margaret Symington.
\newblock Four dimensions from two in symplectic topology.
\newblock {\em arXiv preprint math/0210033}, 2002.

\bibitem{Tam10}
Luen-Fai Tam.
\newblock Exhaustion functions on complete manifolds.
\newblock {\em Recent advances in geometric analysis}, 11:211--215, 2010.

\bibitem{temkin}
Michael Temkin.
\newblock Introduction to {B}erkovich analytic spaces.
\newblock In {\em Berkovich spaces and applications}, pages 3--66. Springer,
  2015.

\bibitem{tonkonog2019}
Dmitry Tonkonog.
\newblock From symplectic cohomology to {L}agrangian enumerative geometry.
\newblock {\em Advances in Mathematics}, 352:717--776, 2019.

\bibitem{tonkonog}
Dmitry Tonkonog and Umut Varolgunes.
\newblock Super-rigidity of certain skeleta using relative symplectic
  cohomology.
\newblock {\em arXiv preprint arXiv:2003.07486}, 2020.

\bibitem{Usher}
Michael Usher.
\newblock Floer homology in disk bundles and symplectically twisted geodesic
  flows.
\newblock {\em Journal of Modern Dynamics}, 3(1):61, 2009.

\bibitem{varolgunesthesis}
Umut Varolgunes.
\newblock {\em Mayer-Vietoris property for relative symplectic cohomology}.
\newblock PhD thesis, Massachusetts Institute of Technology, 2018.

\bibitem{varolgunes}
Umut Varolgunes.
\newblock Mayer--{V}ietoris property for relative symplectic cohomology.
\newblock {\em Geometry \& Topology}, 25(2):547--642, 2021.

\bibitem{Vit}
C.~Viterbo.
\newblock Functors and computations in {F}loer homology with applications. {I}.
\newblock {\em Geom. Funct. Anal.}, 9(5):985--1033, 1999.

\bibitem{san}
San Vu~Ngoc.
\newblock On semi-global invariants for focus--focus singularities.
\newblock {\em Topology}, 42(2):365--380, 2003.

\bibitem{weibel}
Charles~A Weibel.
\newblock {\em An introduction to homological algebra}.
\newblock Number~38. Cambridge university press, 1995.

\bibitem{Zung}
Nguyen~Tien Zung.
\newblock A note on focus-focus singularities.
\newblock {\em Differential Geometry and its Applications}, 7(2):123--130,
  1997.

\end{thebibliography}
\end{document}